\newtheorem{Thm}{Theorem}[section]\newtheorem*{Thm*}{Theorem}
\newtheorem{Lem}[Thm]{Lemma} 
\newtheorem{Cor}[Thm]{Corollary}
\newtheorem{Prop}[Thm]{Proposition}
\newtheorem{Prop-Def}[Thm]{Proposition-Definition}
\theoremstyle{definition}
\newtheorem{Ex}[Thm]{Example}
\newtheorem{Def}[Thm]{Definition}
\newtheorem{Rem}[Thm]{Remark}
\newcommand{\0}{\hspace{-3pt}0}
\newcommand{\ra}{\rightarrow}
\newcommand{\la}{\leftarrow}
\newcommand{\D}{\mathsf{D}}
\newcommand{\K}{\mathsf{K}}
\newcommand{\SSS}{\mathbb{S}}
\newcommand{\cal}{\mathcal}
\newcommand{\T}{\mathcal T}
\newcommand{\sss}{\mathcal S}
\newcommand{\X}{\mathcal X}
\newcommand{\rrr}{\mathcal R}
\newcommand{\zzz}{\mathcal Z}
\newcommand{\Y}{\mathcal Y}
\newcommand{\ZZ}{\mathcal Z}
\newcommand{\hhh}{\mathcal H}
\newcommand{\p}{\mathrm{p}}
\newcommand{\SMC}{\mathrm{SMC}}
\newcommand{\bb}{\mathrm{b}}
\newcommand{\Filt}{\mathsf{Filt}}
\newcommand{\con}{{\rm cone}}
\newcommand{\h}{{\mathrm H}}
\renewcommand{\dim}{{\rm dim}}
\newcommand{\xra}{\xrightarrow}
\newcommand{\Z}{{\mathbb Z}}
\newcommand{\lan}{\langle}
\newcommand{\ran}{\rangle}
\newcommand{\un}{\underline}
\newcommand{\op}{\oplus}
\newcommand{\ot}{\otimes}
\newcommand{\hs}{\hspace{-3pt}}
\newcommand{\sg}{\operatorname{sg}\nolimits}
\newcommand{\Hom}{\operatorname{Hom}\nolimits}
\newcommand{\rad}{\operatorname{rad}\nolimits}
\newcommand{\Top}{\operatorname{top}\nolimits}
\newcommand{\im}{\operatorname{Im}\nolimits}
\newcommand{\End}{\operatorname{End}\nolimits}
\newcommand{\RHom}{\mathbf{R}\strut\kern-.2em\operatorname{Hom}\nolimits}
\newcommand{\RshHom}{\mathbf{R}\strut\kern-.2em\mathscr{H}\strut\kern-.3em\operatorname{om}\nolimits}
\newcommand{\shHom}{\mathscr{H}\strut\kern-.3em\operatorname{om}\nolimits}
\newcommand{\shEnd}{\mathscr{E}\strut\kern-.3em\operatorname{nd}\nolimits}
\DeclareMathOperator{\moduleCategory}{{\mathsf{mod}}} \renewcommand{\mod}{\moduleCategory}
\DeclareMathOperator{\proj}{\mathsf {proj}}
\DeclareMathOperator{\thick}{\mathsf{thick}}
\DeclareMathOperator{\per}{\mathsf{per}}
\DeclareMathOperator{\add}{\mathsf {add}}
\DeclareMathOperator{\CM}{\mathsf {CM}}
\DeclareMathOperator{\Proj}{\mathsf{Proj}}
\numberwithin{equation}{section}
\begin{document}

%\title{Reductions of triangulated categories with respect to Simple-minded Collections}

%\title{Simple-minded reductions of triangulated categories}
\title[SMC reduction of triangulated categories]{Reductions of triangulated categories \\ and simple-minded collections}
\author{Haibo Jin}
\address{Haibo Jin: Graduate School of Mathematics, Nagoya University, Furocho, Chikusaku, Nagoya 464-8602, Japan}
\email{d16002n@math.nagoya-u.ac.jp}

%%%%%%%%%%%%%%%%%%%%%%
%%%%%%%%%%%%%%%%%%%%%
\begin{abstract}
%%%%%%%%%%%%%%%%%%%%%
%%%%%%%%%%%%%%%%%%%%%
%Verdier quotient is a fundamental way to construct  a new triangulated category form the given triangulated category. 
Silting  and Calabi-Yau reductions are   important process in representation theory to construct new triangulated categories from given one, which are similar to Verdier quotient.
 %are similar process playing.
 In this paper, first
we  introduce a new reduction process of triangulated category, which is analogous to the silting (Calabi-Yau) reduction. 
For a triangulated category $\T$ with a pre-simple-minded collection (=pre-SMC) $\cal R$, we construct a new triangulated category $\cal U$ such that the SMCs in $\cal U$ bijectively correspond to those in $\T$ containing $\cal R$.
Secondly, we give an analogue of Buchweitz's theorem for  the singularity category $\T_{\sg}$ of a SMC quadruple $(\T,\T^{\p},\SSS, \cal S)$: the category $\T_{\sg}$ can be realized as the stable category of  an extriangulated subcategory $\cal F$ of $\T$.
Finally, we show the SMS (simple-minded system) reduction due to Coelho Sim\~oes and  Pauksztello is the shadow of  our SMC reduction. This is parallel to the result that Calabi-Yau reduction is the shadow of  silting reduction  due to Iyama and Yang.
\end{abstract}
%%%%%% adresss
\maketitle

\tableofcontents

%%%%%%%%%%%%%%%%%%%%%%
%%%%%%%%%%%%%%%%%%%%%
\section{Introduction}
%%%%%%%%%%%%%%%%%%%%%%
%%%%%%%%%%%%%%%%%%%%

Triangulated categories appear in many branches of mathematics, such as algebraic geometry, representation theory and algebraic topology.
In derived categories, there are two important classes of objects: projective objects and simple objects. Projective objects (or more generally, tilting objects) play a central role in tilting theory, which is  one of the standard tools for studying triangulated categories. Their variants,
silting objects and cluster tilting objects, have been
used to study positive Calabi-Yau (= CY) triangulated categories \cite{BMRRT, IY1, KR, KMV} and  the categorification of cluster category  \cite{FZ}. 
On the other hand, simple objects, or more generally, simple-minded collections (=SMCs) are also well-studied   in derived categories. They are   important  in Koszul duality
%%%%%%%%%%%%%
\cite{BGS, KN}, and bijectively correspond to silting objects \cite{R, KY}.
%\cite{A1, CKL}.
 Simple-minded systems (= SMSs) in stable module categories were introduced in \cite{KL} and studied  for negative CY triangulated categories in \cite{D, CS3}. 
Recently, there is increasing interest in negative CY triangulated categories (see, for example \cite{CS1, CS2, CS3, CSP}), including the stable categories of Cohen-Macaulay (= CM) dg modules \cite{J}.

There are two useful tools  to study the class of  silting (resp. cluster-tilting, SMC, SMS) objects in a triangulated category $\T$. One is mutation, which gives a new object in this class from a given one. Another is  reduction, which is a new triangulated category $\cal U$ realized as a  certain sub (or subfactor) category of  $\T$.  
There is a bijection between silting (resp. cluster-tilting, SMS, SMC) objects in $\cal U$ and those in $\T$ with some properties. The following  picture shows some works on these subjects, where the reduction of SMC was not studied before. 

%\begin{center}
%\begin{tabular}{|c|cc|cc|}
%\hline  & \multicolumn{2}{|c|}{projective-like  objects} & \multicolumn{2}{|c|}{simple-like  objects}   \\
%\hline     & \multicolumn{1}{|c|}{cluster-tilting} & {silting} &  \multicolumn{1}{|c|}{SMS} &{SMC} 
%\\ \hline  mutation  & \multicolumn{1}{|c|}{\cite{BMRRT, IY1}} &{\cite{AI}}  &   \multicolumn{1}{|c|}{\cite{D, CS3}} &{\cite{KY}}  
%\\ \hline reduction  & \multicolumn{1}{|c|}{\cite{IY1}} &   {\cite{AI, IY2}}& \multicolumn{1}{|c|} {\cite{CSP}} &  {This paper} 
%\\ 
%\hline &  \multicolumn{1}{|c|}{positive CY cat.} &{derived cat.} & \multicolumn{1}{|c|}{negative CY cat.} &  {derived cat.}  \\ \hline
%\end{tabular} \end{center}

{\small\[\begin{xy}
(30,32)*{\txt{Projective-like objects}},
(-5,20)*\txt{Derived\\ categories},
(30,20)*+[F:<5pt>]{\txt{Silting\\
mutation \cite{AI} \\
reduction \cite{AI,IY2}}}="1",
(-5,0)*{\txt{Calabi-Yau \\ triangulated\\ categories}},
(30,0)*+[F:<5pt>]{\txt{Cluster-tilting\\
mutation \cite{BMRRT,IY1}\\
reduction \cite{IY1}}}="2",
(75,32)*{\txt{Simple-like objects}},
(75,20)*+[F:<5pt>]{\txt{SMC\\
mutation \cite{KY}\\
reduction [This paper]}}="3",
(75,0)*+[F:<5pt>]{\txt{SMS\\
mutation \cite{D,CS3}\\
reduction \cite{CSP}}}="4",
%(70,20)*+[F:<5pt>]{\txt{locally\\ representation-finite\\ $k$-linear category}}="2",
\ar@{~>}"1";"2" 
\ar@{~>}"3";"4" 
\end{xy}\]}

Thus our first aim of this paper is to introduce the \emph{SMC reduction}.
 For   a pre-SMC  $\cal R$ (which is a SMC without generating condition) of a Krull-Schmidt triangulated category $\T$, the corresponding SMC reduction is  the Verdier quotient $\cal U=\T/\thick(\cal R)$.  
 One can realize $\cal U$ as the  
 additive subcategory $$\cal Z=\cal R[\ge\0]^{\perp}\cap {}^{\perp}\cal R[\le\0]$$
  of $\T$
   under certain assumptions (R1) and (R2) in Section \ref{Section:SMCreduction}.
  Namely,
  
\begin{Thm}[Theorem \ref{Thm:SMCbij}]\label{Thm:1.1}
Under the setting above, the following results hold.
 \begin{enumerate}[\rm (1)]
   \item  The composition $\cal Z \hookrightarrow \T \ra \cal U$ gives an equivalence $\cal Z \xra{\simeq} \cal U$;
      \item There is a bijection  
   \[ {\rm SMC}_{\rrr} \T:=\{ \text{SMCs in $\T$ contain $\cal R$}\}   \longleftrightarrow {\rm SMC} \,\cal U:= \{ \text{SMCs in $\cal U$}\}.\]
 \end{enumerate}
 \end{Thm}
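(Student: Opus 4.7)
The plan is to mirror the proof strategy of Iyama--Yang for silting reduction, adapted to the SMC setting. For (1), I aim to show that the composition $F\colon \cal Z \hookrightarrow \T \to \cal U$ is dense, full, and faithful, using two successive approximation triangles whose cones lie in $\thick(\cal R)$.

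For density, given $X \in \T$, I expect assumptions (R1)--(R2) to furnish a right $\Filt(\cal R[\leq 0])$-approximation triangle
$$R \to X \to Y \to R[1]$$
with $R \in \Filt(\cal R[\leq 0])$ and $Y \in {}^{\perp}\cal R[\leq 0]$, followed by a left $\Filt(\cal R[\geq 1])$-approximation triangle
$$Y \to Z \to R' \to Y[1]$$
with $R' \in \Filt(\cal R[\geq 1])$ and $Z \in \cal R[\geq 0]^{\perp}$. The crucial consistency check is that $Z$ remains in ${}^{\perp}\cal R[\leq 0]$; this reduces to $\Filt(\cal R[\geq 1]) \subseteq {}^{\perp}\cal R[\leq 0]$, which follows from the SMC vanishing $\Hom(\cal R, \cal R[k]) = 0$ for $k < 0$. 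Thus $Z \in \cal Z$ and, since $R, R' \in \thick(\cal R)$, $Z$ is isomorphic to $X$ in $\cal U$.

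For faithfulness, suppose $f\colon X \to Y$ with $X, Y \in \cal Z$ vanishes in $\cal U$, so that $f$ factors as $X \to R \to Y$ for some $R \in \thick(\cal R)$. Since $\cal R$ is an SMC of $\thick(\cal R)$, it induces a bounded t-structure there, giving a triangle $R_+ \to R \to R_- \to R_+[1]$ with $R_+ \in \Filt(\cal R[\geq 1])$ and $R_- \in \Filt(\cal R[\leq 0])$. Since $X \in {}^{\perp}\cal R[\leq 0]$ implies $\Hom(X, R_-) = 0$, the map $X \to R$ factors through $R_+$; since $Y \in \cal R[\geq 0]^{\perp}$ implies $\Hom(R_+, Y) = 0$, we conclude $f = 0$. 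Fullness follows by an analogous zigzag argument applied to roofs representing morphisms of $\cal U$, combined with the density construction.

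For (2), given an SMC $\sss$ of $\cal U$, I lift it via the inverse of $F$ to $\widetilde{\sss} \subset \cal Z$ and verify that $\cal R \cup \widetilde{\sss}$ is an SMC of $\T$: the Hom-orthogonality between $\cal R$ and $\widetilde{\sss}$ is automatic from $\widetilde{\sss} \subset \cal Z$; the Schurian and Ext-vanishing conditions within $\widetilde{\sss}$ transfer from $\cal U$ to $\T$ via full faithfulness of $F$; and generation of $\T$ by $\cal R \cup \widetilde{\sss}$ follows from generation of $\cal U$ by $\sss$ together with the description of $\thick(\cal R)$ as the kernel of $\T \to \cal U$. Conversely, any SMC of $\T$ containing $\cal R$ has its remaining members lying in $\cal Z$ by SMC orthogonality, and their image in $\cal U$ is an SMC. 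The main obstacle I foresee is formulating the approximations in (1) so that the second step preserves the orthogonality achieved in the first and the resulting $Z$ is well-defined up to canonical isomorphism; this hinges on the precise content of (R1) and (R2).
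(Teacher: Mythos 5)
Your overall architecture (truncate twice to land in $\cal Z$, verify the functor $\cal Z\to\cal U$ is dense, full and faithful, then transfer the SMC axioms) is viable and close in spirit to the paper, but three points are genuine gaps rather than details. First, the existence of the decomposition triangles is assumed, not proved: (R1) only gives approximations by the heart $\hhh_{\rrr}=\Filt(\rrr)$ \emph{inside} $\cal R[>\0]^{\perp}$ and ${}^{\perp}\cal R[<\0]$, and upgrading this to torsion pairs $\T=\X_{\rrr}\perp\X_{\rrr}^{\perp}={}^{\perp}\Y_{\rrr}\perp\Y_{\rrr}$ on all of $\T$ is the main technical content (Proposition \ref{Prop:R1}), proved by induction on the vanishing bound supplied by (R2) using iterated minimal right $\hhh_{\rrr}[l]$-approximations; this is exactly the point you flag as ``hinging on (R1)--(R2)'' but do not resolve. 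Second, your first triangle is wrong-sided: by Lemma \ref{Lem:notation}(3) the cone of a minimal right $\Filt(\cal R[\le\0])$-approximation lies in $\Filt(\cal R[\le\0])^{\perp}=\cal R[\le\0]^{\perp}$, not in ${}^{\perp}\cal R[\le\0]$ as you claim; to get $Y\in{}^{\perp}\cal R[\le\0]$ you must use the truncation triangle $Y\to X\to R\to Y[1]$ of the $t$-structure ${}^{\perp}\Y_{\rrr}\perp\Y_{\rrr}$ (equivalently a left approximation), i.e.\ the two terms occur in the opposite order. With that fix your consistency check ($Z\in Y\ast\Filt(\cal R[\ge\hs1])\subset{}^{\perp}\cal R[\le\0]$) is correct and density follows; your faithfulness argument is also correct, but fullness is only asserted. (The paper avoids all of dense/full/faithful by establishing the torsion pairs and then quoting Proposition \ref{Thm:IY}, noting $\X_{\rrr}[1]\cap\Y_{\rrr}=0$, so no ideal quotient appears; see Proposition \ref{Prop:decomp.}.)

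Third, in part (2) the Ext-vanishing does \emph{not} ``transfer via full faithfulness of $F$''. The suspension of $\cal U$ restricted to $\cal Z$ is not $[1]$ of $\T$ but the modified shift $\lan1\ran$ defined by $\hhh_{\rrr}$-approximation triangles, and for $X\in\cal Z$, $n>0$, the object $X[n]$ generally leaves $\cal Z$; so $\Hom_{\cal U}(X[n],Y)=0$ and $\Hom_{\T}(X[n],Y)=0$ are not identified by the equivalence $\cal Z\simeq\cal U$ alone. One needs the comparison $X\lan n\ran\in X[n]\ast\hhh_{\rrr}[n]\ast\cdots\ast\hhh_{\rrr}[1]$ (equivalently $X[n]\in\hhh_{\rrr}[n-1]\ast\cdots\ast\hhh_{\rrr}\ast X\lan n\ran$) from Lemma \ref{Lem:Omega}, combined with $\Hom_{\T}(\hhh_{\rrr}[\ge\0],Y)=0$ for $Y\in\cal Z$, to pass between the two vanishing conditions, and this is needed in \emph{both} directions of the bijection; this computation is the heart of the paper's proof of Theorem \ref{Thm:SMCbij}(2) and is missing from your argument. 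By contrast, your verification of the $\cal R$-versus-$\widetilde{\sss}$ orthogonality from $\widetilde{\sss}\subset\cal Z$ and your treatment of the generation condition are essentially correct.
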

  Since $\zzz$ is not closed under $[\pm 1]$, it dose not have  a triangulated structure a priori. Nevertheless, the theorem above shows that it has a canonical triangulated structure induced by $\cal U$.
Also notice that,  Theorem \ref{Thm:1.1} can be regarded as a dual of   silting reduction \cite{IY2}, where it was necessary to take an ideal quotient of $\cal Z$. In \cite{J},  Theorem \ref{Thm:1.1} was used to construct SMCs and it  played an important role in the proof of \cite[Theorem 7.1]{J}. 
%In a forthcoming work, we will apply Theorem \ref{Thm:1.1}  to construct SMCs.

The second aim of this paper is to generalize the singularity category of a finite dimensional  Gorenstein  $k$-algebra  $A$ over a field $k$.
In this case, the singularity category is defined as  the Verdier quotient $\D_{\sg}(A)=\D^{\bb}(\mod A)/\K^{\bb}(\proj A)$  by \cite{B, O}.
  Buchweitz's equivalence  states that $\D_{\sg}(A)$ is triangle equivalence to the stable category  $\un{\CM}A$ of Cohen-Macaulay $A$-modules.  A key observation in our context is that 
  $\D^{\bb}(\mod A)$ has a SMC  consisting of simple $A$-modules,  
   and  there is a  relative Serre functor $\nu=?\ot_{A}^{\bf L}DA$.
  
  To generalize the notion of singularity categories  and   Buchweitz's equivalence, we work on 
 a \emph{SMC quadruple}  $(\T, \T^{\rm p}, \SSS,\cal S)$, where $\T^{\p}$ is a thick subcategory of a triangulated category $\T$, $\SSS$ is a relative Serre functor, $\sss$ is a SMC of $\T$ and they satisfy some conditions (see Definition \ref{Def:relativeSerre}). 
We define the \emph{singularity category} as the Verdier quotient  $$\T_{\sg}:=\T/\T^{\p}.$$ 
In this setting, we have a co-$t$-structure $\T={}\T_{>0}\perp \T_{\le 0}$, where $\T_{>0}={}^{\perp}\sss[\ge\0]$ and $\T_{\le 0}={}^{\perp}\sss[<\0]$.
 Using them we define subcategories
\[
\cal F = \T_{>0}^{\perp}\cap {}^{\perp}(\T_{\le -1}\cap \T^{\p}),  \
\cal P= \T_{\ge 0}\cap \T_{\le 0}, \]
where in the algebra case above, $\cal F=\CM A$ and $\cal P=\proj A$.
Our second result realizes  $\T_{\sg}$ as a subfactor category of $\T$.

 \begin{Thm}[Theorem \ref{Thm:singularity} (1), (2)]\label{Thm:i2}
 Let $(\T, \T^{\rm p}, \SSS,\cal S)$ be a SMC quadruple and let $\T_{\sg}$, $\cal F$, $\cal P$  be defined as above.  Then 
\begin{enumerate}[\rm (1)]
\item $\cal F$ is a Frobenius extriangulated category with $\Proj \cal F=\cal P$ \rm{(}in the sense of \cite{NP}\rm{)};
\item
The composition
\[ \cal F \subset \T \ra \T_{\sg}  \]
induces an equivalence $\pi: \frac{\cal F}{[\cal P]} \xra{\simeq} \cal \T_{\sg}$. Moreover,  $\T_{\sg}$ has a Serre functor $\SSS[-1]$.
\end{enumerate}
 \end{Thm}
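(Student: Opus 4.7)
The plan is to mimic Buchweitz's original argument for Gorenstein algebras, transported to the abstract SMC-quadruple setting, using the co-$t$-structure $\T=\T_{>0}\perp\T_{\le 0}$ as the main organizational tool and the relative Serre functor $\SSS$ to produce the duality between projectives and injectives.

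First I would verify the extriangulated structure on $\cal F$. Since $\cal F=\T_{>0}^{\perp}\cap {}^{\perp}(\T_{\le -1}\cap \T^{\p})$ is an intersection of a right and a left $\Hom$-orthogonal, it is extension-closed in $\T$, so it inherits a canonical extriangulated structure in the sense of Nakaoka--Palu. Next I would identify the projective-injective objects. The inclusion $\cal P\subseteq \cal F$ follows because $\cal P=\T_{\ge 0}\cap\T_{\le 0}\subseteq \T^{\p}$, so $\cal P$ lies in $\T_{\le -1}^{\perp}$ via the co-$t$-structure and in ${}^{\perp}\T_{>0}$ dually. To see that every object of $\cal P$ is projective in $\cal F$, I would use the defining orthogonality $\cal F\subseteq {}^{\perp}(\T_{\le -1}\cap \T^{\p})$ together with the fact that for $P\in\cal P$, the shift $P[-1]$ lies in $\T_{\le -1}\cap\T^{\p}$, so $\Ext^1_{\cal F}(\cal F,P)=\Hom_{\T}(\cal F,P[1])=0$. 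Dually for injectivity using $\T_{>0}^{\perp}$. For enough projectives and injectives, one truncates $X\in\cal F$ with respect to the co-$t$-structure; the relative Serre functor $\SSS$ interchanges the two sides of the orthogonality and shows $\Proj\cal F=\Inj\cal F=\cal P$, giving the Frobenius property of~(1).

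For part~(2), the functor $\pi:\cal F\to \T_{\sg}$ kills $\cal P$ since $\cal P\subseteq\T^{\p}$, so it factors through $\cal F/[\cal P]$. For essential surjectivity, given $X\in \T$ I would use the co-$t$-structure to produce a triangle $X'\to X\to N$ with $X'\in\T_{>0}^{\perp}$ and $N\in\T^{\p}$ (hence an isomorphism in $\T_{\sg}$), and then a dual truncation to land inside $\cal F$; the point is that at each step the error term lies in $\T^{\p}$, hence vanishes in the quotient. For full faithfulness I would use a Verdier calculus-of-fractions argument: any roof $X\xleftarrow{s}Z\to Y$ with $\con(s)\in\T^{\p}$ and $X,Y\in\cal F$ can be rectified, via a further truncation with respect to $(\T_{>0},\T_{\le 0})$ and using that $\cal F$ is extension-closed, to a roof $X\xleftarrow{s'}Z'\to Y$ with $Z'\in\cal F$ and $\con(s')\in \cal P$; this exhibits $\pi$ as a localization at the ideal $[\cal P]$. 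Finally, the Serre functor on $\T_{\sg}$ comes from descending $\SSS$ along $\pi$: the relative Serre property $\SSS:\T^{\p}\to\T$ together with the extra shift that appears when passing from the exact sequences in the Frobenius category $\cal F$ to triangles in its stable category forces the descended functor to be $\SSS[-1]$, exactly as the Auslander--Reiten translate $\tau=\SSS[-1]$ of the classical setup.

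The main obstacle is full faithfulness in~(2). The subtle point is that $\cal F$ is \emph{not} closed under the shifts of $\T$, so one cannot directly compare $\Hom_{\T_{\sg}}(X,Y)$ with $\Hom_{\cal F/[\cal P]}(X,Y)$ using standard long-exact-sequence arguments in $\T$. One must instead show: (i) every morphism $X\to Y$ in $\T_{\sg}$ admits a representative already in $\Hom_{\T}(X,Y)$ modulo factorizations through $\cal P$, and (ii) if $f:X\to Y$ in $\cal F$ becomes zero in $\T_{\sg}$, then it factors through an object of $\cal P$. Both reductions rely crucially on the simultaneous use of the SMC $\sss$ (which via the co-$t$-structure refines the approximation process on $\T^{\p}$) and the relative Serre duality provided by $\SSS$ to convert morphisms vanishing against $\T^{\p}$-objects into factorizations through the coheart $\cal P$. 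This is the analogue, in the simple-minded setting, of the technical heart of Iyama--Yang's silting reduction.
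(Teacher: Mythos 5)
Your plan is to reprove, by hand, exactly the statement that the paper obtains by citing Iyama--Yang (Proposition \ref{Thm:IY} and Remark \ref{Rem:extri}), and the genuine gap is that you never supply the one ingredient that citation (or any hand-made version of it) requires: the \emph{second} torsion pair on $\T$. One half of the needed approximation theory comes for free from the given co-$t$-structure $\T=\T_{>0}\perp\T_{\le 0}$, but ``enough injectives'' in $\cal F$, the rectification of roofs in your fullness step, and the essential surjectivity argument all need left approximations by $\Y:=\T_{\le 0}\cap\T^{\p}$, i.e.\ a torsion pair $\T={}^{\perp}\Y\perp\Y$ restricting to $\T^{\p}=\T_{>0}\perp\Y$. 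This is precisely where (RS1) and the \emph{second} co-$t$-structure in (RS2) enter: the paper shows $\Y=\SSS^{-1}\bigl(\sss[<0]^{\perp}\bigr)$ (using $\sss[<0]^{\perp}\subset\T^{\p}$ together with relative Serre duality) and then transports the co-$t$-structure $\T=\sss[\ge 0]^{\perp}\perp\sss[<0]^{\perp}$ along $\SSS^{-1}$. Your sentence ``the relative Serre functor interchanges the two sides of the orthogonality'' gestures at this but contains no argument, and the second co-$t$-structure of (RS2) is never invoked in your proposal; without it neither the Frobenius property in (1) nor the fullness/density of $\pi$ in (2) is justified (your proposed rectification of a roof to one whose denominator has cone in $\add\cal P$ is exactly an approximation statement of this kind). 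There are also smaller slips that should be fixed: $\Ext^{1}_{\cal F}(\cal F,P)=\Hom_{\T}(\cal F,P[1])=0$ is \emph{injectivity} of $P$, and it uses $P[1]\in\T_{\le -1}\cap\T^{\p}$ (note $\T_{\le 0}[1]=\T_{\le -1}$, whereas $P[-1]\notin\T_{\le -1}$ in general since $\cal P={}^{\perp}\sss[\neq 0]$); similarly $\cal P\subseteq\cal F$ needs $\cal P\subseteq\T_{>0}^{\perp}\cap{}^{\perp}\T_{\le -1}$, not the orthogonals you wrote.

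The second gap is the Serre functor. Saying that $\SSS$ ``descends along $\pi$ with an extra shift coming from the stable category'' is an assertion, not a proof: $\SSS$ is only a relative Serre functor (a duality between $\T^{\p}$ and $\T$), and one must actually establish the bifunctorial duality $D\Hom_{\T_{\sg}}(X,Y)\simeq\Hom_{\T_{\sg}}(Y,\SSS X[-1])$ for arbitrary objects of the Verdier quotient (including its Hom-finiteness). The paper does this by Amiot's local-cover criterion: for $Y\in\T^{>i}$ the truncation triangle $X_{>i}\to X\to X_{\le i}$ has $X_{>i}\in\T_{>i}\subset\T^{\p}$ and $\Hom_{\T}(X_{\le i},Y)=0$ (using $\T_{\le i}=\T^{\le i}$), so $X_{>i}\to X$ is a local $\T^{\p}$-cover relative to $Y$, and \cite{Am} then yields the Serre functor $\SSS[-1]$. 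Some argument of this nature (or a direct computation with roofs) is indispensable; the heuristic about the shift does not replace it.
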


Theorem \ref{Thm:singularity} can be regard as a dual of the equivalence between the fundamental domain and the cluster category \cite{Am, G, IY2}, where it was not necessary to  take ideal  quotient.

An important case of Serre quadruple is 
non-positive  \emph{CY triple}, which is a Serre quadruple
$(\T, \T^{\p}, \SSS, \cal S)$  with $\SSS=[-d]$ for $d\ge 0$. In the rest  part of introduction,  we will  focus on $(-d)$-CY triple.  In this case, there is a nice description of $\cal F$ as follows.

\begin{Prop}[Theorem \ref{Thm:singularity} (3)]\label{Prop:3}
Let $(\T, \T^{\p}, \sss)$ be a $(-d)$-CY triple. Then  $\cal F=\cal H[d]\ast\cal H[d-1]\ast\cdots\ast\cal H$ and $\sss$ is a $d$-SMS in $\T_{\sg}$, where $\cal H=\Filt\cal S$ is  the extension-closed subcategory generated by $\cal S$.
\end{Prop}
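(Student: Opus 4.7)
The plan is to identify $\cal F$ with the category of objects whose $\cal H$-cohomology is supported in degrees $[-d,0]$ for the bounded $t$-structure on $\T$ induced by the SMC $\cal S$, and then to transport this filtration to $\T_{\sg}$ via the equivalence $\cal F/[\cal P]\simeq\T_{\sg}$ of Theorem~\ref{Thm:singularity}(2).

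The starting observation is that since $\cal S$ generates $\T$, the co-aisle $\T_{\le 0}={}^{\perp}\cal S[<\0]$ coincides with the aisle $\T^{\le 0}$ of the canonical $t$-structure with heart $\cal H=\Filt(\cal S)$, and similarly $\T_{\le -1}=\T^{\le -1}$. Granting this, the inclusion $\cal H[d]\ast\cdots\ast\cal H\subset\cal F$ reduces, by the extension-closure of $\cal F$ from Theorem~\ref{Thm:singularity}(1), to checking $\cal S[i]\in\cal F$ for $0\le i\le d$. The condition $\cal S[i]\in\T_{\le 0}$ is immediate from the SMC axiom. For $\cal S[i]\in{}^{\perp}(\T_{\le -1}\cap\T^{\p})$, I apply the relative Serre duality for $\SSS=[-d]$:
\[
\Hom_{\T}(\cal S[i],Y)\cong D\Hom_{\T}(Y,\cal S[i-d]),\qquad Y\in\T^{\p}.
\]
For $Y\in\T^{\le -1}$ and $0\le i\le d$ the target $\cal S[i-d]$ lies in $\T^{\ge d-i}\subset\T^{\ge 0}$, so the right-hand side vanishes by the $t$-structure orthogonality $\Hom(\T^{\le -1},\T^{\ge 0})=0$.

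For the reverse inclusion, take $X\in\cal F$. Membership $X\in\T_{\le 0}=\T^{\le 0}$ already forces $H^i_{\cal H}X=0$ for $i>0$. For the other side, the same Serre duality converts the given orthogonality $\Hom_{\T}(X,Y)=0$ for $Y\in\T^{\le -1}\cap\T^{\p}$ into $\Hom_{\T}(Y,X[-d])=0$ for such $Y$, and I want to upgrade this to $X[-d]\in(\T^{\le -1})^{\perp}=\T^{\ge 0}$, i.e., $X\in\T^{\ge -d}$. This is where the crux lies: one must know that $\T^{\le -1}\cap\T^{\p}$ is large enough to detect $\T^{\ge 0}$, i.e., that every object of $\T^{\le -1}$ admits a suitable ``projective approximation'' from $\T^{\le -1}\cap\T^{\p}$; this is a structural input that has to be extracted from the defining properties of a SMC quadruple (presumably through the co-$t$-structure supplied by the SMC together with $\T^{\p}$ being a thick subcategory compatible with $\SSS$). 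Once $X\in\T^{\ge -d}\cap\T^{\le 0}$ is in hand, the bounded $t$-structure truncations of $X$ yield the desired filtration $X\in\cal H[d]\ast\cdots\ast\cal H$.

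For the SMS claim, the quotient functor $\pi:\cal F\to\cal F/[\cal P]\simeq\T_{\sg}$ is essentially surjective and commutes with shifts, so the decomposition $\cal F=\cal H[d]\ast\cdots\ast\cal H$ descends to an analogous filtration of $\T_{\sg}$ by shifts of $\pi(\cal H)=\Filt_{\T_{\sg}}(\cal S)$, supplying the generating condition. The required vanishings $\Hom_{\T_{\sg}}(\cal S,\cal S[n])=0$ in the intermediate range follow by combining the SMC vanishings in $\T$ with the new Serre duality $\SSS_{\T_{\sg}}=[-d-1]$ of Theorem~\ref{Thm:singularity}(2), after verifying that any map $\cal S\to\cal S[n]$ factoring through $\cal P\subset\cal H$ is forced to be zero in the relevant range by the $t$-structure orthogonality $\Hom(\cal H,\cal H[n])=0$ for $n<0$. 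The main difficulty throughout is the projective-approximation input flagged in the previous paragraph; everything else is a careful bookkeeping of the two compatible $t$- and co-$t$-structures against Serre duality.
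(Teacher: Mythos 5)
Your overall strategy (identify $\cal F$ with $\T^{\le 0}\cap\T^{\ge -d}$ and push the filtration through $\cal F/[\cal P]\simeq\T_{\sg}$) is the right one, and your proof of the inclusion $\cal H[d]\ast\cdots\ast\cal H\subset\cal F$ via relative Serre duality is correct. But the reverse inclusion is exactly where you stop: you reduce it to the claim that $\Hom_{\T}(X,\T^{\le -1}\cap\T^{\p})=0$ forces $X\in\T^{\ge -d}$, i.e.\ that $\T^{\le -1}\cap\T^{\p}$ ``detects'' $\T^{\ge 0}$ after applying $\SSS$, and you explicitly leave this as a ``structural input to be extracted''. That input is the whole content of the computation, and it cannot be obtained by quoting the statements of Theorem \ref{Thm:singularity} (1),(2) as black boxes -- it is established inside their proof. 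Concretely, the paper fills it as follows: with $\Y:=\T_{\le 0}\cap\T^{\p}$ (so that $\T_{\le -1}\cap\T^{\p}=\Y[1]$), relative Serre duality (RS1) together with the inclusion $\sss[<\0]^{\perp}\subset\T^{\p}$ from (RS2) gives the identification $\Y=\SSS^{-1}\bigl(\sss[<\0]^{\perp}\bigr)$; applying the auto-equivalence $\SSS^{-1}=[d]$ to the \emph{second} co-$t$-structure $\T=\sss[\ge\0]^{\perp}\perp\sss[<\0]^{\perp}$ of (RS2) then produces a torsion pair $\T={}^{\perp}\Y\perp\Y$ with ${}^{\perp}\Y=\SSS^{-1}\bigl(\sss[\ge\0]^{\perp}\bigr)=\T^{>-d}$, whence $\cal F=\T_{>0}^{\perp}\cap{}^{\perp}(\Y[1])=\T^{\le 0}\cap\T^{\ge -d}=\cal H[d]\ast\cdots\ast\cal H$ by Proposition \ref{Prop:SMCtotstr}. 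Your heuristic ``projective approximation'' points at the first co-$t$-structure ${}^{\perp}\sss[\ge\0]\perp{}^{\perp}\sss[<\0]$, which alone is not enough (it bounds objects from the wrong side); it is the second co-$t$-structure transported by $\SSS$ that does the detection. As it stands this is a genuine gap.

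On the SMS part, your sketch is both more complicated than necessary and contains a slip: $\cal P\subset\cal H$ is false outside the ordinary-algebra case (for a dg algebra $\cal P=\add A$ is the co-heart, not contained in the heart), and no Serre duality in $\T_{\sg}$ nor any ``factoring through $\cal P$'' analysis is needed. Since $X[i]$ and $Y$ lie in $\cal F$ for $X,Y\in\sss$ and $0\le i\le d$, the equivalence $\cal F/[\cal P]\simeq\T_{\sg}$ exhibits $\Hom_{\T_{\sg}}(\pi(X)[i],\pi(Y))$ as a quotient of $\Hom_{\T}(X[i],Y)$, so the vanishing for $1\le i\le d$ and the condition $\dim\Hom_{\T_{\sg}}(\pi(X),\pi(Y))=\delta$ (which your sketch does not address) follow at once from the SMC axioms; the generating condition then descends from $\cal F=\cal H[d]\ast\cdots\ast\cal H$ as you indicate. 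So the second half is repairable by this simpler argument, but the first half needs the torsion-pair computation described above.
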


 A typical example of Theorem \ref{Thm:i2} and Proposition \ref{Prop:3} was considered in \cite{J}, where proper Gorenstein dg $k$-algebras and their Cohen-Macaulay modules were studied.

 The third aim of this paper is to connect our SMC reductions  and the SMS reductions defined by Coelho Sim\~oes and  Pauksztello \cite{CSP}.    We first show that the  SMC reduction of a CY triple gives rise to a new CY triple.
 
 \begin{Thm}[Theorem \ref{Thm:SMCred}] \label{Thm:i3}
 Let $(\T, \T^{\p}, \cal S)$ be a $(-d)$-CY triple. Let $\cal R$ be a subset of $\cal S$ such that the extension-closed subcategory $\hhh_{\rrr}$ generated by $\rrr$ is functorially finite in $\T$ . Let $\cal U$ be the SMC reduction of $\cal T$ with respect to $\cal R$. 
 %%%%% define U^{\p}
 Then the triple $(\cal U, \cal U^{\p}, \cal S)$  is also a  $(-d)$-CY triple, where one can regard $\cal U^{\p}:= \T^{\p}\cap (\thick\cal R)^{\perp}$ as  a subcategory of $\cal U$.
 \end{Thm}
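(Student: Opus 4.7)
The plan is to transfer each ingredient of the $(-d)$-CY triple structure from $\T$ to $\cal U$ through the SMC reduction. First, by Theorem~\ref{Thm:SMCbij}, $\cal U=\T/\thick\rrr$ is a Krull--Schmidt triangulated category and the SMC $\sss$ of $\T$, which contains $\rrr$ by hypothesis, corresponds under the bijection $\mathrm{SMC}_{\rrr}\T\leftrightarrow\mathrm{SMC}\,\cal U$ to an SMC of $\cal U$, realised as the image of $\sss\setminus\rrr$ under the equivalence $\zzz\simeq\cal U$. This supplies the SMC component of the new triple.

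The main step is to realise $\cal U^{\p}=\T^{\p}\cap(\thick\rrr)^{\perp}$ as a thick subcategory of $\cal U$. The key observation is that in the $(-d)$-CY setting the one-sided perpendicularity built into the definition of $\cal U^{\p}$ upgrades automatically to a two-sided one: for $X\in\T^{\p}$ and $N\in\thick\rrr$, CY duality gives
\[\Hom_\T(X,N)\cong D\Hom_\T(N,X[-d]),\]
and since $(\thick\rrr)^{\perp}$ is stable under shifts, $X\in(\thick\rrr)^{\perp}$ forces $X[-d]\in(\thick\rrr)^{\perp}$, making the right-hand side vanish. Hence $\cal U^{\p}\subseteq{}^{\perp}(\thick\rrr)\cap(\thick\rrr)^{\perp}$, so by the standard Verdier-quotient calculus the functor $\T\to\cal U$ is fully faithful on $\cal U^{\p}$. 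Closure of $\cal U^{\p}$ under shifts, summands and extensions in $\T$ then transfers to $\cal U$, making $\cal U^{\p}$ thick in $\cal U$.

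For the relative Serre functor, the shift $[-d]$ on $\T$ descends to the shift on $\cal U$ and preserves $\cal U^{\p}$ because both $\T^{\p}$ and $(\thick\rrr)^{\perp}$ are shift-stable. Serre duality on $\cal U$ then follows by lifting: for $X\in\cal U^{\p}$ and $Y\in\cal U$ with any lift $\widetilde Y\in\T$,
\[\Hom_{\cal U}(X,Y)=\Hom_\T(X,\widetilde Y)\cong D\Hom_\T(\widetilde Y,X[-d])=D\Hom_{\cal U}(Y,X[-d]),\]
where the outer equalities use the fully faithfulness just established (note $X[-d]\in\cal U^{\p}$).

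The hardest step is the upgrade from one-sided to two-sided perpendicularity in the second paragraph; this is precisely where the $(-d)$-CY hypothesis is used in an essential way. Once this is granted, the remaining SMC-quadruple axioms (in particular any generation condition relating $\sss$ and $\T^{\p}$) descend to $(\cal U,\cal U^{\p},\sss)$ via Theorem~\ref{Thm:SMCbij} and the fully faithfulness of $\cal U^{\p}\hookrightarrow\cal U$ established above.
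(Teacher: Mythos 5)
Your first three paragraphs are essentially fine and match the paper's preliminary steps: the identification $\T^{\p}\cap(\thick\rrr)^{\perp}=\T^{\p}\cap{}^{\perp}\thick\rrr$ via the relative Serre duality is exactly how the paper justifies viewing $\cal U^{\p}$ inside $\cal U$, and (RS0), (RS1) for the new triple do descend as you say (modulo the small point that the Hom-isomorphisms you use come from the two-sided orthogonality of $\cal U^{\p}$ to $\thick\rrr$ in the Verdier quotient, not merely from full faithfulness of $\cal U^{\p}\hookrightarrow\cal U$).

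The genuine gap is your last paragraph. The condition (RS2) --- that $\cal U={}^{\perp}\sss'[\ge\0]\perp{}^{\perp}\sss'[<\0]=\sss'[\ge\0]^{\perp}\perp\sss'[<\0]^{\perp}$ are co-$t$-structures with ${}^{\perp}\sss'[\ge\0]$ and $\sss'[<\0]^{\perp}$ contained in $\cal U^{\p}$ --- does not ``descend via Theorem \ref{Thm:SMCbij}'': that theorem only gives the additive equivalence $\zzz\simeq\cal U$ and the bijection of SMCs, and says nothing about co-$t$-structures. This axiom is where essentially all of the work in the paper's proof lies. Concretely, under the identification $\cal U\simeq\zzz$ the suspension of $\cal U$ is the functor $\lan1\ran$ built from $\hhh_{\rrr}$-approximations, not the restriction of $[1]$, so orthogonality to $\sss'\lan n\ran$ in $\zzz$ must be compared with orthogonality to $\sss[n]$ in $\T$ via Lemma \ref{Lem:Omega} ($S\lan n\ran\in S[n]\ast\hhh_{\rrr}[n]\ast\cdots\ast\hhh_{\rrr}[1]$). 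More seriously, one must show that the co-$t$-structure decomposition $X_{\le 0}[-1]\to X_{>0}\to X\to X_{\le 0}$ of an object $X\in\zzz$ stays inside $\zzz$, i.e.\ $X_{>0}\in\T^{\p}\cap\zzz$ and $X_{\le 0}\in\zzz$ (Lemma \ref{Lem:decomp}); the delicate point there is $\Hom_{\T}(X_{>0},\rrr[-1])=0$, which is not formal and is proved using Proposition \ref{Prop:inducedzero} (a radical connecting map induces the zero map on Hom into the relevant shift of $\sss$, resting on the analysis of the co-heart $\cal P$ in Propositions \ref{Prop:silting} and \ref{Prop:modP}). Finally, the containment ${}^{\perp}\sss'\lan\ge\0\ran\subset\cal U^{\p}$ also needs an argument (a direct-summand argument from the same triangle), and it is exactly this containment, not any ``generation condition,'' that (RS2) demands. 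Without these steps your proposal establishes only (RS0) and (RS1) for $(\cal U,\cal U^{\p},\sss)$, so the proof of the theorem is incomplete.
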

 
 For a $(-d)$-CY triple $(\T, \T^{\p}, \cal S)$, we know $\T_{\sg}$ is a $(-d-1)$-CY triangulated category
 by Theorem \ref{Thm:i2} (2),  and we can consider the SMS reduction $(\T_{\sg})_{\cal R}$ in $\T_{\sg}$ with respect to $\cal R$ introduced in  \cite{CSP}. Our main theorem of this paper  shows that SMS reduction is the shadow of  SMC reduction in the following sense.
 
 \begin{Thm}[Theorem \ref{Thm:mainresult}]
 Keep the assumption in Theorem $\ref{Thm:i3}$.
Then there is a triangle equivalence from the singularity category  $\cal U_{\sg}$ to the SMS reduction  $(\cal T_{\sg})_{\cal R}$ of the singularity category $\T_{\sg}$ with respect to $\cal R$.
 \end{Thm}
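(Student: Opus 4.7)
The plan is to realize both $\uuu_{\sg}$ and $(\T_{\sg})_{\rrr}$ as concrete subfactors of $\T$ and then identify them. Since $\rrr\subseteq\sss$ and $\sss$ generates $\T^{\p}$, we have $\thick\rrr\subseteq \T^{\p}$, so the two ``directions'' of reduction $\T\to\uuu$ and $\T\to \T_{\sg}$ should commute, which is the conceptual reason for the equivalence.

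For the left-hand side, apply Theorem \ref{Thm:i3} to get the new $(-d)$-CY triple $(\uuu,\uuu^{\p},\sss\setminus\rrr)$, and then apply Theorem \ref{Thm:i2} to it to obtain $\uuu_{\sg} \simeq \cal F_{\uuu}/[\cal P_{\uuu}]$, where $\cal F_{\uuu},\cal P_{\uuu}\subseteq \uuu$ are defined from the co-$t$-structure associated with $\sss\setminus\rrr$. Using the equivalence $\zzz\xra{\simeq}\uuu$ of Theorem \ref{Thm:1.1}(1), I would pull these back to subcategories $\cal F',\cal P'$ of $\zzz\subseteq \T$, so that $\uuu_{\sg}\simeq \cal F'/[\cal P']$, with both sides now sitting concretely inside $\T$.

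For the right-hand side, realize $\T_{\sg}\simeq \cal F/[\cal P]$ by Theorem \ref{Thm:i2}, and recall from Proposition \ref{Prop:3} that $\sss$ is a $d$-SMS in $\T_{\sg}$, so the SMS reduction of \cite{CSP} is defined. The CSP construction gives an explicit realization $(\T_{\sg})_{\rrr}\simeq \w{\zzz}/[\rrr]$ for a suitable perpendicular subcategory $\w{\zzz}\subseteq \T_{\sg}$; lifting $\w{\zzz}$ to an additive subcategory $\w{\cal F}\subseteq \cal F\subseteq \T$ gives $(\T_{\sg})_{\rrr}\simeq \w{\cal F}/[\cal P\cup \rrr]$.

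The crux is then to verify two identifications inside $\T$: (i) $\cal F'=\w{\cal F}$ as additive subcategories, and (ii) $\cal P'$ and $\cal P\cup\rrr$ generate the same ideal inside this common subcategory. Both $\cal F'$ and $\w{\cal F}$ are cut out by $\Hom$-vanishing conditions against shifts of $\rrr$ and shifts of $\sss\setminus\rrr$, although phrased through the different quotients $\uuu$ and $\T_{\sg}$; expanding them via $\zzz=\rrr[\ge\0]^{\perp}\cap{}^{\perp}\rrr[\le\0]$ together with $\thick\rrr\subseteq\T^{\p}$ should reduce them to the same list of conditions in $\T$. For (ii), one checks that the projective objects of the co-$t$-structure on $\uuu$ correspond, via $\zzz\subseteq\T\to\T_{\sg}$, to finite extensions of shifts of $\rrr$, which is precisely what generates $[\rrr]$ modulo $[\cal P]$. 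I expect (i) to be the main obstacle: tracking the layered perpendicularity conditions (relative to $\thick\rrr$, $\T^{\p}$, and the two halves $\rrr$ and $\sss\setminus\rrr$ of the SMC) is delicate, and both inclusions need to be checked using the CY Serre duality $\SSS=[-d]$ to swap left and right perpendiculars. Once (i) and (ii) are in place, the composition $\cal F'\hookrightarrow \w{\cal F}\twoheadrightarrow (\T_{\sg})_{\rrr}$ is an equivalence of additive categories, and the triangulated structure matches automatically since on both sides it is the unique one induced by the ambient $\T$ through the respective subfactor realizations.
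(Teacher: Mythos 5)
Your overall idea---work inside $\T$, realize $\uuu_{\sg}$ via $\zzz$ and Theorem \ref{Thm:singularity}, and compare with the SMS reduction---is in the same spirit as the paper, but as written the proposal has genuine gaps. First, you misrepresent the target: the SMS reduction of \cite{CSP} is \emph{not} an ideal quotient $\w{\zzz}/[\rrr]$ (nor $\w{\cal F}/[\cal P\cup\rrr]$); it is the full subcategory $(\T_{\sg})_{\rrr}=\{X\mid \Hom_{\T_{\sg}}(\rrr[i],X)=0=\Hom_{\T_{\sg}}(X,\rrr[-i]),\ 0\le i\le d\}$ of $\T_{\sg}$, equipped with a \emph{new} triangulated structure whose shift and triangles are built from $\hhh_{\rrr}$-approximation cones inside $\T_{\sg}$. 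Consequently your final step---``the triangulated structure matches automatically since on both sides it is the unique one induced by the ambient $\T$''---has no basis: neither triangulation is inherited from $\T$, and there is no uniqueness principle to invoke. The paper has to prove exactly this compatibility (Proposition \ref{Prop:inducedwell}): it checks that a minimal right $\hhh_{\rrr}$-approximation $R_X\to X[1]$ taken in $\T$ remains an $\hhh_{\rrr}$-approximation in $\T_{\sg}$, so that $\lan 1\ran$ on $\zzz$ is sent to the CSP shift and the standard triangles of $\zzz$ (Proposition \ref{Prop:triangles}) to CSP triangles; that step cannot be skipped.

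Second, your identifications (i) and (ii) hide the actual technical content and (i) is not literally true. The subcategory $\cal F_{\zzz}=\cal H'\lan d\ran\ast\cdots\ast\cal H'$ is built with the reduced shift $\lan\cdot\ran$, while lifts of objects of $(\T_{\sg})_{\rrr}$ to $\cal F$ need not lie in $\zzz$; at best every object of $(\T_{\sg})_{\rrr}$ is \emph{isomorphic in $\T_{\sg}$} to the image of an object of $\cal F_{\zzz}$, i.e.\ one must prove essential surjectivity, and the paper does so using the nontrivial input that $\rho(\sss\backslash\rrr)$ is a $d$-SMS in $(\T_{\sg})_{\rrr}$ (\cite[Theorem 6.6]{CSP}) together with the $\ast$-generation by its heart. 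Likewise, orthogonality conditions in $(\T_{\sg})_{\rrr}$ are stated in $\T_{\sg}$, and Hom-spaces in the Verdier quotient are not formally computable from those in $\T$: one needs the comparison maps $\Hom_{\T}(X,Y)\to\Hom_{\T_{\sg}}(X,Y)$ to be bijective in the relevant ranges, which is where the relative Serre duality $\SSS=[-d]$ enters (Proposition \ref{Prop:inducedbij}, Lemma \ref{Lem:Nbij}). Your (ii) amounts to the claim that a morphism between objects of $\cal F_{\zzz}$ vanishes in $\uuu_{\sg}$ (factors through $\zzz\cap\T^{\p}$, equivalently through $\cal P_{\zzz}$) if and only if it vanishes in $\T_{\sg}$ (factors through $\cal P$); the ``only if'' is clear, but the converse is precisely the faithfulness statement and again requires the degree-range Hom comparisons (and the description $Y\lan t\ran\in Y[t]\ast\hhh_{\rrr}[t]\ast\cdots\ast\hhh_{\rrr}[1]$ of Lemma \ref{Lem:Omega})---it does not follow from a formal manipulation of ideals, especially since adding $[\rrr]$ to the ideal is vacuous on the reduction subcategory. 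In short, the three pillars of the paper's proof---well-definedness and triangle-functoriality of $\rho$, density via the $d$-SMS property of $\rho(\sss\backslash\rrr)$, and full faithfulness via Serre-duality Hom comparisons---are all absent or replaced by unsupported claims in your outline.
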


 This can be illustrated by the following commutative diagram of operations.
 
 \[\xymatrixcolsep{6pc}\xymatrixrowsep{4pc}\xymatrix{   \cal T \ar[r]^{\text{sing.   category}}  \ar[d]_{\text{SMC reduction}}& \T_{\sg} \ar@{~>}[d]^{\text{SMS reduction}}\\ \cal U \ar[r]^{\text{sing. category}} & \cal U_{\sg} \cong (\T_{\sg})_{\cal R}
}\]

 The diagram above induces a commutative diagram of maps
 
 \[\xymatrixcolsep{6pc}\xymatrixrowsep{4pc}\xymatrix{ \text{SMCs in $\T$  contains $\cal R$} \ar[r]  \ar[d]& \text{SMSs in $\T_{\sg}$ contains $\cal R$}\ar[d]\\
 \text{SMCs in $\cal U$} \ar[r]  & \text{SMSs in $(\T_{\sg})_{\cal R}$} }\]
 where the horizontal two maps above are well-defined  under mild conditions (see Theorem \ref{Thm:S'}).
 The results we obtain here are parallel to the connection between  silting reductions and CY reductions given in \cite{IY2}. 

In Appendix \ref{appendix}, we give a triangle equivalence induced by derived Schur functors. It provides us an important class of examples on SMC reduction and it is also useful itself.

\medskip\noindent
{\bf Acknowledgements }
The author is supported by China Scholarship Council. He would like to thank  Prof. Osamu Iyama for many useful discussions. 

%%%%%%%%%%%%%%%%%%%%%
%%%%%%%%%%%%%%%%%%%%
\section{Preliminaries}
%%%%%%%%%%%%%%%%%%%%%%
%%%%%%%%%%%%%%%%%%%%%%

%%%%%%%%%%%%%%%%%%%%%%
\subsection{Notation}
%%%%%%%%%%%%%%%%%%%%%%

Throughout this paper, $k$ denotes a field.
Let $\T$ be an additive category. Let $\cal S$ be a full subcategory of $\T$. For an object $X$ in $\T$, a morphism $f: S\ra X$ is called a \emph{right $\cal S$-approximation} if   $S\in \cal S$ and $\Hom_{\T}(S',f)$ is surjective for any $S'\in \cal S$.
We say $\cal S$ is \emph{contravariantly finite} if every object in $\T$ has a right $\cal S$-approximation. Dually, we define \emph{left $\cal S$-approximation} and \emph{covariantly finite} subcategories. We say $\cal S$ is \emph{functorially finite} if it is both contravariantly finite and covariantly finite. 

We denote by $\add \cal S$ the smallest full subcategory of $\T$ containing $\cal S$ and closed under isomorphism, finite direct sums, and direct summands. Denote by $[\cal S]$ the ideal of $\T$ consisting of morphisms which factor through an object in $\add \cal S$ and denote by $\frac{\cal T}{[\cal S]}$ the additive quotient of $\cal T$ by $\cal S$. Define subcategories
\begin{eqnarray*}
{}^{\perp}\cal S &:=&\{ X\in \T\mid \Hom_{\T}(X, \cal S)=0\}, \\
\cal S^{\perp}&:=&\{ X\in \T\mid \Hom_{\T}(\cal S, X)=0\}. 
\end{eqnarray*}

 We denote by $[1]$ (or $\lan 1\ran$) the suspension functors for  triangulated categories. Let $\cal T$ be a triangulated category. For any $X, Y\in \T$ and $n\in \Z$, when we write $\Hom_{\T}(X, Y[>\hspace{-3pt}n])=0$ (resp. $\Hom_{\T}(X, Y[<\hspace{-3pt}n])=0$, $\Hom_{\T}(X, Y[\ge\hspace{-3pt} n])=0$,   $\Hom_{\T}(X, Y[\le \hspace{-3pt}n])=0$), we mean $\Hom_{\T}(X, Y[i])=0$ for all $i>n$ (resp. $i<n$, $i\ge n$, $i\le n$).

 Let $\cal S$ be a full subcategory of $\T$. We denote by $\thick (\cal S)$ the smallest thick subcategory containing $\cal S$. Let $\cal S'$ be another full subcategory of $\T$. Define a new subcategory of $\T$ as follows. 
 \begin{eqnarray*}
 \cal S \ast \cal S' &:= & \{ X\in \cal T \mid \text{there is a triangle $S\ra X \ra S' \ra S[1]$ with $S\in \cal S$ and $S'\in\cal S'$} \}.
 \end{eqnarray*}
 If $\Hom_{\T}(\cal S, \cal S')=0$, that is, if $\Hom_{\T}(S, S')=0$ for any $S\in \cal S$ and $S'\in \cal S'$, we write $\cal S\ast \cal S'=\cal S\perp \cal S'$. 
 For subcategory $\cal S_{1}, \cdots, \cal S_{n}$ of $\T$, we define $\cal S_{1}\ast\cdots\ast\cal S_{n}$ and $\cal S_{1}\perp\cdots\perp \cal S_{n}$ inductively.
We say $\cal S$ is \emph{extension-closed} if $\cal S\ast\cal S=\cal S$. We denote by $\Filt(\cal S)$ the smallest extension-closed subcategory of $\T$ containing $\cal S$.
It is easy to see $\Filt(\cal S)=\bigcup_{n\ge1}\underbrace{\cal S\ast\cdots\ast \cal S}_{n}$. 
We write $\Filt(\cal S[\ge \hspace{-3pt}n])=\Filt(\bigcup_{i\ge n}\cal S[i])$ and  $\Filt(\cal S[\le \hspace{-3pt}n])=\Filt(\bigcup_{i\le n}\cal S[-i])$.

Here we recall some well-known results on additive closures and approximations for later use.
\begin{Lem}\label{Lem:notation}
Let $\T$ be a Krull-Schmidt triangulated category. Let $\X$ and $\Y$ be two extension-closed subcategories of $\T$. Then 
\begin{enumerate}[\rm(1)]
\item If $\Y\ast\X\subset\X\ast\Y$, then $\X\ast\Y$ is also extension-closed;
 \item If $\Hom_{T}(\X, \Y)=0$, then $\add(\X\ast\Y)=\X\ast\Y$; 
 \item Let $T\in \T$. Let $R_{T}\xra{f} T\ra T' \ra X[1]$ be the triangle extended  by  the minimal right $\X$-approximation  $f$ of $T$. Then $T' \in \X^{\perp}$. If moreover, $\Hom_{\T}(\Y,T)=0$, then $f$ is also a minimal right $(\Y\ast\X)$-approximation of $T$. 
\end{enumerate} 
\end{Lem}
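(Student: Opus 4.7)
The plan is to prove the three parts in the order (1), (3), (2), since (3) will be used in the proof of (2).

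Part (1) is purely formal. The octahedral axiom gives associativity of $\ast$, so
\[(\X \ast \Y) \ast (\X \ast \Y) = \X \ast (\Y \ast \X) \ast \Y \subseteq \X \ast (\X \ast \Y) \ast \Y = (\X \ast \X) \ast (\Y \ast \Y) = \X \ast \Y,\]
where the inclusion uses the hypothesis $\Y \ast \X \subseteq \X \ast \Y$, and the final equality uses extension-closedness of $\X$ and $\Y$.

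For (3), I would first show $T' \in \X^{\perp}$. Take any $\phi \colon X \to T'$ with $X \in \X$, and let $h \colon T' \to R_T[1]$ denote the connecting map of the triangle. The composite $\xi := h \circ \phi$ satisfies $f[1] \circ \xi = (f[1] \circ h) \circ \phi = 0$ because $f[1] \circ h = 0$ in the triangle, so the extension $R_T \to E \to X \xrightarrow{\xi} R_T[1]$ represented by $\xi$ admits a lift $\tilde{f} \colon E \to T$ with $\tilde f$ restricting to $f$ on $R_T$. Since $\X$ is extension-closed and $R_T, X \in \X$, one has $E \in \X$, so the right $\X$-approximation property yields $\chi \colon E \to R_T$ with $f \chi = \tilde{f}$; the composite $\mu := \chi \circ (R_T \hookrightarrow E)$ then satisfies $f \mu = f$, which by minimality of $f$ forces $\mu$ to be an isomorphism. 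This splits the extension $\xi$, so $\xi = 0$, and $\phi$ factors through $g \colon T \to T'$; but any map $X \to T$ factors through $f$, and $g \circ f = 0$, so $\phi = 0$. For the moreover statement, a map $u \colon U \to T$ with $U \in \Y \ast \X$ along a triangle $Y \to U \xrightarrow{p} X' \to Y[1]$ satisfies $u|_Y = 0$ by $\Hom(\Y, T) = 0$, so $u = v \circ p$ for some $v \colon X' \to T$; since $v$ factors through $f$ by the approximation property, so does $u$. Minimality carries over since it only concerns $\End(R_T)$.

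For (2), closure of $\X \ast \Y$ under finite direct sums is immediate from additivity of $\X$ and $\Y$; the nontrivial content is closure under direct summands. Given $M = M_1 \oplus M_2 \in \X \ast \Y$ with triangle $S \xrightarrow{f} M \xrightarrow{g} S' \to S[1]$, the hypothesis $\Hom(\X, \Y) = 0$ gives $\Hom(X, S') = 0$ for every $X \in \X$, so the long exact sequence forces $\Hom(X, S) \twoheadrightarrow \Hom(X, M)$; projecting onto $M_i$ shows that $f_i := p_{M_i} \circ f \colon S \to M_i$ is a right $\X$-approximation of $M_i$. Passing to the minimal component using Krull-Schmidt and applying (3) yields triangles $S_i \to M_i \to C_i \to S_i[1]$ with $C_i \in \X^{\perp}$, while the octahedral axiom applied to the composable pair $S \xrightarrow{f} M \xrightarrow{p_{M_i}} M_i$ produces a second triangle $M_j \to S' \to C_i \to M_j[1]$ (with $j \neq i$), placing $C_i \in M_j \ast S'$. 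The interplay between these two descriptions of $C_i$, combined with induction on the number of indecomposable Krull-Schmidt summands of $M$, lets one conclude $C_i \in \Y$, so $M_i \in \X \ast \Y$. The main obstacle is precisely this inductive interlocking between the $M_1$ and $M_2$ analyses, which I expect to require the most care.
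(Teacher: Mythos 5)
Parts (1) and (3) of your proposal are fine. Your (1) is the same one-line computation the paper uses, and your (3) is a correct, self-contained write-up of the argument that the paper simply outsources to \cite{IY1}; the splitting argument via the lift $\tilde f$ and right minimality, and the factorization argument for the ``moreover'' part, are both complete.

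The genuine gap is in (2), which is exactly the part the paper settles by citing \cite[Proposition 2.1(1)]{IY1}. Your argument stops at the decisive step: you never prove $C_i\in\Y$, you only announce that the ``interplay'' of your two descriptions plus an induction will give it. Concretely: (a) the octahedron applied to $S\xra{f}M\xra{p_{M_i}}M_i$ yields a triangle $M_j\to S'\to D_i\to M_j[1]$ whose third term is $D_i=\con(f_i)$ for the possibly non-minimal approximation $f_i$, i.e.\ $D_i\cong C_i\oplus S_i''[1]$ where $S\cong S_i\oplus S_i''$; it is not the triangle with third term $C_i$ that you wrote, and it does not directly constrain $C_i$. (b) The proposed induction on the number of indecomposable summands of $M$ has no visible structure: the only smaller objects in sight are $M_1$ and $M_2$, and their membership in $\X\ast\Y$ is precisely what is being proven, so there is nothing to which an inductive hypothesis could be applied.

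The gap can be closed inside your own setup, without the octahedron or any induction. Each $f_i\colon S\to M_i$ is a right $\X$-approximation, so in the Krull--Schmidt category $\T$ it has a right minimal version $f_i^{\mathrm{min}}\colon S_i\to M_i$ with $S_i$ a direct summand of $S$, and $f_1^{\mathrm{min}}\oplus f_2^{\mathrm{min}}$ is a right minimal right $\X$-approximation of $M$. Since $f$ is also a right $\X$-approximation of $M$, the standard comparison with the minimal one gives a decomposition $S\cong S_1\oplus S_2\oplus S''$ under which $f$ becomes $(f_1^{\mathrm{min}}\oplus f_2^{\mathrm{min}})\oplus(S''\to 0)$; taking cones, $S'\cong C_1\oplus C_2\oplus S''[1]$. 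Hence $C_i$ is a direct summand of $S'\in\Y$, so $C_i\in\Y$ and the triangle $S_i\to M_i\to C_i\to S_i[1]$ shows $M_i\in\X\ast\Y$. Note that this final step, as well as your earlier move ``passing to the minimal component'' (which needs $S_i\in\X$), uses the standing convention of \cite{IY1} that subcategories are closed under direct summands; this hypothesis is genuinely needed and should be made explicit, since without it statement (2) fails outright (take $\Y=\{0\}$ and $\X$ the even-dimensional vector spaces placed in degree zero in $\D^{\bb}(k)$).
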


\begin{proof}
(1)  follows from $(\X\ast\Y)\ast(\X\ast\Y)=\X\ast(\Y\ast\X)\ast\Y\subset \X\ast\X\ast\Y\ast\Y=\X\ast\Y$.

(2) See  \cite[Proposition 2.1 (1)]{IY1}.

(3) The first assertion follows from the proof of \cite[Proposition 2.3 (1)]{IY1} and the second one is easy to check.
\end{proof}

%%%%%%%%%%%%%%%%%%%%%%%
\subsection{$t$-structure and co-$t$-structures}
\label{Section:tstr}
%%%%%%%%%%%%%%%%%%%%%%

Let $\cal T$ be a triangulated category. Let $\cal X$ and $\cal Y$ be two full subcategories of $\cal T$. If $\cal T=\cal X\perp \cal Y$, $\cal X^{\perp}=\cal Y$ and ${}^{\perp}\cal Y=\cal X$ hold, we say $\cal T=\cal X\perp \cal Y$ is a \emph{torsion pair}. 
If a torsion pair $\cal T=\cal X\perp\cal Y$ satisfies $\cal X[1]\subset \cal X$ (resp. $\cal Y[1]\subset \cal Y$), we call it a \emph{$t$-structure} (resp. \emph{co-$t$-structure}), in this case, we denote by $\cal H=\cal X\cap\cal Y[1]$ (resp. $\cal P=\cal X\cap \cal Y[-1]$) the \emph{heart} (resp. \emph{co-heart}). We say a $t$-structure $\cal T=\cal X\perp \cal Y$ is \emph{stable} if $\cal X[1]=\cal X$.

Let $\cal S$ be a thick subcategory of $\T$. Let  us recall  a sufficient condition for the  Verdier quotient $\cal T/\cal S$ to be realized as an ideal quotient given in \cite{IY3}. We consider the following setting.

\begin{itemize}
\item[(T0)] $\cal T$ is a triangulated category, $\cal S$ is a thick subcategory of $\cal T$ and $\cal U=\T/\cal S$;
\item[(T1)]  $\cal S$ has a torsion pair $\cal S=\cal X\perp\cal Y$;
\item[(T2)] $\cal T$ has torsion pairs $\cal T=\cal X\perp \cal X^{\perp}={}^{\perp}\cal Y\perp\cal Y$. 
\end{itemize}
Let $\cal Z:= \cal X^{\perp}\cap {}^{\perp}\cal Y[1]$ and $\cal P:=\cal X[1]\cap \cal Y$.   Then 

\begin{Prop}\cite[Theorem 1.1]{IY3}\label{Thm:IY}
Under the assumptions (T0), (T1) and (T2), the composition $\cal Z\subset \T \ra \cal U$ induces an equivalence of additive category $\frac{\cal Z}{[\cal P]}\cong \cal U$. In particular, the category $\frac{\cal Z}{[\cal P]}$ has a structure of a triangulated category.
\end{Prop}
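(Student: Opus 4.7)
The plan is to show that the composition $\cal Z \hookrightarrow \cal T \to \cal U$ descends to an additive functor $\bar F\colon \cal Z/[\cal P] \to \cal U$ and then to verify that $\bar F$ is essentially surjective, faithful, and full. Well-definedness is immediate because $\cal S$ is thick and $\cal P = \cal X[1] \cap \cal Y \subseteq \cal S$, so any morphism factoring through an object of $\cal P$ vanishes in $\cal U$.

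For essential surjectivity, given $T \in \cal T$ I would apply the torsion pair $\cal T = \cal X \perp \cal X^{\perp}$ from (T2) to replace $T$ by some $T' \in \cal X^{\perp}$; the kernel lies in $\cal X \subseteq \cal S$, so $T \cong T'$ in $\cal U$. Next, applying the shifted torsion pair $\cal T = ({}^{\perp}\cal Y)[1] \perp \cal Y[1]$ to $T'$ produces $Z \in {}^{\perp}\cal Y[1]$ with $T' \cong Z$ in $\cal U$. A short long-exact-sequence computation using $\Hom_{\cal T}(\cal X, \cal Y) = 0$ (from (T1)) then shows that $Z$ still lies in $\cal X^{\perp}$, hence $Z \in \cal Z$.

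For faithfulness, suppose $f\colon Z_1 \to Z_2$ in $\cal Z$ satisfies $\bar F(f) = 0$ in $\cal U$. Then $f = gh$ with $h\colon Z_1 \to S$ and $g\colon S \to Z_2$ for some $S \in \cal S$. Applying $\cal S = \cal X \perp \cal Y$ to $S$ and using $Z_2 \in \cal X^{\perp}$, the long exact sequence shows that $g$ factors through $S \to Y_S$ as some $g'\colon Y_S \to Z_2$. Applying next the shifted torsion pair $({}^{\perp}\cal Y)[1] \perp \cal Y[1]$ to $Y_S$ and using $Z_1 \in {}^{\perp}\cal Y[1]$, the induced map $h'\colon Z_1 \to Y_S$ factors through the left-hand part $L \to Y_S$. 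Direct checks show $L \in \cal Y$ (it is an extension of two objects of $\cal Y$) and $L \in \cal X[1]$ (since $\cal S \cap {}^{\perp}\cal Y[1] = \cal X[1]$), so $L \in \cal P$ and $f$ factors through $\cal P$.

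For fullness, I would represent a morphism in $\Hom_{\cal U}(Z_1, Z_2)$ by a roof $Z_1 \xleftarrow{s} W \xra{g} Z_2$ with $\con(s) \in \cal S$, and apply the essential-surjectivity construction to $W$: using $Z_1, Z_2 \in \cal Z$, both legs factor through each successive truncation of $W$, and an octahedron argument keeps the cone of the modified $s$ in $\cal S$. This reduces the task to showing that, for $s\colon W \to Z_1$ with $W, Z_1 \in \cal Z$ and $\con(s) \in \cal S$, the morphism $s$ is an isomorphism in $\cal Z/[\cal P]$; then $\phi = g s^{-1}$ gives the desired representative. The main obstacle is this last splitting claim. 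My approach would combine the orthogonality $\Hom_{\cal T}(\cal P, \cal Z[1]) = \Hom_{\cal T}(\cal Z, \cal P[1]) = 0$ (immediate from the definitions of $\cal P$ and $\cal Z$) with a careful analysis of $C := \con(s)$ via both torsion pairs of $\cal S$, in order to extract a $\cal P$-summand of $C$ and split the triangle $W \to Z_1 \to C \to W[1]$ modulo $[\cal P]$; this is the delicate step where most of the work will lie.
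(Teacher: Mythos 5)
Your treatment of well-definedness, essential surjectivity and faithfulness is correct: the two truncation triangles from (T2) place any object into $\cal Z$ up to $\cal U$-isomorphism, and your factorization $f=g'\ell h''$ through $L\in\cal Y\cap{}^{\perp}\cal Y[1]\cap\cal S=\cal X[1]\cap\cal Y=\cal P$ is exactly the right argument; in effect you have proved the useful lemma that for $Z_{1},Z_{2}\in\cal Z$ every morphism $Z_{1}\to Z_{2}$ factoring through $\cal S$ already factors through $\cal P$. The reduction of fullness to the claim that $s\colon W\to Z_{1}$ with $W,Z_{1}\in\cal Z$ and $\con(s)\in\cal S$ becomes invertible in $\cal Z/[\cal P]$ is also valid. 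But that claim is precisely the heart of the theorem, and you have not proved it: you only announce a plan to ``extract a $\cal P$-summand of $C=\con(s)$ and split the triangle modulo $[\cal P]$'', and this is where the gap lies. Moreover, as stated the plan looks wrong-headed: in the motivating example ($\T=\D^{\bb}(\mod A)$, $\cal S=\K^{\bb}(\proj A)$, $\cal Z=\CM A$, $\cal P=\proj A$) the cone of a map between two Cohen--Macaulay modules is a perfect complex with cohomology in two degrees and in general has no direct summand in $\cal P$ at all, yet $s$ is still invertible in $\un{\CM}A$. So no splitting of $C$ is available; the mechanism has to act on morphisms, not on $C$ itself.

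For what it is worth, the claim can be proved with the tools you already have, but by a different route than summand-extraction. Writing the triangle $W\xra{s}Z_{1}\xra{u}C\xra{v}W[1]$, decompose $C$ with the shifted torsion pair $\cal S=\cal X[1]\perp\cal Y[1]$: since $Z_{1}\in{}^{\perp}\cal Y[1]$, the map $u$ factors as $Z_{1}\xra{u'}A\xra{j}C$ with $A\in\cal X[1]$; since $\Hom_{\T}(A,W[1])=\Hom_{\T}(A[-1],W)=0$ (as $W\in\cal X^{\perp}$), the map $j$ lifts along $u$ to some $\gamma\colon A\to Z_{1}$, and then $\epsilon:=\gamma u'$ satisfies $u\epsilon=u$ and factors through $\cal S$, hence lies in $[\cal P]$ by your faithfulness lemma; this yields $t$ with $st\equiv 1_{Z_{1}}$ modulo $[\cal P]$. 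Dually, decomposing $C[-1]$ with $\cal S=\cal X\perp\cal Y$ and using $\Hom_{\T}(\cal X,W)=0$ and $\Hom_{\T}(Z_{1},\cal Y[1])=0$ produces $t''$ with $t''s\equiv 1_{W}$ modulo $[\cal P]$, whence $s$ is invertible in $\cal Z/[\cal P]$. Until an argument of this kind is supplied, the fullness part of your proposal is a sketch rather than a proof. (Note also that the paper itself does not prove this proposition but cites Iyama--Yang, so the standard you should measure against is completeness of your own argument.)
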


\begin{Rem}\label{Rem:extri}
If (T0), (T1) and (T2) hold, we may regard $\cal Z$ as a Frobenius  extriangulated category with   $\Proj \cal Z=\cal P$  in the sense of \cite{NP} (see \cite[Section 1.2]{IY3}).
 \end{Rem}

%%%%%%%%%%%%%%%%%%%%%%%%%%
\subsection{Simple-minded collections and simple-minded systems}
%%%%%%%%%%%%%%%%%%%%%%%%%

Let $\cal T$ be a Krull-Schmidt triangulated category and let $\cal S$ be a subcategory of $\T$.

\begin{Def}\label{Def:SMC}
We call $\cal S$ a \emph{pre-simple-minded collection} (\emph{pre-SMC}) if for any $X, Y\in \cal S$, the following conditions hold.
\begin{enumerate}[\rm (1)]
 \item $\Hom_{\T}(X, Y[<\0])=0$;
 \item $\dim_{k}\Hom_{\T}(X, Y)=\delta_{X, Y}$.
\end{enumerate} 
We call $\cal S$ a \emph{simple-minded collection} (\emph{SMC}) if $\cal S$ is a  pre-SMC and moreover,  $\thick(\cal S)=T$.
\end{Def}

For any pre-SMC, there is a standard $t$-structure  
associated to it in the following sense, see \cite[Corollary 3 and Proposition 4]{A2} or \cite[Proposition 5.4]{KY}. 
\begin{Prop}\label{Prop:SMCtotstr}
Let $\cal R$ be a pre-SMC of $\cal T$. Let $\cal X_{\cal R}:=\Filt(\cal R[\ge \0])$ and  $\cal Y_{\cal R}:= \Filt(\cal R[< \0])$.  $\hhh_{\rrr}=\Filt(R)$.  Then 
\begin{enumerate}[\rm (1)]
\item
We have a  bounded $t$-structure 
$\T=\cal X_{\rrr}\perp \cal Y_{\rrr}$ with heart $\hhh_{\rrr}$. 
\item We have 
$\X_{\rrr}=\bigcup_{n\ge 0}\hhh_{\rrr}[n]\ast \hhh_{\rrr}[n-1]\ast \cdots \ast\hhh_{\rrr}$ and 
$\cal Y_{\rrr}=\bigcup_{n\ge 1}\hhh_{\rrr}[-1]\ast \cdots \ast\hhh_{\rrr}[-n+1]\ast\hhh_{\rrr}[-n]$.
\end{enumerate}
\end{Prop}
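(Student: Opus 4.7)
The plan is to establish (2) first---giving an explicit graded filtration of every object---and then read off the $t$-structure of (1) from that description; the whole analysis takes place inside $\thick(\cal R)$, which equals $\T$ precisely when $\cal R$ is actually an SMC.

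First I would dispense with the routine axioms. The inclusions $\X_\rrr[1]\subseteq\X_\rrr$ and $\Y_\rrr[-1]\subseteq\Y_\rrr$ are immediate from $\cal R[\ge\0][1]\subseteq\cal R[\ge\0]$ and the fact that $\Filt$ commutes with $[1]$. For semiorthogonality $\Hom_\T(\X_\rrr,\Y_\rrr)=0$, the pre-SMC condition gives $\Hom_\T(\cal R[i],\cal R[j])=\Hom_\T(\cal R,\cal R[j-i])=0$ whenever $i\ge\0>j$; a double induction on the number of extension steps needed to build $X\in\X_\rrr$ and $Y\in\Y_\rrr$, applying the long exact sequences coming from the defining triangles, spreads this vanishing to the whole of $\X_\rrr$ and $\Y_\rrr$.

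For (2), the containment $\bigcup_n\hhh_\rrr[n]\ast\cdots\ast\hhh_\rrr\subseteq\X_\rrr$ is automatic, since each $\hhh_\rrr[i]\subseteq\X_\rrr$ for $i\ge\0$ and $\X_\rrr$ is extension-closed; the reverse is the heart of the proof. Given $X\in\cal R[i_1]\ast\cdots\ast\cal R[i_\ell]$ with every $i_j\ge\0$, I would proceed by induction on $\ell$, keeping track of $m=\max_j i_j$. Using the pre-SMC vanishing $\Hom_\T(\hhh_\rrr[m],\hhh_\rrr[j])=0$ for $j<m$ together with the octahedral axiom, I would argue that the shift-$m$ layer can be peeled off to the left: successive subquotients with shifts $<m$ appearing above a shift-$m$ piece can be pushed past it, so that after re-bracketing one obtains a triangle $X'\to X\to X''\to$ with $X'\in\hhh_\rrr[m]$ and $X''$ filtered by shifts $<m$ in a strictly shorter filtration, to which the inductive hypothesis applies. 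The same reasoning gives the dual description of $\Y_\rrr$.

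Once (2) is in hand, part (1) is essentially bookkeeping. For a given $X\in\thick(\cal R)$, its standard filtration from (2) breaks cleanly at shift $\0$ into an $\X_\rrr$-part and a $\Y_\rrr$-part, giving the decomposition triangle; combined with the orthogonality already proved, this is precisely a $t$-structure. The heart $\X_\rrr\cap\Y_\rrr[1]=\hhh_\rrr$ follows by comparing the explicit unions of (2), and boundedness is immediate since every filtration is finite. The main obstacle will be the reordering/bubble-up step inside (2): the pre-SMC hypothesis forces Hom-vanishing between different shifts of $\hhh_\rrr$, but adjacent shifts $\hhh_\rrr[i]$ and $\hhh_\rrr[i+1]$ can produce genuine non-split extensions, so one must be careful with the octahedral axiom and Lemma \ref{Lem:notation} to keep such extensions organised as $\hhh_\rrr[i+1]\ast\hhh_\rrr[i]$ rather than losing them back into an uncontrolled subcategory of $\Filt(\cal R[\ge\0])$.
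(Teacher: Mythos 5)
The paper offers no proof of this proposition at all---it is quoted from \cite{A2} and \cite[Proposition 5.4]{KY}---so your argument has to stand on its own, and as written it has a genuine gap at exactly the point you yourself call ``the main obstacle''. Your reading that everything happens inside $\thick(\cal R)$ is correct, and semiorthogonality by d\'evissage, as well as deducing (1) from (2), are fine. The problem is the reordering step. When a subquotient of shift $m$ sits \emph{above} one of shift $j<m$ in the filtration (i.e.\ you are in $\hhh_{\rrr}[j]\ast\hhh_{\rrr}[m]$ and want to move the shift-$m$ piece down), the obstruction is the connecting map, which lives in $\Hom_{\T}(\hhh_{\rrr}[m],\hhh_{\rrr}[j+1])$, not in the group $\Hom_{\T}(\hhh_{\rrr}[m],\hhh_{\rrr}[j])$ whose vanishing you invoke. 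For adjacent layers $j=m-1$ this group is $\Hom_{\T}(\hhh_{\rrr},\hhh_{\rrr})\neq 0$, so no combination of Hom-vanishing and octahedra alone can perform the swap. What is actually needed is the lemma that $\con(w)\in\hhh_{\rrr}[1]\ast\hhh_{\rrr}$ for every morphism $w$ between objects of $\hhh_{\rrr}$ (equivalently $\hhh_{\rrr}\ast\hhh_{\rrr}[1]\subset\hhh_{\rrr}[1]\ast\hhh_{\rrr}$), and proving it forces you to use condition (2) of Definition \ref{Def:SMC}---so that a nonzero map between members of $\cal R$ is an isomorphism---together with an induction on the lengths of the $\cal R$-filtrations of source and target; this is the content of the argument in \cite{A2} that the paper cites. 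Your sketch never uses condition (2) anywhere.

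That this condition is indispensable, and hence that your outline as it stands cannot close, is shown by $A=k[x]/(x^{2})$, $\T=\per A$, $\cal R=\{A\}$: condition (1) of Definition \ref{Def:SMC} holds but (2) fails. Here every object of $\bigcup_{n\ge 0}\hhh_{\rrr}[n]\ast\cdots\ast\hhh_{\rrr}$ is a direct sum of shifts $A[i]$, $i\ge 0$ (all relevant connecting maps lie in $\Hom_{\T}(A,A[\ge 2])=0$), whereas $\con(x\colon A\to A)$ lies in $A\ast A[1]\subset\Filt(\cal R[\ge\0])$ but is indecomposable with two one-dimensional cohomologies, so part (2) of the proposition fails; and a long-exact-sequence computation (using that $k$ is not perfect over $A$) shows $\con(x)[-1]\notin\X_{\rrr}\ast\Y_{\rrr}$, so part (1) fails as well. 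So you must either supply the cone lemma above or cite \cite{A2}/\cite{KY} for it; with that ingredient added, the rest of your plan goes through.
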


Let $\cal S$ be a SMC in $\T$ and let $\cal H=\Filt(\cal S)$. 
%Then $\hhh=\add(\hhh)$ by \cite[Lemma 2.7]{D}.
We write $\T^{\le n}=\Filt(\cal S[\ge \hspace{-3pt}n])$ and $\T^{\ge n}=\Filt(\cal S[\le \hs n])$.  
The following result is directly from Proposition \ref{Prop:SMCtotstr}.

\begin{Lem}\label{Lem:SMCtotstr}
Let $\T$ be a triangulated category. Let  $\cal S$ be a  SMC of $\T$ and $\hhh=\Filt(\sss)$. Then
\begin{enumerate}[\rm(1)]
 \item We have a  bounded $t$-structure $\T=\T^{\le 0}\perp \T^{\ge 1}$ with heart $\cal H$;
 \item For any $X, Y\in \cal T$, we get $\Hom_{\T}(X[\gg \0], Y)=0$;
 \item For any $Y\in \cal T$, we get $\Hom_{\T}(\cal H[\gg \0], Y)=0$ and $\Hom_{\T}(Y[\gg\0],\hhh)=0$.
 \end{enumerate}
\end{Lem}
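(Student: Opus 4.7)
The plan is to derive all three parts from Proposition~\ref{Prop:SMCtotstr} applied to the pre-SMC $\sss$, together with the generating condition $\thick(\sss)=\T$ that upgrades a pre-SMC to a genuine SMC. Proposition~\ref{Prop:SMCtotstr} already supplies the $t$-structure $\T=\X_{\sss}\perp\Y_{\sss}$ with heart $\hhh$, where $\X_\sss=\Filt(\sss[\ge\0])$ and $\Y_\sss=\Filt(\sss[<\0])$. So for (1) only boundedness remains, and both (2) and (3) should fall out of boundedness together with the orthogonality $\Hom_\T(\X_\sss,\Y_\sss)=0$.

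For boundedness I would set
\[
\cal C := \bigcup_{n\ge 0}\bigl(\X_{\sss}[-n]\cap\Y_{\sss}[n+1]\bigr)
\]
and verify that it is a thick subcategory of $\T$. The generators lie inside: $\sss\subset\hhh=\X_\sss\cap\Y_\sss[1]\subset\cal C$ via the piece $n=0$. Extension-closure of each $\X_\sss[-n]\cap\Y_\sss[n+1]$ comes from each aisle and coaisle being extension-closed, and a nested union preserves this. Stability under $[\pm 1]$ reduces to the semi-invariances $\X_\sss[1]\subset\X_\sss$ and $\Y_\sss[-1]\subset\Y_\sss$, which send each piece into the next piece. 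Summand-closure is automatic because aisles and coaisles of a $t$-structure are closed under direct summands in a Krull--Schmidt triangulated category. Hence $\cal C$ is thick and contains $\sss$, forcing $\cal C\supset\thick(\sss)=\T$; this is exactly boundedness.

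For (2), given $X,Y\in\T$, I would choose $n\ge 0$ with both $X,Y\in\X_{\sss}[-n]\cap\Y_{\sss}[n+1]$. Then $X[k]\in\X_\sss$ for every $k\ge n$ and $Y[-n-1]\in\Y_\sss$, so for $k\ge 2n+1$
\[
\Hom_\T(X[k],Y)=\Hom_\T\bigl(X[k-n-1],\,Y[-n-1]\bigr)=0
\]
by orthogonality. For (3), with $Y\in\X_\sss[-n]\cap\Y_\sss[n+1]$ and $\hhh\subset\X_\sss\cap\Y_\sss[1]$: when $k\ge n+1$, we have $\hhh[k-n-1]\subset\X_\sss$ and $Y[-n-1]\in\Y_\sss$, yielding $\Hom_\T(\hhh[k],Y)=0$; dually $Y[k-1]\in\X_\sss$ and $\hhh[-1]\subset\Y_\sss$ give $\Hom_\T(Y[k],\hhh)=0$. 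I expect the only nontrivial step to be the thickness verification for $\cal C$; once boundedness is in hand, parts (2) and (3) are direct consequences of $t$-structure orthogonality and require no further input beyond $\thick(\sss)=\T$.
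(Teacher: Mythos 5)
Your proof is correct, and it follows the route the paper intends: the paper offers no separate argument for this lemma beyond the remark that it is ``directly from Proposition~\ref{Prop:SMCtotstr}'', i.e.\ part (1) is Proposition~\ref{Prop:SMCtotstr} applied to $\sss$ (the generating condition $\thick(\sss)=\T$ turning the bounded $t$-structure on $\thick(\sss)$ into one on all of $\T$), and (2), (3) are exactly the consequence of boundedness plus the orthogonality $\Hom_{\T}(\X_{\sss},\Y_{\sss})=0$ that you spell out. The only place you diverge is that you re-prove boundedness instead of quoting it: Proposition~\ref{Prop:SMCtotstr}\,(1) already asserts the $t$-structure is \emph{bounded}, so your d\'evissage argument with $\mathcal{C}=\bigcup_{n\ge 0}(\X_{\sss}[-n]\cap\Y_{\sss}[n+1])$ is logically redundant; on the other hand it is correct and has the merit of making visible that it is precisely $\thick(\sss)=\T$ which forces boundedness (for a mere pre-SMC one only gets a bounded $t$-structure on $\thick(\rrr)$, which is how the proposition is actually used elsewhere in the paper). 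One small correction: closure of each piece under direct summands does not need the Krull--Schmidt hypothesis you invoke; since $\X_{\sss}={}^{\perp}\Y_{\sss}$ and $\Y_{\sss}=\X_{\sss}^{\perp}$ are perpendicular categories of a torsion pair, they (and their shifts and intersections) are automatically closed under summands.
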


Next we recall the notion of simple-minded systems, which is introduced in \cite{KL} and generalized in \cite{CS1}.

\begin{Def}\cite[Definition 2.1]{CS1}
Let $d\ge 0$. We call $\cal S$ a $d$-Simple-minded system (or $d$-SMS) if for any $X,Y\in \cal S$, the following conditions hold.
\begin{enumerate}[\rm(1)]
 \item $\dim \Hom_{\T}(X, Y)=\delta_{X,Y}$;
 \item  If $d\ge1$, then $\Hom_{\T}(X[i], Y)=0$ for any $1\le i\le d$;
 \item $\T=\add \Filt(\{ \cal S[d],\cal S[d-1], \cdots, \cal S\})$.
\end{enumerate} 
\end{Def}
By \cite[Lemma 2.8]{CSP}, the condition (3) above is equivalent to say that $\T=\cal H[d]\ast \cal H[d-1]\ast\cdots\ast\cal H$.

\subsection{Non-positive dg algebras}

Let $A$ be a dg $k$-algebra, that is a graded $k$-algebra with a compatible structure of a complex. We denote by $\D(A)$ the derived category of dg $A$-modules (see \cite{Keller}) and $\D^{\bb}(A)$ the subcategory of $\D(A)$, consisting  of the dg $A$-modules whose total cohomology are finite-dimensional. Let $\per A$ be the perfect derived category, that is, the thick subcategory of $\D(A)$ generated by $A$.

We call a dg $k$-algebra $A$ \emph{proper} if $A\in \D^{\bb}(A)$.
We  say  $A$  is \emph{non-positive}, if $A^{i}=0$ for any $i>0$. In this case, there is a natural map $A \ra \h^{0}(A)$, which is also a morphism of dg algebras. So we may regard any $\h^{0}(A)$-module as a dg $A$-module. 

Let $A$ be a non-positive dg $k$-algebra and $M$ be a dg $A$-module. We define the \emph{standard truncation} $\tau^{\le i}$ and $\tau^{>i}$  by

\[  (\tau^{\le i}M)^{j}:= \begin{cases} M^{j} & \text{for } j<i, \\
\ker d_{M}^{i} & \text{for } j=i, \\
0 & \text{for } j>i. 
\end{cases}      \  \   \  \   \   \   \  \  \   \
(\tau^{>i}M)^{j}:=\begin{cases}
0 & \text{for } j<i, \\
M^{i}/\ker d_{M}^{i} & \text{for } j=i, \\
M^{j} & \text{for } j>i.
\end{cases}\]
Since $A$ is non-positive, then  $\tau^{\le i} M$ and $\tau^{> i}M $ are also dg $A$-modules. Moreover, we have a triangle 
$$\tau^{\le i}M \ra M \ra \tau^{i}M \ra \tau^{\le i}M[1]$$
 in $\D(A)$.
Denote by $\cal S_{A}$ the set of simple $\h^{0}(A)$-modules and we may also regard $S_{A}$ as the set of simple dg $A$-modules (concentrated in degree $0$).  The following results are well-known.

\begin{Lem}\label{Lem:predg}
Let $A$ be a non-positive dg $k$-algebra. Assume $\h^{i}(A)$ is finite-dimensional for any $i\in \Z$. Then 
 \begin{enumerate}[\rm (1)]
  \item $\D^{\bb}(A)$ is Hom-finite. 
  \item $\D^{\bb}(A)=\thick (S_{A})$ and $S_{A}$ is a SMC of $\D^{\bb}(A)$.
 \end{enumerate}
\end{Lem}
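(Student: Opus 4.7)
The plan is to exploit the standard $t$-structure on $\D(A)$ induced by the non-positivity of $A$, and then reduce both parts by d\'evissage to the simples. The truncation functors $\tau^{\le n}$, $\tau^{>n}$ preserve the dg $A$-module structure (because $A^{>0}=0$) and produce a $t$-structure $(\D^{\le 0}(A), \D^{\ge 0}(A))$ on $\D(A)$ whose heart is canonically equivalent to $\mod \h^0(A)$. Since each $\h^i(A)$ is finite-dimensional, $\h^0(A)$ is a finite-dimensional $k$-algebra, so this heart is a length category whose simples are exactly $\sss_A$; the $t$-structure restricts to a bounded $t$-structure on $\D^{\bb}(A)$ with the same heart.

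For (2), any $M \in \D^{\bb}(A)$ can be written as a finite iterated cone of the shifted cohomologies $\h^i(M)[-i]$ via the truncation triangles $\tau^{\le i}M \to M \to \tau^{>i}M \to \tau^{\le i}M[1]$, by induction on $\sum_i \dim_k \h^i(M)$. Each $\h^i(M)$ has a finite composition series in $\mod \h^0(A)$ with factors in $\sss_A$, so a second d\'evissage places $M$ in $\thick(\sss_A)$. The pre-SMC conditions for $\sss_A$ come for free from the $t$-structure: $\Hom_{\D(A)}(S, T[n]) = 0$ for $n < 0$ holds automatically for objects in the heart, while $\Hom_{\D(A)}(S, T) = \Hom_{\h^0(A)}(S, T) = \delta_{S,T}\, k$ by Schur's lemma (under the standing convention that each simple has endomorphism ring $k$).

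For (1), the same d\'evissage argument reduces Hom-finiteness on $\D^{\bb}(A)$ to showing $\dim_k \Hom_{\D(A)}(S, T[n]) < \infty$ for each $S, T \in \sss_A$ and each $n \in \Z$. I would lift a primitive idempotent $\bar e \in \h^0(A)$ with top $S$ to an idempotent $e \in A^0$ along the surjection $A^0 \twoheadrightarrow \h^0(A)$ (possible by non-positivity), so that $P_S := Ae$ is a K-projective dg summand of $A$ with $\Hom_{\D(A)}(P_S, -)=\h^0(e\,\cdot)$; in particular each $\Hom_{\D(A)}(P_S, T[n])$ is either $0$ or the finite-dimensional space $eT$. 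Iterating projective covers in the heart and taking cones yields a (possibly unbounded) resolution of $S$ by summands of $A$, whose associated spectral sequence computes $\Hom_{\D(A)}^n(S, T)$ with finite-dimensional pages at each bidegree thanks to the finite-dimensionality of each $\h^i(A)$.

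The main obstacle is this last step: because $A$ may have cohomology in infinitely many negative degrees, the projective resolution of $S$ need not lie in $\D^{\bb}(A)$, and one has to argue that each fixed $\Hom_{\D(A)}(S, T[n])$ nevertheless receives contributions from only finitely many stages of the resolution. The cleanest rigorous route is to build a minimal semifree resolution of $S$ degree-by-degree, where at each stage the free generators are controlled by the finite-dimensional cohomologies $\h^i(A)$, ensuring that the $n$-th cohomology of the Hom complex depends on only boundedly many stages; alternatively one may invoke the corresponding standard result in the literature on non-positive dg algebras.
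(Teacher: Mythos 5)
Your route is necessarily different from the paper's, because the paper disposes of this lemma entirely by citation: part (1) is quoted from \cite[Theorem 3.1]{Keller} (see also \cite[Proposition 6.12]{AMY}) and part (2) from \cite[Proposition 2.1]{KY2}. Your argument for (2) is complete and is the standard one underlying those references: the truncation triangles give the double d\'evissage reducing any $M\in\D^{\bb}(A)$ first to the shifted cohomologies $\h^{i}(M)[-i]$ and then, via composition series in $\mod\h^{0}(A)$, to $\sss_{A}$, while the pre-SMC axioms follow from the $t$-structure together with Schur's lemma; your caveat that one needs $\End(S)=k$ for each simple (e.g.\ $k$ algebraically closed) is precisely the hypothesis left implicit in Definition \ref{Def:SMC} and in the paper's citations, so flagging it is appropriate. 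For (1) you correctly reduce, by the same d\'evissage, to showing $\dim_{k}\Hom_{\D(A)}(S,T[n])<\infty$ for simples $S,T$, and you correctly identify the crux: a semifree resolution $P$ of $S$ whose graded basis is bounded above and has only finitely many elements in each degree, so that $\Hom_{A}(P,T)^{n}$, hence its cohomology, is finite-dimensional for every $n$. But you do not carry this construction out (the spectral-sequence remark is vaguer than what is needed), and in the end you offer to invoke the literature, which is exactly what the paper does; so for (1) your proposal supplies the right strategy but not a proof that goes beyond the paper's citation.

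One concrete slip: the parenthetical claim that a primitive idempotent of $\h^{0}(A)$ lifts to an idempotent of $A^{0}$ ``by non-positivity'' is false in general. For instance, $A=k[t]\otimes\Lambda(\xi)$ with $\deg t=0$, $\deg\xi=-1$ and $d\xi=t^{2}-t$ is non-positive with $\h^{0}(A)\cong k\times k$ and $\h^{i}(A)=0$ for $i\neq 0$, yet $A^{0}=k[t]$ has no nontrivial idempotents. Fortunately the step is unnecessary: in the inductive construction of the resolution one kills the top cohomology of the current cone by a map from a finite direct sum of shifted copies of $A$ itself (no summands of $A$ are needed, since that top cohomology is a finite-dimensional $\h^{0}(A)$-module); because each $\h^{j}(A)$ is finite-dimensional and the top nonvanishing degree of the cone strictly drops at each stage, the resulting semifree resolution has only finitely many basis elements in each degree, which is exactly the finiteness your argument requires. (Alternatively, $\D(A)$ has arbitrary coproducts and is therefore idempotent complete, so the idempotent $\bar e\in\h^{0}(A)=\End_{\D(A)}(A)$ splits in $\D(A)$ even when it does not lift to $A^{0}$.)
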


\begin{proof}
(1)  is a corollary of  \cite[Theorem 3.1]{Keller}, see also \cite[Proposition 6.12]{AMY}. 

(2) is directly from \cite[Proposition 2.1]{KY2}.
\end{proof}

We end this section by a useful observation that any SMC of $\D^{\bb}(A)$ can be regarded as simple dg $B$-modules for some non-positive dg algebra $B$.

\begin{Prop}\label{Prop:SMCsimple}
Let $A$ be a non-positive proper dg $k$-algebra and let $\cal S$  be a SMC of $\D^{\bb}(A)$.  Then there exists a non-positive dg $k$-algebra $B$ and a triangle equivalence $F: \D^{\bb}(B) \xra{\simeq} \D^{\bb}(A)$ such that $F(\cal S_{B})=\cal S$, where $S_{B}$ is the set of simple dg $B$-modules.
\end{Prop}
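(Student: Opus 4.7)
The plan is to realize $B$ as the non-positive model of the dg endomorphism algebra of a silting object of $\per(A)$ that corresponds to $\cal S$, and then invoke dg Morita theory to transfer the simples of $B$ to $\cal S$. The bridge between a SMC and a dg algebra of the required form is the silting--SMC bijection of Koenig--Yang \cite{KY}, which is formulated exactly for non-positive proper dg algebras.

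First I would apply \cite{KY} to $\cal S = \{S_1, \ldots, S_n\}$ to obtain a basic silting object $T = T_1 \oplus \cdots \oplus T_n \in \per(A)$ whose associated bounded t-structure on $\D^{\bb}(A)$ has heart $\Filt(\cal S)$ with simples precisely $\cal S$. Next, I would set $B := \RHom_A(T, T)$, computed via a cofibrant replacement of $T$ in a dg enhancement of $\D(A)$. The silting condition $\Hom_{\D(A)}(T, T[n]) = 0$ for all $n > 0$ forces $\h^n(B) = 0$ for $n > 0$, so the standard truncation $\tau^{\leq 0} B$ is a quasi-isomorphic non-positive dg subalgebra; after replacing $B$ by $\tau^{\leq 0} B$, I may assume $B$ is non-positive with $\h^0(B) = \End_{\D(A)}(T)$.

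Then by Keller's derived Morita theorem, the functor $\RHom_A(T, -) \colon \D(A) \to \D(B)$ restricts to a triangle equivalence $\per(A) \simeq \per(B)$ sending $T$ to $B$. Using the properness of $A$ together with the Hom-finiteness of $\D^{\bb}(A)$ (Lemma \ref{Lem:predg} (1)), the cohomologies $\h^i(B)$ are finite-dimensional, and the same functor extends to a triangle equivalence $F^{-1} \colon \D^{\bb}(A) \simeq \D^{\bb}(B)$. Under $F^{-1}$, the t-structure on $\D^{\bb}(A)$ induced by $T$ is carried to the standard t-structure on $\D^{\bb}(B)$; at the level of hearts this is the identification $\Filt(\cal S) \simeq \mod \h^0(B)$, which sends simples to simples and gives $F^{-1}(\cal S) = \cal S_B$. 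A quasi-inverse $F$ then satisfies $F(\cal S_B) = \cal S$, as required.

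The main obstacle is verifying that Keller's Morita equivalence on $\per$ extends to an equivalence on $\D^{\bb}$ and that the t-structure induced by $T$ on the $A$-side is identified with the standard t-structure on the $B$-side, so that $\cal S$ and $\cal S_B$ correspond. Both compatibilities are built into the Koenig--Yang correspondence, but spelling them out requires careful bookkeeping of cohomological finiteness through $\RHom_A(T, -)$ and a direct comparison of the heart $\Filt(\cal S)$ with $\mod \h^0(B)$ via the silting object $T$.
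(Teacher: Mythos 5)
Your proposal is correct and follows essentially the same route as the paper: pass from $\cal S$ to a silting object of $\per A$ via the Koenig--Yang bijection, take the (truncated, non-positive) dg endomorphism algebra $B$, use Keller's derived Morita theorem to get an equivalence restricting to $\D^{\bb}$, and identify the simples. The only cosmetic difference is at the last step, where the paper simply cites Su--Yang (Theorem 1.1 of \cite{SY}) for $F(\cal S_{B})=\cal S$, while you sketch the same fact directly through the comparison of the $T$-induced $t$-structure with the standard $t$-structure on $\D^{\bb}(B)$.
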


\begin{proof}
 There is a bijection
\[ \{\text{SMCs of } \D^{\bb}(A)\} \longleftrightarrow \{\text{silting objects of }\per A \}     
\]
by \cite[Theorem 6.1]{KY} (see also \cite[Theorem 1.2]{SY}). Then there is a silting object $P\in \per A$ corresponding  to $S$. Considering the dg algebra $B':=\shEnd_{A}(P)$, then we have $\h^{i}(B')=0$ for $i>0$ and the  truncation $B:=\tau^{\le 0}B'$ also has a structure of dg $k$-algebra, which is quasi-isomorphic to $B'$. 
Notice that the functor $\RshHom_{A}(P, ?): \D(A)\ra \D(B')$ is a triangle equivalence by \cite[Lemma 4.2]{Keller},  
so there a triangle equivalence $F: \D(B)\ra \D(A)$ which restricts to $\per$ and $\D^{\bb}$. Moreover, by \cite[Theorem 1.1]{SY}, we have that $F(\cal S_{B})=\cal S$. 
\end{proof}

%%%%%%%%%%%%%%%%%%%%%%%%
\section{SMC reductions of triangulated categories} \label{Section:SMCreduction}
%%%%%%%%%%%%%%%%%%%%%%%

The aim of this section is to introduce the SMC reduction. It is an operation to construct a new triangulated category  form the given triangulated category  and one of its pre-Simple-minded collections (pre-SMCs). 
One important property is that, under mild conditions, there is a bijection between the SMCs of the  new category  and the SMCs of the original one containing the given pre-SMC.

\subsection{SMC reductions}\label{Section:reduction}
Let $\T$ be a Krull-Schmidt triangulated category and $\cal R$ be a pre-SMC of $\T$ (see Definition \ref{Def:SMC}). We denote by $\SMC \, \T$ the set of SMCs  of $\T$ and by $\SMC_{\rrr}\T$ the set of SMCs of $\T$ containing $\rrr$. 
We define the \emph{SMC reduction}  of  $\T$ with respect to  $\cal R$  as the Verdier quotient $$\cal U:=\T/\thick(\cal R).$$

 By Proposition \ref{Prop:SMCtotstr}, $\thick(\cal R)$ admits a natural $t$-structure $\thick(\cal R)=\X_{\rrr}\perp\Y_{\rrr}$, where $\X_{\rrr}=\Filt(\cal R[\ge \0])$ and $\cal Y_{\rrr}=\Filt(\cal R[<\0])$, whose heart is  denote by $$\cal H_{\cal R}=\Filt(\cal R).$$ 
   Consider the following mild conditions.
\begin{itemize}
 \item[(R1)] $\hhh_{\rrr}$ is contravariantly  finite in $\cal R[>\0]^{\perp}$ and  convariantly finite in ${}^{\perp}\cal R[<\0]$;
 \item[(R2)] For any $X\in \T$, we have $\Hom_{\T}(X, \hhh_{\rrr}[i])=0=\Hom_{\T}(\hhh_{\rrr}, X[i])$ for $i\ll0$.
\end{itemize}

 Notice that by Lemma \ref{Lem:SMCtotstr}, (R2) holds  if there is a SMC of $\T$ containing $\cal R$ .
Let 
$$\cal Z:=\cal R[\ge\0]^{\perp}\cap {}^{\perp}\cal R[\le\0].$$
 Similar to  silting reduction (see \cite[Theorems 3.1 and 3.7]{IY2}), we have the following results.
 \begin{Thm}\label{Thm:SMCbij}
 Assume the assumptions (R1) and (R2) hold. Then 
 \begin{enumerate}[\rm (1)]
   \item  The composition $\cal Z \hookrightarrow \T \ra \cal U$ is an additive equivalence $\cal Z \xra{\simeq} \cal U$;
      \item There is a bijection  
      \[ \SMC_{\rrr}\T \longleftrightarrow \SMC \,\cal U, \]
      sending $\sss\in \SMC_{\rrr}\T$ to $\sss\backslash \rrr \in \SMC\,\cal U$.
 \end{enumerate}
 \end{Thm}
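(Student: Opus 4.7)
The plan is to apply Proposition~\ref{Thm:IY} with $\cal S := \thick(\rrr)$, $\cal X := \X_\rrr$, and $\cal Y := \Y_\rrr$. Then (T0) is tautological and (T1) is precisely Proposition~\ref{Prop:SMCtotstr}(1). Direct computations show $\cal X^\perp \cap {}^\perp(\cal Y[1]) = \rrr[\ge\0]^\perp \cap {}^\perp\rrr[\le\0] = \cal Z$, while the co-heart $\cal P = \X_\rrr[1] \cap \Y_\rrr \subseteq \X_\rrr \cap \Y_\rrr = 0$ (the last equality from the $t$-structure in Proposition~\ref{Prop:SMCtotstr}). Thus the ideal quotient in Proposition~\ref{Thm:IY} is trivial, and once (T2) is in hand Proposition~\ref{Thm:IY} delivers the equivalence $\cal Z \xra{\simeq} \cal U$ of (1).

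The bulk of the proof of (1) lies in (T2), namely the existence of the torsion pairs $\T = \X_\rrr \perp \X_\rrr^\perp = {}^\perp\Y_\rrr \perp \Y_\rrr$. The approach, in the spirit of the silting reduction of \cite{IY2}, is: given $X \in \T$, use (R2) to produce $N \ge 0$ with $\Hom_\T(\hhh_\rrr[n], X) = 0$ for $n > N$, then use (R1) to iteratively take right $\hhh_\rrr[i]$-approximations for a bounded range of shifts determined by $N$, and glue the resulting triangles via the octahedral axiom into a single approximation triangle $X_\cal X \to X \to X_\cal Y \to X_\cal X[1]$ with $X_\cal X \in \hhh_\rrr \ast \cdots \ast \hhh_\rrr[N] \subset \X_\rrr$ and $X_\cal Y \in \X_\rrr^\perp$. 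The key tools are Lemma~\ref{Lem:notation}(3) together with the pre-SMC vanishing $\Hom_\T(\hhh_\rrr[>\0], \hhh_\rrr) = 0$ (a direct consequence of $\Hom_\T(\rrr, \rrr[<\0]) = 0$ propagated through $\Filt$), which together control the higher-shift Homs introduced by each step. The torsion pair ${}^\perp\Y_\rrr \perp \Y_\rrr$ is constructed dually using the covariantly finite half of (R1) and the other half of (R2).

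For part~(2), the forward map sends $\sss \in \SMC_\rrr \T$ to $\sss \setminus \rrr$. I first check $\sss \setminus \rrr \subset \cal Z$: for $X \in \sss \setminus \rrr$ and $r \in \rrr$, the vanishings $\Hom_\T(\rrr[i], X) = 0$ for $i \ge \0$ and $\Hom_\T(X, \rrr[i]) = 0$ for $i \le \0$ all reduce, by shifting and using $X \ne r$, to the pre-SMC axioms for $\sss$. A short extension of this argument shows $Y[i] \in \X_\rrr^\perp$ for every $Y \in \sss \setminus \rrr$ and $i \le 0$, so the image of $Y[i]$ in $\cal Z$ under the equivalence of (1) is computed by a single $\Y_\rrr[1]$-approximation triangle; combined with $X \in {}^\perp\rrr[\le\0] \cap {}^\perp(\Y_\rrr[1])$, this yields $\Hom_\cal U(X, Y[i]) = \Hom_\T(X, Y[i])$ for $X, Y \in \sss \setminus \rrr$ and $i \le 0$. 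The SMC axioms for $\sss \setminus \rrr$ in $\cal U$ therefore inherit from those of $\sss$ in $\T$, and $\thick_\cal U(\sss \setminus \rrr) = \cal U$ follows from $\thick_\T(\sss) = \T$ and the essential surjectivity of the quotient. The inverse map sends $\sss' \in \SMC\,\cal U$ to $\sss := (\text{lift in }\cal Z) \cup \rrr$, verified by symmetric Hom calculations.

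The step I expect to be the main obstacle is the iterative approximation in (T2): although (R1) supplies the approximation in each degree and (R2) guarantees termination, the iteration must be organized so that the orthogonality hypothesis of (R1) is preserved from one step to the next, which requires the pre-SMC vanishing of $\Hom_\T(\hhh_\rrr[>\0], \hhh_\rrr)$ in an essential way. Once (T2) is in hand, part~(2) reduces to the Hom-bookkeeping sketched above using the equivalence of (1) and the explicit $\Y_\rrr[1]$-approximation description of the $\cal U$-shift on objects of $\cal Z$.
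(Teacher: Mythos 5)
Your proposal follows essentially the same route as the paper: part (1) is exactly the paper's Proposition \ref{Prop:R1} (the inductive approximation argument using (R1), (R2), Lemma \ref{Lem:notation}(3) and the pre-SMC vanishing to produce the two torsion pairs) followed by an application of Proposition \ref{Thm:IY} with $\X_{\rrr}$, $\Y_{\rrr}$ and trivial co-heart, and part (2) is the same correspondence $\sss\mapsto\sss\backslash\rrr$ with inverse given by lifting to $\cal Z$ and adjoining $\rrr$. The only cosmetic difference is in the Hom-bookkeeping for (2): you compute $\Hom_{\cal U}(X,Y[i])$ via a $\Y_{\rrr}[1]$-coapproximation of $Y[i]$ inside $\X_{\rrr}^{\perp}=\cal Z\perp\Y_{\rrr}[1]$, whereas the paper uses the explicit description $X\lan n\ran\in X[n]\ast\hhh_{\rrr}[n]\ast\cdots\ast\hhh_{\rrr}[1]$ of the induced shift on $\cal Z$ (Lemma \ref{Lem:Omega}), which is equivalent; just make sure in the surjectivity direction you also verify the generation condition $\thick_{\T}(\sss_{\cal U}\cup\rrr)=\T$, which the paper gets from the decomposition $\T=\X_{\rrr}\perp\cal Z\perp\Y_{\rrr}[1]$.
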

The rest of this section is devoted to the proof of Theorem \ref{Thm:SMCbij}.
We start with the following observation, which is the `dual' of \cite[Proposition 3.2]{IY2}.  
\begin{Prop} \label{Prop:R1}
The following are equivalent.
\begin{enumerate}[\rm(1)]
\item
 $\T=\cal X_{\cal R}\perp \cal X_{\cal R}^{\perp}={}^{\perp}\cal Y_{\cal R}\perp \cal Y_{\cal R}$ are two $t$-structures; 
 \item $\hhh_{\rrr}$ satisfies the conditions (R1) and (R2).
\end{enumerate}
In this case, the heart of $t$-structures in (1) are $\hhh_{\rrr}$.
\end{Prop}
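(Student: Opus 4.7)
The direction $(1)\Rightarrow(2)$ is the easier half. Starting from the torsion pair $\T=\cal X_\rrr\perp\cal X_\rrr^\perp$, I would apply its truncation triangle $A\to X\to B\to A[1]$ to any $X\in\rrr[>\0]^\perp$. Since $B\in\cal X_\rrr^\perp$ kills $\Hom$ from $\rrr[j]$ ($j\ge 0$), the long exact sequence of $\Hom(\rrr[j],-)$ combined with the description $A\in\hhh_\rrr[n]\ast\cdots\ast\hhh_\rrr$ (Proposition \ref{Prop:SMCtotstr}) and the pre-SMC vanishing $\Hom(\rrr,\rrr[<\0])=0$ forces the higher shifted layers of $A$ to vanish, so $A\in\hhh_\rrr$. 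Then $A\to X$ is automatically a right $\hhh_\rrr$-approximation since $\Hom(\hhh_\rrr,B)=0$; the left half of (R1) is dual, using the other torsion pair. The vanishing (R2) follows by combining the truncation triangles from both $t$-structures with the same filtration/pre-SMC bookkeeping, which gives a uniform upper bound on the shifts of $\hhh_\rrr$ that interact with $X$.

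For the main direction $(2)\Rightarrow(1)$, the plan is an iterated peeling argument producing, for each $X\in\T$, a triangle $A\to X\to B$ with $A\in\cal X_\rrr$ and $B\in\cal X_\rrr^\perp$ (and dually for $\cal Y_\rrr$). By (R2) choose $n$ with $\Hom(\hhh_\rrr[j],X)=0$ for $j\ge n$, so $X[-n+1]\in\rrr[>\0]^\perp$, which is exactly where (R1) supplies contravariant finiteness of $\hhh_\rrr$. Take a minimal right $\hhh_\rrr$-approximation $H'\to X[-n+1]$, shift, and extend to $H'[n-1]\to X\to X_1\to H'[n]$. A long exact sequence computation, using pre-SMC vanishing together with Lemma \ref{Lem:notation}(3), shows $X_1\in\rrr[\ge n-1]^\perp$, so the construction may be repeated at level $n-1$. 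After $n$ iterations one obtains $A\to X\to B$ with $A\in\hhh_\rrr[n-1]\ast\cdots\ast\hhh_\rrr\subset\cal X_\rrr$ and $\Hom(\rrr[j],B)=0$ for all $j\ge 0$, i.e.\ $B\in\cal X_\rrr^\perp$. The other torsion pair is built dually, starting from the covariantly finite half of (R1) and the other half of (R2).

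For the final claim on hearts, the heart of $(\cal X_\rrr,\cal X_\rrr^\perp)$ is
\[
\cal X_\rrr\cap(\cal X_\rrr^\perp)[1]=\cal X_\rrr\cap\rrr[>\0]^\perp.
\]
The inclusion $\hhh_\rrr\subseteq$ is immediate from the pre-SMC axioms. Conversely, for $Z\in\cal X_\rrr\cap\rrr[>\0]^\perp$, use the filtration $Z\in\hhh_\rrr[n]\ast\cdots\ast\hhh_\rrr$ and induct on $n$: the top-layer triangle together with $\Hom(\rrr[n],Z)=0$ and the pre-SMC vanishing forces the top layer in $\hhh_\rrr[n]$ to be zero whenever $n\ge 1$, so $Z\in\hhh_\rrr$. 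The analogous argument identifies ${}^\perp\cal Y_\rrr\cap\cal Y_\rrr[1]$ with $\hhh_\rrr$.

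The main obstacle is Step 2 of the iteration: verifying that a single approximation preserves the correct orthogonality $\Hom(\rrr[\ge n-1],X_1)=0$. The delicate point is that after rotating the triangle, one connecting map $\Hom(\rrr,H')\to\Hom(\rrr[n-1],X_n)$ appears in the long exact sequence, and it must be surjective in order to annihilate the potentially obstructing term; this surjectivity is exactly the right $\hhh_\rrr[n-1]$-approximation property of $H'[n-1]\to X_n$. Coordinating this approximation-induced surjectivity with the pre-SMC Hom-vanishing across all relevant degrees is the technical heart of the proof, and is the dual analogue of the corresponding step in the silting reduction \cite[Proposition 3.2]{IY2}.
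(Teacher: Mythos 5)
Your proposal is correct and follows essentially the same route as the paper: the truncation triangles of the two $t$-structures give (R1) and (R2), the converse is the same inductive peeling by minimal right (resp.\ left) $\hhh_{\rrr}$-approximations justified via (R2) and Lemma \ref{Lem:notation}(3), and the heart is identified with $\hhh_{\rrr}$ by the filtration/splitting argument (in the paper phrased as $\X_{\rrr}=\X_{\rrr}[1]\ast\hhh_{\rrr}$ and $\add\hhh_{\rrr}=\hhh_{\rrr}$). No substantive differences or gaps.
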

\begin{proof}
We first claim that $\cal X_{\cal R}\cap \X_{\rrr}^{\perp}[1]=\hhh_{\rrr}={}^{\perp}\Y_{\rrr}\cap \Y_{\rrr}[1]$. We only show the first equality since the second one is dual. Since $\rrr$ is a pre-SMC, then $\Hom_{\T}(\X_{\rrr}[1], \hhh_{\rrr})=0$ and  thus  $\hhh_{\rrr}\subset \cal X_{\cal R}\cap \X_{\rrr}^{\perp}[1]$. Now assume $X\in \cal X_{\cal R}\cap \X_{\rrr}^{\perp}[1]$. 
 Since we know $X\in \cal X_{\rrr}=\X_{\rrr}[1]\ast \hhh_{\rrr}$ by Proposition \ref{Prop:SMCtotstr} and $\Hom_{\T}(\X_{\rrr}[1], X)=0$, then it is clear that $X\in \add\hhh_{\rrr}=\hhh_{\rrr}$ (Since $\hhh_{\rrr}$ is the heart of a $t$-structure, so $\add\hhh_{\rrr}=\hhh_{\rrr}$).
 Therefore $\cal X_{\cal R}\cap \X_{\rrr}^{\perp}[1]=\hhh_{\rrr}$.

 $(1)\Rightarrow (2)$  We show (R1).
 For any $X\in \cal R[>\0]^{\perp}=\cal X_{\cal R}^{\perp}[1]$, there is a triangle $$\cal Z[-1]\ra Y\xra{f} X \ra Z $$ with $Y\in \cal X_{\cal R}$ and $Z\in \cal X_{\cal R}^{\perp}$. We claim $Y\in \hhh_{\rrr}$ and $f$ is a right $\hhh_{\rrr}$-approximation of $X$.
 Since $\cal X_{\cal R}^{\perp}[-1]\subset \cal X_{\cal R}^{\perp}[1]$, then $Y\in Z [-1]\ast X \in \cal X_{\cal R}^{\perp}[1]$ and thus $Y\in \cal X_{\cal R}^{\perp}[1]\cap \cal X_{\cal R}=\hhh_{\rrr}$. 
 Since $\Hom_{\T}(\hhh_{\rrr}, Z)=0$, then it follows that $f$ is a right $\hhh_{\rrr}$-approximation. So  
 $\hhh_{\rrr}$ is contravariantly  finite in $\cal R[>\0]^{\perp}$. Dually, $\hhh_{\rrr}$ is  convariantly finite in ${}^{\perp}\cal R[<\0]$. 
 %Then (R1) holds.
 
 We show (R2).
 For any $T\in \T$, consider the triangle  $T'\ra T \ra T'' \ra T'[1]$ with $T'\in \X_{\rrr}$ and $T''\in \X_{\rrr}^{\perp}$. Since $\Hom_{\T}(\hhh_{\rrr}[\gg \0], T')=0$ by Lemma \ref{Lem:SMCtotstr} and $\Hom_{\T}(\hhh_{\rrr}[\ge \0], T'')=0$, then  we know $\Hom_{\T}(\hhh_{\rrr}[\gg\0], T)=0$. The dual argument shows $\Hom_{\T}(T[\gg\0], \hhh_{\rrr})=0$. 
 %Thus (R2) is true.

 $(2) \Rightarrow (1)$ We only show $\T=\cal X_{\cal R}\perp \cal X_{\cal R}^{\perp}$ is a $t$-structure, because  the other assertion can be shown  similarly.
 Since $\X_{\rrr}[1]\subset\X_{\rrr}$, it is enough to show $\T=\X_{\rrr}\ast\X_{\rrr}^{\perp}$.
  %%%%%%%%%%%%%%%%%%%%
  Let $X\in \T$.  We have $\Hom_{\T}(\hhh_{\rrr}[\ge \hs l], X)=0$ for some $l\in\Z$  by (R2). 
Notice that by   Proposition \ref{Prop:SMCtotstr}, $\X_{\rrr}=\bigcup_{n\ge 0}\hhh_{\rrr}[n]\ast\cdots\ast\hhh_{\rrr}$.
  If $l\le 0$, then we get $X\in \hhh_{\rrr}[\ge\hs l]^{\perp}\subset \hhh_{\rrr}[\ge \0]^{\perp}=\X_{\rrr}^{\perp}$, and thus $X\in \X_{\rrr}\ast \X_{\rrr}^{\perp}$. 
  
Next we  use the induction on $l$ to prove  $X\in \X_{\rrr}\ast \X_{\rrr}^{\perp}$ generally.
 We assume $\hhh_{\rrr}[\ge \hs l-1]^{\perp}\subset \X_{\rrr}\ast\X_{\rrr}^{\perp}$ for some $l>0$.
By assumption (R1),  there exists a triangle $H[l-1]\xra{f} X \ra X' \ra H[l]$ such that  $f$ is a minimal right $(\hhh_{\rrr}[l-1])$-approximation of $X$.
Since $X\in\hhh_{\rrr}[\ge \hs l]^{\perp}$, then $f$ is also a minimal right $(\hhh_{\rrr}[\ge\hs l-1])$-approximation and $\Hom_{\T}(\hhh_{\rrr}[\ge \hs l-1], X')=0$  by Lemma \ref{Lem:notation}. 
By our assumption, $X'\in \X_{\rrr}\ast\X_{\rrr}^{\perp}$. Thus $X\in H[l-1]\ast X'\subset \hhh_{\rrr}[l-1]\ast\X_{\rrr}\ast\X_{\rrr}^{\perp}=\X_{\rrr}\ast\X_{\rrr}^{\perp}$ holds since $\X_{\rrr}$ is extension-closed. 
\end{proof}

 The following proposition shows the first statement of Theorem \ref{Thm:SMCbij}.

\begin{Prop}\label{Prop:decomp.}
 \begin{enumerate}[\rm (1)]
 \item The natural functor $\cal Z \hookrightarrow \T \ra \cal U$ gives an equivalence $\cal Z \xra{\simeq} \cal U$;
    \item We have $\T=\X_{\cal R}\perp\cal Z\perp \Y_{\cal R}[1]$.
\end{enumerate}
\end{Prop}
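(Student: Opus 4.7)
I plan to prove the two parts independently: (1) as a direct consequence of the general quotient equivalence Proposition \ref{Thm:IY} once $[\cal P]$ is identified as the zero ideal, and (2) by iterating the two $t$-structures of Proposition \ref{Prop:R1}.

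For (1), the strategy is to instantiate Proposition \ref{Thm:IY} with $\cal S := \thick(\cal R)$, with (T1) supplied by the standard $t$-structure $\thick(\cal R) = \cal X_{\cal R} \perp \cal Y_{\cal R}$ from Proposition \ref{Prop:SMCtotstr} (a $t$-structure is in particular a torsion pair), and with (T2) supplied by Proposition \ref{Prop:R1}. Under these identifications the subcategory $\cal X^{\perp} \cap {}^{\perp}\cal Y[1]$ appearing in Proposition \ref{Thm:IY} coincides with our $\cal Z = \cal X_{\cal R}^{\perp} \cap {}^{\perp}\cal Y_{\cal R}[1]$, so the only verification needed is $\cal P := \cal X_{\cal R}[1] \cap \cal Y_{\cal R} = 0$. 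This is immediate: the $t$-structure axiom gives $\cal X_{\cal R}[1] \subset \cal X_{\cal R}$, while $\Hom_{\cal T}(\cal X_{\cal R}, \cal Y_{\cal R}) = 0$ forces $\cal X_{\cal R} \cap \cal Y_{\cal R} = 0$. Consequently $[\cal P]$ is the zero ideal and the equivalence $\cal Z/[\cal P] \cong \cal U$ collapses to $\cal Z \xra{\simeq} \cal U$.

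For (2), I would iterate two truncations on any $X \in \cal T$. The $t$-structure $\cal T = \cal X_{\cal R} \perp \cal X_{\cal R}^{\perp}$ furnishes a triangle $X_1 \to X \to X_2 \to X_1[1]$ with $X_1 \in \cal X_{\cal R}$ and $X_2 \in \cal X_{\cal R}^{\perp}$; the shifted $t$-structure $\cal T = {}^{\perp}\cal Y_{\cal R}[1] \perp \cal Y_{\cal R}[1]$ (a shift of the second $t$-structure in Proposition \ref{Prop:R1}) then yields a triangle $A \to X_2 \to B \to A[1]$ with $A \in {}^{\perp}\cal Y_{\cal R}[1]$ and $B \in \cal Y_{\cal R}[1]$. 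The crucial point is to show $A \in \cal X_{\cal R}^{\perp}$, so that $A \in \cal Z$. Rotate the second triangle to $B[-1] \to A \to X_2 \to B$ and apply $\Hom_{\cal T}(\cal X_{\cal R}, -)$: the right neighbor $\Hom_{\cal T}(\cal X_{\cal R}, X_2)$ vanishes by construction, and the left neighbor $\Hom_{\cal T}(\cal X_{\cal R}, B[-1])$ vanishes because $B[-1] \in \cal Y_{\cal R}$ and $\cal X_{\cal R} \perp \cal Y_{\cal R}$ inside $\thick(\cal R)$. The long exact sequence then squeezes $\Hom_{\cal T}(\cal X_{\cal R}, A) = 0$. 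Splicing the two triangles exhibits $X \in \cal X_{\cal R} \ast \cal Z \ast \cal Y_{\cal R}[1]$, and the adjacent orthogonalities $\Hom_{\cal T}(\cal X_{\cal R}, \cal Z) = 0$ and $\Hom_{\cal T}(\cal Z, \cal Y_{\cal R}[1]) = 0$ are built into the very definition $\cal Z = \cal X_{\cal R}^{\perp} \cap {}^{\perp}\cal Y_{\cal R}[1]$.

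The main obstacle is the vanishing $\Hom_{\cal T}(\cal X_{\cal R}, B[-1]) = 0$ used in (2). The subtlety is that $\cal Y_{\cal R}[1]$ is not itself orthogonal to $\cal X_{\cal R}$ (indeed $\cal X_{\cal R}$ and $\cal Y_{\cal R}[1]$ both contain $\hhh_{\cal R}$), but the single shift back by $[-1]$ relocates $B$ into $\cal Y_{\cal R}$, where the $t$-structure orthogonality with $\cal X_{\cal R}$ becomes available. For (1), by contrast, all the real content is in verifying (T0)--(T2) and then computing that $\cal P$ is trivial; once this is done Proposition \ref{Thm:IY} does the rest.
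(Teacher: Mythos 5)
Your proposal is correct and follows essentially the same route as the paper: part (1) is exactly the paper's application of Proposition \ref{Thm:IY} to the $t$-structures of Propositions \ref{Prop:SMCtotstr} and \ref{Prop:R1} together with the observation $\cal X_{\cal R}[1]\cap\cal Y_{\cal R}=0$, and part (2) is the same two-step truncation (first along $\T=\cal X_{\cal R}\perp\cal X_{\cal R}^{\perp}$, then along the shifted $t$-structure ${}^{\perp}\cal Y_{\cal R}[1]\perp\cal Y_{\cal R}[1]$) with the same Hom-vanishing argument showing the middle piece lies in $\cal X_{\cal R}^{\perp}\cap{}^{\perp}\cal Y_{\cal R}[1]=\cal Z$. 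Your explicit remark that only the adjacent orthogonalities $\Hom_{\T}(\cal X_{\cal R},\cal Z)=0$ and $\Hom_{\T}(\cal Z,\cal Y_{\cal R}[1])=0$ hold (not $\Hom_{\T}(\cal X_{\cal R},\cal Y_{\cal R}[1])=0$) correctly pins down the intended reading of the statement, which the paper leaves implicit.
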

\begin{proof}
(1) By Propositions \ref{Prop:SMCtotstr} and \ref{Prop:R1},
we have $t$-structures $\thick(\rrr)=\X_{\rrr}\perp\Y_{\rrr}$ and 
$\T=\cal X_{\cal R}\perp \cal X_{\cal R}^{\perp}={}^{\perp}\cal Y_{\cal R}\perp \cal Y_{\cal R}$. Notice that $\X[1]\cap \Y={0}$, then the assertion holds by  Proposition \ref{Thm:IY}.

(2)  It suffices to show $\X_{\rrr}^{\perp}= \cal Z \perp \Y_{\rrr}[1]$. For any $M\in \X_{\rrr}^{\perp}$, there is a triangle $M''[-1]\ra M'\ra M\ra M''$ with $M'\in{}^{\perp}\Y_{\rrr}[1]$ and $M''\in \Y_{\rrr}[1]$ by Proposition \ref{Prop:R1}.
Applying $\Hom_{\T}(\X_{\rrr}, ?)$ to this triangle, it is easy to see $M'\in \X_{\rrr}^{\perp}$. Then $M'\in\X_{\rrr}^{\perp}\cap{}^{\perp}\Y_{\rrr}[1]=\zzz$.
So  $\X_{\rrr}^{\perp}= \cal Z \perp \Y_{\rrr}[1]$.
\end{proof}

In the next part, we study the triangulated structure of $\zzz$, which will be used later.
Since $\cal U$ has a natural structure of triangulated category, then by using the additive equivalence $\cal Z \xra{\simeq} \cal U$, we may also regard $\cal Z$ as a triangulated category. Now we describe the shift functor $\lan1\ran$ in $\cal Z$. 

We define $\langle1\rangle$ on objects of $\zzz$ first. 
For any $X\in \cal Z$, we have $X[1]\in \rrr[>\0]^{\perp}$ and by (R1), there exists a $\hhh_{\rrr}$-approximation of $X[1]$. Define  
  $X\lan 1\ran$ as the third term of the following triangle.
\begin{eqnarray}\label{Omega}
 R_{X} \xra{f_{X}} X[1] \ra  X\lan1\ran \ra R_{X}[1]
 \end{eqnarray}
where $f_{X}$ is the minimal right 
$\hhh_{\rrr}$-approximation of $X[1]$. 
Notice that $ X\lan1\ran$ is defined uniquely up to isomorphism.
 Similarly, we can define $X\lan-1\ran$. Immediately, we have the following observation. 
 
 \begin{Lem}\label{Lem:Omega}
 Let $\lan1\ran$ be defined as above. Then
 \begin{enumerate}[\rm(1)]
 \item For any $X\in \zzz$, we have $X\lan1\ran\in Z$;
 \item For $X\in \Z$ and $n\ge 1$, we have $ X\lan n\ran \in X[n]\ast\hhh_{\rrr}[n]\ast \cdots \ast \hhh_{\rrr}[1]$.
 \end{enumerate}
 \end{Lem}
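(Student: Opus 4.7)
The plan is to verify (1) by checking the two orthogonality conditions defining $\zzz$ separately, and then to deduce (2) by a short induction on $n$ from (1) together with the shape of the triangle (\ref{Omega}).

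For the first half of (1), namely $X\lan 1\ran\in\rrr[\ge\0]^{\perp}=\X_{\rrr}^{\perp}$, the idea is to identify (\ref{Omega}) with the truncation triangle of $X[1]$ under the $t$-structure $\T=\X_{\rrr}\perp\X_{\rrr}^{\perp}$ supplied by Proposition \ref{Prop:R1}. Since $X\in\zzz\subset\X_{\rrr}^{\perp}$, one has $\Hom_{\T}(\hhh_{\rrr}[j],X[1])=\Hom_{\T}(\hhh_{\rrr}[j-1],X)=0$ for every $j\ge 1$. Iterating the second clause of Lemma \ref{Lem:notation}(3) with $\Y=\hhh_{\rrr}[j]$ for successive $j\ge 1$ shows that $f_X$ upgrades from a minimal right $\hhh_{\rrr}$-approximation to a minimal right $\X_{\rrr}$-approximation of $X[1]$; the first clause of the same lemma then places the cone $X\lan 1\ran$ in $\X_{\rrr}^{\perp}$. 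For the second half, namely $X\lan 1\ran\in{}^{\perp}\rrr[\le\0]$, apply $\Hom_{\T}(-,\rrr[i])$ for $i\le 0$ to (\ref{Omega}): the hypothesis $X\in{}^{\perp}\rrr[\le\0]$ yields $\Hom_{\T}(X[1],\rrr[i])=\Hom_{\T}(X,\rrr[i-1])=0$ for $i\le 1$, while $R_X\in\hhh_{\rrr}$ combined with $\hhh_{\rrr}$ being the heart of a bounded $t$-structure on $\thick(\rrr)$ forces $\Hom_{\T}(R_X[1],\rrr[i])=\Hom_{\T}(R_X,\rrr[i-1])=0$ for $i\le 0$. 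The long exact sequence then delivers $\Hom_{\T}(X\lan 1\ran,\rrr[i])=0$ in that range.

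For (2), I would induct on $n\ge 1$. The base case is direct: rotating (\ref{Omega}) to $X[1]\ra X\lan 1\ran\ra R_X[1]\ra X[2]$ together with $R_X\in\hhh_{\rrr}$ gives $X\lan 1\ran\in X[1]\ast\hhh_{\rrr}[1]$. For the inductive step, write $X\lan n+1\ran=(X\lan n\ran)\lan 1\ran$; by (1), $X\lan n\ran\in\zzz$, so the base case applied to it yields $X\lan n+1\ran\in X\lan n\ran[1]\ast\hhh_{\rrr}[1]$, while shifting the inductive hypothesis produces $X\lan n\ran[1]\in X[n+1]\ast\hhh_{\rrr}[n+1]\ast\cdots\ast\hhh_{\rrr}[2]$. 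Concatenating the two inclusions yields the desired containment. The only non-routine point in the whole argument is the upgrade from a minimal right $\hhh_{\rrr}$-approximation to a minimal right $\X_{\rrr}$-approximation in the first half of (1); once that has been observed, everything else reduces to long exact sequence bookkeeping and a one-step induction.
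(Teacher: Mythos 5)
Your proposal is correct and takes essentially the same route as the paper: the key step in both is upgrading $f_X$ from a minimal right $\hhh_{\rrr}$-approximation to a minimal right $\X_{\rrr}$-approximation via Lemma \ref{Lem:notation}(3) (the paper does this in a single application with $\Y=\X_{\rrr}[1]$, using $\X_{\rrr}=\X_{\rrr}[1]\ast\hhh_{\rrr}$, rather than by iterating over $\hhh_{\rrr}[j]$), followed by the long exact sequence argument for the ${}^{\perp}\rrr[\le\0]$ condition and the same one-step induction for (2).
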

 
 \begin{proof}
 (1)  Since $X\in \zzz$, then $\Hom_{\T}(\rrr[>\0], X[1])=0$. Notice that $\X_{\rrr}[1]=\Filt(\rrr[>\0])$ and $\X_{\rrr}=\X_{\rrr}[1]\ast\hhh_{\rrr}$ by 
 Proposition \ref{Prop:SMCtotstr},
 then $f_{X}$ in triangle \eqref{Omega} is also a minimal right $\X_{\rrr}$-approximation of $X[1]$ and  $X\lan1\ran\in \X_{\rrr}^{\perp}$    by  Lemma \ref{Lem:notation}  (3).  
 
 On the other hand, since $X\in {}^{\perp}\Y_{\rrr}[1]$ and $R_{X}\in \hhh_{\rrr}\subset {}^{\perp}\Y_{\rrr}$, then $X[1]\in {}^{\perp}\Y_{\rrr}[2]\subset{}^{\perp}\Y_{\rrr}[1]$ and $R_{X}[1]\in {}^{\perp}\Y_{\rrr}[1]$.
 Therefore, $X\lan1\ran\in {}^{\perp}\Y_{\rrr}[1]$ by triangle \eqref{Omega}. So $ X\lan1\ran\in \X_{\rrr}^{\perp}\cap {}^{\perp}\Y_{\rrr}[1]=\zzz$.
 
 (2) For $n\ge 1$, consider the following triangle.
 \begin{eqnarray}\label{map}
 R_{X\lan n-1\ran} \ra X\lan n-1\ran[1] \ra  X\lan n\ran \ra R_{X\lan n-1\ran}[1], \end{eqnarray}
 where $R_{X\lan n-1\ran} \ra X\lan n-1\ran[1] $ is the minimal right $\hhh_{\rrr}$-approximation of $X\lan n-1\ran[1]$, then we have $X\lan n\ran\in X\lan n-1\ran [1]\ast\hhh_{\rrr}[1]$. By induction, it easy to see $X\lan n\ran \in X[n]\ast \hhh_{\rrr}[n]\ast \cdots \ast \hhh_{\rrr}[1]$.
 \end{proof}
 
 Next we define $\lan1\ran$ on morphisms of $\zzz$. Let $s\in \Hom_{\ZZ}(X,Y)$ for any $X, Y$ in $\zzz$.  Consider the following diagram.
 \begin{equation} \label{morphism}
 \begin{aligned}
  \xymatrix{ R_{X} \ar[d]^{h} \ar[r]^{f_{X}}  & X[1] \ar[d]^{s[1]} \ar[r]^{g_{X}}  &  X\lan1\ran \ar[d]^{t}\ar[r] & R_{X}[1] \ar[d]^{h[1]}\\
R_{Y} \ar[r]^{f_{Y}}  & Y[1] \ar[r]^{g_{Y}}  & Y\lan1\ran \ar[r] & R_{Y}[1]
}
\end{aligned}
\end{equation}
 Since $\Hom_{\T}(R_{X}, Y\lan1\ran)=0$, then there exists a morphism $h\in\Hom_{\T}(R_{X}, R_{Y})$, such that $s[1]\circ f_{X}=f_{Y}\circ h$.
 Let $t:  X\lan1\ran\ra Y\lan1\ran$ be a morphism  such that  diagram \eqref{morphism} is commutative. We define $ s\lan1\ran:=t$. The following lemma shows $ s\lan 1\ran$ is well defined.
 
 \begin{Lem}\label{Lem:morphism}
 Let $X,Y\in \zzz$. For any $s\in \Hom_{\zzz}(X, Y)$, 
 $s\lan1\ran$ defined above is determined by $s$ uniquely. 
 \end{Lem}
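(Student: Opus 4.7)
The plan is to show that any morphism $t \colon X\lan 1\ran \to Y\lan 1\ran$ making the middle square of \eqref{morphism} commute is unique; this will force $s\lan 1\ran$ to be independent of both the auxiliary choice of $h$ and of the TR3 completion. I will reduce the uniqueness to a single Hom-vanishing.

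Suppose $t_1, t_2$ both satisfy $t_i \circ g_X = g_Y \circ s[1]$. Then $(t_1 - t_2) \circ g_X = 0$, and applying the cohomological functor $\Hom_{\T}(-, Y\lan 1\ran)$ to the triangle $R_X \xra{f_X} X[1] \xra{g_X} X\lan 1\ran \to R_X[1]$, the resulting long exact sequence shows that $t_1 - t_2$ factors through the connecting morphism $X\lan 1\ran \to R_X[1]$. It therefore suffices to prove $\Hom_{\T}(R_X[1], Y\lan 1\ran) = 0$. For this vanishing, $R_X \in \hhh_{\rrr} \subset \X_{\rrr}$, and since $\X_{\rrr}[1] \subset \X_{\rrr}$ by Proposition \ref{Prop:SMCtotstr}, we get $R_X[1] \in \X_{\rrr}$. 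On the other hand, Lemma \ref{Lem:Omega}(1) gives $Y\lan 1\ran \in \zzz \subset \rrr[\ge\0]^{\perp} = \X_{\rrr}^{\perp}$, where the last equality is immediate from $\X_{\rrr} = \Filt(\rrr[\ge\0])$. Combining these two facts yields the required vanishing.

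I do not foresee any serious obstacle; the argument merely combines the torsion-pair structure from Proposition \ref{Prop:R1} with the perpendicularity already observed in the proof of Lemma \ref{Lem:Omega}(1). The only conceptual subtlety worth flagging is that proving uniqueness of $t$ for a single auxiliary $h$ simultaneously eliminates the ambiguity of $h$ itself: any two valid pairs $(h_1, t_1)$ and $(h_2, t_2)$ both satisfy $t_i \circ g_X = g_Y \circ s[1]$, so the uniqueness of $t$ alone suffices.
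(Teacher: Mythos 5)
Your proposal is correct and follows essentially the paper's own argument: the paper also deduces $(t-t')\circ g_X=0$, factors $t-t'$ through $R_X[1]$, and concludes from $\Hom_{\T}(R_X[1],Y\lan1\ran)=0$ (using $R_X[1]\in\X_{\rrr}$ and $Y\lan1\ran\in\X_{\rrr}^{\perp}$). The only difference is that the paper first proves uniqueness of $h$ as well, which, as you correctly observe, is not needed to show that $s\lan1\ran$ is determined by $s$.
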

 
 \begin{proof}
 We first claim the morphism $h$ in diagram \eqref{morphism} is uniquely determined by $s$.
 If there exists $h'\in \Hom_{\T}(R_{X},R_{Y})$ such that $s[1]\circ f_{X}=f_{Y}\circ h'$, then $f_{Y}\circ (h-h')=0$ and moreover, $h-h'$ factors through $Y\lan1\ran[-1]$. But $R_{X}\in \hhh_{\rrr}\subset \X_{\rrr}$ and $ Y\lan1\ran \in \X_{\rrr}^{\perp}$, so $\Hom_{\T}(R_{X},  Y\lan1\ran[-1])=0$. Thus $h=h'$.
 
 Next we show $t$ is unique. If there exists $t':  X\lan1\ran\ra Y\lan1\ran$ such that the diagram \eqref{morphism} commutes. Then 
 we have $(t-t')\circ g_{X}=0$, so $t-t'$ factors through $R_{X}[1]$. But $\Hom_{T}(R_{X}[1],  Y\lan1\ran)=0$, then $t=t'$.
 \end{proof}

 By Lemma \ref{Lem:Omega} and Lemma \ref{Lem:morphism}, it is easy to check that $\lan1\ran: \zzz\ra \zzz$
 is a well-defined functor. 
 Notice that the triangle \eqref{Omega} gives an isomorphism $X[1]\cong X\lan1\ran$ in $\cal U$.  
 
 Next we describe the triangles in $\zzz$.
 Let $X, Y\in\zzz$ and $s\in \Hom_{\zzz}(X,Y)$. Then $s$ induces a triangle $X\xra{s}Y\ra Z\ra X[1]$ in $\T$. 
Consider the right $\hhh_{\rrr}$-approximations of $Z$ and $X[1]$. We have the following commutative diagrams.
\begin{equation} \label{triangle}
 \begin{aligned}
\xymatrix{ & & R_{Z} \ar[d] \ar[r] & R_{X} \ar[d] \\
X \ar[r]^{s} & Y \ar[r]^{t} & Z \ar[r] \ar[d]^{u} & X[1] \ar[d]
\\
& & W \ar[r] &  X\lan1\ran }
\end{aligned}
\end{equation}
 In this case, the following result holds.
 \begin{Prop}\label{Prop:triangles}
Consider the triangulated structure of $\cal Z$ induced by $\cal U$. Then 
\begin{enumerate}[\rm(1)]
\item
The suspension functor of  $\cal Z$ is given by $\lan1\ran$;
\item Let $s:X\ra Y$ be a morphism in $\zzz$. Then the triangle in $\zzz$ induced by $s$  is $X\xra{s}Y\xra{ut} W\ra X\lan1\ran$.
\end{enumerate}
\end{Prop}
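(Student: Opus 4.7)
The plan is to transport the triangulated structure of $\cal U$ back along the additive equivalence $\pi: \cal Z \xra{\simeq} \cal U$ of Proposition \ref{Prop:decomp.}(1) and then recognize both the shift functor and the distinguished triangles intrinsically on the $\cal Z$-side. The whole argument rests on a single observation: the Verdier quotient functor $q: \T \to \cal U$ annihilates every object of $\thick \cal R$, so any morphism in $\T$ whose cone lies in $\thick \cal R$ becomes an isomorphism in $\cal U$.

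For (1), I would first recall from Lemma \ref{Lem:Omega}(1) that $X\lan 1 \ran \in \cal Z$, and then apply $q$ to the defining triangle \eqref{Omega}: the middle map $X[1] \to X\lan 1 \ran$ becomes an isomorphism in $\cal U$ because $R_X \in \hhh_\rrr \subset \thick \cal R$. Since $q$ commutes with the shift, this identifies $X\lan 1\ran$ as a canonical preimage of $\pi(X)[1]$ under $\pi$, so $\lan 1\ran$ implements on objects the shift induced by $\pi$. For morphisms, applying $q$ to diagram \eqref{morphism} turns $q(g_X)$ and $q(g_Y)$ into isomorphisms, under which the commutativity of the right square forces $q(s\lan 1\ran)$ to equal $q(s)[1]$. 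Functoriality of $\lan 1\ran$ on $\cal Z$ is then guaranteed by the uniqueness statement in Lemma \ref{Lem:morphism}.

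For (2), I would form the cone triangle $X \xra{s} Y \xra{t} Z \xra{c} X[1]$ in $\T$; its $q$-image is a distinguished triangle in $\cal U$. Diagram \eqref{triangle} supplies a morphism $W \to X\lan 1\ran$ making the evident square commute in $\T$, and the two remaining vertical maps $u: Z \to W$ and $X[1] \to X\lan 1\ran$ become isomorphisms in $\cal U$ (their cones $R_Z[1]$ and $R_X[1]$ lie in $\thick \cal R$). Composing yields an isomorphism in $\cal U$ between the original cone triangle and $X \xra{s} Y \xra{ut} W \to X\lan 1\ran$; since all four terms of the latter row already lie in $\cal Z$, transporting back along $\pi$ exhibits it as the distinguished triangle in $\cal Z$ determined by $s$.

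The main technical point I expect to dwell on is verifying that $W$ lies in $\cal Z$, which is required for the previous paragraph to make sense intrinsically in $\cal Z$. Concretely, both $W \in \cal R[\ge \0]^\perp$ and $W \in {}^\perp \cal R[\le \0]$ must be checked. The first will follow by combining $\Hom_\T(\cal R[>\0], Z) = 0$ (read off the triangle $X \to Y \to Z \to X[1]$ using $X, Y \in \cal Z$) with Lemma \ref{Lem:notation}(3): the approximation $R_Z \to Z$ is then also a minimal right $\X_\rrr$-approximation, so $W \in \X_\rrr^\perp = \cal R[\ge \0]^\perp$. The second will follow from the long exact sequence attached to $R_Z \to Z \to W \to R_Z[1]$, using $\Hom_\T(R_Z, \cal R[<\0]) = 0$ (from $R_Z \in \hhh_\rrr$ and the pre-SMC condition) together with $\Hom_\T(Z, \cal R[\le \0]) = 0$ (again from $X, Y \in \cal Z$). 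Once $W \in \cal Z$ is in hand, the identification of the connecting morphism $W \to X\lan 1\ran$ with what diagram \eqref{triangle} produces is a routine diagram chase via the octahedral axiom applied to the composite $R_Z \to Z \to X[1]$.
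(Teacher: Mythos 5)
Your proposal is correct and follows essentially the same route as the paper: transport the triangulated structure along the equivalence $\cal Z\simeq\cal U$, observe that the maps $X[1]\to X\lan1\ran$ and $u:Z\to W$ become isomorphisms in $\cal U$ because their cones lie in $\thick(\cal R)$, and check $W\in\cal Z$. The paper's proof is just terser — it cites Lemmas \ref{Lem:Omega} and \ref{Lem:morphism} for (1) and says $W\in\cal Z$ is "similar to the proof of Lemma \ref{Lem:Omega}", which is exactly the approximation argument you spell out.
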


\begin{proof}
(1) Directly form  Lemma \ref{Lem:Omega} and Lemma \ref{Lem:morphism}.

(2)  Notice we have isomorphism $Z\cong W$ and $X[1]\cong X\lan1\ran$  in $\cal U$. Moreover, $X\xra{s}Y\xra{ut} W\ra X\lan1\ran$ is a triangle in $\cal U$. 
Since we have $W\in \zzz$ (similar to the proof of Lemma \ref{Lem:Omega}).
Then the assertion holds by the equivalence $\cal Z\simeq \cal U$. 
\end{proof}

Now we are ready to prove Theorem \ref{Thm:SMCbij}.

\begin{proof}[Proof of Theorem \ref{Thm:SMCbij}]
(1) is directly from Proposition \ref{Prop:decomp.} (1).

(2)
Let $\sss\in \SMC_{\rrr}\T$. We first show  that $\sss\backslash \rrr\in \SMC\, \cal U$. 
Since $\thick_{\T}(\sss)=\T$, then $\thick_{\cal U}(\sss\backslash \rrr)=\cal U$. Let $X, Y\in \sss\backslash \rrr$.
It is clear from Definition \ref{Def:SMC} that $\sss\backslash \rrr \subset \zzz$. So by (1), we have 
$$\dim\Hom_{\zzz}(X,Y)=\dim\Hom_{\T}(X,Y)=\delta_{X,Y}.$$
Since $ X\lan n\ran \in X[n]\ast\hhh_{\rrr}[n]\ast \cdots \ast \hhh_{\rrr}[1]$ for $n>0$ by Lemma \ref{Lem:Omega} and $\Hom_{\T}(\hhh_{\rrr}[\ge\0], Y)=0$, then by (1) again,  
$$\Hom_{\cal U}(X[n],Y)=\Hom_{\zzz}(X\lan n \ran,Y)=\Hom_{\T}(X\lan n\ran, Y)=\Hom_{\T}(X[n], Y)=0.$$

So $\sss\backslash\rrr\in \SMC\,\cal U$.
Therefore, sending $\sss\in\SMC_{\rrr}\T$ to $\sss\backslash\rrr\in\SMC\,\cal U$ gives us a well-defined map
$\SMC_{\rrr}\T\ra \SMC\,\cal U$, which is clearly injective.

We show the map is also surjective. Let $\sss_{\cal U}$ be a SMC of $\cal U$.
By (1), we may assume $\sss_{\cal U}\subset \zzz$. In this case, $\sss_{\cal U}$ is also a SMC of $\zzz$.
Let $\sss=\sss_{\cal U}\cup \rrr$. We claim $\sss\in \SMC_{\rrr}\T$.
Since $\rrr$ is a pre-SMC and $\zzz=\rrr[\ge\0]^{\perp}\cap {}^{\perp}\rrr[\le \0]$,  it is clear that $\dim\Hom_{\T}(X, Y)=\delta_{X,Y}$ for any $X,Y\in \cal S$, and $\Hom_{\T}(X[>\0], Y)=0$ for $X\in \rrr$, $Y\in \sss$  or $X\in \sss$, $Y\in \rrr$.
Next we show $\Hom_{\T}(X[>\0],Y)=0$ for any $X, Y\in \sss_{\cal U}$.
Notice that   Lemma \ref{Lem:Omega} (2) implies
$X[n]\in \hhh_{\rrr}[n-1]\ast \cdots \ast \hhh_{\rrr}\ast X\lan n \ran$ for $n>0$.
Since $\Hom_{\T}(\hhh_{\rrr}[\ge\0], Y)=0$, then $\Hom_{\T}(X[n], Y)=\Hom_{\zzz}(X\lan n\ran, Y)=0$ for $n>0$.

To show $\sss$ is a SMC of $\T$, we  are left to show $\T=\thick_{\T}(\sss)$. 
Since $X\lan m \ran\in \thick_{\T}(S)$ for any $X\in \sss_{\cal U}$ and $\thick_{\zzz}(\sss_{\cal U})=\zzz$, then 
$\zzz\subset \thick_{\T}(\sss)$. So 
$\thick_{\T}(\cal Z \cup \rrr)\subset \thick_{\T}(\sss)\subset\T$.
But by Proposition \ref{Prop:decomp.} (2), we have $\thick_{\T}(\zzz\cup\rrr)=\T$, so $\thick_{\T}(\sss)=\T$ and thus $\sss\in \SMC_{\rrr}\T$.  Then the  map $\SMC_{\rrr}\T\ra \SMC\,\cal U$ is bijective.
 \end{proof}

\subsection{Examples}
In this subsection, we consider the application of Theorem \ref{Thm:SMCbij} to non-positive dg algebras. 
We first give the following result.

\begin{Prop}
Let $A$ be a non-positive proper dg $k$-algebra. Let $\cal S$ be a SMC of $\D^{\bb}(A)$ and $\cal R$ be a subset of $\cal S$. Then 
 $\hhh_{\rrr}=\Filt(\rrr)$ satisfies the conditions (R1) and (R2) in Section \ref{Section:reduction}.  
\end{Prop}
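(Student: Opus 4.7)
Condition (R2) follows immediately from Lemma \ref{Lem:SMCtotstr}(3) applied to the SMC $\sss\supset\rrr$. Since $\hhh_{\rrr}=\Filt(\rrr)\subset\Filt(\sss)=\hhh_{\sss}$, for any $X\in\T$ we get $\Hom_{\T}(X,\hhh_{\rrr}[i])\subset\Hom_{\T}(X,\hhh_{\sss}[i])=0$ and, dually, $\Hom_{\T}(\hhh_{\rrr}[i],X)\subset\Hom_{\T}(\hhh_{\sss}[i],X)=0$ for $i\ll 0$. So the real work is (R1), which I would establish by verifying the equivalent condition of Proposition \ref{Prop:R1}, namely that $\T=\X_{\rrr}\perp\X_{\rrr}^{\perp}$ and $\T={}^{\perp}\Y_{\rrr}\perp\Y_{\rrr}$ are both $t$-structures.

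To set up, I would first apply Proposition \ref{Prop:SMCsimple} to the pair $(A,\sss)$ to obtain a non-positive dg algebra $B$ and a triangle equivalence $F\colon\D^{\bb}(B)\xra{\simeq}\D^{\bb}(A)$ sending $\sss_B$ to $\sss$. Transporting along $F$, I may assume $\sss=\sss_A$ is the set of simple $\h^0(A)$-modules. Under this reduction, the heart of the $\sss$-$t$-structure identifies with the finite length abelian category $\mod\h^0(A)$, and $\hhh_{\rrr}$ is the Serre subcategory of modules whose composition factors all lie in $\rrr$. Since this subcategory is Serre in a length category, the inclusion $\hhh_{\rrr}\hookrightarrow\hhh_{\sss}$ admits a right adjoint $M\mapsto M_{\rrr}$ (the largest $\hhh_{\rrr}$-submodule) and a left adjoint $M\mapsto M^{\rrr}$ (the largest $\hhh_{\rrr}$-quotient), giving the required approximations at the abelian level.

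For the first $t$-structure, given $X\in\T$, I would build a decomposition triangle $X'\to X\to X''\to X'[1]$ with $X'\in\X_{\rrr}$ and $X''\in\X_{\rrr}^{\perp}$ by induction on the total cohomological dimension of $X$ in the $\sss$-$t$-structure. The positive-shift part of $X$ lies automatically in $\X_{\rrr}^{\perp}$, reducing to the case of bounded-above cohomology. At the top surviving cohomological degree one splits off the abelian short exact sequence $0\to H(X)_{\rrr}\to H(X)\to H(X)/H(X)_{\rrr}\to 0$, lifts it through the truncation triangle of $X$, and splices it with the inductively constructed decomposition of the lower truncation via an octahedral diagram. The dual construction, using $M\mapsto M^{\rrr}$ and $\Y_{\rrr}$, produces the second $t$-structure.

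\textbf{Main obstacle.} The crux of the argument is verifying that the assembled third term genuinely lies in $\X_{\rrr}^{\perp}$: although the abelian quotient $H(X)/H(X)_{\rrr}$ has no $\rrr$-composition factor in its socle, the group $\Hom_{\T}(\hhh_{\rrr},Y[n])$ for $Y$ a negative truncation and $n\ge 1$ can be nonzero in $\D^{\bb}(A)$ due to the dg structure, producing higher cohomological obstructions invisible at the abelian level. I expect to control these by first establishing the vanishing $\Hom_{\T}(\hhh_{\rrr},X[-i])=0$ for $X\in\rrr[>\0]^{\perp}$ and $i>0$ (which propagates from the defining vanishing on $\rrr$ to all of $\hhh_{\rrr}$ via long exact sequences on short exact sequences in $\hhh_{\rrr}$), and then using this to run the octahedral induction cleanly. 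A more conceptual alternative is to observe that the adjoint pair $\hhh_{\rrr}\hookrightarrow\hhh_{\sss}$ is governed by an idempotent in $\h^0(A)$, so the derived Schur functor framework of Appendix \ref{appendix} lifts the entire adjoint structure to a recollement at the level of $\D^{\bb}(A)$, producing the required $t$-structures systematically.
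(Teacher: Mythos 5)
Your treatment of (R2) via Lemma \ref{Lem:SMCtotstr}, your reduction to $\sss=\sss_{A}$ via Proposition \ref{Prop:SMCsimple}, and your Serre-subcategory adjoints on $\mod\h^{0}(A)$ (which are exactly the content of the paper's Lemma \ref{Lem:lambda}) are all fine, and deducing (R1) from Proposition \ref{Prop:R1}(1) would be logically legitimate. The gap is in the step that actually carries the weight. The paper does not build the two $t$-structures directly; it proves (R1) by showing $\hhh_{\rrr}$ is functorially finite in all of $\D^{\bb}(A)$, and the essential point there is a cochain-level construction of left (resp.\ right) $\hhh$-approximations of an \emph{arbitrary} object $M\in\D^{\bb}(A)$, using a homotopically projective resolution ${}_{P}M$ and the quotient $P^{0}/(\im d^{-1}+K)$ (resp.\ an injective resolution); only afterwards does the abelian-level statement about $\hhh_{\rrr}\subset\hhh$ finish the job. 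Your outline never supplies this bridge from the heart to arbitrary bounded dg modules: the degreewise scheme ``split off the maximal $\rrr$-subobject of each cohomology and splice by octahedra'' does not compute the $\X_{\rrr}$-truncation. Concretely, let $A$ be the path algebra of an $A_{2}$-quiver with simples labelled so that $\Ext^{1}_{A}(S_{1},S_{2})=k$, take $\rrr=\{S_{1}\}$ and $X=S_{2}[1]$: the truncation triangle for the aisle $\X_{\rrr}=\Filt(\rrr[\ge 0])$ is $S_{1}\to S_{2}[1]\to P[1]\to S_{1}[1]$ (with $P$ the length-two projective-injective), so the aisle part is $S_{1}$, produced by the class in $\Ext^{1}(S_{1},S_{2})$ and not by any subobject of $S_{2}$; the object your recipe would declare to lie in $\X_{\rrr}^{\perp}$, namely $S_{2}[1]$ itself, does not, since $\Hom_{\T}(S_{1},S_{2}[1])\ne 0$. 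This is exactly the obstruction you flag, but your proposed remedy, the vanishing $\Hom_{\T}(\hhh_{\rrr},X[-i])=0$ for $X\in\rrr[>0]^{\perp}$ and $i>0$, lives in the negative direction and does not touch these positive-degree extension classes; what is really needed is the existence of right $\hhh_{\rrr}$- (or $\X_{\rrr}$-) approximations of arbitrary bounded objects, i.e.\ essentially (R1) itself. (The Proposition is of course true in this example; it is your construction of the truncation triangle, not the statement, that breaks.)

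Your ``more conceptual alternative'' is also not available as stated: Appendix \ref{appendix} (Proposition \ref{Prop:idem}) provides only the Verdier-quotient equivalence $\D^{\bb}(A)/\ker F^{\bb}\simeq\D^{\bb}(eAe)$, not a recollement of bounded derived categories; the adjoints do not restrict to $\D^{\bb}$ without additional hypotheses (compare the assumption that $eA$ be a pre-SMC in Proposition \ref{Prop:SMCex}(2)), and producing them at the bounded level is essentially the same finiteness problem you are trying to solve; there is also the minor issue that the idempotent of $\h^{0}(A)$ cutting out $\rrr$ need not lift to $A^{0}$. To repair the argument along the paper's lines, keep your reduction and your abelian-level adjoints, but add the explicit construction of left and right $\hhh$-approximations in $\D^{\bb}(A)$ via homotopically projective and injective resolutions, and then compose a $\hhh$-approximation with a $\hhh_{\rrr}$-approximation inside $\hhh$ to obtain (R1).
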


\begin{proof}
We know (R2) is true by Lemma \ref{Lem:SMCtotstr}. So we only need to  show (R1). In fact, we show that $\hhh_{\rrr}$ is functorially finite in $\D^{\bb}(A)$. 
By Proposition \ref{Prop:SMCsimple}, we may assume $\sss=\sss_{A}$ is the set of simple dg $A$-modules. In this case, $\hhh=\Filt(\sss)$ is equivalent to $\mod \h^{0}(A)$ (see Lemma \ref{Lem:predg}).

We first claim that $\hhh$ is functorially finite in $\D^{\bb}(A)$.
 Let $M\in \D^{\bb}(A)$. Considering the $P$-resolution ${}_{P}M$ of $M$, then ${}_{P}M\cong M$ in $\D^{\bb}(A)$ and for any $N\in \D^{\bb}(A)$, we have $\Hom_{\D^{\bb}(A)}(M, N)=\Hom_{\mathscr H (A)}({}_{P}M, N)$, where $\mathscr H(A)$ is the homotopy category (see \cite[Section 3]{Keller}). 
 We write ${}_{P}M$ as a $k$-complex and consider the following diagram.
 \[\small{\xymatrix{{}_{P}M:  & \cdots \ar[r]  & P^{-1} \ar[r]^{d^{-1}} & P^{0} \ar[r]^{d^{0}} \ar[dr]^{f} \ar[d]^{g}& P^{1} \ar[r] & \cdots  \\
 & & & L & N \ar@{.>}[l]_{h} & 
 } }\]
where $N:=\frac{P^{0}}{\im d^{-1}+K}$ and $K:= P^{0}\cap A(\op_{i\ge 1} P^{i})$. Then  $N\in\hhh$ and the map  $f:{}_{P}M\ra N$ above  is a morphism of dg $A$-modules. For any $L\in \hhh$ and a morphism $g: {}_{P}M\ra L$ of dg $A$-modules, we have $g(K)=0$, so there exists $h: N\ra L $ such that $g=h\circ f$. Then $f$ is a left $\hhh$-approximation of $M$.
Thus $\hhh$ is a covariantly finite subcategory of $\D^{\bb}(A)$. 
Dually, by using $I$-resolutions, we can show that $\hhh$ is contravariantly finite. Therefore the claim is true.

To show $\hhh_{\rrr}$ is functorially finite in $\D^{\bb}(A)$, it is enough to show $\hhh_{\rrr}$ is functorially finite in $\hhh=\mod \h^{0}(A)$. By Lemma \ref{Lem:lambda} below, we know it is true. Then we finish the proof.
\end{proof}

We need the following well-known fact.
\begin{Lem}\label{Lem:lambda}
Let $\Lambda$ be a finite-dimensional $k$-algebra and let $\rrr$ be a subset of simple $\Lambda$-modules. Then $\hhh_{\rrr}=\Filt(\rrr)$ is functorially finite in $\mod \Lambda$.
\end{Lem}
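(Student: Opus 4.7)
The plan is to identify $\hhh_{\rrr}$ with the module category of an explicit quotient algebra of $\Lambda$, after which functorial finiteness follows from the two standard adjoint functors to the corresponding full subcategory inclusion.

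First I would fix a complete set of primitive orthogonal idempotents $\{e_{1}, \ldots, e_{n}\}$ of $\Lambda$ whose corresponding simple modules $\{S_{1}, \ldots, S_{n}\}$ exhaust the simple $\Lambda$-modules up to isomorphism, and, after relabelling, assume $\rrr = \{S_{1}, \ldots, S_{m}\}$. Setting $e := e_{m+1} + \cdots + e_{n}$ and letting $I := \Lambda e \Lambda$ be the two-sided ideal generated by $e$, I would then prove the identification $\hhh_{\rrr} = \mod(\Lambda/I)$ of full subcategories of $\mod \Lambda$. The key observation is that a simple $\Lambda$-module $S$ is annihilated by $I$ if and only if it is annihilated by $e$, which by the choice of $e$ happens if and only if $S \in \rrr$; hence the simples of $\Lambda/I$ are exactly $\rrr$. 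Consequently $\mod(\Lambda/I) \subseteq \Filt(\rrr) = \hhh_{\rrr}$ by taking composition series, while the reverse containment is immediate since $\mod(\Lambda/I)$ is extension-closed in $\mod \Lambda$ and contains $\rrr$.

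Next I would exhibit the approximations using the two adjoints to the inclusion $\mod(\Lambda/I) \hookrightarrow \mod \Lambda$: the left adjoint $M \mapsto M/IM$ and the right adjoint $M \mapsto \{m \in M : Im = 0\}$. The unit $M \twoheadrightarrow M/IM$ is then a left $\hhh_{\rrr}$-approximation of $M$, since any morphism from $M$ to some $N \in \mod(\Lambda/I)$ kills $IM$ and hence factors uniquely through $M/IM$; dually, the counit inclusion $\{m \in M : Im = 0\} \hookrightarrow M$ is a right $\hhh_{\rrr}$-approximation. Finite-dimensionality of $\Lambda/I$ guarantees these approximations lie in $\mod \Lambda$, so $\hhh_{\rrr}$ is functorially finite in $\mod \Lambda$.

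There is essentially no technical obstacle in this argument; the only point requiring mild care is the verification that the simple $\Lambda/I$-modules coincide with $\rrr$, which is precisely what the choice of $e$ as the sum of primitive idempotents corresponding to the simples \emph{not} in $\rrr$ is designed to ensure.
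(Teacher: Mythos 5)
Your proposal is correct and follows essentially the same route as the paper: the paper also picks the idempotent $e$ with $\rrr=\Top(1-e)\Lambda$ and produces exactly your two approximations, the surjection onto $M/M\Lambda e\Lambda$ and the inclusion of the submodule annihilated by $\Lambda e\Lambda$, only phrased as the unit and counit $M\to i_{*}i^{*}(M)$ and $i_{*}i^{!}(M)\to M$ of the recollement of \cite{PV} attached to $e$. The one step you label ``immediate'' that deserves a line is the extension-closedness of $\mod(\Lambda/\Lambda e\Lambda)$ inside $\mod\Lambda$ (your reverse containment $\Filt(\rrr)\subseteq\mod(\Lambda/I)$): this is false for arbitrary ideals and here uses that $I=\Lambda e\Lambda$ is idempotent, so that in an extension $0\to A\to M\to B\to 0$ with $IA=0=IB$ one gets $IM\subseteq A$ and hence $IM=I^{2}M\subseteq IA=0$.
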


\begin{proof}
There exists an idempotent $e\in \Lambda$ such that $\rrr=\Top (1-e)\Lambda$. It is well-known that we have a standard recollement of abelian categories (see, for example \cite[Example 2.10]{PV})
  \[ \xymatrixcolsep{4pc}\xymatrix{\mod \Lambda/\Lambda e\Lambda \ar[r]^{i_{*}=\rm{inc.}} &\mod \Lambda \ar@/_1.5pc/[l]_{i^{*}=?\ot_{\Lambda}\Lambda/\Lambda e\Lambda} \ar@/^1.5pc/[l]^{i^{!}=\Hom_{\Lambda}(\Lambda/\Lambda e\Lambda, ?)} \ar[r]^{j^{*}=?\ot_{\Lambda}\Lambda e}  &\mod e\Lambda e \ar@/^-1.5pc/[l]_{j_{!}=?\ot_{e\Lambda e}e\Lambda} \ar@/^1.5pc/[l]^{j_{*}=\Hom_{e\Lambda e}(\Lambda e, ?)}
  }. \] 
Then one can show $i_{*}(\mod \Lambda/\Lambda e\Lambda)=\hhh_{\rrr}$ and by \cite[Proposition 2.8]{PV}, for any $M\in \mod \Lambda$, we have two exact sequences
\[ j_{!}j^{*}(M) \ra M \xra{f} i_{*}i^{*}(M)\ra 0,\]
\[ 	0\ra i_{*}i^{!}(M) \xra{g} M \ra j_{*}j^{*}(M).\] 
It is easy to check that $f$ (resp. $g$) is a left (resp. right) $\hhh_{\rrr}$-approxiamtion of $M$. So $\hhh_{\rrr}$ is functorially finite.
\end{proof}

Next we give some  useful observations, which  allow us to realize the SMC reduction of bounded derived categories as  new bounded derived categories.

\begin{Prop}\label{Prop:SMCex}
Let $A$ be a non-positive proper dg $k$-algebra. Let $e$ be an idempotent.
Assume $e\in A^{0}$.
\begin{enumerate}[\rm(1)]
\item Let $\rrr=\Top (1-e)\h^{0}(A)$. Then the SMC reduction $\D^{\bb}(A)/\thick(\rrr)$ is triangle  equivalent to $\D^{\bb}(eAe)$;
\item
 Assume $eA$ is a pre-SMC in $\D^{\bb}(\mod A)$. Then the SMC reduction $\D^{\bb}(\mod A)/\thick(eA)$ is  triangle equivalent to $\D^{\bb}(B)$, where $B$ is the dg $k$-algebra $\shEnd_{\per A/\thick eA}(A)$.
 \end{enumerate}
\end{Prop}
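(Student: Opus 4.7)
The plan is to derive both statements from the SMC reduction Theorem~\ref{Thm:SMCbij} combined with the derived Schur functor equivalence promised in Appendix~\ref{appendix} and with Keller's Morita theorem for dg categories. The hypothesis $e\in A^{0}$ guarantees that $eAe$ is a non-positive dg $k$-algebra, that $Ae$ is a dg $A$-$eAe$-bimodule, and (since $Ae$ is a direct summand of $A$) that $Ae$ is cofibrant over $A$. Hence all functorial constructions below are well behaved.

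For part (1), the candidate equivalence is induced by the derived Schur functor
$F=-\otimes_{A}^{\mathbf{L}} Ae: \D(A)\to \D(eAe)$.
First I would verify that $F$ restricts to $\D^{\bb}(A)\to \D^{\bb}(eAe)$: since $Ae$ is cofibrant, $\h^{i}(M\otimes_{A}^{\mathbf{L}}Ae)=\h^{i}(M)e$, which is finite-dimensional and bounded whenever $M\in \D^{\bb}(A)$, and $eAe$ is proper by Lemma~\ref{Lem:predg}. Next, each simple $\h^{0}(A)$-module $S$ in $\cal R=\Top((1-e)\h^{0}(A))$ satisfies $Se=0$ by construction, so $F(\cal R)=0$; therefore $F$ annihilates $\thick(\cal R)$ and factors through a canonical triangle functor $\bar{F}: \D^{\bb}(A)/\thick(\cal R)\to \D^{\bb}(eAe)$. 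The final step is to upgrade $\bar{F}$ to a triangle equivalence, which is precisely the content of the derived Schur functor equivalence supplied by Appendix~\ref{appendix}.

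For part (2), I would apply Theorem~\ref{Thm:SMCbij} directly. By the preceding proposition the pre-SMC $eA$ satisfies (R1) and (R2), so we obtain a triangle equivalence $\cal Z\xra{\simeq}\cal U:=\D^{\bb}(\mod A)/\thick(eA)$. The image of $A$ in $\cal U$ generates the thick subcategory $\per A/\thick(eA)\subset \cal U$ (because $A$ generates $\per A$), and by definition its dg endomorphism algebra in a natural dg enhancement of $\per A/\thick(eA)$ is exactly $B=\shEnd_{\per A/\thick(eA)}(A)$. Keller's Morita theorem for dg categories then produces a triangle equivalence $\per B\xra{\simeq}\per A/\thick(eA)$ sending $B$ to $A$. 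To extend this to the desired equivalence $\D^{\bb}(B)\cong \cal U$, I would verify that $B$ is a non-positive proper dg $k$-algebra (using the pre-SMC hypothesis on $eA$ to control the Ext groups of $A$ in the quotient) and then invoke Lemma~\ref{Lem:predg}, together with the compatibility of the Keller equivalence with the thick closure generated by simple dg modules.

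The main obstacle in part (1) is the full identification of $\thick(\cal R)$ with the kernel of $F$ on $\D^{\bb}(A)$; this is the heart of the appendix and is established there by filtering objects of $\D^{\bb}(A)$ via the standard $t$-structure whose heart is $\mod \h^{0}(A)$ and reducing to the exactness of multiplication by the idempotent $e$. The delicate point in part (2) is proving that $B$ is non-positive and proper: while $\h^{0}(B)=\End_{\cal U}(A)$ arises automatically, controlling $\h^{i}(B)=\Hom^{i}_{\per A/\thick(eA)}(A,A)$ for $i\neq 0$ requires a careful analysis of roofs in the Verdier quotient, for which the semisimple nature of $\hhh_{eA}=\add(eA)$ forced by the pre-SMC hypothesis (i.e.\ $eAe\cong k^{n}$) is decisive.
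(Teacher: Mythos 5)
Part (1) of your proposal is essentially the paper's own proof: Proposition \ref{Prop:idem} supplies the equivalence $\D^{\bb}(A)/\ker F^{\bb}\simeq\D^{\bb}(eAe)$, and the identification $\ker F^{\bb}=\thick(\rrr)$ is exactly the d\'evissage by standard truncation that you describe (the paper carries out this identification inside the proof of the proposition rather than in the appendix, but the argument is the one you sketch), so this half is fine.

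Part (2) has a genuine gap. The paper's route is: the pre-SMC hypothesis forces $eAe$ to have cohomology concentrated in degree $0$ and semisimple, hence $\D^{\bb}(eAe)=\per eAe$ and $eA$ is perfect over $eAe$, so the left adjoint $G=?\ot_{eAe}^{\bf L}eA$ restricts to $\D^{\bb}$; the resulting stable $t$-structure $(\thick(eA),\ker F^{\bb})$ on $\D^{\bb}(A)$ gives $\D^{\bb}(A)/\thick(eA)\simeq\ker F^{\bb}$, and then \cite[Corollary 2.12]{KY2} identifies $\ker F^{\bb}$ with $\D^{\bb}(B)$. Your route stops at Keller's Morita theorem, which only yields $\per B\simeq\per A/\thick(eA)$ with $B\mapsto A$, and the promotion of this to $\D^{\bb}(B)\simeq\D^{\bb}(A)/\thick(eA)$ is asserted via ``compatibility of the Keller equivalence with the thick closure generated by simple dg modules.'' This is where the argument breaks down: you construct no functor between $\D^{\bb}(B)$ and the Verdier quotient; $\per A/\thick(eA)$ is in general a proper subcategory of $\D^{\bb}(A)/\thick(eA)$, and the simple dg $B$-modules need not lie in $\per B$, so there is nothing on which the Keller equivalence could a priori be ``compatible.'' Passing from the perfect level to the bounded level is precisely the nontrivial content of the cited Kalck--Yang result, and making it usable requires first identifying the SMC reduction with $\ker F^{\bb}$ --- the step your outline omits and the place where the pre-SMC hypothesis actually enters (through $G$ restricting to $\D^{\bb}$). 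Two further points: non-positivity and properness of $B$ are not evident from its definition (Hom-spaces of a dg quotient can have cohomology in positive degrees a priori), and properness is in any case not what Lemma \ref{Lem:predg} needs (degreewise finite-dimensional cohomology suffices); and your appeal to the preceding proposition for (R1)--(R2) does not apply literally, since $eA$ is not assumed to be contained in a SMC --- though this last issue is minor, because Theorem \ref{Thm:SMCbij} is not needed for part (2) at all.
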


\begin{proof}
We have a natural derived  Schur functor $F=?\otimes_{A}^{\bf L}Ae: \D(A)\ra \D(eAe)$, which restricts to a functor $F^{\bb}=?\otimes_{A}^{\bf L}Ae: \D^{\bb}(A)\ra \D^{\bb}(eAe)$.
 It is well-known that $F$ admits a left adjoint $G=?\ot_{eAe}^{\bf L}eA$, which is fully faithful  (see for example \cite[Lemma 4.2]{Keller}).

(1)
By Proposition \ref{Prop:idem}, the  functor $F^{\bb}$ induces a triangle equivalence $F^{\bb}: \D^{\bb}(A)/\ker F^{\bb}\xra{\simeq} \D^{\bb}(eAe)$.
Since $\ker F^{\bb}=\{ M\in \D^{\bb}(A)\mid Me=0 \text{ in } \D^{\bb}(eAe) \}$, then by standard truncation, we have $\ker F^{\bb}=\thick (\rrr)$.
So the SMC reduction $\D^{\bb}(A)/\thick(\rrr)$ is equivalent to $\D^{\bb}(eAe)$.

(2)
We claim under our assumption,   $G$ also restricts to $\D^{\bb}$.
Since $eA$ is a pre-SMC, then $\End(eA)=eAe$ is a division ring. Thus 
$\D^{\bb}(eAe)=\per eAe$ and  $eA$ has finite projective dimension as left $eAe$-module, so $G$ also restricts to  $G^{\bb}: \D^{\bb}(eAe)\ra \D^{\bb}(A)$.  Then we have  an adjoint pair $(G^{\bb},F^{\bb})$ between $\D^{\bb}$ and moreover, we have a $t$-structure $(G^{\bb}(\D^{\bb}(eAe)), \ker F^{\bb})$ of $\D^{\bb}(A)$.
So there is a triangle equivalent $\D^{\bb}(A)/G^{\bb}(\D^{\bb}(eAe))\xra{\simeq} \ker F^{\bb}$.
Notice that $G^{\bb}(\D^{\bb}(eAe))=G^{\bb}(\per eAe)=\thick_{A}(eA)$ and by \cite[Corollary 2.12]{KY2} (b), 
 we have $\ker F^{\bb}\cong \D^{\bb}(B)$, where $B$ is the dg $k$-algebra $\shEnd_{\per A/\thick eA}(A)$.
Then the SMC reduction $\D^{\bb}(A)/\thick(eA)$ is equivalent to $\D^{\bb}(B)$.
\end{proof}

%\begin{Prop}
%Let $A$  be a non-positive dg $k$-algebra. Let $\rrr$ be a pre-SMC of $\D^{\bb}(A)$ which is contained in a SMC. Then the conditions (R1) and (R2) in Section \ref{Section:reduction} hold. 
%\end{Prop}

%\begin{Prop}
%Let $e$ be an idempotent of $A$ and let $\rrr=\Top (1-e)A$. Then the SMC reduction $\D^{\bb}(A)/\thick(R)$ is triangle equivalent to $\D^{\bb}(eAe)$.
%\end{Prop}

Let us consider a concrete  example. 
\begin{Ex}
Let $A$ be a finite-dimensional $k$-algebra presented by a quiver 
\xymatrix{1 \ar@<-1pt>[r]_{\alpha} & 2 \ar@<-3pt>[l]_{\beta}} with relations $\alpha\beta=0=\beta\alpha$.
Let $P_{i}$ (resp. $S_{i}$), $i=1,2$, be the  indecomposable projective (simple) module which  corresponds to the vertex $i$.
 It is easy to check $P_{1}$ is a pre-SMC in $\D^{\bb}(\mod A)$. Then by  Proposition \ref{Prop:SMCex} (2), the SMC reduction  $\D^{\rm b}(\mod A)/\thick(P_{1})$ is equivalent to $\D^{\rm b}(B)$, where $B$ is the dg algebra $k[X]/(X^{2})$ with $\deg X=-1$  and zero differential.
Then  by Theorem \ref{Thm:SMCbij}, we have the following bijection,
\[\xymatrix{ \SMC_{P_{1}} \D^{\rm b}(\mod A)   \ar[r] \ar@{=}[d]& \SMC \, \D^{\bb}(B) \ar[l] \ar@{=}[d] \\
\{\cdots, S_{1}[-2], S_{1}[-1], S_{2}[1], S_{2}[2], \cdots\}   \ar[r] &   \{ k[i]\mid \in
\Z \} \ar[l]
}.\]
We mention that in \cite[Example 2.47]{AI}, the silting quiver of $\per A$ is given and by using Koenig-Yang bijection (see \cite[Theorem 6.1]{KY}), one gets the description of $\SMC\, \D^{\bb}(\mod A)$ and thus the description of $\SMC_{P_{1}} \D^{\rm b}(\mod A)$.
\end{Ex}

%%%%%%%%%%%%%%%%%%%%%%
%%%%%%%%%%%%%%%%%%%%%%
\section{Singularity category of  SMC quadruple}
%%%%%%%%%%%%%%%%%%%%%%
%%%%%%%%%%%%%%%%%%%%%%

%%%%%%%%%%%%%%%%%%%%%%
\subsection{Main results}
%%%%%%%%%%%%%%%%%%%%%%
In this subsection, we introduce the singularity category of a SMC quadruple and show some basic properties of this category. We give the definition of a SMC quadruple first.

\begin{Def}\label{Def:relativeSerre}
We say a quadruple $(\T, \T^{\rm p}, \SSS, \sss)$ is  a \emph{SMC quadruple} if the following conditions are satisfied.
\begin{itemize}
\item[(RS0)] $\T$ is a $k$-linear Hom-finite Krull-Schmidt triangulated category and $\cal T^{\p}$ is a thick subcategory of $\T$;
\item[(RS1)] $\SSS: \T\ra \T$ is a triangle equivalence restricting to an equivalence $\SSS: \T^{\p}\ra \T^{\p}$ and satisfying  a bifunctorial isomorphism for any $X\in \T^{\p}$ and $Y\in \T$:
 \[ D\Hom_{\T}(X, Y) \simeq \Hom_{\T}(Y,\SSS X);\]
 \item[(RS2)] $\cal S$ is a SMC in $\T$ and $\T= \cal {}^{\perp}\cal S[\ge \0]\perp {}^{\perp}\cal S[<\0]=\cal S[\ge \0]^{\perp}\perp\cal S[<\0]^{\perp}$ are co-$t$-structures of $\T$ satisfying ${}^{\perp}\cal S[\ge \hspace{-3pt}0]\subset \T^{\rm p}$ and $\cal S[<\0]^{\perp}\subset \cal T^{\p}$;
 \end{itemize}
Moreover, If $\SSS=[-d]$ for some $d\ge 0$, we call $(\T, \T^{\p}, \cal S)$ a \emph{$(-d)$-CY triple}.
\end{Def}

The definition above is inspired from the following example and we will see  (RS2) plays an important role later. 
\begin{Ex}\label{Ex:fd}
Let $A$ be a finite-dimensional Gorenstein $k$-algebra. 
Then one can show  that  the quadruple $(\D^{\bb}(\mod A), \K^{\bb}(\proj A), \nu, \cal S)$ is a SMC quadruple, where $\nu=?\ot_{A}^{\bf L}DA$ is the Nakayama functor and $\cal S$ is the set of simple $A$-modules.
\end{Ex}

For simplicity, we introduce the following notations for $i\in\Z$.
$$\T_{>i}=\T_{\ge i+1}:=\cal {}^{\perp}\cal S[> \hspace{-3pt}-1-i], \ \ \T_{<i}=\T_{\le i-1}:={}^{\perp}\cal \cal S[<\hspace{-3pt} 1-i];$$
$$\T^{>i}=\T^{\ge i+1}:= \Filt (\sss[<\hs -i]), \  \  \T^{<i}=\T^{\le i-1}:=\Filt(\sss[\ge 1-i]). $$
Let $(\T, \T^{\rm p}, \SSS,\cal S)$ be a SMC quadruple.
Then  we have co-$t$-structures $\T=\T_{>i}\perp\T_{\le i}$  and moreover, $\T_{>i}\subset \T^{\p}$ by (RS2).
Also notice that  we have bounded
 $t$-structures $\T=\T^{\le i}\perp \T^{>i}$ 
 and $\T^{\le i}=\T_{\le i}$
by Lemma \ref{Lem:SMCtotstr}. 
Immediately, we have the following useful  observation.

\begin{Lem}\label{RS3}
Let  $(\T, \T^{\rm p}, \SSS, \sss)$ be a SMC quadruple. Then $(\T^{\p})^{\perp}=0$ in $\T$.
\end{Lem}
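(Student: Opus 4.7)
The plan is to trap any $Y\in(\T^{\p})^{\perp}$ inside the intersection $\T^{\le 0}\cap \T^{\ge 1}$ of the aisle and co-aisle of the bounded $t$-structure from Lemma \ref{Lem:SMCtotstr}; since the $t$-structure forces $\Hom(\T^{\le 0},\T^{\ge 1})=0$, such a $Y$ has $\mathrm{id}_Y=0$ and hence $Y=0$.

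The first step is a Serre-duality flip. Given $Y\in(\T^{\p})^{\perp}$, the functorial isomorphism $D\Hom_{\T}(X,Y)\simeq \Hom_{\T}(Y,\SSS X)$ from (RS1), together with the fact that $\SSS$ restricts to a self-equivalence of $\T^{\p}$, immediately shows $(\T^{\p})^{\perp}={}^{\perp}\T^{\p}$ inside $\T$; thus I get both $\Hom(\T^{\p},Y)=0$ and $\Hom(Y,\T^{\p})=0$ for free.

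The second step feeds this into the two co-$t$-structures from (RS2). Using $\Hom(\T^{\p},Y)=0$ together with ${}^{\perp}\sss[\ge\0]\subset \T^{\p}$ and the torsion-pair identity $({}^{\perp}\sss[\ge\0])^{\perp}={}^{\perp}\sss[<\0]$ from the first co-$t$-structure, I obtain $\Hom(Y,\sss[<\0])=0$. Symmetrically, using $\Hom(Y,\T^{\p})=0$ with $\sss[<\0]^{\perp}\subset \T^{\p}$ and the identity ${}^{\perp}(\sss[<\0]^{\perp})=\sss[\ge\0]^{\perp}$ from the second co-$t$-structure, I obtain $\Hom(\sss[\ge\0],Y)=0$. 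The two directions being available is exactly what the Serre-duality flip in Step~1 bought me.

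Finally, I would promote these simple-level vanishings to the full $t$-structure. By Lemma \ref{Lem:SMCtotstr} we have $\T=\T^{\le 0}\perp \T^{\ge 1}$ with $\T^{\le 0}=\Filt(\sss[\ge\0])$ and $\T^{\ge 1}=\Filt(\sss[<\0])$. Since Hom-vanishing propagates through $\Filt$, the relation $\Hom(Y,\sss[<\0])=0$ upgrades to $\Hom(Y,\T^{\ge 1})=0$, so $Y\in {}^{\perp}\T^{\ge 1}=\T^{\le 0}$; symmetrically $\Hom(\sss[\ge\0],Y)=0$ gives $Y\in (\T^{\le 0})^{\perp}=\T^{\ge 1}$. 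Hence $Y\in \T^{\le 0}\cap \T^{\ge 1}$, and $\mathrm{id}_Y\in \Hom(\T^{\le 0},\T^{\ge 1})=0$ forces $Y=0$. No step is technically hard; the only thing one must spot is the Serre-duality flip, without which the two co-$t$-structures would only give information on a single side of the Hom.
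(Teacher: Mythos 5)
Your proof is correct, but it takes a genuinely different route from the paper's. The paper never invokes the Serre axiom (RS1) in this lemma: it works one-sidedly with the hypothesis $\Hom_{\T}(\T^{\p},X)=0$, takes for every $i\in\Z$ the co-$t$-structure triangle $X_{>i}\ra X\ra X_{\le i}\ra X_{>i}[1]$ with $X_{>i}\in\T_{>i}\subset\T^{\p}$ and $X_{\le i}\in\T_{\le i}=\T^{\le i}$, notes that the first map vanishes so the triangle splits and $X$ is a direct summand of $X_{\le i}$, hence $X\in\T^{\le i}$ for all $i$, and then concludes from the \emph{boundedness} of the $t$-structure of Lemma \ref{Lem:SMCtotstr} that $X\in\bigcap_{i}\T^{\le i}=0$. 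You instead spend (RS1) once, at the start, to turn the one-sided hypothesis into the two-sided vanishing $\Hom_{\T}(\T^{\p},Y)=0=\Hom_{\T}(Y,\T^{\p})$; after that a single application of each of the two co-$t$-structures of (RS2) at shift $0$, via the torsion-pair identities $({}^{\perp}\sss[\ge 0])^{\perp}={}^{\perp}\sss[<0]$ and ${}^{\perp}(\sss[<0]^{\perp})=\sss[\ge 0]^{\perp}$, plus propagation of Hom-vanishing through $\Filt$ and the descriptions $\T^{\le 0}=\Filt(\sss[\ge 0])$, $\T^{\ge 1}=\Filt(\sss[<0])$ from Proposition \ref{Prop:SMCtotstr}, traps $Y$ in $\T^{\le 0}\cap\T^{\ge 1}$, where orthogonality alone forces $Y=0$. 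What each approach buys: the paper's argument shows the statement needs no Serre-duality input at all (only (RS2) and boundedness), while yours avoids both the boundedness of the $t$-structure and the infinite family of truncation triangles, and your Serre-duality flip yields the dual statement ${}^{\perp}(\T^{\p})=0$ for free. All the identities you use are indeed available from the paper's definition of torsion pair and from Proposition \ref{Prop:SMCtotstr} and Lemma \ref{Lem:SMCtotstr}, so your argument is complete as written.
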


\begin{proof}
For any $X\in \T$ and $i\in\Z$,
there exists a triangle $X_{>i}\ra X \ra X_{\le i}\ra X_{>i}[1]$, such that  
$X_{>i}\in \T_{>i}\subset \T^{\p}$ and $X_{\le i}\in \T_{\le i}=\T^{\le i}$ by (RS2). 
If $X\in (\T^{\p})^{\perp}$, then $\Hom_{\T}(X_{>i}, X)=0$ and thus 
$X_{\le i}\cong X\op X_{>i}[1]$ in $\T$.
So $X\in \T^{\le i}$ for any $i\in \Z$. 
Since $\T=\T^{\le 0}\perp \T^{>0}$ is a bounded $t$-structure by Lemma 
 \ref{Lem:SMCtotstr}, then $X\in \bigcap_{i\in\Z}\T^{\le i}=0$ .
\end{proof}

Now we introduce a new class of triangulated categories, which is a generalization of Buchweitz and Orlov's construction of singularity categories.

\begin{Def}
For a SMC quadruple $(\T, \T^{\rm p}, \SSS,\cal M)$, we define the \emph{singularity category} as the Verdier quotient 
\[ \cal \T_{\sg}:= \T/\T^{\rm p}.\]
\end{Def}

One important property of $\T_{\sg}$ is that $\T_{\sg}$ can be realized as a subfactor category of $\T$. To make it clear, let us introduce the following subcategories of $\T$.
\[
\cal F = \T_{>0}^{\perp}\cap {}^{\perp}(\T_{\le -1}\cap \T^{\p}),  \  \  
\cal P= \T_{\ge 0}\cap \T_{\le 0}, \  \
 \hhh=\T^{\ge 0}\cap \T^{\le 0}. \]

It is clear $\cal P$ is just the co-heart of  
the co-$t$-structure of $\T=\T_{>0}\perp\T_{\le 0}$ and $\hhh$ is the heart of the $t$-structure $\T=\T^{\le 0}\perp\T^{>0}$. Our main results in this section is as follows.

\begin{Thm}\label{Thm:singularity}
Let $(\T, \T^{\rm p}, \SSS,\cal S)$ be a SMC quadruple.  Then we have
\begin{enumerate}[\rm (1)]
\item $\cal F$ is a Frobenius extriangulated category with $\Proj \cal F=\cal P$  in the sense of \cite{NP};
\item
The composition
\[ \cal F \subset \T \ra \cal \T_{\sg}  \]
induces an equivalence $\pi: \frac{\cal F}{[\cal P]} \xra{\simeq} \cal \T_{\sg}$. Moreover,  $\T_{\sg}$ has a Serre functor $\SSS[-1]$;
\item If $\SSS=[-d]$, then $\cal F=\cal H[d]\ast\cal H[d-1]\ast\cdots\ast\cal H$  and $\pi(\sss)$ is a $d$-SMS in $\T_{\sg}$.
%\item $\pi(\sss)$ is a $(-d-1)$-CY configuration in $\cal B$. 
\end{enumerate}
\end{Thm}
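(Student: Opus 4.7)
The plan is to derive parts (1) and (2) from the Iyama--Yang reduction framework (Proposition \ref{Thm:IY}) and then specialize for part (3). Set $\cal S := \T^{\p}$, $\cal U := \T_{\sg}$, and take the torsion-pair data $\cal X := \T_{\ge 1}$, $\cal Y := \T_{\le 0} \cap \T^{\p}$. Condition (T1) is obtained by restricting the co-$t$-structure $\T = \T_{\ge 1}\perp \T_{\le 0}$ from (RS2) to $\T^{\p}$, which is valid because $\T_{\ge 1}\subset \T^{\p}$. The first half of (T2), namely $\T = \cal X \perp \cal X^{\perp}$ with $\cal X^{\perp} = \T_{\le 0}$, is the same co-$t$-structure. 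For the second half $\T = {}^{\perp}\cal Y \perp \cal Y$, I would produce for each $X\in \T$ a triangle $F \to X \to T \to F[1]$ with $T \in \T_{\le 0}\cap \T^{\p}$ and $F \in {}^{\perp}(\T_{\le 0}\cap \T^{\p})$, by first decomposing $X$ via the co-$t$-structure and then refining the $\T_{\le 0}$-piece via the second torsion pair $\T = \sss[\ge 0]^{\perp}\perp \sss[<0]^{\perp}$ of (RS2), using the crucial inclusion $\sss[<0]^{\perp}\subset \T^{\p}$. Proposition \ref{Thm:IY} then yields the equivalence $\cal F/[\cal P'] \xra{\simeq} \T_{\sg}$, where $\cal F = \cal X^{\perp}\cap {}^{\perp}\cal Y[1] = \T_{\le 0}\cap {}^{\perp}(\T_{\le -1}\cap \T^{\p})$ matches the stated definition and $\cal P' = \cal X[1]\cap \cal Y = \T_{\ge 0}\cap \T_{\le 0}\cap \T^{\p}$. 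To identify $\cal P'$ with $\cal P = \T_{\ge 0}\cap\T_{\le 0}$, note that $\T_{\ge 0} = \T_{\ge 1}[1]\subset \T^{\p}[1] = \T^{\p}$. The Frobenius extriangulated structure on $\cal F$ with $\Proj \cal F = \cal P$ is then immediate from Remark \ref{Rem:extri}.

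For the Serre functor claim on $\T_{\sg}$: Hom-finiteness of $\T_{\sg}$ follows from the equivalence $\pi$ and Hom-finiteness of $\T$. To produce the Serre functor, I would mimic Buchweitz's argument. For each $X\in \cal F$, the verification of (T2) above provides a triangle $\Omega X \to P \to X \to \Omega X[1]$ with $P\in \T^{\p}$, so that $\Omega X \cong X[-1]$ in $\T_{\sg}$. Applying the relative Serre duality (RS1) to $P$ and chasing through the triangle (replacing $X$ by $\Omega X$ and then shifting) yields $D\Hom_{\T_{\sg}}(X,Y) \cong \Hom_{\T_{\sg}}(Y,\SSS X[-1])$, exhibiting $\SSS[-1]$ as the Serre functor on $\T_{\sg}$.

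For part (3), assume $\SSS = [-d]$. I would first establish $\cal F = \hhh[d]\ast\hhh[d-1]\ast\cdots\ast\hhh$. The inclusion $\supset$ is a direct Hom-vanishing verification using the bounded $t$-structure $\T = \T^{\le 0}\perp\T^{>0}$ from Lemma \ref{Lem:SMCtotstr} with heart $\hhh$, together with the CY Serre identity $\Hom_{\T}(X,Y)\cong D\Hom_{\T}(Y,X[-d])$ for $X\in \T^{\p}$. For $\subset$, any $X\in \cal F\subset \T_{\le 0} = \T^{\le 0}$ admits a bounded filtration by cohomological shifts of $\hhh$ (Proposition \ref{Prop:SMCtotstr}), and the orthogonality $X\in {}^{\perp}(\T_{\le -1}\cap \T^{\p})$ combined with the CY Serre identity restricts the filtration range to $\hhh[0], \hhh[1], \ldots, \hhh[d]$. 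Given this description, the three conditions for $\pi(\sss)$ to be a $d$-SMS in $\T_{\sg}$ follow. First, $\sss\subset \cal F$ is a consequence of the CY Serre identity: for $T\in \T_{\le -1}\cap \T^{\p}$ one computes $\Hom(\sss,T) \cong D\Hom(T,\sss[-d]) = 0$ since $-d\le 0$. Second, using $\pi:\cal F/[\cal P]\cong \T_{\sg}$, the non-degeneracy $\dim\Hom_{\T_{\sg}}(\pi S,\pi S') = \delta_{S,S'}$ and the orthogonality $\Hom_{\T_{\sg}}(\pi\sss[i],\pi\sss) = 0$ for $1\le i\le d$ descend from $\sss$ being a pre-SMC in $\T$, once one verifies $\sss\cap \cal P = \emptyset$ (again via the CY Serre identity applied to the co-heart description of $\cal P$). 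Finally, the generation $\T_{\sg} = \add\Filt\{\pi\sss[d],\ldots,\pi\sss\}$ follows from $\pi(\cal F) = \T_{\sg}$ and $\cal F = \hhh[d]\ast\cdots\ast\hhh$ with $\hhh = \Filt\sss$.

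The hardest step will be verifying the second torsion pair of (T2), $\T = {}^{\perp}(\T_{\le 0}\cap\T^{\p})\perp (\T_{\le 0}\cap\T^{\p})$, which requires a nontrivial construction interleaving both torsion pairs of (RS2). The extraction of the Serre functor on $\T_{\sg}$ from the relative duality (RS1) is also delicate: the shift $[-1]$ in $\SSS[-1]$ tracks the \emph{syzygy} from the $\T^{\p}$-approximation triangle, mirroring Buchweitz's original argument for singularity categories of Gorenstein algebras.
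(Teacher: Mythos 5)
Your overall framing (apply Proposition \ref{Thm:IY} with $\cal X=\T_{>0}$, $\cal Y=\T_{\le 0}\cap\T^{\p}$, then invoke Remark \ref{Rem:extri}) is the same as the paper's, but the step you yourself flag as hardest --- the torsion pair $\T={}^{\perp}\cal Y\perp\cal Y$ --- is exactly where your proposal has a genuine gap, and the interleaving you sketch does not close it. If you decompose $X$ by the co-$t$-structure and then refine the $\T_{\le 0}$-piece via $\T=\sss[\ge\0]^{\perp}\perp\sss[<\0]^{\perp}$, the resulting piece $B\in\sss[<\0]^{\perp}$ does lie in $\T^{\p}$, but there is no reason for it to lie in $\T_{\le 0}={}^{\perp}\sss[<\0]$ (membership in $\sss[<\0]^{\perp}$ constrains maps \emph{into} $B$, not out of it), and the complementary piece is built from an object of $\sss[\ge\0]^{\perp}$, which is not visibly left-orthogonal to $\T_{\le 0}\cap\T^{\p}$. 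The missing idea, which is the heart of the paper's proof, is the identification $\cal Y={}^{\perp}\sss[<\0]\cap\T^{\p}=\SSS^{-1}\bigl(\sss[<\0]^{\perp}\bigr)$: one inclusion uses (RS1) for objects of $\T^{\p}$, the other uses $\sss[<\0]^{\perp}\subset\T^{\p}$ from (RS2). Once this is known, the required torsion pair is simply the image under $\SSS^{-1}$ of the co-$t$-structure $\T=\sss[\ge\0]^{\perp}\perp\sss[<\0]^{\perp}$, with ${}^{\perp}\cal Y=\SSS^{-1}\bigl(\sss[\ge\0]^{\perp}\bigr)$; no new triangle construction is needed. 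This identification is also what makes part (3) immediate: for $\SSS=[-d]$ it gives ${}^{\perp}\cal Y=\T^{>-d}$, hence $\cal F=\T^{\le 0}\cap\T^{\ge -d}=\hhh[d]\ast\cdots\ast\hhh$ by Proposition \ref{Prop:SMCtotstr}, whereas your Hom-vanishing/filtration argument for $\cal F\subset\hhh[d]\ast\cdots\ast\hhh$ would in effect have to reprove the same orthogonality.

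The Serre functor claim is also under-argued. A syzygy triangle $\Omega X\to P\to X$ with $P\in\T^{\p}$ only gives $X\cong\Omega X[1]$ in $\T_{\sg}$; it does not by itself let you compute $\Hom_{\T_{\sg}}(X,Y)$ by Homs in $\T$, and your ``chase'' skips exactly the point where boundedness enters. What the paper does (and what you would need) is: for given $X,Y\in\T$ choose $i$ with $Y\in\T^{>i}$, take the co-$t$-structure triangle $X_{>i}\to X\to X_{\le i}$, and use $X_{>i}\in\T^{\p}$ together with $\Hom_{\T}(X_{\le i},Y)=0$ (since $\T_{\le i}=\T^{\le i}$) to see that $X_{>i}\to X$ is a local $\T^{\p}$-cover of $X$ relative to $Y$ in the sense of \cite{Am}; then \cite[Lemma 1.1, Theorem 1.3, Proposition 1.4]{Am} yield the Serre functor $\SSS[-1]$ on $\T_{\sg}$. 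Note that the cover must be chosen depending on $Y$, and the argument has to apply to all $X,Y\in\T$, not only to objects of $\cal F$. The remaining points --- deducing (1) and the equivalence in (2) from Proposition \ref{Thm:IY}, and the descent of the pre-SMS conditions and generation in (3) through $\cal F/[\cal P]$ --- are fine and agree with the paper.
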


\begin{proof}
(1) and (2) We want to apply Proposition \ref{Thm:IY}. 
Let  $\X=\T_{>0}$
and $\Y=\T_{\le 0}\cap \T^{\p}$.
Then it is easy to check  
$ \cal F= \cal X^{\perp}\cap {}^{\perp}\cal Y[1]$ and $ \cal P=\cal X[1]\cap \cal Y$.
We claim that we have co-$t$-structures $\T^{\p}=\X\perp\Y$ and $\T=\X\perp\X^{\perp}={}^{\perp}\Y\perp\Y$. In fact, we know $\X^{\perp}=\T_{\le 0}$  and  $\T=\cal X\perp \cal X^{\perp}$ is a co-$t$-structure by (RS2).
For any $T\in\T^{\p}$, there exists a triangle $T_{>0}\ra T \ra T_{\le 0}\ra T_{>0}[1]$ such that $T_{>0}\in \T_{>0}$ and $T_{\le 0}\in \T_{\le 0}$. Since $T_{>0}\in \T^{\p}$ by (RS2), so $\T_{\le 0}$ is also in $\T^{\p}$.
Then the co-$t$-structure $\T=\X\perp\X^{\perp}$ restricts to a co-$t$-structure $\T^{\p}=\X\perp(\X^{\perp}\cap \T^{\p})=\X\perp\Y$ of $\T^{\p}$.

 Now we show $\T={}^{\perp}\Y\perp\Y$ is also a co-$t$-structure.
Since $\Y={}^{\perp}\sss[<\0]\cap \T^{\p}$, then $\Y\subset \SSS^{-1}\sss[<\0]^{\perp}$ by (RS1).
Notice that $\sss[<\0]^{\perp}\subset\T^{\p}$ by (RS2), then it is easy to see $\SSS^{-1} \sss[<\0]^{\perp}\subset \Y$ by (RS1).
So $\Y=\SSS^{-1}\sss[<\0]^{\perp}$.
By (RS2), there is a co-$t$-structure $\T=\sss[\ge \0]^{\perp}\perp\sss[<\0]^{\perp}$. 
Then $\T={}^{\perp}\Y\perp\Y$ is also a co-$t$-structure with ${}^{\perp}\Y=\SSS^{-1}\sss[\ge\0]^{\perp}$.

By Proposition \ref{Thm:IY} and Remark \ref{Rem:extri}, we know 
$\cal F$ is a Frobenius extriangulated category with $\Proj \cal F=\cal P$ and 
the composition
$ \cal F \subset \T \ra \cal \T_{\sg} $
induces an equivalence $\pi: \frac{\cal F}{[\cal P]} \xra{\simeq} \cal \T_{\sg}$.

We are left to show the existence of Serre functor in $\T_{\sg}$. 
Let $X,Y\in \T$.
There exist $i\in \Z$ such that $Y\in \T^{>i}$ (because $\T=\T^{\le 0}\perp\T^{>0}$ is a bounded $t$-structure by Lemma \ref{Lem:SMCtotstr}).
By (RS2), there is a triangle
\[ X_{>i} \ra X\ra X_{\le i} \ra X_{>i}[1],\]
with $X_{>i}\in \T_{>i}$ and $X_{\le i}\in \T_{\le i}=\T^{\le i}$.
 Since $\Hom_{T}(X_{\le i}, Y)=0$ and $X_{>i}\in \T^{\p}$, then the morphism $X_{>i}\ra X$ is a local $\T^{\p}$-cover of $X$ relative to $Y$ in the sense of \cite[Definition 1.2]{Am}. Then by \cite[Lemma1.1, Theorem 1.3 and Proposition 1.4]{Am}, we know $\SSS[-1]$ is a Serre functor of $\T_{\sg}$.

(3) For the case $\SSS=[-d]$, we have 
${}^{\perp}\Y=\SSS^{-1}\cal S[\ge\0]^{\perp}=\T^{> -d}$. On the other hand, $\X^{\perp}=\T_{\le 0}=\T^{\le 0}$. 
So $\cal F= \cal X^{\perp}\cap {}^{\perp}\cal Y[1]=\T^{\le 0}\cap \T^{\ge -d}=\cal H[d]\ast\cal H[d-1]\ast\cdots\ast\cal H$ by Proposition \ref{Prop:SMCtotstr}. 

Next we show $\pi(\sss)$ is a $d$-SMS in $\T_{\sg}$. Let $X, Y\in \sss$.  We may assume $\pi(X)$ and $\pi(Y)$ are non-zero objects in $\T_{\sg}\cong\frac{\cal F}{[\cal P]}$.
Since $$\dim\Hom_{\T_{\sg}}(\pi(X),\pi(Y))=\dim\Hom_{\frac{\cal F}{[\cal P]}}(X,Y)\le \dim\Hom_{\T}(X,Y),$$
and $\dim\Hom_{\T}(X,Y)=\delta_{X,Y}$, then we have $\dim\Hom_{\T_{\sg}}(\pi(X),\pi(Y))=\delta_{\pi(X),\pi(Y)}$.
If $d\ge 1$, since  
$\Hom_{\T}(X[i],Y)=0$ for any $1\le i\le d$, then 
$\Hom_{\T_{\sg}}(\pi(X)[i],\pi(Y))=\Hom_{\frac{\cal F}{[\cal P]}}(X[i],Y)=0$.
The fact $\cal F= \cal H[d]\ast\cal H[d-1]\ast\cdots\ast\cal H$
implies that $\frac{\cal F}{[\cal P]}= \cal H[d]\ast\cal H[d-1]\ast\cdots\ast\cal H$. Then $\T_{\sg}=\pi(\cal H)[d]\ast\pi(\cal H)[d-1]\ast\cdots\ast\pi(\cal H)$. So $\pi(S)$ is a $d$-SMS of $\T_{\sg}$.
\end{proof}

We apply Theorem \ref{Thm:singularity} to Example \ref{Ex:fd} and then we have the following well-known result.

\begin{Ex}
Let $A$ be a finite-dimensional Gorenstein $k$-algebra. Then $\cal P=\K^{\bb}(\proj A)$ and $\cal F=\CM A$. By theorem \ref{Thm:singularity}, the natural functor $\CM A\subset \D^{\bb}(\mod A)\ra \D^{\bb}_{\sg}(A)$ gives an equivalence $\un{\CM}A\simeq \D^{\bb}_{\sg}(A)$ and moreover, $\D^{\bb}_{\sg}(A)$ has a Serre functor $?\ot_{A}^{\bf L}DA[-1]$.
\end{Ex}

%%%%%%%%%%%%%%%%%%%%%%%%%%%
\subsection{Further properties}
%%%%%%%%%%%%%%%%%%%%%%

In this subsection, we continue to study the  properties of a SMC quadruple. This part is  technical and abstract, but we will see it is  useful. 
Let  $(\T, \T^{\rm p}, \SSS, \sss)$ be a SMC quadruple. Let $\cal P$ be the co-heart of  the co-$t$-structure  $\T=\T_{>0}\perp\T_{\le 0}$. 
It is clear that $\cal P$ is a subcategory of $\T^{\p}$. We mainly study the properties of $\cal P$.
First we point out that  $\cal P$ is silting in $\T^{\p}$.
\begin{Prop}\label{Prop:silting}
\begin{enumerate}[\rm(1)]
\item $\cal P$ is a silting subcategory in $\T^{\p}$;
\item We have a co-$t$-structure $\T^{\p}=\Filt(\cal P[\le \0])\perp\Filt(\cal P[>\0])$. Moreover,  
$\Filt (\cal P[\le\0])=\T_{\ge 0}$ and $\Filt(\cal P[>\0])=\T_{<0}\cap \T^{\p}$.
\end{enumerate}
\end{Prop}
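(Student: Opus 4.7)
The plan is to derive both (1) and (2) simultaneously by establishing the semi-orthogonality $\Hom_{\T}(\cal P,\cal P[>\0])=0$ together with two filtration identities $\Filt(\cal P[\le\0])=\T_{\ge 0}$ and $\Filt(\cal P[>\0])=\T_{<0}\cap\T^{\p}$; the decomposition of $\T^{\p}$ and the silting property then follow easily.

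First I would record two preliminary facts. Since $\cal P=\T_{\ge 0}\cap\T_{\le 0}$, a short computation shows $\T_{\ge 0}=\T_{>0}[1]$, so thickness of $\T^{\p}$ together with $\T_{>0}\subset\T^{\p}$ (from (RS2)) gives $\cal P\subset\T^{\p}$. For the silting vanishing, fix $P,Q\in\cal P$ and $n\ge 1$: from $Q\in\T_{\le 0}={}^{\perp}\cal S[<\0]$ one directly gets $Q[n]\in{}^{\perp}\cal S[<n]\subset{}^{\perp}\cal S[\le\0]=\T_{<0}$, and combined with $P\in\T_{\ge 0}$ and the orthogonality of the shifted co-$t$-structure $\T=\T_{\ge 0}\perp\T_{<0}$ (obtained from $\T=\T_{>0}\perp\T_{\le 0}$ by applying $[1]$), this yields $\Hom_{\T}(P,Q[n])=0$.

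For $\Filt(\cal P[\le\0])=\T_{\ge 0}$, the inclusion $\subset$ is immediate from $\cal P[-n]=\T_{\ge n}\cap\T_{\le n}\subset\T_{\ge 0}$ for $n\ge 0$ together with extension-closedness of $\T_{\ge 0}$. For the reverse, Lemma \ref{Lem:SMCtotstr} ensures $\Hom_{\T}(X,\cal S[k])=0$ for $k\ll 0$ via a standard cohomology argument with the bounded $t$-structure, so every $X\in\T$ lies in some $\T_{\le M}$ with $M\ge 0$; I would then induct on $M$. The base case $M=0$ gives $X\in\T_{\ge 0}\cap\T_{\le 0}=\cal P$. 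For the inductive step $M\ge 1$, apply the co-$t$-structure $\T=\T_{\ge M}\perp\T_{<M}$ to obtain a triangle $X'\to X\to X''$ with $X'\in\T_{\ge M}$, $X''\in\T_{\le M-1}$; the shift computations $\T_{\le M-1}[-1]\subset\T_{\le M}$ and $\T_{\ge M}[1]\subset\T_{\ge M-1}\subset\T_{\ge 0}$ combined with extension-closedness then force $X'\in\T_{\ge M}\cap\T_{\le M}=\cal P[-M]$ and $X''\in\T_{\ge 0}\cap\T_{\le M-1}$, so the inductive hypothesis places $X\in\cal P[-M]\ast\Filt(\cal P[\le\0])\subset\Filt(\cal P[\le\0])$.

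The dual identity $\Filt(\cal P[>\0])=\T_{<0}\cap\T^{\p}$ is proved by a symmetric induction: the same boundedness shows every $Y\in\T$ lies in some $\T_{\ge -N}$, and for $Y\in\T_{\le -1}\cap\T_{\ge -N}\cap\T^{\p}$ with $N\ge 2$, decomposing via $\T=\T_{\ge -N+1}\perp\T_{\le -N}$ gives $Y''\in\cal P[N]$ and $Y'\in\T_{\ge -N+1}\cap\T_{\le -1}\cap\T^{\p}$ (thickness of $\T^{\p}$ keeps $Y'$ there), to which induction applies. Finally, restricting the co-$t$-structure $\T=\T_{\ge 0}\perp\T_{<0}$ to $\T^{\p}$ (legitimate since $\T_{\ge 0}\subset\T^{\p}$) produces $\T^{\p}=\T_{\ge 0}\perp(\T_{<0}\cap\T^{\p})$; combined with the two filtration identities this gives (2), and (1) then follows because the semi-orthogonality is already done and $\thick(\cal P)\supseteq\Filt(\cal P[\le\0])\perp\Filt(\cal P[>\0])=\T^{\p}$. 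The main obstacle is not conceptual but careful bookkeeping of how the subcategories $\T_{\ge n}$ and $\T_{\le n}$ behave under the shifts $[\pm 1]$ throughout each induction step.
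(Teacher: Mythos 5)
Your overall strategy is viable, and in fact it establishes the identities $\Filt(\cal P[\le\0])=\T_{\ge 0}$ and $\Filt(\cal P[>\0])=\T_{<0}\cap \T^{\p}$ by direct induction, whereas the paper first proves $\thick\cal P=\T^{\p}$ (Lemmas \ref{Lem:cothick} and \ref{Lem:finitemany}) and then quotes the standard co-$t$-structure attached to a silting subcategory. Your first induction (the inclusion $\T_{\ge 0}\subset\Filt(\cal P[\le\0])$) and the bookkeeping with shifts of $\T_{\ge n}$, $\T_{\le n}$ are correct. However, there is one genuine gap, and it sits exactly where the relative Serre functor must enter: you claim that ``the same boundedness shows every $Y\in\T$ lies in some $\T_{\ge -N}$.'' This is not symmetric to the first boundedness and it is false for general objects of $\T$. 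The downward bound $X\in\T_{\le M}$ asks for $\Hom_{\T}(X,\sss[k])=0$ for $k\ll\0$, which indeed follows from the bounded $t$-structure (Lemma \ref{Lem:SMCtotstr}); the upward bound $Y\in\T_{\ge -N}$ asks for $\Hom_{\T}(Y,\sss[k])=0$ for $k\gg\0$, and no $t$-structure argument gives this: for the SMC quadruple $(\D^{\bb}(\mod A),\K^{\bb}(\proj A),\nu,\{k\})$ with $A=k[x]/(x^{2})$, the simple module $S=k$ has $\Hom(S,S[k])=\Ext^{k}_{A}(k,k)\neq 0$ for all $k\ge 0$, so $S$ lies in no $\T_{\ge -N}$.

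Fortunately your dual induction only needs the bound for $Y\in\T_{<0}\cap\T^{\p}$, i.e.\ for objects of $\T^{\p}$, and for those it is true --- but the proof requires (RS1), not ``the same'' argument: for $Y\in\T^{\p}$ one has $D\Hom_{\T}(Y,\sss[k])\cong\Hom_{\T}(\sss[k],\SSS Y)=\Hom_{\T}(\sss,(\SSS Y)[-k])$, which vanishes for $k\gg\0$ by Lemma \ref{Lem:SMCtotstr}. This is precisely the content of the paper's Lemma \ref{Lem:finitemany}, and it is not an optional convenience: the whole point of the proposition is that objects of $\T^{\p}$ are bounded on both sides with respect to the co-$t$-structure, and the upward bound is exactly where the relative Serre duality is used (part (1) would fail without some such hypothesis). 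Once you replace the erroneous sentence by this (RS1)-argument, restricted to $Y\in\T^{\p}$, the rest of your dual induction and the final assembly of (1) and (2) go through.
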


To prove this proposition, we give two lemmas first.

\begin{Lem}\label{Lem:cothick}
For $X\in \T$, if there exist $i\le j\in \Z$ such that $X\in \T_{\ge i}\cap\T_{\le j}$, then $X\in \thick \cal P$.
\end{Lem}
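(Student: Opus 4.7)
The plan is to prove this by induction on $j-i$, using the co-$t$-structure $\T=\T_{>k-1}\perp\T_{\le k-1}$ (equivalently $\T_{\ge k}\perp\T_{\le k-1}$) to peel off one layer at a time. Before starting, I would record the key shift identities: since $\T_{\ge i}={}^{\perp}\sss[\ge 1-i]$ and $\T_{\le i}={}^{\perp}\sss[\le -i-1]$, a direct verification gives $\T_{\ge 0}[-i]=\T_{\ge i}$ and $\T_{\le 0}[-i]=\T_{\le i}$, hence $\T_{\ge i}\cap\T_{\le i}=\cal P[-i]$, and the monotonicity $\T_{\le k-1}[-1]\subset \T_{\le k}$, $\T_{\ge k}[1]\subset \T_{\ge k-1}$.

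The base case $i=j$ is then immediate: $X\in\T_{\ge i}\cap \T_{\le i}=\cal P[-i]\subset \thick\cal P$. For the inductive step, assume the result whenever the width is less than $n$ and suppose $X\in\T_{\ge i}\cap \T_{\le j}$ with $j-i=n\ge 1$. Apply the co-$t$-structure decomposition at level $j$ to get a triangle
\[
X_{\ge j}\lra X\lra X_{\le j-1}\lra X_{\ge j}[1]
\]
with $X_{\ge j}\in\T_{\ge j}$ and $X_{\le j-1}\in \T_{\le j-1}$. I then need to localize both outer terms into strips for which the induction applies.

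For $X_{\ge j}$: rotating the triangle shows $X_{\ge j}\in X_{\le j-1}[-1]\ast X$, and since both $X_{\le j-1}[-1]\in\T_{\le j}$ (by the shift identity) and $X\in \T_{\le j}$ lie in the extension-closed subcategory $\T_{\le j}$, we conclude $X_{\ge j}\in \T_{\ge j}\cap \T_{\le j}=\cal P[-j]\subset\thick\cal P$. For $X_{\le j-1}$: it lies in $X\ast X_{\ge j}[1]$, and since $X\in\T_{\ge i}$ and $X_{\ge j}[1]\in\T_{\ge j-1}\subset\T_{\ge i}$ (using $j-1\ge i$) and $\T_{\ge i}$ is extension-closed, we get $X_{\le j-1}\in\T_{\ge i}\cap\T_{\le j-1}$, a strip of width $n-1$, so the induction hypothesis gives $X_{\le j-1}\in \thick\cal P$. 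The triangle then forces $X\in\thick\cal P$.

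The only delicate part is bookkeeping: one must ensure the shifts of $\T_{\ge\cdot}$ and $\T_{\le\cdot}$ track in the expected direction (the numerology in the paper's definitions is such that $[1]$ moves $\T_{\le k}$ into $\T_{\le k-1}$ and $\T_{\ge k}$ into $\T_{\ge k-1}$), and that both halves of the co-$t$-structure are extension-closed. Once those identities are set down at the outset, everything else is a short induction.
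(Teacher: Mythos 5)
Your proof is correct and is essentially the paper's own argument: induction on the width $j-i$, decomposing $X$ by the co-$t$-structure at level $j$, showing the $\T_{\ge j}$-piece lies in $\T_{\ge j}\cap\T_{\le j}=\cal P[-j]$ and the truncation piece lies in the narrower strip $\T_{\ge i}\cap\T_{\le j-1}$, then applying the induction hypothesis. The only cosmetic difference is that you deduce $X_{\ge j}\in\T_{\le j}$ from extension-closedness of ${}^{\perp}\sss[\le\hs -j-1]$ applied to the rotated triangle, whereas the paper gets the same membership by checking $\Hom_{\T}(\T_{>j},-)$ vanishes on the two outer terms and using $\T_{>j}^{\perp}=\T_{\le j}$ — the same computation in different clothing.
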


\begin{proof}
We apply the induction on $j-i$. If $j-i=0$, then $\T_{\ge i}\cap\T_{\le j}=\cal P[-i]$, the assertion is clear. Assume it holds for $j-i<n$, $n> 0$. Now consider the case $j-i=n$. There exists a triangle
\[ X_{<j}[-1]\ra X_{\ge j} \ra X \ra X_{<j}  \]
such that $X_{\ge j}\in \T_{\ge j}$ and $X_{<j} \in \T_{<j}$.  
Since $X, X_{<j}[-1]\in \T_{\le j}$, then  
$\Hom_{\T}(\T_{>j}, X_{<j}[-1])=0=\Hom_{\T}(\T_{>j}, X)$. By the triangle above, we have $\Hom_{\T}(\T_{>j}, X_{\ge j})=0$. So $X_{\ge j}\in \T_{\le j}\cap \T_{\ge j}=\cal P[-j]$.
Since $X_{<j}\in \T_{\ge i}\cap\T_{\le j-1}$, by assumption, $X_{<j}\in \thick \cal P$. Then  $X\in \thick \cal P$.
So the statement is true.
\end{proof}

\begin{Lem}\label{Lem:finitemany}
For any $P\in \T^{\p}$, $\Hom_{\T}(P, \sss[n])\not=0$ for only finite many $n\in \Z$.
\end{Lem}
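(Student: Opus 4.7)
My plan is to reduce the finiteness claim to two one-sided vanishing statements and combine Lemma~\ref{Lem:SMCtotstr}(3) with the Serre-type duality (RS1). Recall that since $\sss$ is a SMC with heart $\hhh=\Filt(\sss)\supset\sss$, Lemma~\ref{Lem:SMCtotstr}(3) says that for any object $Y\in\T$ we have
\[
\Hom_{\T}(\hhh[\gg \0],Y)=0
\quad\text{and}\quad
\Hom_{\T}(Y[\gg \0],\hhh)=0.
\]
This is the only extra input I will need.

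First I will kill the case $n\ll \0$. Since $\sss\subset\hhh$, the second vanishing applied to $Y=P$ gives $\Hom_{\T}(P[m],\sss)=0$ for all sufficiently large $m$; rewriting $\Hom_{\T}(P,\sss[n])=\Hom_{\T}(P[-n],\sss)$, this yields $\Hom_{\T}(P,\sss[n])=0$ for $n\ll \0$.

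Next I will handle the case $n\gg \0$ using the hypothesis $P\in\T^{\p}$, which is precisely where (RS1) applies. The bifunctorial isomorphism gives
\[
D\Hom_{\T}(P,\sss[n])\;\simeq\;\Hom_{\T}(\sss[n],\SSS P),
\]
so it suffices to see that the right-hand side vanishes for $n\gg \0$. But the first vanishing from Lemma~\ref{Lem:SMCtotstr}(3), applied to $Y=\SSS P$ (and using again $\sss\subset\hhh$), gives exactly $\Hom_{\T}(\sss[n],\SSS P)=0$ for $n\gg \0$. Together with the previous step, this shows $\Hom_{\T}(P,\sss[n])\ne 0$ only for finitely many $n\in\Z$.

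There is no real obstacle: the only subtlety is spotting that the two directions of $n$ require different arguments, with (RS1) being essential to convert $\Hom_{\T}(P,\sss[n])$ (which is not directly controlled by the SMC vanishing at $P$ for $n\gg \0$) into $\Hom_{\T}(\sss[n],\SSS P)$, which is.
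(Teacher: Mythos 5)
Your proof is correct and follows essentially the same route as the paper: the vanishing for $n\ll 0$ comes from the boundedness statement in Lemma~\ref{Lem:SMCtotstr}, and the vanishing for $n\gg 0$ comes from applying (RS1) to $P\in\T^{\p}$ to rewrite $D\Hom_{\T}(P,\sss[n])\simeq\Hom_{\T}(\sss[n],\SSS P)$ and then invoking the same boundedness. The only cosmetic difference is that you cite part (3) of Lemma~\ref{Lem:SMCtotstr} where the paper uses part (2); the content is identical.
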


\begin{proof}
We know $\Hom_{\T}(P, X[\ll\0])=0$ by Lemma \ref{Lem:SMCtotstr}. On the other hand, we have $\Hom_{\T}(P, \sss[n])=D\Hom_{\T}(\sss[n], \SSS P)$  by (RS1), which vanishes for big enough $n$. 
So the statement holds.
\end{proof}

Now we are ready to prove Proposition \ref{Prop:silting}.
\begin{proof}[Proof of Proposition \ref{Prop:silting}]
(1)
Since $\cal P$ is the co-heart of a co-$t$-structure,
 then $\Hom_{\T}(\cal P, \cal P[>\0])=0$. To show $\cal P$ is silting in $\T^{\p}$, it suffices to show $\T^{\p}=\thick\cal P$.
For any $P\in \T^{\p}$, there are only finite many $n\in \Z$ such that $\Hom_{\T}(P, \sss[n])\not=0$ by lemma \ref{Lem:finitemany}. Then there exist $i, j\in \Z$ such that $P\in \T_{\ge i}\cap \T_{\le j}$. By Lemma \ref{Lem:cothick}, $P\in \thick\cal P$. So $\cal P$ is a silting object in $\T^{\p}$.

(2) Since $\cal P$ is silting in $\T^{\p}$, then it is known that $\cal P$ gives us a standard co-$t$-structure $\T^{\p}=\Filt(\cal P[\le \0])\perp\Filt(\cal P[>\0])$ (see \cite[Proposition 2.8]{IY2}).
In the proof of Theorem \ref{Thm:singularity}, we showed the co-$t$-structure $\T=\T_{\ge 0}\perp \T_{<0}$  of $\T$ restricts to a co-$t$-structure $\T^{\p}=\T_{\ge 0}\perp (\T_{<0}\cap T^{\p})$ of $\T^{\p}$. 
Since $\Filt(\cal P[\le \0])\subset \T_{\ge 0}$ and $\Filt(\cal P[>\0])\subset\T_{<0}$, it turns out that these two co-$t$-structure coincide with each other. In particular, $\T_{\ge 0}=\Filt (\cal P[\le\0])$.
\end{proof}

Next we study the relation between $\cal P$ and the standard $t$-structure of $\T=\T^{\le 0}\perp\T^{>0}$.
\begin{Prop}\label{Prop:modP}
%Assume $(\T, \T^{\rm p}, \SSS,\sss)$ is a SMC quadruple. Then 
 \begin{enumerate}[\rm(1)]
  \item We have $\cal P[\ge\0]^{\perp}=\T^{>0}$ and $\cal P[\le\0]^{\perp}=\T^{<0}$ in $\T$;
  \item The functor $\Hom_{\T}(\cal P, ?): \T \ra \mod\cal P$ restricts to an equivalence form the heart $\cal H$ to $\mod \cal P$.
 \end{enumerate}
\end{Prop}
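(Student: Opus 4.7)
For (1), my plan is to combine the $t$-structure truncation with Lemma \ref{RS3} and the silting property of $\cal P$ in $\T^{\p}$. To prove $\cal P[\ge\0]^{\perp}=\T^{>0}$: the inclusion $\supset$ follows immediately, since $\cal P \subset \T_{\le 0} = \T^{\le 0}$ and $\T^{\le 0}$ is closed under positive shifts, so $\cal P[\ge \0] \subset \T^{\le 0}$ and the $t$-structure orthogonality $\Hom(\T^{\le 0},\T^{>0})=0$ applies. Conversely, given $X \in \cal P[\ge\0]^{\perp}$, take the $t$-structure triangle $X^{\le 0}\to X\to X^{>0}$. The part $X^{>0}\in\T^{>0}$ already vanishes against $\cal P[\ge\0]$, so $\Hom(\cal P[i],X^{\le 0})=0$ for $i\ge 0$; for $i<0$ the shift $\cal P[i]$ lies in $\T_{\ge -i}\subset \T_{>0}$ while $X^{\le 0}\in\T_{\le 0}$, and the co-$t$-structure orthogonality $\Hom(\T_{>0},\T_{\le 0})=0$ forces the remaining vanishings. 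Hence $X^{\le 0}\in(\thick\cal P)^{\perp}=(\T^{\p})^{\perp}=0$ by Proposition \ref{Prop:silting}(1) together with Lemma \ref{RS3}, giving $X=X^{>0}\in\T^{>0}$. The equality $\cal P[\le\0]^{\perp}=\T^{<0}$ is obtained by the symmetric argument, using the shifted decompositions $\T=\T^{\le -1}\perp\T^{\ge 0}$ and $\T=\T_{\ge 0}\perp\T_{\le -1}$ (the latter being the shift of the given co-$t$-structure).

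For (2), I follow the standard silting-heart recipe. First, for $X\in\hhh$ one checks $\Hom_{\T}(\cal P[i],X)=0$ for $i\neq 0$: when $i\ge 1$, $\cal P[i]\subset\T_{\ge 1}=\T_{>0}$ while $X\in\T^{\le 0}=\T_{\le 0}$, and when $i\le -1$, $\cal P[i]\subset\T^{\le -1}$ while $X\in\T^{\ge 0}$. Next, for $X\in\hhh$ I construct a projective cover by $\cal P$ by combining two truncations: from $\T=\T_{\ge 0}\perp\T_{\le -1}$ applied to $X$ I extract the $\T_{\ge 0}$-part $X_{\ge 0}\in\Filt(\cal P[\le 0])$ (Proposition \ref{Prop:silting}(2)), and from the further truncation $\T=\T_{\ge 1}\perp\T_{\le 0}$ applied to $X_{\ge 0}$ I peel off an object $P_{0}\in\T_{\ge 0}\cap\T_{\le 0}=\cal P$. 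The composite $P_{0}\to X_{\ge 0}\to X$ is then a right $\cal P$-approximation, because any map $P\to X$ with $P\in\cal P$ factors uniquely through $X_{\ge 0}$ (since $\Hom(P,X_{\le -1})=0$) and then through $P_{0}$ (since $\Hom(P,\T_{\ge 1}[1])=0$). Iterating produces a two-term presentation $P_{1}\to P_{0}\to X$, whose image under $\Hom_{\T}(\cal P,-)$ is a finite presentation exhibiting $\Hom_{\T}(\cal P,X)\in\mod\cal P$.

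Full faithfulness of $F=\Hom_{\T}(\cal P,-)|_{\hhh}:\hhh\to\mod\cal P$ then follows by a routine Yoneda-lemma argument, using the vanishing $\Hom_{\T}(\cal P[i],\hhh)=0$ for $i>0$ to lift module maps along the projective presentation, while essential surjectivity is obtained by taking a projective presentation $\Hom_{\T}(\cal P,P_{1})\to\Hom_{\T}(\cal P,P_{0})\to M\to 0$ of $M\in\mod\cal P$, lifting it to $\alpha:P_{1}\to P_{0}$ in $\cal P$ by Yoneda, and checking via part (1) that $\cone(\alpha)$ lands in $\T^{\le 0}\cap\T^{\ge 0}=\hhh$. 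The principal obstacle lies in the approximation step: $\cal P$ is silting in $\T^{\p}$ but need not sit inside $\hhh$, since $\cal P\subset\T_{\ge 0}\cap\T^{\le 0}$ whereas $\hhh=\T^{\le 0}\cap\T^{\ge 0}$ and the co-$t$-structure aisle $\T_{\ge 0}$ is in general different from the $t$-structure coaisle $\T^{\ge 0}$. Navigating between these two filtrations on $\T$ is exactly what makes part (1) indispensable — it is what lets us identify the syzygy of the approximation as again belonging to $\hhh$, after which everything becomes formal.
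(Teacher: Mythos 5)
Your part (1) is correct and follows essentially the paper's route: the inclusion $\T^{>0}\subset\cal P[\ge\0]^{\perp}$ from $\cal P[\ge\0]\subset\T^{\le 0}$, and for the converse killing the $\T^{\le 0}$-truncation of $X$ by showing it is right-orthogonal to every shift of $\cal P$ and invoking $\thick\cal P=\T^{\p}$ (Proposition \ref{Prop:silting}) together with $(\T^{\p})^{\perp}=0$ (Lemma \ref{RS3}); your symmetric treatment of $\cal P[\le\0]^{\perp}=\T^{<0}$ also works, though the paper gets it at once from $\Filt(\cal P[\le\0])=\T_{\ge 0}$. Note, however, a sign issue that resurfaces later: with the paper's conventions $\T_{\ge i}[1]=\T_{\ge i-1}$ and $\T^{\le i}[1]=\T^{\le i-1}$, so it is the \emph{negative} shifts $\cal P[i]$, $i\le -1$, that lie in $\T_{\ge -i}\subset\T_{>0}$, and the positive shifts $\cal P[i]$, $i\ge 1$, that lie in $\T^{\le -i}\subset\T^{\le -1}$. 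You used this correctly in (1) but swapped the two cases in the orthogonality check at the start of (2); the conclusion $\Hom_{\T}(\cal P[i],\hhh)=0$ for $i\neq 0$ is still true (your two justifications cover the two cases, just with the labels interchanged).

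Part (2) has genuine gaps. First, the approximation step: applying $\T=\T_{\ge 1}\perp\T_{\le 0}$ to $X_{\ge 0}$ gives a triangle $Y_{\ge 1}\to X_{\ge 0}\to P_{0}\to Y_{\ge 1}[1]$, so the object $P_{0}\in\cal P$ you extract is a \emph{quotient} of $X_{\ge 0}$; there is no map $P_{0}\to X_{\ge 0}$, and the composite ``$P_{0}\to X_{\ge 0}\to X$'' is undefined. The vanishing you invoke to produce the factorization, $\Hom_{\T}(\cal P,\T_{\ge 1}[1])=0$, is false: $\T_{\ge 1}[1]=\T_{\ge 0}\supset\cal P$. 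What does work (and is the paper's argument in Lemma \ref{Lem:abel}) is the opposite factorization: since $X\in\T_{\le 0}$ one has $\Hom_{\T}(\Filt(\cal P[<\0]),X)=0$, hence the composite $Y_{\ge 1}\to X_{\ge 0}\to X$ vanishes and $X_{\ge 0}\to X$ factors through $X_{\ge 0}\to P_{0}$, giving $\alpha\colon P_{0}\to X$, which is then checked to be a right $\cal P$-approximation. Second, your essential surjectivity step fails: for $\alpha\colon P_{1}\to P_{0}$ in $\cal P$ the cone need \emph{not} lie in $\hhh$, because $\cal P\not\subset\T^{\ge 0}$ -- e.g.\ for $\T=\D^{\bb}(A)$ with $A$ non-positive one has $\cal P=\add A$, whose cohomology sits in many negative degrees, so $\con(\alpha)\in\cal P\ast\cal P[1]$ only lies in $\T^{\le 0}$, and part (1) gives no control of its $\T^{<0}$-part. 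The object realizing $M$ must be the heart truncation $\h^{0}(\con(\alpha))$, and handling this correctly is precisely why the paper replaces $\cal P$ by $\cal P^{0}=\{P^{0}\mid P\in\cal P\}\subset\hhh$ (the $0$-th cohomologies of the objects of $\cal P$), proves that $\cal P^{0}$ is a class of projective generators of $\hhh$ (using the right $\cal P$-approximation triangle and part (1) to see that the cocone lies in $\T^{>0}$, then taking $\h^{0}$), and identifies $\Hom_{\T}(\cal P,?)|_{\hhh}$ with $\Hom_{\hhh}(\cal P^{0},?)$; your ``routine Yoneda'' full faithfulness has the same defect, since the would-be projective presentations live in $\T$, not in $\hhh$. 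So the overall strategy can be salvaged, but only after inserting the paper's factorization trick and the passage to $\cal P^{0}$.
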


We first show a lemma.

%%% Lem 1.11 and Lem 1.12 together as proposition.
\begin{Lem} \label{Lem:abel}
\begin{enumerate}[\rm(1)]
\item 
$\cal P$ is a contravariantly finite subcategory of $\T$;
\item $\mod \cal P$ is an abelian category.
\end{enumerate}
\end{Lem}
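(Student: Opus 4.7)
The plan is to obtain (2) as a standard consequence of (1) via Auslander's weak-kernel criterion, so the substantive content is in (1). For (1), the idea is to stack two truncations: first the $t$-structure truncation coming from the SMC $\sss$, then the co-$t$-structure truncation coming from (RS2). The key input is the paper's identification $\T^{\le 0} = \T_{\le 0}$ recorded at the start of Section~4.1, which forces the second truncation to land inside $\cal P$.

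Concretely, for (1), let $X \in \T$. The $t$-structure truncation produces a triangle
\[ \tau^{\le 0} X \ra X \ra \tau^{\ge 1} X \ra \tau^{\le 0} X[1], \]
with $\tau^{\le 0} X \in \T^{\le 0} = \T_{\le 0}$ and $\tau^{\ge 1} X \in \T^{\ge 1}$. Since $\cal P \subset \T_{\le 0} = \T^{\le 0}$ and $\Hom_{\T}(\T^{\le 0}, \T^{\ge 1}) = 0$, the induced map $\Hom_{\T}(\cal P, \tau^{\le 0} X) \ra \Hom_{\T}(\cal P, X)$ is surjective, so it suffices to approximate $X' := \tau^{\le 0} X$. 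I would then apply the shifted co-$t$-structure truncation $\T = \T_{\ge 0} \perp \T_{\le -1}$ to $X'$ to obtain a triangle
\[ X'_{\ge 0} \ra X' \ra X'_{\le -1} \ra X'_{\ge 0}[1], \]
with $X'_{\ge 0} \in \T_{\ge 0}$ and $X'_{\le -1} \in \T_{\le -1} \subset \T_{\le 0}$. Since $X' \in \T_{\le 0}$ and $\T_{\le 0}$ is extension-closed, $X'_{\ge 0} \in \T_{\ge 0} \cap \T_{\le 0} = \cal P$. The orthogonality $\Hom_{\T}(\T_{\ge 0}, \T_{\le -1}) = 0$ then makes $\Hom_{\T}(\cal P, X'_{\ge 0}) \ra \Hom_{\T}(\cal P, X')$ surjective, and composing with the earlier surjection shows that $X'_{\ge 0} \ra X' \ra X$ is the desired right $\cal P$-approximation of $X$.

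For (2), I would invoke Auslander's classical result that $\mod \cal A$ is abelian if and only if $\cal A$ admits weak kernels. Weak kernels in $\cal P$ follow from (1): for $f: P \ra P'$ in $\cal P$, complete $f$ to a triangle $C \ra P \ra P' \ra C[1]$ in $\T$, use (1) to pick a right $\cal P$-approximation $P'' \ra C$, and observe that the composite $P'' \ra C \ra P$ is a weak kernel of $f$. With the two-step truncation in (1) working smoothly, the main obstacle is really just keeping the two truncations compatible via the identification $\T^{\le 0} = \T_{\le 0}$; once this is in place, both parts are direct.
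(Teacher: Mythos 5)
Your proof is correct, and for part (1) it takes a genuinely more direct route than the paper's. Both arguments make the same first reduction (using $\Hom_{\T}(\cal P,\T^{>0})=0$, it suffices to approximate $X\in\T^{\le 0}=\T_{\le 0}$) and both use the co-$t$-structure triangle $X_{\ge 0}\ra X\ra X_{<0}\ra X_{\ge 0}[1]$ with $X_{\ge 0}\in\T_{\ge 0}$ and $X_{<0}\in\T_{\le -1}$. At that point the paper does \emph{not} observe that $X_{\ge 0}$ already lies in $\cal P$: it invokes Proposition \ref{Prop:silting} (i.e. $\T_{\ge 0}=\Filt(\cal P[\le 0])$, coming from the silting co-$t$-structure) to decompose $X_{\ge 0}$ further into $Y_{>0}\ra X_{\ge 0}\ra Y_{0}$ with $Y_{0}\in\cal P$, and then builds the approximation $\alpha\colon Y_{0}\ra X$ by a factorization argument using $\Hom_{\T}(\cal P[<0],X)=0$. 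You instead show that the truncation map $X_{\ge 0}\ra X$ is itself a right $\cal P$-approximation, since $X_{\ge 0}\in\T_{\ge 0}\cap\T_{\le 0}=\cal P$; this bypasses Proposition \ref{Prop:silting} entirely and gives the slightly sharper fact that for $X\in\T^{\le 0}$ the co-$t$-truncation lands in the co-heart, while the paper's detour only costs extra diagram-chasing. One step you should tighten: extension-closedness of $\T_{\le 0}$ has to be applied to the rotated triangle $X'_{\le -1}[-1]\ra X'_{\ge 0}\ra X'$, so what is needed is $X'_{\le -1}[-1]\in\T_{\le 0}$, i.e. $\T_{\le -1}[-1]\subset\T_{\le 0}$ (immediate, since $\T_{\le -1}={}^{\perp}\sss[\le 0]$ and $\T_{\le 0}={}^{\perp}\sss[\le -1]$), not merely $X'_{\le -1}\in\T_{\le -1}\subset\T_{\le 0}$ as you wrote; equivalently, apply $\Hom_{\T}(-,\sss[\le -1])$ to the triangle and use $X'\in{}^{\perp}\sss[\le -1]$ and $X'_{\le -1}\in{}^{\perp}\sss[\le 0]$. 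For part (2) the paper simply cites \cite[Lemma 4.7]{IY2}; your weak-kernel argument (take the cocone of $f$ and a right $\cal P$-approximation of it, then apply Auslander's criterion) is exactly the standard proof of that lemma, so the content there is the same.
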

\begin{proof}
(1)
Because $\T=\T^{\le 0}\perp\T^{>0}$ is a $t$-structure and $\Hom_{\T}(\cal P, \T^{>0})=0$, it suffices to show there exists a right $\cal P$-approximation   for any $X\in T^{\le 0}$. 
Let $X\in T^{\le 0}$.
There is  a  triangle 
$X_{\ge 0}\ra X \ra X_{<0}\ra X_{\ge0}[1]$ with $X_{\ge 0}\in \T_{\ge 0}$ and $X_{<0}\in\T_{<0}$. 
Notice that  $X_{\ge 0}\in \T_{\ge 0}=\Filt(\cal P[\le \0])$ by Proposition \ref{Prop:silting}. Then there is a triangle $Y_{>0}\ra X_{\ge 0}\ra Y_{0}$ such that $Y_{>0}\in \Filt(\cal P[<\0])$ and $Y_{0}\in \Filt(\cal P[\ge\0])$. 
It is easy to check $Y_{0}\in\cal P$. We have the following diagram.
\[ \xymatrix{ Y_{>0} \ar[d]^{f'} & &Z &
\\ X_{\ge 0} \ar[r]^{f} \ar[d]^{g'}& X \ar[r]^{g} \ar[ur]^{\beta} & X_{<0} \ar[r] \ar[u]_{h}& X_{\ge 0}[1] 
\\ Y_{0} \ar[ur]_{\alpha} & & & }\]
Since $X\in \T^{\le 0}=\T_{\le 0}$, then $\Hom_{\T}(\cal P[<\0], X)=0$. Then $f\circ f'=0$ and there is $\alpha\in \Hom_{\T}(Y_{0},X)$ such that $f=\alpha\circ g'$.

We claim $\alpha: Y_{0}\ra X$ is a right $\cal P$-approximation of $X$.
Let $Z$ be the third term of the triangle extended by $\alpha$.
Since $\beta \circ f=\beta\circ \alpha \circ g'=0$, then  there exists $h\in \Hom_{\T}(X_{<0}, Z)$ such that $\beta=h\circ g$.
Since $\Hom_{\T}(\cal P, X_{<0})=0$, then $\Hom_{T}(\cal P, \beta)=0$. So $\alpha$ is right $\cal P$-approximation of $X$.

(2) See \cite[Lemma 4.7]{IY2}.
\end{proof}

Now let us prove Proposition \ref{Prop:modP}.

\begin{proof}[Proof of Proposition \ref{Prop:modP}]
(1) We only show $\cal P[\ge\0]^{\perp}=\T^{>0}$, since
$\cal P[\le\0]^{\perp}=\T^{<0}$ is directly  induced from   Proposition \ref{Prop:silting}.
Since $\cal P[\ge \0]\subset \T_{\le 0}\cap \T^{\p}\subset \T_{\le 0}=\T^{\le 0}$,  then   $\T^{>0}\subset \cal P[\ge\0]^{\perp}$. 

We claim $\cal P[\ge\0]^{\perp}\subset\T^{<0}$. Let $X\in \cal P[\ge\0]^{\perp}$. Consider the following triangle
\[X^{>0}[-1] \ra X^{\le 0} \ra X \ra X^{>0} \]
with $X^{\le 0}\in \T^{\le 0}$ and $X^{>0}\in \T^{>0}$. Since $\Hom_{\T}(\cal P[\ge \0], X^{>0}[-1])=0$ and $\Hom_{\T}(\cal P[\ge \0], X)=0$, then by applying $\Hom_{\T}(\cal P[\ge\0], ?)$ to the triangle above, we have $\Hom_{\T}(\cal P[\ge\0], X^{\le 0})=0$.
On the other hand, by the definition of co-heart, we know $\cal P={}^{\perp}\cal S[\not=\0]$, thus  $\Hom_{\T}(\cal P[<\0], X^{\le 0})=0$. So $\Hom_{\T}(\cal P[n], X^{\le 0})=0$ for any $n\in \Z$. Thus $X^{\le 0}=0$ by Proposition \ref{Prop:silting} and Lemma \ref{RS3}.
 So  $X\cong X^{>0}\in\T^{>0}$.   Then $\cal P[\ge\0]^{\perp}=\T^{>0}$ holds.

(2) We have $\cal H=\Filt(\cal S)=\cal P[\not=\0]^{\perp}$ by (1). For any $P\in \cal P$, consider the following triangle.
\begin{eqnarray}\label{0thcohom} P^{<0}\ra P \ra P^{0} \ra P^{<0}[1]\end{eqnarray}
with $P^{<0}\in \T^{<0}$ and $P^{0}\in \T^{\ge 0}$. Since $\Hom_{\T}(P, \cal S[<\0])=0$ and $\Hom_{\T}(P^{<0}[1], \cal S[<\0])=0$, then $\Hom_{\T}(P^{0}, \cal S[<\0])=0$ and $P^{0}\in \cal H$. 
Let $\cal P^{0}=\{ P^{0}\mid P\in \cal P\}\subset \cal H$ be a subcategory of $\cal H$.
It is easy to check that the functor $(-)^{0}:\cal P\ra \cal P^{0}$ is an equivalence. 
Since $\Hom_{\T}(\T^{<0}, \cal H)=0$, then $\Hom_{\T}(P, \cal H)=\Hom_{\T}(P^{0}, \cal H)$ for any $P\in \cal P$. So
we have the following commutative diagram.
\[\xymatrix{   \cal H \ar[d]_{\Hom_{\T}(\cal P^{0}, ?)} \ar[dr]^{\Hom_{\T}(\cal P, ?)}& \\
\mod \cal P^{0} \ar[r]_{(-)^{0}}^{\simeq} & \mod \cal P}\]
To show $\cal H$ is equivalent to $\mod\cal P$, it suffices to show that $\cal P^{0}$ forms a class of projective generators of $\cal H$. 
For any $X\in \cal H$ and $P\in \cal P$, applying $\Hom_{\T}(?, X)$ to the triangle \eqref{0thcohom}, 
we get $\Hom_{\T}(P^{0}, X[1])=0$ by $\Hom_{\T}(P^{<0}, X)=0$ and $\Hom_{\T}(P[-1], X)=0$. So $P^{0}$ is projective in $\cal H$.

For any $X\in \cal H$. Consider the minimal right $\cal P$-approximation of $X$ ($\cal P$ is a contravariantly finite subcategory of $\T$ by Lemma \ref{Lem:abel}).
\[ Y_{\cal P} \ra X_{\cal P} \ra X \ra Y_{\cal P}[1]. \]
Applying $\Hom_{\T}(\cal P, ?)$ to the triangle, we have long exact sequence
\[ \Hom_{\T}(\cal P, X_{\cal P}[i]) \ra \Hom_{\T}(\cal P, X[i]) \ra \Hom_{\T}(\cal P, Y_{\cal P}[i+1]) \ra \Hom_{\T}(\cal P, X_{\cal P}[i+1]).\]
Since $\Hom_{\T}(\cal P, X_{\cal P}[i])=\Hom_{\T}(\cal P, X[i])=0$ for $i>0$, then $\Hom_{\T}(\cal P, Y_{\cal P}[>\hspace{-3pt}1])=0$.
For the case $i=0$, since $\Hom_{\T}(\cal P, X_{\cal P})\ra \Hom_{\T}(\cal P, X)$ is surjective, then $\Hom_{\T}(\cal P, Y_{\cal P}[1])=0$. So $Y_{\cal P}[1]\in \cal P[\ge \0]^{\perp}=\Filt \sss[<\0]$.
Taking 0-th cohomology, we have an exact sequence $(X_{\cal P})^{0}\ra X \ra 0$. 
So $\cal P^{0}$ is a projective generator of $\cal H$.
\end{proof}

The following Proposition is important in the sequel.

\begin{Prop}\label{Prop:inducedzero}
Let  $(\T, \T^{\rm p}, \SSS, \sss)$ be a SMC quadruple. Let $X\in \T_{\le i}$ and $Y\in \T_{\ge i}$ for some $i\in \Z$.  Then for any  $f\in \rad(X, Y)$ and $S\in \sss$, the induced map $\Hom_{\T}(f, S[-i]): \Hom_{\T}(Y, S[-i])\ra \Hom_{\T}(X, S[-i])$ is zero.
\end{Prop}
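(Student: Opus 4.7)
The plan is to reduce the statement to a standard fact about radicals in Krull--Schmidt categories via the equivalence $\cal H \simeq \mod \cal P$ from Proposition \ref{Prop:modP}(2). First I would truncate $X$ and $Y$ using the two co-$t$-structures from (RS2): the triangle $X_{\ge i} \xra{\alpha} X \ra X_{<i} \ra X_{\ge i}[1]$ coming from $\T = \T_{\ge i} \perp \T_{<i}$ satisfies $X_{\ge i} \in \T_{\ge i} \cap \T_{\le i} = \cal P[-i]$ (since $X, X_{<i} \in \T_{\le i}$), and dually $Y_{>i} \ra Y \xra{\pi} Y_{\le i} \ra Y_{>i}[1]$ gives $Y_{\le i} \in \cal P[-i]$. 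Put $\bar f := \pi \circ f \circ \alpha : X_{\ge i} \ra Y_{\le i}$.

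Next, applying $\Hom_{\T}(-, S[-i])$ to the two truncation triangles, the vanishings $\Hom(X_{<i}, S[-i]) = 0$ (from $X_{<i} \in \T^{\le i-1}$ and $S[-i] \in \T^{\ge i}$ via the $t$-structure of Lemma \ref{Lem:SMCtotstr}) and $\Hom(Y_{>i}, S[-i]) = 0$ (directly from $Y_{>i} \in {}^{\perp}\sss[\ge -i]$) yield an injection $\alpha^{*} : \Hom(X, S[-i]) \hookrightarrow \Hom(X_{\ge i}, S[-i])$ and a surjection $\pi^{*} : \Hom(Y_{\le i}, S[-i]) \twoheadrightarrow \Hom(Y, S[-i])$. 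A diagram chase then shows that $\Hom(f, S[-i])$ is zero if and only if $\Hom(\bar f, S[-i])$ is zero.

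The crucial step is to establish that $\bar f \in \rad(X_{\ge i}, Y_{\le i})$. Suppose it is not; then Krull--Schmidt provides indecomposable summands $P_0 \mid X_{\ge i}$ and $P_1 \mid Y_{\le i}$, with split inclusion $\iota_0 : P_0 \ra X_{\ge i}$ and split projection $p_1 : Y_{\le i} \ra P_1$, such that $\phi := p_1 \bar f \iota_0 : P_0 \ra P_1$ is an isomorphism. Setting $a := \alpha \iota_0 : P_0 \ra X$ and $b := \phi^{-1} p_1 \pi : Y \ra P_0$, one computes $b \circ f \circ a = \phi^{-1} \phi = \mathrm{id}_{P_0}$, which contradicts $f \in \rad(X, Y)$ by the standard characterization that $f$ lies in the radical if and only if $b f a$ is non-invertible for every indecomposable $A$ and every pair $a : A \ra X$, $b : Y \ra A$.

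Finally, Proposition \ref{Prop:modP}(2), shifted by $[-i]$, identifies $\Hom(\cal P[-i], S[-i])$ with a simple object of $\mod \cal P[-i]$; as a simple module it is annihilated by the radical of $\cal P[-i]$, so $\Hom(\bar f, S[-i]) = 0$ and we are done. The main obstacle is the third step: the truncation maps $\alpha$ and $\pi$ need not be split, so $\bar f$ is not literally a restriction of $f$ to summands. However, the characterization of the radical through compositions $b f a$ with $A$ indecomposable translates a failure of $\bar f$ to be radical into a failure of $f$, which is what makes the argument go through.
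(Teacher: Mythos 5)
Your proof is correct and takes essentially the same route as the paper's: truncate $X$ and $Y$ by the two co-$t$-structures so that $X_{\ge i},Y_{\le i}\in\cal P[-i]$, reduce the vanishing of $\Hom_{\T}(f,S[-i])$ to that of $\Hom_{\T}(\bar f,S[-i])$ using $\Hom_{\T}(X_{<i},S[-i])=0=\Hom_{\T}(Y_{>i},S[-i])$, and then kill the radical map $\bar f$ through the equivalence $\cal H\simeq\mod\cal P$ of Proposition \ref{Prop:modP} (this last point is exactly the paper's Lemma \ref{Lem:radical}). The only cosmetic difference is that the paper gets $\bar f\in\rad(X_{\ge i},Y_{\le i})$ in one line from the fact that the radical is an ideal, whereas you re-derive it by contradiction via the matrix-component characterization in a Krull--Schmidt category.
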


\begin{proof}
Let $g\in \Hom_{\T}(Y, S[-i])$. We show $g\circ f=0$.
Consider the following diagram,
\[ \xymatrix{  X_{i} \ar[r]^{\alpha} & X \ar[d]^{f} \ar[r]^{\gamma} & X_{< i} \ar[r] & X_{i}[1] \\
Y_{>i} \ar[r] & Y\ar[d]^{g} \ar[r]^{\beta} & Y_{i}\ar[r] \ar@{.>}[ld]^{h}& Y_{>i}[1]  \\
& S[-i] & &}\]
where $X_{i}\in \T_{\ge i}, X_{<i}\in \T_{<i}$ and 
$Y_{>i}\in \T_{>i}, Y_{i}\in \T_{\le i}$. Since $X\in \T_{\le i}$ and $Y\in \T_{\ge i}$, it is easy to check that $X_{i}, Y_{i}\in \T_{\le i}\cap \T_{\ge i}=\cal P[-i]$.
Notice that $\Hom_{\T}(Y_{>i}, S[-i])=0$ for $Y_{>i}\in \T_{>i}={}^{\perp}\sss[\ge \hs -i]$, 
then there exists $h\in \Hom_{\T}(Y_{0}, S[-i])$ such that $g=h\circ\beta$. 
Since $f\in \rad(X, Y)$, then $\beta\circ f\circ\alpha\in \rad(X_{i}, Y_{i})$, and moreover, we have  $g\circ f\circ \alpha= h\circ \beta\circ f\circ\alpha =0$ by the following Lemma \ref{Lem:radical}.
So $g\circ f$ factors through $\gamma$. But $\Hom_{\T}(X_{<i}, S[-i])=0$ by $X_{< i}\in \T_{\le i}={}^{\perp}\sss[\le \hs -i]$, then $g\circ f=0$.  
\end{proof}
 
 The following lemma is a generalization of a well-known result: for a finite-dimensional $k$-algebra $A$, the radical map $f:Q\ra P $ induces a zero map $\Hom_{A}(f, S)=0$, where $P, Q$ are projective $A$-modules and $S$ is simple. 
  
\begin{Lem}\label{Lem:radical}
Let $P, Q \in \cal P$ and $S\in \cal S$. Let $f\in \rad(Q,P)$, then the induced morphism $\Hom_{T}(f, S):\Hom_{\T}(P,S) \ra \Hom_{\T}(Q,S)$ is zero.
\end{Lem}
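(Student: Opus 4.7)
The plan is to reduce the claim to the classical module-theoretic fact that a radical map between indecomposable projectives kills every morphism to a simple, by means of the equivalence $\hhh \simeq \mod \cal P$ furnished by Proposition~\ref{Prop:modP}(2). By bi-additivity in $P$ and $Q$ together with the Krull--Schmidt property of $\cal P$, it suffices to treat the case of indecomposable $P, Q \in \cal P$, in which case $f \in \rad(Q, P)$ just means that $f$ is not an isomorphism.

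To transfer the computation into the heart, I would use the truncation triangle $P^{<0} \to P \to P^{0} \to P^{<0}[1]$ (and its analogue for $Q$) introduced in the proof of Proposition~\ref{Prop:modP}(2). Since $S \in \sss \subset \hhh$ and $\Hom_{\T}(\T^{<0}, \hhh) = 0$, applying $\Hom_\T(-, S)$ yields natural isomorphisms $\Hom_\T(P, S) \cong \Hom_\hhh(P^0, S)$, and likewise for $Q$; by naturality these identify $\Hom_\T(f, S)$ with $\Hom_\hhh(f^0, S)$, where $f^0 \colon Q^0 \to P^0$ is the morphism induced by functoriality of $(-)^0$. The functor $(-)^0 \colon \cal P \to \cal P^0$ is an equivalence (by the same proof), so $f^0$ is again a non-isomorphism between indecomposables. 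The equivalence $\Hom_\T(\cal P, -) \colon \hhh \xrightarrow{\simeq} \mod \cal P$ then sends $\cal P^0$ onto the finitely generated projectives and carries $\sss$ onto a complete set of simples of $\mod \cal P$, since $\sss$ is precisely the set of simple objects of the length category $\hhh = \Filt \sss$.

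What remains is the standard statement: in $\mod \cal P$, a non-isomorphism $\tilde f \colon \tilde Q \to \tilde P$ between indecomposable projectives induces the zero map on $\Hom(-, \tilde S)$ for every simple $\tilde S$. Indeed, for any indecomposable projective, $\Hom(-, \tilde S)$ is one-dimensional when it is the projective cover of $\tilde S$ and vanishes otherwise; hence if either $\Hom(\tilde P, \tilde S)$ or $\Hom(\tilde Q, \tilde S)$ vanishes we are done, and otherwise $\tilde P \cong \tilde Q$ is the projective cover of $\tilde S$ and $\tilde f$ lies in $\rad \End(\tilde P)$, so it is annihilated by the canonical surjection $\End(\tilde P) \twoheadrightarrow \End(\tilde P)/\rad \End(\tilde P) \cong \End(\tilde S)$, which gives the desired vanishing. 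The main (mild) point requiring care is checking that ``radical between indecomposables'' is preserved along the chain $\cal P \xrightarrow{(-)^0} \cal P^0 \hookrightarrow \hhh \xrightarrow{\simeq} \mod \cal P$, but this is automatic since each of these functors is fully faithful and preserves isomorphism classes.
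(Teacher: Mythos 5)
Your argument is correct, but it takes a different route from the paper's. The paper does not reduce to indecomposables nor pass through truncations: it simply observes, via Proposition \ref{Prop:modP}(2), that $\Hom_{\T}(\cal P,S)$ is a simple object of $\mod\cal P$, and then runs the one-line ``a radical morphism annihilates a simple module'' argument, phrased by noting that $\Hom_{\T}(Q,f)$ lands in the radical and evaluating the composition $\Hom_{\T}(Q,Q)\ra\Hom_{\T}(Q,P)\ra\Hom_{\T}(Q,S)$ at $1_{Q}$. You instead invoke Krull--Schmidt to reduce to indecomposable $P,Q$, transport the problem into the heart via the truncation $(-)^{0}$ and the equivalence $\hhh\simeq\mod\cal P$, and finish with the classical projective-cover argument; this is perfectly valid (the identifications $\Hom_{\T}(P,S)\cong\Hom_{\T}(P^{0},S)$ are natural because $\Hom_{\T}(\T^{<0},\hhh)=0=\Hom_{\T}(\T^{<0}[1],\hhh)$, and $(-)^{0}\colon\cal P\ra\cal P^{0}$ is an equivalence), but it is longer and quietly uses one extra fact: that an indecomposable projective object of $\hhh\simeq\mod\cal P$ has simple top, so that a nonzero map to a simple forces it to be the projective cover. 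This does hold here --- $\End(P^{0})\cong\End_{\T}(P)$ is local and $\End$ of the top is a quotient of it, or one can argue by Yoneda that $\rad(-,P)$ is the unique maximal subfunctor of $\Hom(-,P)$ --- so it is not a gap, but it is exactly the point your trichotomy leans on, and the Yoneda formulation of it ($\Hom(\Hom(-,Q),\tilde S)=\tilde S(Q)$, with simple functors vanishing on radical maps) collapses your whole final step into the paper's direct argument. In short: same key input (Proposition \ref{Prop:modP}(2)), but the paper's mechanism is ``simples are killed by the radical'' applied globally, while yours is the more classical indecomposable/projective-cover computation, which buys concreteness at the cost of a few extra (true, standard) verifications; note also that your final passage from $\hhh$ to $\mod\cal P$ is not needed, since the projective-cover argument can be run in $\hhh$ itself.
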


\begin{proof}
By Proposition \ref{Prop:modP}, the functor $\Hom_{\T}(\cal P, ?): \cal H\ra \mod\cal P$ is an equivalence. Since $\cal S$ is the set of simples of $\cal H$, then $\Hom_{\T}(\cal P, S)$ is  simple in $\mod \cal P$ for any $S\in \sss$. 
Since $f$ is a radical map, then $\Hom_{\T}(Q, f): \Hom_{\T}(Q,Q)\ra \Hom_{\T}(Q, P)$ is a radical map as $\End_{\T}(Q)$-module. Then 
the composition $ \Hom_{\T}(Q,Q)\ra \Hom_{\T}(Q, P)\ra \Hom_{\T}(Q, S)$ is zero. Consider the image of $1_{Q}\in\Hom_{\T}(Q,Q)$ in the composition, we get that   the induced morphism $\Hom_{T}(f, S):\Hom_{\T}(P,S) \ra \Hom_{\T}(Q,S)$ is also zero.
\end{proof}

%%%%%%%%%%%%%%%%%%%%%%%
\subsection{Independence of  SMC quadruple}
%%%%%%%%%%%%%%%%%%%%%%%

The aim of this subsection is to show 
under certain conditions, being a SMC quadruple is independent of the choice of SMC. 
 Let  $(\T, \T^{\rm p}, \SSS, \sss)$ be a SMC quadruple.  Let $\hhh=\Filt(\sss)$.
We show the following result.
%\comment{Using this theorem to show SMCs sending to SMSs under dg algebra setting.}
\begin{Thm}\label{Thm:S'}
Let $\sss'$ be another SMC of $\T$. Assume that
\begin{enumerate}[\rm (1)]
\item
$\hhh'=\Filt(\sss')$ is functorially finite;
\item There exists $n\in \Z$ such that $\sss'\subset \hhh[n]\ast\hhh[n-1]\ast\cdots\ast\hhh[-n]$.
\end{enumerate}
  Then $(\T, \T^{\rm p}, \SSS, \sss')$
is also a SMC quadruple.
\end{Thm}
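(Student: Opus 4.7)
Conditions (RS0) and (RS1) do not involve $\sss$, so they transfer to $(\T,\T^{\p},\SSS,\sss')$ for free. The entire content of the theorem is therefore to verify (RS2) for $\sss'$: that $\T={}^{\perp}\sss'[\ge\0]\perp{}^{\perp}\sss'[<\0]=\sss'[\ge\0]^{\perp}\perp\sss'[<\0]^{\perp}$ are co-$t$-structures with ${}^{\perp}\sss'[\ge\0]\subset\T^{\p}$ and $\sss'[<\0]^{\perp}\subset\T^{\p}$.

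My starting point is an amplitude observation. Since $\Phi_n:=\hhh[n]\ast\cdots\ast\hhh[-n]=\T^{\le n}\cap\T^{\ge -n}$ is extension-closed as the intersection of an aisle and a coaisle, assumption (2) upgrades to $\hhh'\subset\Phi_n$ and $\sss'[i]\subset\Filt(\sss[\ge i-n])$ for every $i\in\Z$. Consequently, for any $X\in\T_{>N}={}^{\perp}\sss[\ge-N]$ one checks $\Hom(X,\sss'[i])=0$ whenever $i\ge n-N$, so choosing $N\ge n+1$ yields the inclusion $\T_{>N}\subset{}^{\perp}\sss'[\ge\0]\cap\T^{\p}$.

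I would then address the four subtasks. Orthogonality $\Hom({}^{\perp}\sss'[\ge\0],{}^{\perp}\sss'[<\0])=0$ holds by d\'evissage, since ${}^{\perp}\sss'[<\0]$ coincides with the aisle $(\T')^{\le 0}=\Filt(\sss'[\ge\0])$ of the $\sss'$-$t$-structure. For existence of the decomposition triangle $X_1\to X\to X_2$ with $X_1\in{}^{\perp}\sss'[\ge\0]$ and $X_2\in{}^{\perp}\sss'[<\0]$, I begin with the $\sss$-co-$t$-structure triangle $X_{>N}\to X\to X^{\le N}$ for $N\ge n+1$ and further decompose $X^{\le N}$ by iterated left $\hhh'[i]$-approximations ($i\ge 0$); functorial finiteness of $\hhh'$ (assumption (1)) makes this possible, and Lemma~\ref{Lem:SMCtotstr}(3) guarantees termination in finitely many steps, after which the octahedral axiom splices everything together. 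Shift-closedness ${}^{\perp}\sss'[<\0][1]\subset{}^{\perp}\sss'[<\0]$ is immediate. For the containment ${}^{\perp}\sss'[\ge\0]\subset\T^{\p}$: given $X\in{}^{\perp}\sss'[\ge\0]$, the long exact Hom-sequence applied to the $\sss$-triangle above forces $X^{\le N}\in\T^{\le N}\cap{}^{\perp}\sss'[\ge\0]$, and I argue that this implies $X^{\le N}=0$, whence $X=X_{>N}\in\T^{\p}$. The second co-$t$-structure and the inclusion $\sss'[<\0]^{\perp}\subset\T^{\p}$ follow by a dual argument invoking Serre duality (RS1).

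The main obstacle is the vanishing step $X^{\le N}=0$: showing that an object of $\T^{\le N}$ which is simultaneously left-perpendicular to $\sss'[\ge\0]$ must be zero. A naive top-down d\'evissage along the bounded $\sss'$-$t$-structure filtration of $X^{\le N}$ does not immediately succeed, as higher $\Ext$-groups among shifts of $\sss'$ can be nonzero. The refined plan is to combine the $\sss'$-$t$-structure truncation of $X^{\le N}$ with the $\sss$-amplitude bound from assumption (2), forcing the $\sss'$-cohomology of $X^{\le N}$ into a range incompatible with $X^{\le N}\in\T^{\le N}$ unless the object vanishes. This delicate coordination between the two $t$-structures---enabled by assumption (2)---is the technical heart of the proof.
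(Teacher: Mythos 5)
Your reduction to (RS2), the orthogonality argument via ${}^{\perp}\sss'[<0]=\Filt(\sss'[\ge 0])$, and the observation that $\T_{>N}\subset{}^{\perp}\sss'[\ge 0]\cap\T^{\p}$ for $N$ large are fine and agree with the paper. The two decisive steps, however, have genuine gaps. First, the existence of the decomposition triangles: stripping $X^{\le N}$ by iterated left $\hhh'[i]$-approximations cannot terminate as you claim. Lemma \ref{Lem:SMCtotstr}(3), applied to $\sss'$, gives $\Hom_{\T}(\hhh'[\gg 0],Y)=0$ and $\Hom_{\T}(Y[\gg 0],\hhh')=0$, i.e.\ vanishing against sufficiently \emph{negative} shifts of $\hhh'$; it says nothing about $\Hom_{\T}(Y,\hhh'[i])$ for $i\gg 0$, which is what an iteration over $i\ge 0$ would need, and that vanishing fails exactly for the interesting objects (in the model of Section \ref{Section:example} it would force the object into $\per A$ by Lemma \ref{Lem:apply1}). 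So finitely many approximation steps cannot produce a cocone satisfying the infinitely many conditions defining ${}^{\perp}\sss'[\ge 0]$. The paper avoids this by fixing a finite window $\X=\hhh'[l]\ast\cdots\ast\hhh'$ with $l\ge 2n$, using covariant finiteness of $\hhh'$ to get a torsion pair $\T={}^{\perp}\X\ast\X$, and then showing that for $Y\in{}^{\perp}\X$ the $\sss$-co-$t$-structure triangle at level $l$ already is the required decomposition: all degrees $i>l$ are killed at once by $Y_{\ge -l}\in{}^{\perp}\sss[\ge l+1]$ together with hypothesis (2), the window $n\le i<l$ uses $Y\in{}^{\perp}\X$, and the boundary degree $i=l$ requires Proposition \ref{Prop:inducedzero} (a radical map from $\T_{\le -l}$ to $\T_{\ge -l}$ induces zero on $\Hom_{\T}(-,\sss[l])$), a step that ultimately rests on the co-heart $\cal P$ and (RS1). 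Your outline has no counterpart to this boundary argument, which is the technical heart of the proof.

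Second, the inclusion ${}^{\perp}\sss'[\ge 0]\subset\T^{\p}$: your route through ``$X^{\le N}=0$'' rests on the claim that $\T^{\le N}\cap{}^{\perp}\sss'[\ge 0]=0$, which you yourself flag as open; in fact it is false, so no refinement of that plan can succeed. Already for $\sss'=\sss$ (hypotheses hold with $n=0$, e.g.\ $\T=\D^{\bb}(\mod A)$, $\T^{\p}=\K^{\bb}(\proj A)$ with $A$ self-injective), take $X=P[-1]$ for $0\ne P\in\cal P$: since $\cal P={}^{\perp}\sss[\ne 0]$ we get $\Hom_{\T}(P[-1],\sss[i])=\Hom_{\T}(P,\sss[i+1])=0$ for all $i\ge 0$, while $P[-1]\in\T_{\le N}$ for $N\ge 1$, so in your triangle $X_{>N}=0$ and $X^{\le N}\cong P[-1]\ne 0$ (concretely, a shifted projective module). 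The statement you need is much weaker and follows in one line from hypothesis (2), exactly as in the paper: (2) also yields $\hhh\subset\hhh'\ast\hhh'[-1]\ast\cdots\ast\hhh'[-m]$ for some $m\ge 0$, hence ${}^{\perp}\sss'[\ge 0]\subset{}^{\perp}\sss[\ge m]$, which is a shift of $\T_{>0}$ and therefore lies in $\T^{\p}$ by (RS2) for $\sss$. As it stands, your proposal settles the routine parts but leaves both the construction of the co-$t$-structure triangles and the inclusion into $\T^{\p}$ unproved, the latter being argued through a false intermediate claim.
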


\begin{proof}
To show $(\T, \T^{\rm p}, \SSS, \sss')$
is a SMC quadruple,  we only need to check (RS2) in Definition \ref{Def:relativeSerre} holds, that is,  $\T={}^{\perp}\cal S'[\ge \0]\perp {}^{\perp}\cal S'[<\0]=\cal S'[\ge \0]^{\perp}\perp\cal S'[<\0]^{\perp}$ are co-$t$-structures of $\T$, satisfying ${}^{\perp}\cal S'[\ge \hspace{-3pt}0]\subset \T^{\rm p}$ and $\cal S'[<\0]^{\perp}\subset \cal T^{\p}$. 
We may assume, up to shift, that 
\begin{eqnarray}\label{eqn61}
\hhh' &\subset & \hhh[n]\ast\hhh[n-1]\ast\cdots\ast\hhh.
\end{eqnarray}
Then $\Hom_{\T}(\hhh, \hhh'[<\hs -n])=0$ and $\Hom_{\T}(\hhh',\hhh[<\0])=0$. So in this case, we also have 
\begin{eqnarray}\label{eqn62}
\hhh &\subset & \hhh'\ast\hhh'[-1]\ast\cdots\ast\hhh'[-n]. 
\end{eqnarray}

We prove $\T={}^{\perp}\cal S'[\ge \0]\perp {}^{\perp}\cal S'[<\0]$ is a co-$t$-structure. 
 By Proposition \ref{Prop:SMCtotstr}, we have
\begin{eqnarray}\label{eqn63}
{}^{\perp}\sss'[<\0] &= &\bigcup_{i\ge 0}\hhh'[i]\ast\hhh'[i-1]\ast\cdots\ast\hhh'.
\end{eqnarray}
Then \eqref{eqn61}, \eqref{eqn62} and \eqref{eqn63} imply the following equality.
\begin{eqnarray} \label{eqn60}
{}^{\perp}\sss'[<\0] &=& \bigcup_{i\ge n}\hhh[i]\ast\cdots\ast\hhh[n] \ast\hhh'[n-1]\ast\cdots\ast\hhh'.
\end{eqnarray}

Now fix an integer $l\ge 2n$. Let $\X:=\hhh'[l]\ast\hhh'[l-1]\ast\cdots\ast\hhh'$ and $\Y:={}^{\perp}\X$ be two subcategories of $\T$. 
Since $\hhh'$ is convariantly finite, then $\cal X$ is also convariantly finite (see \cite[Theorem 1.4]{Ch0})  and thus $\T=\Y\ast\X$ is a torsion pair by \cite[Proposition 2.3]{IY1}.  
We claim that $\Y\subset {}^{\perp}\sss'[\ge \0]\ast {}^{\perp}\sss'[<\0]$. Then $\T=\Y\ast\X\subset {}^{\perp}\sss'[\ge \0]\ast {}^{\perp}\sss'[<\0]\subset\T$ and therefore, 
$\T= {}^{\perp}\sss'[\ge \0]\ast {}^{\perp}\sss'[<\0]$ is  a co-$t$-structure.
%by $\X\subset{}^{\perp}\sss'[<\0]$ and ${}^{\perp}\sss'[<\0]$ is extension closed.

Now we show the claim. For any $Y\in \Y$,  there exists a triangle 
\begin{eqnarray}\label{eqn6}
Y_{<-l}[-1] \xra{f} Y_{\ge -l} \ra Y \ra Y_{< -l}\end{eqnarray}
such that  $Y_{\ge -l}\in \T_{\ge -l}= {}^{\perp}\sss[\ge \hs l+1]$ and $Y_{<-l}\in\T_{<-l}= {}^{\perp}\sss[\le \hs l]$.
 Since $\T_{<-l}= \Filt \sss[>\hs l]\subset{}^{\perp}\sss'[< \0]$ by  \eqref{eqn60}, then to prove the claim,  it suffices to show $Y_{\ge -l}\in {}^{\perp}\sss'[\ge \0]$. With \eqref{eqn61}, we only need to check the following cases.
\begin{enumerate}[\rm (1)]
 \item[(i)] $\Hom_{\T}(Y_{\ge-l},\sss[i])=0$ for $l<i$; 
 \item[(ii)] $\Hom_{\T}(Y_{\ge-l}, \sss[i])=0$ for $n\le i\le l$;
 \item[(iii)] $\Hom_{\T}(Y_{\ge-l}, \sss'[i])=0$ for $0\le i \le n-1.$
 \end{enumerate}
 Notice that  (i) is clear since  $Y_{\ge -l}\in {}^{\perp}\sss[\ge l+1]$. 
 We show (ii). For any $n\le i\le l$, since $\sss[i]\subset\X$ by \eqref{eqn62}, then $\Hom_{\T}(Y, \sss[i])=0$.
 On the other hand, notice that $Y_{<-l}[-1]\in \T_{\le -l}=\T^{\le -l}$ and $Y_{\ge -l}\in \T_{\ge -l}$. Then $\Hom_{\T}(Y_{<-l}[-1], S[i])=0$ for $n\le i
<l$ and by Proposition \ref{Prop:inducedzero}, $\Hom_{\T}(f, S[l])=0$. 
 Then (ii) is true by triangle \eqref{eqn6}.

We show (iii). 
For    $0\le i\le n-1$, \eqref{eqn61} implies  $S'[i]\subset \hhh[2n-1]\ast \cdots \ast \hhh \subset \T^{\ge -l+1}$ (Because $l\ge 2n$ by our assumption).
So in this case, $\Hom_{\T}(Y_{-l}[-1], \sss'[i])=0$.
Since $Y\in\Y$, then $\Hom_{\T}(Y, \sss'[i])=0$ for $0\le i\le l$.  Then by triangle \eqref{eqn6}, (iii) is true.

So our claim above holds and thus $\T={}^{\perp}\cal S'[\ge \0]\perp {}^{\perp}\cal S'[<\0]$ is a co-$t$-structure.
By \eqref{eqn62}, ${}^{\perp}\sss'[\ge \0]\subset {}^{\perp}\sss[\ge \hs n]$. Since $ {}^{\perp}\sss[\ge \hs n]\subset\T^{\p}$, then ${}^{\perp}\sss'[\ge \0]\subset\T^{\p}$.
Similarly, one can show 
$\T=\cal S'[\ge \0]^{\perp}\perp\cal S'[<\0]^{\perp}$ is a co-$t$-structure of $\T$ and $\sss'[\le \0]^{\perp}\subset\T^{\p}$.
So $(\T, \T^{\rm p}, \SSS, \sss')$
is also a SMC quadruple.
\end{proof}

Immediately form Theorem \ref{Thm:S'} above and Theorem \ref{Thm:singularity} (3), we have the following observation.

\begin{Cor}
 Let  $(\T, \T^{\rm p}, \SSS, \sss)$ be a SMC quadruple. Assume there are only finitely many indecomposable objects in $\T$ (up to isomorphism). Then the functor $\T \ra \T_{\sg}$ induces a well-defined map
 \[ \{ \text{SMCs of } \T \} \longrightarrow  \{ d\text{-SMSs of } \T_{\sg} \}. \]
\end{Cor}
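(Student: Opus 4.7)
The plan is to reduce this corollary to the two preceding results in this subsection: Theorem \ref{Thm:S'} will upgrade an arbitrary SMC $\sss'$ of $\T$ into a SMC quadruple $(\T, \T^{\p}, \SSS, \sss')$, after which Theorem \ref{Thm:singularity} (3) will convert $\sss'$ into a $d$-SMS of $\T_{\sg}$ via the functor $\pi : \cal F \subset \T \to \T_{\sg}$. Note that for $d$-SMSs even to make sense in the target, one must implicitly be in the $(-d)$-CY setting $\SSS=[-d]$; this is the regime where Theorem \ref{Thm:singularity} (3) applies.

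First, I would fix an arbitrary SMC $\sss'$ of $\T$ and verify the two hypotheses of Theorem \ref{Thm:S'}. Every $X\in\sss'$ is indecomposable since $\dim_k \End_{\T}(X)=\delta_{X,X}=1$, so $\sss'$ consists of finitely many objects (up to isomorphism) by the finite representation type hypothesis on $\T$. Consequently $\hhh'=\Filt(\sss')$ has only finitely many indecomposables, all of them among the finitely many indecomposables of $\T$. A standard argument in Hom-finite Krull--Schmidt categories shows that any such essentially finite additive subcategory is functorially finite: for any $X\in\T$ the canonical map $X\to\bigoplus_i C_i^{\dim\Hom_{\T}(X,C_i)}$, where $\{C_i\}$ are representatives of the indecomposables of $\hhh'$, is a left $\hhh'$-approximation, and dually on the right. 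This gives condition (1) of Theorem \ref{Thm:S'}.

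For condition (2), by Lemma \ref{Lem:SMCtotstr} (1) the $t$-structure $\T=\T^{\le 0}\perp\T^{>0}$ with heart $\hhh=\Filt(\sss)$ is bounded, so every object of $\T$, in particular each $S'\in\sss'$, lies in $\hhh[n_{S'}]\ast\hhh[n_{S'}-1]\ast\cdots\ast\hhh[-n_{S'}]$ for some $n_{S'}\in\Z$. Since $\sss'$ is finite, we may take $n:=\max_{S'\in\sss'}n_{S'}$ to obtain $\sss'\subset\hhh[n]\ast\hhh[n-1]\ast\cdots\ast\hhh[-n]$, which is hypothesis (2) of Theorem \ref{Thm:S'}.

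With both hypotheses in hand, Theorem \ref{Thm:S'} gives that $(\T,\T^{\p},\SSS,\sss')$ is again a SMC quadruple, hence a $(-d)$-CY triple as $\SSS=[-d]$. Applying Theorem \ref{Thm:singularity} (3) to this new quadruple, $\pi(\sss')$ is a $d$-SMS of $\T_{\sg}$. The assignment $\sss'\mapsto\pi(\sss')$ is clearly well-defined on isomorphism classes because $\pi$ is a functor, producing the claimed map. The only mild point requiring attention is the functorial finiteness verification, which is nonetheless immediate under the finite representation type assumption; no deeper obstacle arises.
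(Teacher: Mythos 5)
Your proposal is correct and follows the paper's own route: the paper derives this corollary immediately from Theorem \ref{Thm:S'} together with Theorem \ref{Thm:singularity} (3), exactly as you do, with the finiteness of indecomposables serving precisely to guarantee the two hypotheses of Theorem \ref{Thm:S'} (functorial finiteness of $\Filt(\sss')$ and a uniform bound $\sss'\subset\hhh[n]\ast\cdots\ast\hhh[-n]$). Your verification of these hypotheses simply makes explicit what the paper leaves implicit, so there is nothing to correct.
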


%%%%%%%%%%%%%%%%%%%%%%%%%%%%%%
\section{Application to  Gorenstein dg algebras}\label{Section:example}
%%%%%%%%%%%%%%%%%%%%%%%%%%%%%%%%% 

In this section, we consider the  applications of Theorem \ref{Thm:singularity} to Gorenstein dg $k$-algebra. 
Let $A$ be a dg $k$-algebra.
% We denote by $\D A$ the derived  category of $A$ and $\D^{\bb}(A)$ the bounded derived category (that is, the triangulated subcategory of $\D A$, consisting of objects whose total cohomology have finite dimension). 
We  use the setting considered in \cite{J}.  Assume $A$ satisfies the following conditions.
 \begin{enumerate}[\rm(1)]
 \item $A$ is \emph{non-positive};
 \item $A$ is \emph{proper};
 \item $A$ is \emph{Gorenstein}, $i.e.$  $\per A$  coincides with the thick subcategory generated by $DA$.  
 \end{enumerate}

Let $\SSS:=?\ot_{A}^{\bf L}DA$ be the Nakayama functor. Let $\sss=\{S_{i}, 1\le i \le n\}$ be the set of simple  $\h^{0}(A)$-modules. We may also regard $\sss$ as the set of simple dg $A$-modules concentrated in degree $0$. In this case, we have the following observation.
\begin{Prop}\label{Prop:apply}
The quadruple $(\D^{\bb}(A), \per A, \SSS, \sss)$ is a SMC quadruple.
\end{Prop}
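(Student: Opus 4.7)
The plan is to verify the three axioms of Definition \ref{Def:relativeSerre} in turn, using the hypotheses on $A$ together with Lemma \ref{Lem:predg}.

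Axiom (RS0) is immediate: properness gives $A \in \D^\bb(A)$, so $\per A = \thick(A)$ is a thick subcategory of $\D^\bb(A)$, and Lemma \ref{Lem:predg}(1) gives Hom-finiteness (hence Krull--Schmidt over $k$). For (RS1), the Nakayama functor $\SSS = -\otimes_A^{\bf L} DA$ satisfies the Serre duality isomorphism $D\Hom(X, Y) \cong \Hom(Y, \SSS X)$ for $X \in \per A$, $Y \in \D(A)$ by the standard adjunction (which reduces to the identity $D\h^0(Y) \cong \Hom(Y, DA)$ when $X = A$). The Gorenstein hypothesis $DA \in \per A$ makes $\SSS$ an autoequivalence of $\per A$ with quasi-inverse $\RHom_A(DA, -)$. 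Since $\D^\bb(A) = \thick(\sss)$ by Lemma \ref{Lem:predg}(2), extending $\SSS$ to an autoequivalence of $\D^\bb(A)$ reduces to checking $\SSS(\sss) \subset \D^\bb(A)$; this follows from Serre duality applied with $X = A$, which yields $\dim \Hom(A, \SSS S[i]) = \dim \Hom(S[i], A)$, a finite integer that is nonzero for only finitely many $i$ since $A \in \D^\bb(A)$.

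For (RS2), $\sss$ is a SMC by Lemma \ref{Lem:predg}(2), yielding the standard bounded $t$-structure $\D^\bb(A) = \D^{\le 0} \perp \D^{>0}$ with heart $\mod \h^0(A)$ via Lemma \ref{Lem:SMCtotstr}. A maximal-cohomology-degree argument identifies $\T_{\le 0} = {}^\perp \sss[<\0] = \D^{\le 0}$, and dually $\sss[\ge\0]^\perp = \D^{>0}$. The non-positivity of $A$ makes $A$ a silting object of $\per A$ (as $\Hom(A, A[>\0]) = \h^{>0}(A) = 0$), giving a co-$t$-structure on $\per A$ with positive aisle $\Filt(A[\le -1])$. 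I plan to extend this to $\D^\bb(A)$ by iteratively taking projective covers of the topmost cohomology of $X \in \D^\bb(A)$: this produces a triangle $X_{>0} \to X \to X_{\le 0}$ in which $X_{>0}$ is an iterated extension of shifted projectives $A[-n]$ ($n \ge 1$), hence lies in $\T_{>0} \cap \per A$, while $X_{\le 0} \in \D^{\le 0}$. By uniqueness of torsion-pair decompositions this forces $\T_{>0} = {}^\perp \sss[\ge \0] \subset \per A$, and the vanishing $\Hom(\T_{>0}, \T_{\le 0}) = 0$ follows from Serre duality: $X \in \T_{>0}$ iff $\SSS X \in \sss[\ge \0]^\perp = \D^{>0}$, so $\Hom(X, \D^{\le 0}) \cong D\Hom(\D^{\le 0}, \D^{>0}) = 0$. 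The second co-$t$-structure $\T = \sss[\ge\0]^\perp \perp \sss[<\0]^\perp$ and the containment $\sss[<\0]^\perp \subset \per A$ are obtained by applying the autoequivalence $\SSS$ to the first; the latter containment uses the Gorenstein hypothesis $DA \in \per A$ in an essential way, as the positive aisle of the Nakayama-dual co-$t$-structure is built from shifts of $DA$ rather than $A$.

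The main obstacles are (i) constructing the co-$t$-structure decomposition $X = X_{>0} \ast X_{\le 0}$ for arbitrary $X \in \D^\bb(A)$ (not just perfect $X$), which requires a careful inductive projective-cover argument down the cohomology filtration, and (ii) the second containment $\sss[<\0]^\perp \subset \per A$, which fundamentally relies on Gorensteinness to convert the finite injective dimension forced by the $\sss[<\0]^\perp$-condition into membership in $\per A$.
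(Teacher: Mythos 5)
Your treatment of the first co-$t$-structure is essentially workable and is a reasonable alternative to the paper's route: where you build the decomposition $X_{>0}\to X\to X_{\le 0}$ by an explicit induction on projective covers of the top cohomology, the paper instead proves that $\Filt(A[<\0])$ is contravariantly finite in $\D^{\bb}(A)$ (Lemma \ref{Lem:apply2}) and quotes \cite[Proposition 2.3]{IY1}, then identifies $\Filt(A[<\0])^{\perp}=\Filt(\sss[\ge\0])={}^{\perp}\sss[<\0]$. Only the logical order needs repair: you justify $\Hom(\T_{>0},\T_{\le 0})=0$ by Serre duality applied to an arbitrary $X\in\T_{>0}={}^{\perp}\sss[\ge\0]$, but (RS1) requires the first argument to be perfect, which is exactly what you are trying to prove about $\T_{>0}$; the vanishing should instead be read off from $\Hom(A[-n],M)=\h^{n}(M)=0$ for $n\ge 1$ and $M\in\Filt(\sss[\ge\0])$, after which the standard torsion-pair argument gives ${}^{\perp}\sss[\ge\0]=\add\Filt(A[<\0])\subset\per A$.

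The genuine gap is your construction of the second co-$t$-structure: it is \emph{not} obtained by applying $\SSS$ to the first one. Relative Serre duality only converts perpendicularities when the perfect object sits in the correct slot, so $\SSS$ does not exchange ${}^{\perp}\sss[\ge\0]$ with $\sss[\ge\0]^{\perp}$ for non-perfect simples. Concretely, $\SSS$ carries $({}^{\perp}\sss[\ge\0],{}^{\perp}\sss[<\0])=(\add\Filt(A[<\0]),\Filt(\sss[\ge\0]))$ to a co-$t$-structure whose first class lies inside $\per A$ and whose second class contains non-perfect objects, whereas the pair required by (RS2) is $(\sss[\ge\0]^{\perp},\sss[<\0]^{\perp})=(\Filt(\sss[<\0]),\,\sss[<\0]^{\perp})$ with $\sss[<\0]^{\perp}\subset\per A$; the inclusions go the wrong way. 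For $A=k[x]/(x^{2})$ concentrated in degree $0$ (symmetric, so $\SSS\cong\mathrm{id}$), the $\SSS$-image of the first co-$t$-structure is the first co-$t$-structure itself, which is visibly not the second. What is needed is a separate construction: either the dual of your induction (approximating the lowest cohomology by injective envelopes, i.e.\ by shifts of $DA$ — this is where Gorensteinness genuinely enters), or, as in the paper, covariant finiteness of $\Filt(A[\ge\0])$ in $\D^{\bb}(A)$ (Lemma \ref{Lem:apply2}(2)) giving the co-$t$-structure $({}^{\perp}\Filt(A[\ge\0]),\Filt(A[\ge\0]))$, to which one applies $\SSS$ and then identifies ${}^{\perp}\Filt(DA[\ge\0])=\Filt(\sss[<\0])=\sss[\ge\0]^{\perp}$ by Serre duality against the perfect objects $A[i]$. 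A smaller slip of the same kind occurs in your (RS1): that $\SSS$ preserves $\D^{\bb}(A)$ needs Gorensteinness, since $\h^{-j}(\SSS S)\cong D\Hom_{\D(A)}(S,A[j])$ need not vanish for $j\gg 0$ over a non-Gorenstein algebra; properness of $A$ alone, which is what you cite, does not suffice.
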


To show this proposition, we need prepare some lemmas first. 
\begin{Lem}\label{Lem:apply1}
 Let $X\in \D^{\bb}(A)$. Then the following are equivalent.
\begin{enumerate}[\rm (1)]
\item $X\in \per A$;
\item For all $Y\in \D^{\bb}(A)$, the space $\Hom_{\D^{\bb}(A)}(X, Y[i])$ vanishes for almost all $i\in \Z$.
\end{enumerate} 
\end{Lem}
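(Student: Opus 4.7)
The plan is to prove the two directions separately.

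$(1)\Rightarrow(2)$: This is a routine devissage argument. The full subcategory of $\per A$ consisting of objects $X$ satisfying the vanishing condition in (2) is closed under shifts, summands and, via the long exact $\Hom$ sequence, cones; hence it is a thick subcategory. It contains $A$, because $\Hom_{\D^{\bb}(A)}(A,Y[i])\cong\h^{i}(Y)$ vanishes for $|i|\gg 0$ by the defining property of $\D^{\bb}(A)$. Hence this subcategory equals $\thick(A)=\per A$.

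$(2)\Rightarrow(1)$: First, I would reduce (2) to a vanishing statement against simples. Observe that $\{Y\in\D^{\bb}(A)\mid\Hom_{\D^{\bb}(A)}(X,Y[i])=0\text{ for }|i|\gg 0\}$ is thick in $\D^{\bb}(A)$, and by Lemma \ref{Lem:predg} (2), $\D^{\bb}(A)=\thick(\cal S)$ with $\cal S$ finite; so (2) is equivalent to the existence of $N\ge 0$ with $\Hom_{\D^{\bb}(A)}(X,S[i])=0$ for all $S\in\cal S$ and $|i|>N$. Next, I would build a ``perfect resolution'' of $X$: starting from $X_{0}=X$, choose iteratively triangles $P_{n}\to X_{n}\to X_{n+1}\to P_{n}[1]$ in $\D^{\bb}(A)$ with $P_{n}\in\add(A[\ast])$ an $\add(A[\ast])$-approximation (available since $A$ is silting in $\per A$ and $\Hom_{\D^{\bb}(A)}(A[m],X_{n})=\h^{m}(X_{n})$ is finite-dimensional) chosen so that $X_{n+1}\in\D^{\bb}(A)$ has strictly smaller cohomological amplitude. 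A diagram chase through the triangles propagates the vanishing from $X$ to each $X_{n}$, and combined with the bounded $t$-structure induced by $\cal S$ (Lemma \ref{Lem:SMCtotstr} (1)), this forces $X_{n}=0$ for $n$ large enough. Then $X$ is an iterated extension of $P_{0},\ldots,P_{n-1}\in\per A$, so $X\in\thick(A)=\per A$.

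The main obstacle will be the termination step: constructing the approximations $P_{n}$ with enough care that the cohomological amplitude of $X_{n}$ actually shrinks, and converting the uniform bound $N$ from (2) into $X_{n}=0$ via the SMC $\cal S$. The Gorenstein hypothesis enters here to ensure that $\per A$ is closed under the relevant operations (in particular under the Nakayama functor $\SSS$) and that the resolution produced by the approximations stays within $\D^{\bb}(A)$.
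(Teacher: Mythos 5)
Your direction $(1)\Rightarrow(2)$ is exactly the paper's d\'evissage argument, and for $(2)\Rightarrow(1)$ your overall plan (iterate triangles $P_{n}[l_{n}]\ra X_{n}\ra X_{n+1}$ with $P_{n}\in\add A$ hitting the top cohomology, then use (2) to force $X_{n}=0$) is also the strategy the paper follows. However, the two points you defer to ``the main obstacle'' are precisely where the proposal has a genuine gap. First, the termination mechanism you suggest --- choosing the approximations so that the cohomological amplitude of $X_{n+1}$ strictly shrinks --- is not available and cannot work: already for $A$ an ordinary finite-dimensional algebra, covering the top cohomology by a projective and taking the cone is just syzygy-taking, so the amplitude stabilises rather than decreases, and if amplitude-shrinking alone forced termination then every object of $\D^{\bb}(A)$ would be perfect. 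Termination can only come from hypothesis (2). Second, the ``diagram chase propagating the vanishing'' is not routine, and it is here that the choice of approximation matters. Applying $\Hom_{\D^{\bb}(A)}(?,S[l_{m}])$ to the triangle $P_{m-1}[l_{m-1}]\ra X_{m-1}\ra X_{m}\ra P_{m-1}[l_{m-1}+1]$, the outer term $\Hom(P_{m-1}[l_{m-1}+1],S[l_{m}])$ is nonzero exactly when $l_{m}=l_{m-1}+1$, so to conclude $\Hom(X_{m},S[l_{m}])\cong\Hom(X_{m-1},S[l_{m}])$ one must show that the map $\Hom(X_{m-1},S[l_{m-1}])\ra\Hom(P_{m-1}[l_{m-1}],S[l_{m-1}])$ is an isomorphism. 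This is why the paper requires $\h^{0}(P_{m-1})\ra\h^{-l_{m-1}}(X_{m-1})$ to be a \emph{projective cover}: minimality is what makes the induced map on $\Hom(?,S)$ bijective, and with a non-minimal approximation the comparison (and hence the propagation of vanishing) fails. Once the claim $\Hom(X_{m},S[l_{m}])=\Hom(X,S[l_{m}])$ is established, (2) gives $\Hom_{\h^{0}(A)}(\h^{-l_{m}}(X_{m}),S)=0$ for all simples $S$ as soon as $l_{m}>N$, which forces the top cohomology, and hence $X_{m}$ itself, to vanish; your sketch never explains how ``$X_{n}=0$ for $n$ large'' is actually extracted.

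A smaller but symptomatic inaccuracy: the Gorenstein hypothesis plays no role in this lemma. The paper's remark makes the point that the statement holds for every non-positive \emph{proper} dg algebra; what the proof uses is properness (finite-dimensional cohomologies, hence Hom-finiteness of $\D^{\bb}(A)$ and existence of projective covers over $\h^{0}(A)$) and non-positivity (so that $\Hom(A,S[i])=0$ for $i\neq0$ and standard truncations exist). The resolution stays in $\D^{\bb}(A)$ simply because $\add A\subset\D^{\bb}(A)$ and $\D^{\bb}(A)$ is closed under cones, not because of any closure property of $\per A$ under the Nakayama functor.
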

\begin{Rem}This lemma is known for finite dimensional $k$-algebras (see \cite[Lemma 2.4]{AKLY}). Here we generalize it to any  non-positive proper dg $k$-algebras.
\end{Rem}

\begin{proof}
$(1) \Rightarrow (2) $ Since for any $Y\in \D^{\bb}(A)$ and $i\in \Z$, we have $\Hom_{\D^{\bb}(A)}(A, Y[i])=\h^{i}(Y)$, then it is clear (2) holds for $A$. Thus by d\'evissage, (2) holds for any $X\in \per A=\thick(A)$.

$(2)\Rightarrow (1)$ 
Assume $X\in \D^{\bb}(A)$ satisfies (2).
We construct the following triangles inductively.
\begin{eqnarray} \label{induct}
P_{n}[l_{n}]\xra{f_{n}} X_{n}\ra X_{n+1}\ra P_{n}[l_{n}+1], 
\end{eqnarray}
such that $X_{0}=X$, $P_{n}\in\add A$ and $l_{n}=-\sup\{ l\in\Z \mid \h^{l}(X_{n})\not=0\}$. In addition, the induced map $\h^{-l_{n}}(P_{n})\ra \h^{-l_{n}}(X_{n})$ is the projective cover of $\h^{-l_{n}}(X_{n})$. By our construction, it is easy to see that $l_{0}<l_{1}<l_{2}<\cdots$.  We only need to show $X_{n}=0$ for big enough $n$ and then $X\in \per A$.

We claim  
$$\Hom_{\D^{\bb}(A)}(X_{m}, S[l_{m}])=\Hom_{\D^{\bb}(A)}(X, S[l_{m}]),$$ for any $S\in \sss$. Notice  that 
$\Hom_{\D^{\bb}(A)}(P_{i}[t], S[l_{m}])=0$ for any $i$ and $t<l_{m}$.
We consider two cases $l_{m-1}+1<l_{m} $ and $l_{m-1}+1=l_{m}$. For the first case, we know $l_{n}+1<l_{m}$ for all $n<m$, then  we have 
$$\Hom_{\D^{\bb}(A)}(X_{m},S[l_{m}])=\Hom_{\D^{\bb}(A)}(X_{m_{1}}, S[l_{m}])=\cdots= \Hom_{\D^{\bb}(A)}(X, S[l_{m}])$$
by applying $\Hom_{\D^{\bb}(A)}(?, S[l_{m}])$ to triangles $\eqref{induct}$ for $n<m$.
For the second case, we 
consider the following commutative diagram. 
\[
\xymatrix{ \Hom_{\D^{\bb}(A)}(X_{m-1}[1], S[l_{m}]) \ar[r] \ar[d]^{\simeq}&  \Hom_{\D^{\bb}(A)}(P_{m-1}[l_{m}], S[l_{m}]) \ar[d]^{\simeq} \\
\Hom_{A}(\h^{-l_{m-1}}(X_{m-1}), S) \ar[r]^{\simeq}& \Hom_{A}(\h^{-l_{m-1}}(P_{m-1}), S).
}\]
The left and right arrows are bijective (see for example, \cite[Lemma 4.4]{KN}). Since the lower map is isomorphic by our construction of $P_{m-1}$, so is the upper one. Then we have $\Hom_{\D^{\bb}(A)}(X_{m}, S[l_{m}])=\Hom_{\D^{\bb}(A)}(X_{m-1}, S[l_{m}])$ by triangle \eqref{induct} (taking $n=m-1$). Moreover the claim holds by triangle $\eqref{induct}$.

By our assumption, there exists $N>0$, such that for any $n>N$ and $S\in\sss$, we have $\Hom_{\D^{\bb}(A)}(X, S[n])=0$.
Since there exists $m$ such that $l_{m}>N$. Then by the claim above, $\Hom_{\D^{\bb}(A)}(X_{m}, S[l_{m}])=0$ for all $S\in \sss$.  Then 
it is easy to check $$\Hom_{A}(\h^{-l_{m}}(X_{m}), S)=\Hom_{\D^{\bb}(A)}(X_{m}, S[l_{m}])=0.$$
It suggests $X_{m}$ must be zero. 
Thus $X\in P_{0}[l_{0}]\ast P_{1}[l_{1}]\ast\cdots\ast P_{m}[l_{m}]\subset \per A$.
\end{proof}

\begin{Lem}\label{Lem:apply2}
\begin{enumerate}[\rm(1)]
\item
There is a standard co-$t$-structure of $\per A$ given by $\per A= \Filt (A[<\0])\perp \Filt(A[\ge\0])$. Moreover, we have 
\begin{eqnarray*}
\Filt(A[<\0]) &=&\bigcup_{n>0}\Filt(A[-n])\ast\Filt(A[-n+1]\ast \cdots\ast \Filt(A[-1])); \\
\Filt(A[\ge \0]) &=& \bigcup_{n\ge 0}\Filt(A)\ast\cdots\ast\Filt(A[n-1])\ast\Filt(A[n]).
\end{eqnarray*}
\item
$\Filt(A[<\0])$ is a contravariantly finite subcategory of $\D^{\bb}(A)$ and $\Filt(A[\ge \0])$ is a covariantly finite subcategory of $\D^{\bb}(A)$.
\end{enumerate}
\end{Lem}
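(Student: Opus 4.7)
For part~(1), I would invoke standard silting theory: $A$ is a silting object of $\per A$ since $\per A = \thick(A)$ and non-positivity gives $\Hom_{\D(A)}(A,A[i]) = \h^i(A) = 0$ for $i > 0$. The standard co-$t$-structure associated to $A$ (cf.~\cite[Proposition 2.8]{IY2}) then takes the form $\per A = \Filt(A[<\0]) \perp \Filt(A[\ge\0])$. To refine the two halves into the stated ordered filtrations, I would use that $\Hom(A[i], A[j+1]) = \h^{j+1-i}(A) = 0$ whenever $j \ge i$; hence any ``wrongly ordered'' extension $A[j] \ast A[i]$ with $j > i$ automatically splits, and Lemma \ref{Lem:notation} (1) permits one to inductively rearrange any iterated extension into the monotone form $\Filt(A[-n]) \ast \cdots \ast \Filt(A[-1])$ on the $[<\0]$ side, and dually on the $[\ge\0]$ side.

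For the contravariant finiteness of $\Filt(A[<\0])$ in part~(2), I would iteratively kill the top positive cohomology of $X \in \D^{\bb}(A)$. Concretely, setting $N := \max\{i : \h^i(X) \ne 0\}$, if $N > 0$ take a projective cover $\pi: P \twoheadrightarrow \h^N(X)$ in $\mod \h^0(A)$, lift $P$ to $\tilde P \in \add A$ (via the surjection $A^0 \twoheadrightarrow \h^0(A)$ and idempotent lifting), realize $\pi$ as a morphism $\tilde P[-N] \to X$ in $\D^{\bb}(A)$, and replace $X$ by its cone. Since $\tilde P[-N] \in \Filt(A[<\0])$ and the top cohomology degree strictly decreases, after finitely many steps (using the boundedness of the cohomology of $X$) the resulting cone lies in $\Filt(A[<\0])^{\perp}$. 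Composing the triangles then yields the desired right approximation $Y \to X$ with $Y \in \Filt(A[<\0]) \subseteq \per A$.

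Covariant finiteness of $\Filt(A[\ge\0])$ would be handled by the dual construction, killing sufficiently negative cohomology via injective hulls in $\mod \h^0(A)$. Here the Gorenstein hypothesis is essential: $DA \in \per A$ combined with $\Hom(\Filt(A[<\0]), DA) = 0$ (by non-positivity of $A$, since $DA$ has only non-negative cohomology) shows $DA \in \Filt(A[\ge\0])$, and consequently injective $\h^0(A)$-modules---which are summands of copies of $\h^0(DA) = D\h^0(A)$---lift to objects of $\Filt(A[\ge\0])$. I expect the main obstacle to be making this last lifting precise: unlike the projective case, where idempotent lifting from $\h^0(A)$ to $A^0$ directly places the lifted module inside $\add A$, lifting an injective $\h^0(A)$-module into $\Filt(A[\ge\0])$ genuinely uses how $DA$ decomposes as an iterated extension of shifts of $A$, and this is the step that most crucially depends on Gorensteinness.
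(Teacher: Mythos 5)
Your part (1) and the contravariant half of part (2) are essentially fine. For (1) you invoke the same source as the paper (\cite[Proposition 2.8]{IY2}). For the contravariant finiteness of $\Filt(A[<\0])$ your explicit construction (projective covers of the top cohomology realized by maps from $\add A$ shifted, then iterated cones until the remainder has cohomology in degrees $\le 0$, i.e.\ lies in $\Filt(A[<\0])^{\perp}$) is a correct alternative to the paper's argument; the paper instead uses that each $\add A[j]$ is functorially finite in the Hom-finite category $\D^{\bb}(A)$, that finite $\ast$-products of such subcategories are contravariantly finite by \cite[Theorem 1.4]{Ch0}, and that $\Hom_{\D^{\bb}(A)}(A[<\hspace{-3pt}-n],M)=0$ for $n\gg 0$, so the deep part of $\Filt(A[<\0])$ can be discarded via Lemma \ref{Lem:notation}. (Your cohomology-killing is in effect the construction already used inside Lemma \ref{Lem:apply1}.) One small repair: idempotents of $\h^{0}(A)$ need not lift to $A^{0}$; the statement you actually need is that $\add A\subset\D(A)$ is equivalent to $\proj \h^{0}(A)$ via $\h^{0}$, which holds because idempotents split in $\D(A)$ and $\End_{\D(A)}(A)\cong\h^{0}(A)$.

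The covariant half of (2) has a genuine gap. You claim $\Hom_{\D(A)}(\Filt(A[<\0]),DA)=0$ ``since $DA$ has only non-negative cohomology'' and deduce $DA\in\Filt(A[\ge\0])$. But $\Hom_{\D(A)}(A[-n],DA)\cong\h^{n}(DA)\cong D\h^{-n}(A)$ for $n>0$, which is nonzero whenever $A$ has cohomology in negative degrees; the non-negativity of the cohomology of $DA$ is exactly what makes these Hom-spaces nonzero, not zero. Moreover $\Filt(A[\ge\0])$ consists of objects with cohomology concentrated in degrees $\le 0$, so $DA\in\Filt(A[\ge\0])$ would force $\h^{<0}(A)=0$; this fails for genuinely dg Gorenstein examples such as $k[X]/(X^{3})$ with $\deg X=-2$ in Section \ref{Section:example}. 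Likewise injective $\h^{0}(A)$-modules are in general not perfect, so they do not lift into $\Filt(A[\ge\0])$ at all; the objects that corepresent bottom cohomology are shifts of $DA$, and the dual cohomology-killing therefore produces approximations by $\Filt(DA[\ge\0])$, not by $\Filt(A[\ge\0])$. The Gorenstein hypothesis enters differently: since $A\in\thick(DA)$ and $\Hom_{\D(A)}(M,DA[i])\cong D\h^{-i}(M)$ vanishes for $|i|\gg0$, one gets $\Hom_{\D^{\bb}(A)}(M,A[>\hspace{-3pt}n])=0$ for some $n$ depending on $M$; writing $\Filt(A[\ge\0])=(\Filt(A)\ast\cdots\ast\Filt(A[n]))\ast\Filt(A[>\hspace{-3pt}n])$ as in (1), a left approximation of $M$ by the finite product $\Filt(A)\ast\cdots\ast\Filt(A[n])$ (covariantly finite by \cite[Theorem 1.4]{Ch0}) is then already a left $\Filt(A[\ge\0])$-approximation, which is the dual of the paper's argument for the contravariant side. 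As written, your proposal does not establish covariant finiteness of $\Filt(A[\ge\0])$.
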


\begin{proof}
(1) is well-known, see for example \cite[Proposition 2.8]{IY2}.

(2) We only show  $\Filt(A[<\0])$ is contravariantly finite, since the other statement can be show in a dual way.
Notice that $\Filt(A)=\add A$. Then $\Filt(A[n])$ is a functorially finite subcategory of $\D^{\bb}(A)$ for any $n\in \Z$ and thus,  $\Filt(A[-n])\ast\Filt(A[-n+1]) \ast\cdots\ast\Filt(A[-1])$ is contravariantly finite for $n> 0$ by the dual of  \cite[Theorem 1.4]{Ch0}. Let $M\in \D^{\bb}(A)$. There exists $n> 0$, such that $\Hom_{\D^{\bb}(A)}(A[<\hs -n], M)=0$.
Since 
$$\Filt(A[< \0])=\Filt(A[<\hs -n])\ast( \Filt(A[-n])\ast\Filt(A[-n+1]) \ast\cdots\ast\Filt(A[-1]) ),$$
then Lemma \ref{Lem:notation} (2) suggests that there is a right $\Filt(A[<
\0])$-approximation of $M$.   Therefore  
$\Filt(A[<\0])$ is contravariantly finite.
\end{proof}

Now we prove Proposition \ref{Prop:apply}.
\begin{proof}[Proof of Proposition \ref{Prop:apply}] 
We check the conditions (RS0), (RS1) and (RS2) in Definition \ref{Def:relativeSerre} hold.
(RS0) is clear and in our setting, (RS1) is well-known (see for example \cite[Section 10.1]{Keller}). 
 
We  show  (RS2).
We claim  $\D^{\bb}(A)={}^{\perp}\sss[\ge \0]\perp{}^{\perp}\sss[<\0]$ is a co-$t$-structure with ${}^{\perp}\sss[\ge \0]=\Filt (A[<\0])$. In fact, 
we have a co-$t$-structure $\D^{\bb}(A)=\Filt(A[<\0])\perp \Filt(A[<\0])^{\perp}$ by Lemma \ref{Lem:apply2} and \cite[Proposition 2.3]{IY1}.
Since $\Filt(A[<\0])^{\perp}=\{ M\in\D^{\bb}(A)\mid \h^{>0}(M)=0 \}$, then we have 
$$\Filt(A[<\0])^{\perp}=\Filt(\sss[\ge\0])={}^{\perp}\sss[<\0]$$
by Proposition \ref{Prop:SMCtotstr}.
Thus the claim is ture.

Notice that we have another co-$t$-structure $\D^{\bb}(A)={}^{\perp}\Filt(A[\ge \0])\perp \Filt(A[\ge \0])$.
Since by (RS1), we have a triangle equivalence $\SSS:\D^{\bb}(A)\simeq \D^{\bb}(A)$, then $\SSS$ induces a new co-$t$-structure 
$$ \D^{\bb}(A)={}^{\perp}\Filt(\SSS A[\ge \0])\perp \Filt(\SSS A[\ge \0]),$$
and ${}^{\perp}\Filt(\SSS A[\ge \0])=\Filt(\sss[<\0])=\sss[\ge \0]^{\perp}$ by (RS1). Then we have co-$t$-structure $\D^{\bb}(A)=\sss[\ge\0]^{\perp}\perp\sss[<\0]^{\perp}$ with $\sss[<\0]^{\perp}=\Filt(\SSS A[\ge\0])\subset\per A$. So  $(\D^{\bb}(A), \per A, \SSS, \sss)$ is a SMC quadruple.
\end{proof}

Let $\CM A:=A[< \0]^{\perp}\cap {}^{\perp}A[>\0]$ be the category of  Cohen-Macaulay dg $A$-modules . Then we recover some results obtained in \cite{J} by applying
Theorem \ref{Thm:singularity}.

\begin{Cor}\cite[Theorem 2.4 and 6.5]{J}
Let $A$ be a Gorenstein proper non-positive dg $k$-algebra.
\begin{enumerate}[\rm(1)]
\item
 The composition $\CM A\hookrightarrow \D^{\bb}(A)\ra \D^{\bb}(A)/\per A$ induces a triangle equivalence $\un{\CM}A\xra{\simeq}\D^{\bb}(A)/\per A$.  Moreover, $\un{\CM}A$ admits a Serre functor $?\ot_{A}^{\bf L}DA[-1]$;
 \item If $\SSS=[-d]$, then the set of simple dg $A$-modules is a $d$-SMS  in $\un{\CM}A$.
 \end{enumerate}
\end{Cor}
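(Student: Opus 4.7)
The plan is to apply Theorem \ref{Thm:singularity} to the SMC quadruple $(\D^{\bb}(A),\per A,\SSS,\sss)$ furnished by Proposition \ref{Prop:apply}; the whole task then reduces to identifying the abstract subcategories $\cal F$ and $\cal P$ of that theorem with the concrete categories $\CM A$ and $\add A$.

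To compute $\cal F=\T_{>0}^{\perp}\cap{}^{\perp}(\T_{\le -1}\cap\per A)$, I would first reread the co-$t$-structures written down inside the proof of Proposition \ref{Prop:apply}: one has $\T_{>0}={}^{\perp}\sss[\ge\0]=\Filt(A[<\0])$, and restricting to $\per A$ yields the standard co-$t$-structure $\per A=\Filt(A[<\0])\perp\Filt(A[\ge\0])$; shifting the latter gives $\T_{\le -1}\cap\per A=\Filt(A[\ge 1])$. Using that a generating set controls $\perp$ and ${}^{\perp}$ of an extension-closed subcategory, this reads off $\T_{>0}^{\perp}=A[<\0]^{\perp}$ and ${}^{\perp}(\T_{\le -1}\cap\per A)={}^{\perp}A[>\0]$, so
\[\cal F=A[<\0]^{\perp}\cap{}^{\perp}A[>\0]=\CM A.\]
For $\cal P$, the identification $\T_{\ge 0}=\T_{>0}[1]=\Filt(A[\le\0])$ combined with the non-positivity $\Ext^{1}_{\T}(A,A)=\h^{1}(A)=0$ yields $\cal P=\T_{\ge 0}\cap\T_{\le 0}=\Filt(A)=\add A$; in the intersection only the shift $n=0$ survives cohomologically, which is the one small point to verify by hand.

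With $\cal F=\CM A$ and $\cal P=\add A$, Theorem \ref{Thm:singularity}(1)(2) immediately furnishes an equivalence $\un{\CM}A=\CM A/[\add A]\xra{\simeq}\D^{\bb}(A)/\per A$ and exhibits the Serre functor of the target as $\SSS[-1]=?\ot_{A}^{\bf L}DA[-1]$, which proves (1). If $\SSS=[-d]$, Theorem \ref{Thm:singularity}(3) further promotes the image of $\sss$ to a $d$-SMS in $\T_{\sg}\simeq\un{\CM}A$, proving (2). The only (minor) obstacle is the bookkeeping of shifts between the two co-$t$-structures of Proposition \ref{Prop:apply} and the cohomological argument reducing the co-heart to $\add A$; once these are pinned down, the corollary is a direct translation of the general machinery.
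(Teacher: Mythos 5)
Your proposal is correct and follows exactly the route the paper intends: apply Theorem \ref{Thm:singularity} to the SMC quadruple of Proposition \ref{Prop:apply}, and identify $\cal F=\T_{>0}^{\perp}\cap{}^{\perp}(\T_{\le -1}\cap\per A)=A[<\0]^{\perp}\cap{}^{\perp}A[>\0]=\CM A$ and $\cal P=\T_{\ge 0}\cap\T_{\le 0}=\add A$, using $\T_{>0}=\Filt(A[<\0])$ from the proof of Proposition \ref{Prop:apply} together with the standard co-$t$-structure $\per A=\Filt(A[<\0])\perp\Filt(A[\ge\0])$ of Lemma \ref{Lem:apply2}. The one point you flag, that the co-heart reduces to $\add A$, is indeed the only verification left, and it follows from $\Hom_{\D(A)}(A,A[>\0])=\h^{>0}(A)=0$ in the spirit of Lemma \ref{Lem:notation}, so there is no gap.
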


We end this section by an example. 

\begin{Ex}
Let $A$ be the dg $k$-algebra $k[X]/(X^{3})$ with
$\deg X=-2$ and zero differential. Then $\CM A=\{ M\in \D^{\bb}(A)\mid \h^{i}(M)=0 \text{ for } i> 0 \text{ and } i<-4 \}$. Then the AR quiver of $\un{\CM}A$ is given by the following.

 {\tiny
       \begin{center}
         \begin{tikzpicture}[scale=0.6]
         \draw
         node (kl) at (0,0) {$k$}
         node (k2l) at (-2,0) {$k[2]$}
         node (k4l) at (-4,0) {$k[4]$}
         node (A21l) at (-6,0) {$A_{2}[1]$}
         node (A21r) at (2,0) {$A_{2}[1]$}
         node (k4r) at (4,0) {$k[4]$}
         node (k2r) at (6,0) {$k[2]$}
         node (kr) at (8,0) {$k$}
         node (A2l) at (-1,1) {$A_{2}$}
         node (A22l) at (-3,1) {$A_{2}[2]$}
         node (k1l) at (-5,1) {$k[1]$}
         node (k3l) at (-7,1) {$k[3]$}
         node (k3r) at (1,1) {$k[3]$}
         node (k1r) at (3,1) {$k[1]$}
         node (A22r) at (5,1) {$A_{2}[2]$}
         node (A2r) at (7,1) {$A_{2}$}
         %node (Ar) at (6,2) {$A$}
       %  node (Al) at (-2,2) {$A$}
         node at (-8, 0.5) {$\dots$}
         node at (9, 0.5) {$\dots$}
         [->] (k3l) edge (A21l) (A21l) edge (k1l) (k1l)              
         edge (k4l) (k4l) edge (A22l) (A22l) edge (k2l)
         (k2l) edge (A2l) (A2l) edge (kl) (kl) edge (k3r)
         (k3r) edge (A21r) (A21r) edge (k1r) (k1r)              
         edge (k4r) (k4r) edge (A22r) (A22r) edge (k2r)
         (k2r) edge (A2r) (A2r) edge (kr);
         %(A22l) edge (Al) (Al) edge (A2l)
         %(A22r) edge (Ar) (Ar) edge (A2r);
       \draw[dotted] (-7.9, 1.3)--(-0.7,1.3)--(0.6,-0.3)--(-6.6,-0.3)--(-7.9,1.3);  
              \draw[dotted] (0.2, 1.3)--(7.4,1.3)--(8.7,-0.3)--(1.5,-0.3)--(0.2,1.3);   
         \end{tikzpicture}
         \end{center}}
 \noindent where $A_{2}$ is the dg $A$-module $k[X]/(X^{2})$. 
\end{Ex}

%%%%%%%%%%%%%%%%%%%%%%%
%%%%%%%%%%%%%%%%%%%%%%%%%
\section{SMC reduction Versus SMS reduction}
%%%%%%%%%%%%%%%%%%%%%%
%%%%%%%%%%%%%%%%%%%

%%%%%%%%%%%%%%%%%%%%%%%
\subsection{The SMC reduction of a Calabi-Yau triple} \label{Section:SMCofCY}
%%%%%%%%%%%%%%%%%%%%%%

Let $(\T, \T^{\p},\cal S)$ be a $(-d)$-CY triple for $d\ge 0$.  Let $\rrr$ be subset of  $\sss$ such that $\hhh_{\rrr}=\Filt(\rrr)$ is functorially finite subcategory of $\T$.
Then $\rrr$ is a pre-SMC of $\T$ and the conditions (R1) and (R2) in Section \ref{Section:SMCreduction} hold.
Let
$$ \cal U=\T/\thick(\rrr)$$
be the SMC reduction of $\T$ with respect to $\cal R$.
 By relative Serre property (RS1), we  have $\T^{\p}\cap \thick(\cal R)^{\perp}= \T^{\rm p}\cap {}^{\perp}\thick(\cal R)$, which will be denoted by $\cal U^{\p}$, that is,
 $$ \cal U^{\p}:=\T^{\p}\cap \thick(\cal R)^{\perp}= \T^{\rm p}\cap {}^{\perp}\thick(\cal R).$$
 This category can be regarded as a full subcategory of $\cal U$ (see \cite[Lemma 9.1.5]{Neeman}).
 
Our aim in this subsection  is to show the SMC reduction of a Calabi-Yau triple gives us a new Calabi-Yau triple.

\begin{Thm}\label{Thm:SMCred}
The triple $(\cal U, \cal U^{\p}, \cal S)$ is a $(-d)$-CY triple.
\end{Thm}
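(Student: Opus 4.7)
The plan is to verify the axioms (RS0), (RS1), (RS2) of Definition \ref{Def:relativeSerre} for the quadruple $(\cal U, \cal U^{\p}, [-d], \sss\setminus\rrr)$, where $\sss\setminus\rrr$ is the SMC of $\cal U$ supplied by Theorem \ref{Thm:SMCbij}. Axiom (RS0) is essentially routine: the additive equivalence $\cal U\simeq\zzz$ of Theorem \ref{Thm:SMCbij} transfers Hom-finiteness and the Krull-Schmidt property from $\T$ to $\cal U$, while thickness of $\cal U^{\p}$ in $\cal U$ follows from shift- and extension-stability of both $\T^{\p}$ and $\thick(\rrr)^{\perp}$ in $\T$ together with the full faithfulness observed below.

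The key step for (RS1) is the inclusion $\cal U^{\p}\subseteq\zzz$. The containment $\cal U^{\p}\subseteq{}^{\perp}\rrr[\le\0]$ is immediate from the definition $\cal U^{\p}=\T^{\p}\cap\thick(\rrr)^{\perp}$. For the other inclusion, I apply the relative Serre duality of $\T$: for $X\in\T^{\p}$, the isomorphism $D\Hom_{\T}(X,\rrr[n])\cong\Hom_{\T}(\rrr[n],X[-d])$ together with $X\in\thick(\rrr)^{\perp}$ (and shift-stability of $\thick(\rrr)$) gives $\Hom_{\T}(X,\rrr[n])=0$ for all $n$, so $X\in\rrr[\ge\0]^{\perp}$. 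The very same argument yields $\cal U^{\p}\subseteq{}^{\perp}\thick(\rrr)$ as well, so for $X\in\cal U^{\p}$ and any $Y\in\T$ the quotient functor induces bijections $\Hom_{\T}(X,Y)\xra{\simeq}\Hom_{\cal U}(X,Y)$ and $\Hom_{\T}(Y,X)\xra{\simeq}\Hom_{\cal U}(Y,X)$. From this: the shift in $\cal U$ restricted to $\cal U^{\p}$ agrees with the shift in $\T$, $[-d]$ preserves $\cal U^{\p}$ (since both $\T^{\p}$ and $\thick(\rrr)^{\perp}$ are shift-stable), and the bifunctorial Serre duality on $\cal U$ is pulled back directly from the one on $\T$.

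The main obstacle is (RS2). I must produce co-t-structures $\cal U={}^{\perp}_{\cal U}(\sss\setminus\rrr)[\ge\0]\perp{}^{\perp}_{\cal U}(\sss\setminus\rrr)[<\0]$ and $\cal U=(\sss\setminus\rrr)[\ge\0]^{\perp_{\cal U}}\perp(\sss\setminus\rrr)[<\0]^{\perp_{\cal U}}$, with the first factor (resp.\ second factor) contained in $\cal U^{\p}$. The strategy is to transport the corresponding $\T$-co-t-structures of (RS2) for $(\T,\T^{\p},\sss)$ along the quotient, reading them through the decomposition $\T=\X_{\rrr}\perp\zzz\perp\Y_{\rrr}[1]$ of Proposition \ref{Prop:decomp.}: for $X\in\zzz$ I take a $\T$-triangle $X_{>0}\ra X\ra X_{\le 0}\ra X_{>0}[1]$, then replace each outer term by its $\zzz$-representative and check the required containments. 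The technical heart is the comparison of Hom-spaces: under $\cal U\simeq\zzz$, each shifted simple $(\sss\setminus\rrr)[n]_{\cal U}$ corresponds to $S\lan n\ran\in\zzz$, which by Lemma \ref{Lem:Omega} differs from $S[n]$ by iterated $\hhh_{\rrr}$-approximations, so $\Hom_{\cal U}(X,S[n])$ picks up correction terms in $\Hom_{\T}(X,\hhh_{\rrr}[i])$ for $1\le i\le n$; I will have to show these vanish on the relevant pieces using the pre-SMC orthogonality of $\sss$ in $\T$ combined with the Serre duality of the $(-d)$-CY triple (the dual half of the argument then transports the second co-t-structure via $\SSS=[-d]$). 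Once these Hom comparisons are in place, the required decompositions and orthogonalities in $\cal U$ descend directly from those in $\T$, and the containments into $\cal U^{\p}$ follow as in the (RS1) argument.
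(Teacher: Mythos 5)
Your overall frame is the same as the paper's: pass to the realization $\zzz$ of $\cal U$, note that $\cal U^{\p}=\T^{\p}\cap\thick(\rrr)^{\perp}=\T^{\p}\cap{}^{\perp}\thick(\rrr)=\T^{\p}\cap\zzz$ (this is Lemma \ref{Lem:Uthick}; your ``immediate'' containment is actually $\cal U^{\p}\subseteq\rrr[\ge\0]^{\perp}$ and the Serre-dual one is ${}^{\perp}\rrr[\le\0]$, a harmless slip), so that (RS0) and (RS1) descend, and then try to build the co-$t$-structures of (RS2) from the truncation triangle $X_{\le 0}[-1]\xra{f}X_{>0}\ra X\ra X_{\le 0}$ and the comparison $S\lan n\ran\in S[n]\ast\hhh_{\rrr}[n]\ast\cdots\ast\hhh_{\rrr}[1]$ of Lemma \ref{Lem:Omega}. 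But the technical heart of (RS2), which you defer with ``I will have to show these vanish using the pre-SMC orthogonality of $\sss$ combined with Serre duality,'' is exactly where that toolkit is insufficient. The crux is to show that for $X\in\zzz$ the truncation $X_{>0}$ lies in ${}^{\perp}\thick(\rrr)$, and every degree is easy except $\Hom_{\T}(X_{>0},\rrr[-1])$: from the triangle one only gets that this sits between $\Hom_{\T}(X,\rrr[-1])=0$ and $\Hom_{\T}(X_{\le 0},\rrr)$, which need not vanish, so one must show that composition with $f$ kills $\Hom_{\T}(X_{>0},\rrr[-1])$. Serre duality merely converts the space into $D\Hom_{\T}(\rrr[d-1],X_{>0})$, about which the co-$t$-structure says nothing. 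The paper gets this by choosing $f$ to be a \emph{radical} map and invoking Proposition \ref{Prop:inducedzero} (radical maps between $\T_{\le i}$ and $\T_{\ge i}$ induce zero into $\sss[-i]$), whose proof in turn rests on the silting co-heart $\cal P$ and the equivalence $\cal H\simeq\mod\cal P$ (Propositions \ref{Prop:silting}, \ref{Prop:modP}, Lemma \ref{Lem:radical}). This minimality argument is the missing idea in your proposal.

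A second, related problem is your plan to ``replace each outer term by its $\zzz$-representative'' via $\T=\X_{\rrr}\perp\zzz\perp\Y_{\rrr}[1]$. The $\zzz$-representative of $X_{>0}$ differs from $X_{>0}$ by cones lying in $\thick(\rrr)$, and $\thick(\rrr)\not\subset\T^{\p}$ in general (simples are usually not perfect), so after replacement you lose control of membership in $\T^{\p}$, hence of the required containment ${}^{\perp}\sss'\lan\ge\0\ran\subset\cal U^{\p}$. The paper's Lemma \ref{Lem:decomp} avoids this entirely by proving that $X_{>0}$ and $X_{\le 0}$ \emph{already} lie in $\zzz$ (with $X_{>0}\in\T^{\p}\cap\zzz$), so no replacement is needed; the final containment ${}^{\perp}\sss'\lan\ge\0\ran\subset\T^{\p}\cap\zzz$ then follows by showing such an $X$ is a direct summand of $X_{>0}$. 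Without an argument substituting for Proposition \ref{Prop:inducedzero} and Lemma \ref{Lem:decomp}, your proof of (RS2) does not go through.
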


To prove the theorem above, we need the description of $\cal U$ obtained in Section \ref{Section:SMCreduction}. Let 
$$ \cal Z:=\cal R[\ge\0]^{\perp}\cap {}^{\perp}\cal R[\le\0].$$
Then by Theorem \ref{Thm:SMCbij}, there is an equivalence $\zzz\cong\cal U$ and the SMC $\sss$ in $\cal U$ corresponds to SMC $\sss':=\sss\backslash\rrr$ in $\zzz$.
The following lemma implies the triple $(\cal U,\cal U',\sss)$ is equivalent to the triple $(\zzz, \T^{\p}\cap\zzz, \sss')$. So to prove Theorem \ref{Thm:SMCred}, it is equivalent to show $(\cal Z, \T^{\p}\cap\cal Z, \sss')$ is a $(-d)$-CY triple.

\begin{Lem}\label{Lem:Uthick}
We have $\cal U^{\p}=\T^{\p}\cap\cal Z$ as subcategories of $\T$.
\end{Lem}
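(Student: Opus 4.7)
My plan is to establish the equality by a double-inclusion argument, with the Serre duality (RS1) doing the real work in one direction.

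First, I would dispatch the easy containment $\cal U^\p \subseteq \T^\p \cap \cal Z$. By definition, every $X \in \cal U^\p$ lies in $\thick(\cal R)^\perp$ (equivalently in ${}^\perp\thick(\cal R)$, by the Serre-duality identity recorded just above Theorem~\ref{Thm:SMCred}), so $X$ is two-sidedly orthogonal to every shift $\cal R[i]$. In particular $X \in \cal R[\ge\0]^\perp \cap {}^\perp\cal R[\le\0] = \cal Z$, and the first inclusion is immediate.

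For the reverse inclusion $\T^\p \cap \cal Z \subseteq \cal U^\p$, I would take $X \in \T^\p \cap \cal Z$ and upgrade its two half-plane orthogonalities to full orthogonality against every shift $\cal R[i]$. The decisive input is (RS1) together with $\SSS = [-d]$: for $X \in \T^\p$, any $R \in \cal R$, and any $i \in \Z$,
\[
D\Hom_\T(X, R[i]) \;\cong\; \Hom_\T(R[i], X[-d]) \;\cong\; \Hom_\T(R[i+d], X).
\]
Membership in $\cal Z$ provides $\Hom_\T(\cal R[i], X) = 0$ for all $i \ge 0$ and $\Hom_\T(X, \cal R[i]) = 0$ for all $i \le 0$. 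Translating the second family of vanishings through the displayed isomorphism yields $\Hom_\T(\cal R[j], X) = 0$ for every $j \le d$. Since $d \ge 0$, the ranges $\{j \ge 0\}$ and $\{j \le d\}$ together cover $\Z$, and so $\Hom_\T(\cal R[j], X) = 0$ for all $j \in \Z$; equivalently $X \in \thick(\cal R)^\perp$. Combined with $X \in \T^\p$ this places $X$ in $\cal U^\p$.

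I do not anticipate any real obstacle: the argument is pure bookkeeping in Serre duality, and the pivotal combinatorial step is the trivial covering $[0,\infty) \cup (-\infty, d] = \Z$ for $d \ge 0$. This also makes transparent why the Calabi-Yau hypothesis $\SSS = [-d]$ is indispensable here — without it, the two half-plane vanishings built into the definition of $\cal Z$ could not be bridged to all shifts of $\cal R$.
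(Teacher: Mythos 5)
Your proof is correct and follows essentially the same route as the paper: for $X\in\T^{\p}$, relative Serre duality (RS1) with $\SSS=[-d]$ converts one of the two half-line orthogonality conditions defining $\cal Z$ into the other side, and since $d\ge 0$ the two ranges cover $\Z$, giving $X\in{}^{\perp}\thick(\cal R)$ (equivalently $\thick(\cal R)^{\perp}$). The only cosmetic difference is that you argue by double inclusion while the paper phrases it as a single equivalence for $X\in\T^{\p}$.
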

\begin{proof}
Let $X\in \T^{\p}$. Then $X\in \cal Z$ if and only if $\Hom_{\T}(\cal R[\ge \0], X)=0=\Hom_{\T}(X, \cal R[\le \0])$. By the relative Serre duality (RS1), we have $\Hom_{\T}(\cal R[\ge \0], X)=D\Hom_{\T}(X, \cal R[\ge\hspace{-3pt}-d])$. 
Then $X\in \T^{p}\cap\cal Z$ if and only if $X\in\T^{\p}\cap {}^{\perp}\thick(\cal R)$.
\end{proof}

By (RS2), we have co-$t$-structures  $\T={}^{\perp}\sss[\ge \0]\perp{}^{\perp}\sss[< \0]=\cal S[\ge \0]^{\perp}\perp\cal S[<\0]^{\perp}$. Recall we denote by $\T_{>0}={}^{\perp}\sss[\ge \0]$ and $\T_{\le 0}={}^{\perp}\sss[<\0]$.
For $X\in \T$, there is a triangle 
\begin{eqnarray}\label{eqn1}
 X_{\le 0}[-1]\xra{f} X_{>0} \ra X \ra X_{\le 0},
 \end{eqnarray}
with $X_{>0}\in \T_{>0}$ and $X_{\le 0}\in\T_{\le 0}= \T^{\le 0}$. We may assume that $f\in \rad (X_{\le 0}[-1], X_{>0})$. There is  also a triangle
\[ X'_{\ge 0} \ra X \ra X'_{<0} \ra X'_{\ge 0}[1], \]
with $X'_{\ge 0}\in S[>\0]^{\perp}$ and $X'_{< 0}\in S[\le\0]^{\perp}$.
Then we have the following results.
\begin{Lem}\label{Lem:decomp}
Let $X\in \cal Z$. Then 
\begin{enumerate}[\rm(1)]
\item
$X_{>0}\in \T^{\p}\cap\cal Z$ and $X_{\le 0}\in \cal Z$;
\item $X'_{<0}\in \T^{\p}\cap\cal Z$ and $X'_{\ge 0}\in \cal Z$.
\end{enumerate}
\end{Lem}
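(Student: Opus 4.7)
Both parts follow the same template: apply $\Hom_{\T}(\rrr[\cdot],-)$ and $\Hom_{\T}(-,\rrr[\cdot])$ to each given triangle, use the defining vanishings $X\in\cal Z$ together with the co-$t$-structure properties from (RS2) to propagate vanishings to the outer two terms, and handle the single borderline degree that remains via Proposition \ref{Prop:inducedzero} applied to the connecting morphism of the triangle, which may be assumed to lie in the Jacobson radical by the usual minimality of torsion-pair decompositions. Serre duality (RS1) is used systematically to exchange conditions of the form $\Hom_{\T}(-,\rrr[\cdot])=0$ and $\Hom_{\T}(\rrr[\cdot],-)=0$ whenever the relevant piece lies in $\T^{\p}$.

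For (1), the membership $X_{>0}\in\T_{>0}\subset\T^{\p}$ is immediate from (RS2). I plan to reduce the proof that $X_{>0}\in\cal Z$ to establishing $\Hom_{\T}(X_{>0},\rrr[i])=0$ for all $i\le 0$: the companion condition $\Hom_{\T}(\rrr[j],X_{>0})=0$ for $j\ge 0$ is then automatic from (RS1), which gives $\Hom_{\T}(\rrr[j],X_{>0})\simeq D\Hom_{\T}(X_{>0},\rrr[j+d])$ with the right-hand side vanishing by $X_{>0}\in{}^{\perp}\sss[\ge 0]$ and $d\ge 0$. The cases $i=0$ (direct from $X_{>0}\in{}^{\perp}\sss[\ge 0]$) and $i\le -2$ (long exact sequence of $\Hom_{\T}(-,\rrr[i])$ applied to \eqref{eqn1}, combined with $\Hom_{\T}(X,\rrr[i])=0$ and $X_{\le 0}\in{}^{\perp}\sss[<0]$, yielding $\Hom_{\T}(X_{>0},\rrr[i])\hookrightarrow \Hom_{\T}(X_{\le 0},\rrr[i+1])=0$) are routine. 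The crucial borderline case $i=-1$ is where Proposition \ref{Prop:inducedzero} enters: with parameter $i=1$, applied to $f\in\rad(X_{\le 0}[-1],X_{>0})$ (noting $X_{\le 0}[-1]\in\T_{\le 1}$ and $X_{>0}\in\T_{\ge 1}$), it yields $\Hom_{\T}(f,\rrr[-1])=0$, and exactness of the long exact sequence combined with $\Hom_{\T}(X,\rrr[-1])=0$ forces $\Hom_{\T}(X_{>0},\rrr[-1])=0$. Once $X_{>0}\in\cal Z$ is secured, $X_{\le 0}\in\cal Z$ is deduced from the long exact sequences of the triangle together with the vanishings already established for $X$ and $X_{>0}$, using (RS1) on $X_{>0}$ once more to handle the $j=0$ boundary.

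Part (2) follows the same pattern applied to the shifted co-$t$-structure $\T=\sss[>0]^{\perp}\perp\sss[\le 0]^{\perp}$ from (RS2). Since $\sss[\le 0]^{\perp}\subset\sss[<0]^{\perp}\subset\T^{\p}$, one has $X'_{<0}\in\T^{\p}$, so (RS1) is available. Applying $\Hom_{\T}(\rrr[j],-)$ to the second triangle and using $X\in\rrr[\ge 0]^{\perp}$ yields an injection $\Hom_{\T}(\rrr[j],X'_{<0})\hookrightarrow\Hom_{\T}(\rrr[j-1],X'_{\ge 0})$ for $j\ge 0$, which vanishes directly for $j\ge 2$ from $X'_{\ge 0}\in\sss[>0]^{\perp}$; the case $j=0$ is immediate from $X'_{<0}\in\sss[\le 0]^{\perp}$. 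The borderline case $j=1$ requires the radical argument: using (RS1) on $X'_{<0}\in\T^{\p}$ translates $X'_{<0}\in\sss[\le 0]^{\perp}$ into $X'_{<0}\in\T_{\le d-1}$ (so $X'_{<0}[-1]\in\T_{\le d}$), and the connecting morphism $g'\colon X'_{<0}\to X'_{\ge 0}[1]$ may be chosen in the radical; these ingredients position $g'$ so that Proposition \ref{Prop:inducedzero} (or its Serre-dual, obtained by an analogous argument using the co-heart of the second co-$t$-structure) annihilates the borderline injection. The complementary condition $\Hom_{\T}(X'_{<0},\rrr[i])=0$ for $i\le 0$ then follows from (RS1), and $X'_{\ge 0}\in\cal Z$ is deduced from the triangle once $X'_{<0}\in\cal Z$ is known. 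The main obstacle is precisely the $j=1$ step in Part (2): $X'_{\ge 0}$ is not a priori in $\T^{\p}$, so Proposition \ref{Prop:inducedzero} does not apply on the nose, and one must either establish its Serre-dual tailored to the second co-$t$-structure, or perform an octahedral combining the two triangles with Part (1) to reduce the situation to the first co-$t$-structure where Proposition \ref{Prop:inducedzero} applies directly.
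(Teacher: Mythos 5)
Your proof of (1) is essentially the paper's own argument: the same case split on the degree, the same application of Proposition \ref{Prop:inducedzero} to the radical connecting map $f\in\rad(X_{\le 0}[-1],X_{>0})$ for the single borderline degree $i=-1$, with your inline Serre-duality reduction playing the role of the paper's appeal to Lemma \ref{Lem:Uthick} (one small slip there: (RS1) gives $\Hom_{\T}(\rrr[j],X_{>0})\simeq D\Hom_{\T}(X_{>0},\rrr[j-d])$, not $\rrr[j+d]$, so the companion condition is not purely ``automatic'' from $X_{>0}\in{}^{\perp}\sss[\ge\0]$ but follows once your vanishing in all degrees $i\le 0$ is in hand, so the argument is unaffected). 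For (2), where the paper only says ``similar'', you correctly identify the real content of that word: the borderline case $j=1$ needs the Serre-dual of Proposition \ref{Prop:inducedzero} for the second co-$t$-structure (obtainable by dualizing its proof through the co-heart $\SSS\cal P$, or by transporting Lemma \ref{Lem:radical} via (RS1), since the relevant truncation pieces lie in $\T^{\p}$), and your plan fills this in honestly rather than glossing over it.
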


\begin{proof}
We only prove (1), since the second one can be shown in a similar way.
We first show $X_{>0}\in \T^{\p}\cap \cal Z$. Since $X_{>0}\in \T^{\p}$ by (RS2) and $\T^{\p}\cap\cal Z=\T^{\rm p}\cap {}^{\perp}\thick(\cal R)$ by Lemma \ref{Lem:Uthick}, it suffices to show $X_{>0}\in  {}^{\perp}\thick(\cal R)$.
Since $X_{>0}\in \T_{>0}={}^{\perp}\sss[\ge \0]$, then 
 $\Hom_{\T}(X_{>0}, \cal R[\ge \0])=0$.
 Because $\Hom_{\T}(X, R[<\hspace{-3pt}-1])=0$ and $\Hom_{\T}(X_{\le 0}, R[<\0])=0$, then
  we have $\Hom_{\T}(X_{>0}, R[<\hspace{-3pt}-1])=0$ by triangle \eqref{eqn1}.

We are left to show $\Hom_{\T}(X_{>0}, R[-1])=0$ for any $R\in \cal R$. Since $X_{\le 0}[-1]\in \T_{\le 1}$, $X_{>0}\in \T_{\ge 1}$ and $f\in \rad(X_{\le 0}[1], X_{>0})$, then the induced map $\Hom_{\T}(f, R[-1])$ is zero by Proposition \ref{Prop:inducedzero}. Since $\Hom_{\T}(X, R[-1])=0$, then
$\Hom_{\T}(X_{>0}, R[-1])=0$ by the triangle \eqref{eqn1}. So  $X_{>0}\in  {}^{\perp}\thick(\cal R)$ and  therefore, $X_{>0}\in \T^{\p}\cap\cal Z$.

Since $X_{>0}\in \T^{\p}\cap {}^{\perp}\thick(\rrr)=\T^{\p}\cap \thick(\rrr)^{\perp}$ and $X\in \zzz$, then it is easy to check $X_{\le 0}\in \zzz$ by applying $\Hom_{\T}(\rrr[\ge \0], ?)$ and $\Hom_{\T}(?, \rrr[\le \0])$ to  \eqref{eqn1}.
Thus the assertion is true. 
\end{proof}

Now we are ready to prove Theorem \ref{Thm:SMCred}.
\begin{proof}[Proof of Theorem \ref{Thm:SMCred}]
It is enough to prove $(\zzz,\zzz\cap\T^{\p},\sss')$ is a $(-d)$-CY triple.

By Lemma \ref{Lem:Uthick}, we know $\T^{\p}\cap \cal Z$ is a thick subcategory of $\cal Z$ and moreover, $P\lan1\ran= P[1]$ for any $P\in \T^{\p}\cap Z$.
So the conditions (RS0) and (RS1) in Definition \ref{Def:relativeSerre} hold directly. Next we show there is a co-$t$-structure $\cal Z={}^{\perp}\sss'\lan\ge\0\ran\perp{}^{\perp}\sss'\lan<\0\ran$ and ${}^{\perp}
\sss'\lan\ge\0\ran\subset\T^{\p}\cap\cal Z$.

Let $X\in \cal Z$. Consider the triangle \eqref{eqn1}, we claim  $X_{>0}\in{}^{\perp}\sss'\lan\ge\0\ran$ and $X_{\le 0}\in{}^{\perp}\sss'\lan<\0\ran$.
Notice that for any $S\in\sss'$ and $n\ge 1$, we have
 $$S\lan n \ran\in S[n]\ast\hhh_{\rrr}[n]\ast \cdots \ast \hhh_{\rrr}[1]$$ by Lemma \ref{Lem:Omega}. 
Then $\Hom_{\T}(X_{>0}, \sss[\ge\0])=0$ implies 
$\Hom_{\zzz}(X_{> 0}, \sss'\lan \ge\0\ran)=0$, that is $X_{>0}\in{}^{\perp}\sss'\lan\ge\0\ran$.
 Similarly,   $X_{\le 0}\in {}^{\perp}\sss'\lan<\0\ran$ by the fact that    $S\lan -m\ran=\hhh_{\rrr}[\le\hspace{-3pt} -1]\ast \cdots \ast \hhh_{\rrr}[\le \hs -m]\ast S[-m]$ for $m>0$ and $X_{\le 0}\in \T^{\le 0}$. Thus we have
 $$\cal Z={}^{\perp}\sss'\lan\ge\0\ran\ast{}^{\perp}\sss'\lan<\0\ran.$$
  Notice that $\sss'$ is a SMC in $\zzz$ by Theorem \ref{Thm:SMCbij},  then ${}^{\perp}\sss'\lan<\0\ran=\Filt(\sss'\lan \ge\0\ran)$ and therefore, $\Hom_{\zzz}({}^{\perp}\sss'\lan\ge\0\ran,{}^{\perp}\sss'\lan<\0\ran)=0$. So the claim holds and  $\cal Z={}^{\perp}\sss'\lan\ge\0\ran\perp{}^{\perp}\sss'\lan<\0\ran$ is a co-$t$-structure.

 Assume $X\in {}^{\perp}\sss'\lan\ge\0\ran$, consider the triangle \eqref{eqn1}, since $X_{>0}\in \T^{\p}$ by (RS2) and we have shown $X_{\le 0}\in{}^{\perp}\sss'\lan<\0\ran$ above, then $\Hom_{\zzz}(X, X_{\le 0})=0$ and thus  $X$ is a direct summand of $X_{>0}$.  So $X\in \T^{\p}$ and  ${}^{\perp}
\sss'\lan\ge\0\ran\subset\T^{\p}\cap\cal Z$.
 
 Similarly, one can show $\cal Z=\sss'\lan\ge\0\ran^{\perp}\perp\sss'\lan<\0\ran^{\perp}$ is also a co-$t$-structure with $\sss'\lan<\0\ran^{\perp}\subset\zzz\cap\T^{\p}$. Thus  $(\zzz,\zzz\cap\T^{\p},\sss')$   is a $(-d)$-CY triple and so is $(\cal U, \cal U^{\p}, \cal S)$.
\end{proof}

\subsection{SMC reduction reduces SMS reduction}
In this section, we study the relation between SMC reduction and SMS reduction introduced in \cite{CSP}.
Let $(\T, \T^{\p},\cal S)$ be a $(-d)$-CY triple for $d\ge 0$. Let $\hhh=\Filt(S)$. Let $\rrr$ be a subset of  $\sss$ such that $\hhh_{\rrr}=\Filt(\rrr)$ is functorially finite subcategory of $\T$.

The singularity category
 $\T_{\sg}$ is a $(-d-1)$-CY triangulated category and $\sss$ is a $d$-SMS in $\T_{\sg}$  by Theorem \ref{Thm:singularity}.  Moreover, we may regard $\T_{\sg}$ as a subfactor category of $\T$, that is 
 $$ \frac{\cal F}{[\cal P]}\simeq \T_{\sg},$$
 where $\cal F=\cal H[d]\ast\cal H[d-1]\ast\cdots\ast\cal H$, and $\cal P=\T_{\ge 0}\cap \T_{\le 0}$. By this description, it is easy to check $\hhh_{\rrr}$ is also functorially finite in $\T_{\sg}$.
  Let 
$$(\T_{\sg})_{\cal R}=\{X\in \T_{\sg}\mid \Hom_{\T_{\sg}}(\cal R[i], X)=\Hom_{\T_{\sg}}(X,\cal R[-i])=0, \text{ for } 0\le i\le d \}.$$ 
Then we regard $(\T_{\sg})_{\cal R}$ as the SMS reduction of $\T_{\sg}$ with respect to $\cal R$ in the sense of \cite{CSP}. 
By \cite[Theorems 4.1 and 5.1]{CSP}, $(\T_{\sg})_{\cal R}$ has a structure of triangulated category. 

In Section \ref{Section:SMCofCY}, we have shown  the triple $(\cal U, \cal U^{\p},\sss)$ of the reduction of $(\T,\T^{\p},\sss)$ is still a $(-d)$-CY triple (Theorem \ref{Thm:SMCred}).
Our main result of  this subsection is that  the SMS reduction of the singularity category coincides with the singularity category of the SMC reduction
in the  following sense.

\begin{Thm}\label{Thm:mainresult}
There is a triangle equivalence from $\cal U_{\sg}=\cal U/\cal U^{\p}$ to $(\T_{\sg})_{\cal R}$.
\end{Thm}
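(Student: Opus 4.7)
The plan is to use the realization $\cal U\simeq\cal Z$ from Theorem~\ref{Thm:SMCbij} and construct the equivalence concretely as the functor induced by the composition
\[
\Phi_{0}\colon\ \cal Z \hookrightarrow \T \twoheadrightarrow \T_{\sg}.
\]
My first task is to verify that the essential image of $\Phi_{0}$ lies in $(\T_{\sg})_{\rrr}$. For $X\in\cal Z$ and $R\in\rrr$, the defining Hom-vanishings of $\cal Z$ give $\Hom_{\T}(R[i],X)=0$ for $i\ge 0$ and $\Hom_{\T}(X,R[j])=0$ for $j\le 0$; using Lemma~\ref{Lem:decomp} I may replace $X$ by its $\T^{\le 0}$-part $X_{\le 0}$, which is isomorphic to $X$ in $\T_{\sg}$ and, up to further isomorphism in $\T_{\sg}$, lies in $\cal F=\cal H[d]\ast\cdots\ast\cal H$. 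Then $\Hom_{\T_{\sg}}$-groups become computable through the realization $\T_{\sg}\simeq\cal F/[\cal P]$ of Theorem~\ref{Thm:singularity}, and the two one-sided vanishings $\Hom_{\T_{\sg}}(R[i],X)=\Hom_{\T_{\sg}}(X,R[-i])=0$ for $0\le i\le d$ follow from the $\T$-level vanishings together with the $(-d-1)$-CY Serre duality of $\T_{\sg}$ (which swaps the two sides, reducing the problem to a single range $-d-1\le j\le 0$).

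Next, since $\Phi_{0}$ sends $\cal U^{\p}=\T^{\p}\cap\cal Z$ (Lemma~\ref{Lem:Uthick}) to zero, it factors through a functor
\[
\Phi\colon\ \cal U_{\sg}=\cal U/\cal U^{\p}\ \lra\ (\T_{\sg})_{\rrr}.
\]
Checking $\Phi$ is triangulated requires matching the shift $\lan 1\ran$ of $\cal U\simeq\cal Z$ (defined via $\hhh_{\rrr}$-approximations in $\T$, cf.\ equation~\eqref{Omega}) with the analogously defined shift in $(\T_{\sg})_{\rrr}$ due to \cite{CSP} (via $\hhh_{\rrr}$-approximations inside $\T_{\sg}$). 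The canonical projection $\T\ra\T_{\sg}$ sends the defining approximation triangle in $\cal Z$ to an approximation triangle in $\T_{\sg}$, so the two shifts coincide under $\Phi$, and a parallel argument from Proposition~\ref{Prop:triangles} matches distinguished triangles.

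For the equivalence, Theorem~\ref{Thm:SMCred} combined with Theorem~\ref{Thm:singularity}(3) says $\cal U_{\sg}$ is a $(-d-1)$-CY triangulated category with $d$-SMS $\sss':=\sss\setminus\rrr$; by the analogous realization of SMS reductions in \cite{CSP}, the same is true of $(\T_{\sg})_{\rrr}$. The functor $\Phi$ restricts to the identity on $\sss'$ under the two SMS embeddings, and since both target categories admit a common filtration $\cal H'[d]\ast\cdots\ast\cal H'$ with $\cal H'=\Filt(\sss')$, essential surjectivity and fully faithfulness follow by induction on the filtration length, since the Hom-groups $\Hom(S,S'[i])$ for $S,S'\in\sss'$, $0\le i\le d$, are determined purely by the $d$-SMS data in a $(-d-1)$-CY category.

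The main obstacle is the Hom-level analysis: morphisms in the Verdier quotient $\T_{\sg}$ are represented by roofs through $\T^{\p}$, making it subtle to verify both that $\Phi_{0}$ takes values in $(\T_{\sg})_{\rrr}$ and that $\Phi$ is fully faithful. My strategy is to systematically replace Verdier roofs by concrete morphisms in the fundamental domain $\cal F$, using the torsion triple $\T=\X_{\rrr}\perp\cal Z\perp\Y_{\rrr}[1]$ of Proposition~\ref{Prop:decomp.} to transport objects into $\cal Z$, and the radical-morphism vanishing of Proposition~\ref{Prop:inducedzero} together with the Serre dualities in (RS1) and in $\T_{\sg}$ to control which morphisms through $\T^{\p}$ can genuinely appear.
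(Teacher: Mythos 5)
Your overall route is the paper's: realize $\cal U$ as $\zzz$, induce a functor $\rho\colon\zzz\to(\T_{\sg})_{\rrr}$ from $\zzz\hookrightarrow\T\to\T_{\sg}$, match the shift $\lan1\ran$ and the triangles with the approximation-defined ones of \cite{CSP}, kill $\T^{\p}\cap\zzz$, and use the $d$-SMS filtrations on both sides. The essential surjectivity argument and the triangle-functor check coincide with the paper's. But there is a genuine gap in your full-faithfulness step. You argue that the Hom-groups $\Hom(S,S'[i])$, $S,S'\in\sss'$, $0\le i\le d$, ``are determined purely by the $d$-SMS data in a $(-d-1)$-CY category'', and conclude by induction on filtration length. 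Two problems. First, agreement of abstract dimensions on both sides does not show that the specific comparison maps induced by $\tilde\rho$ are isomorphisms; full faithfulness is a statement about those maps. Second, the d\'evissage through $\cal H'\lan d\ran\ast\cdots\ast\cal H'$ forces you to control degrees outside $[0,d]$: the long exact sequences bring in shifts $t=d+1$ (and $t=-d-1$), where by the $(-d-1)$-CY duality both Hom-spaces are $\delta_{S,S'}\,k$, \emph{not} zero, and are not spanned by an identity. There the ``determined by SMS data'' argument only matches dimensions and says nothing about the induced map being nonzero. This is exactly the point the paper handles by a direct computation: for $t>0$ it writes $Y\lan t\ran\in Y[t]\ast\hhh_{\rrr}[t]\ast\cdots\ast\hhh_{\rrr}[1]$ (Lemma \ref{Lem:Omega}) and uses the bijectivity of $\Hom_{\T}(-,-)\to\Hom_{\T_{\sg}}(-,-)$ in the relevant range (Proposition \ref{Prop:inducedbij}, a Buchweitz-type comparison) plus the five lemma to see that the quotient functor itself induces isomorphisms $\Hom_{\zzz}(X,Y\lan t\ran)\xra{\sim}\Hom_{\T_{\sg}}(X,Y\lan t\ran)$ in all degrees. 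Some such comparison statement is indispensable in your scheme and is missing.

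A smaller point: in checking that the image lands in $(\T_{\sg})_{\rrr}$, your Serre-duality ``swap'' converts the two one-sided conditions into the single range $\Hom_{\T_{\sg}}(X,\rrr[j])=0$ for $-d-1\le j\le0$, and you then want to deduce this from the $\T$-level vanishing. But the surjectivity of $\Hom_{\T}(X,\rrr[j])\to\Hom_{\T_{\sg}}(X,\rrr[j])$ holds only for $j\ge -d$ (Proposition \ref{Prop:inducedbij}(2), after replacing $X$ by $X'_{\ge0}$ via Lemma \ref{Lem:decomp}(2)); at the boundary degree $j=-d-1$ it fails to apply, and the only way to treat it is to dualize back to the other side, i.e.\ to run the two separate computations of Lemma \ref{Lem:Nbij}(1) and (2) as in the paper. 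So the reduction to a single range does not actually save you the two-sided argument, and as written it leaves the endpoint unproved.
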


Recall  we  may regard the triple  $(\cal U, \cal U^{\p}, \cal S)$ as $(\cal Z, \cal Z\cap\T^{\p}, \sss')$. Let $\cal H'=\Filt_{\zzz} \sss'$.
Then $\cal U_{\sg}\cong \cal Z/(\cal Z\cap\T^{\p})$ is equivalent to $\frac{\cal F_{\zzz}}{[\cal P_{\zzz}]}$ by 
Theorem \ref{Thm:singularity}, where 
$\cal F_{\zzz}= \cal H'\lan d\ran\ast \cal H'\lan d-1\ran\ast\cdots \ast \cal H'$ 
and $\cal P_{\cal Z}={}^{\perp}\cal H'[\not=\0]$.

We first show the functor $\zzz\hookrightarrow\T\ra\T_{\sg}$ induces a well-defined functor $\zzz\ra(\T_{\sg})_{\rrr}$. Before this, we 
give some general results, which will be used later. 

\begin{Lem}\label{Lem:cocone}
\begin{enumerate}[\rm (1)]
\item
Let $X\in \T$ and $Y\in \T^{\le 0}$. Then 
any morphism in $\Hom_{\T_{\sg}}(X, Y)$ has a representative of the form $X\xra{f} Z \xleftarrow{s} Y$ such that the cocone of $s$ belongs to $\T^{\p}\cap \T^{\le 0}$;
\item 
Let $X\in\T^{\ge 0}$ and $Y\in \T$. Then any morphism in $\Hom_{\T_{\sg}}(X, Y)$ has a representative of the form $X \xleftarrow{t} Z \xra{f} Y$ such that the cone of $t$ belongs to $\T^{\p}\cap \T^{\ge 0}$.
\end{enumerate}
\end{Lem}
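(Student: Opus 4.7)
The plan is to prove (1) by truncating the cocone of $s$ and factoring through a co-$t$-structure vanishing, and to prove (2) by truncating the middle object $Z$ via the second co-$t$-structure in (RS2), deducing the cohomological bound from a long exact sequence.

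For (1), start with a roof $X \xra{f} Z \xleftarrow{s} Y$ whose cocone $C$ lies in $\T^{\p}$, fitting in a triangle $C \xra{\alpha} Y \xra{s} Z \to C[1]$. Apply the first co-$t$-structure $\T = \T_{>0} \perp \T_{\le 0}$ of (RS2) to $C$, producing $C_{>0} \to C \xra{p} C_{\le 0}$ with $C_{>0} \in \T_{>0} \subset \T^{\p}$ and $C_{\le 0} \in \T_{\le 0} = \T^{\le 0}$; thickness of $\T^{\p}$ then gives $C_{\le 0} \in \T^{\p} \cap \T^{\le 0}$. Since $Y \in \T^{\le 0} = \T_{\le 0}$, the co-$t$-structure vanishing $\Hom(\T_{>0}, \T_{\le 0}) = 0$ forces the composition $C_{>0} \to C \xra{\alpha} Y$ to be zero, so $\alpha$ factors as $\alpha = \beta \circ p$ for some $\beta \colon C_{\le 0} \to Y$. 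Extending $\beta$ to a triangle $C_{\le 0} \xra{\beta} Y \xra{s'} Z'$ and applying the octahedral axiom to $C \xra{p} C_{\le 0} \xra{\beta} Y$ supplies a morphism $g \colon Z \to Z'$ with $g s = s'$ and $\con(g) = C_{>0}[2] \in \T^{\p}$. The new roof $(gf, s')$ through $Z'$ is then equivalent to $(f, s)$ in $\T_{\sg}$ via the common refinement at $Z$, and its cocone is exactly $C_{\le 0} \in \T^{\p} \cap \T^{\le 0}$.

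For (2), use the second co-$t$-structure $\T = \sss[\ge\0]^{\perp} \perp \sss[<\0]^{\perp}$, noting that $\sss[\ge\0]^{\perp} = \T^{>0}$ and $\sss[<\0]^{\perp} \subset \T^{\p}$. Truncating $Z$ yields a triangle $Z' \xra{\nu} Z \to Z''$ with $Z' \in \T^{>0}$ and $\con(\nu) = Z'' \in \T^{\p}$, so the new roof $(t\nu, f\nu)$ through $Z'$ is equivalent to $(t, f)$ via the common refinement at $Z'$. The octahedron applied to $Z' \xra{\nu} Z \xra{t} X$ then supplies a triangle $Z'' \to D' \to D$ with $Z'', D \in \T^{\p}$, so $D' := \con(t\nu) \in \T^{\p}$ by thickness. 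Taking cohomology of the triangle $Z' \to X \to D'$, the vanishings $H^i(Z') = 0$ for $i \le 0$ (since $Z' \in \T^{>0} = \T^{\ge 1}$) and $H^i(X) = 0$ for $i < 0$ (since $X \in \T^{\ge 0}$) imply via the long exact sequence that $H^i(D') = 0$ for all $i < 0$, hence $D' \in \T^{\p} \cap \T^{\ge 0}$.

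The main subtlety is the asymmetry between the two parts: they are not proved by dual strategies. In (1), simply truncating $Z$ via the first co-$t$-structure only produces a cocone in $\T^{\le 1}$, because the $H^0$ of the $\T_{\le 0}$-part of $Z$ can survive in the long exact sequence; one must therefore manipulate the cocone itself. Dually in (2), factoring $X \to D$ through a truncation of $D$ would require the stronger hypothesis $X \in \T^{>0}$, while the weaker $X \in \T^{\ge 0}$ is exactly enough to control the long exact sequence once $Z$ is truncated to land in $\T^{>0}$.
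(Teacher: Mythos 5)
Your proof is correct. Part (1) is essentially the paper's own argument: the paper takes the same co-$t$-structure decomposition of the cocone $W$, uses $\Hom_{\T}(\T_{>0},\T_{\le 0})=0$ to factor $W\ra Y$ through $W_{\le 0}$, and completes the resulting commutative square to a morphism of triangles to get $Z\ra Z'$; your use of the octahedron is the same step, just with the extra (unneeded) identification $\con(g)=C_{>0}[2]$.

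Part (2) is where you genuinely diverge. The paper only says (2) is proved ``in a similar way,'' i.e.\ dually: decompose the cone $W=\con(t)$ by the shifted second co-$t$-structure $\T=\sss[>\0]^{\perp}\perp\sss[\le\0]^{\perp}$, where $\sss[>\0]^{\perp}=\T^{\ge 0}$ and $\sss[\le\0]^{\perp}=\sss[<\0]^{\perp}[1]\subset\T^{\p}$, and factor $X\ra W$ through the $\T^{\ge 0}$-part using $\Hom_{\T}(\T^{\ge 0},\sss[\le\0]^{\perp})=0$, which holds precisely because $X\in\T^{\ge 0}=\Filt(\sss[\le\0])$. You instead truncate $Z$ itself by the unshifted co-$t$-structure $\T=\sss[\ge\0]^{\perp}\perp\sss[<\0]^{\perp}$ and then bound the new cone $D'$ by the long exact sequence; this is valid and of comparable length (and $D'\in X\ast Z'[1]\subset\T^{\ge 0}\ast\T^{\ge 0}=\T^{\ge 0}$ gives the bound even without cohomological functors). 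The one inaccuracy is your closing claim that the cone-manipulation strategy of (1) would require the stronger hypothesis $X\in\T^{>0}$ in (2): that is only so if one uses the unshifted co-$t$-structure on the cone; with the shift by $[1]$ indicated above, $X\in\T^{\ge 0}$ suffices, so the paper's ``similar way'' is justified and the asymmetry you describe is an artifact of the chosen shift rather than of the statement. This does not affect the correctness of your argument.
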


\begin{proof}
We only show (1), since (2) can be shown in a similar way.
Any morphism $X\ra Y$ in $\T_{\sg}$ can be written as $X \xra{f} Z \xleftarrow{s} Y$, such that there is a triangle 
\[\xymatrix{ W \ar[r]^{g} & Y \ar[r]^{s} & Z \ar[r] &W[1]}  \]
with $W\in \T^{\p}$. Consider the triangle $W_{>0}\ra W\ra W_{\le 0}\ra W_{>0}[1]$ with $W_{>0}\in \T_{>0}$ and $W_{\le 0}\in T_{\le 0}$.
Notice that $W_{\le 0}\in \T^{\p}$  by the fact $\T_{>0}\subset\T^{\p}$ and the triangle above.
 Since $Y\in \T^{\le 0}=\T_{\le 0}$, then $\Hom_{\T}(W_{>0}, Y)=0$ and $g$ factors though $W\ra W_{\le 0}$.
Thus we obtain the following commutative diagram of triangles.
\[ \xymatrix{ W \ar[r]^{g} \ar[d]& Y \ar[r]^{s} \ar@{=}[d] & Z \ar[r] \ar[d]^{h}& W[1]\ar[d] \\
W_{\le 0} \ar[r] & Y \ar[r]^{hs} & Z' \ar[r] & W_{\le 0}[1]
  }\]
  The morphism  $X \xra{f} Z \xleftarrow{s} Y$ is equivalent to $X \xra{hf} Z' \xleftarrow{hs} Y$, and in this case,  the cocone $W_{\le 0}$ of $hs$ belongs to $\T^{\p}\cap \T^{\le 0}$,  so the assertion follows.
\end{proof}

The following observation is useful.
\begin{Prop}\label{Prop:inducedbij}
\begin{enumerate}[\rm(1)]
\item
The functor $\T\ra \T_{\sg}$ induces a bijection (resp. surjection) 
$\Hom_{\T}(X, Y)\ra \Hom_{\T_{\sg}}(X, Y)$ for $X\in \T^{\ge -d+1}$ (resp. $X\in \T^{\ge -d}$) and $Y\in \T^{\le 0}$;
\item The functor $\T\ra \T_{\sg}$ induces a bijection (resp. surjection) 
$\Hom_{\T}(X, Y)\ra \Hom_{\T_{\sg}}(X, Y)$ for
$X\in \T^{\ge 0}$ and $Y\in \T^{\le d-1}$ (resp. $T\in \T^{\le d}$).
\end{enumerate}
\end{Prop}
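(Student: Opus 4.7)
The plan is to treat (1) and (2) in parallel, each split into surjectivity and injectivity; the two parts will be formally dual, with (1) using the co-$t$-structure $\T=\T_{>0}\perp\T_{\le 0}$ from (RS2) and (2) using the $[1]$-shifted Serre-dual co-$t$-structure $\T=\sss[\ge 1]^{\perp}\perp\sss[\le\0]^{\perp}$, both of which restrict to $\T^{\p}$ since $\T_{>0}\subset\T^{\p}$ and $\sss[\le\0]^{\perp}\subset\T^{\p}$.

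For the surjectivity half, I would represent an element $\phi\in\Hom_{\T_{\sg}}(X,Y)$ using Lemma \ref{Lem:cocone}. In (1), since $Y\in\T^{\le 0}$, Lemma \ref{Lem:cocone}(1) provides a triangle $W\to Y\xra{s}Z\to W[1]$ with $W\in\T^{\p}\cap\T^{\le 0}$; lifting $\phi$ to a morphism in $\T$ amounts to the obstruction $\Hom_{\T}(X,W[1])$ vanishing. Relative Serre duality (applied to $W[1]\in\T^{\p}$) together with $\SSS=[-d]$ identifies this with $D\Hom_{\T}(W[d+1],X)$, which is zero by the $t$-structure $\T=\T^{\le -d-1}\perp\T^{\ge -d}$ since $W[d+1]\in\T^{\le -d-1}$ and $X\in\T^{\ge -d}$. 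Case (2) is entirely dual via Lemma \ref{Lem:cocone}(2): one gets $W\in\T^{\p}\cap\T^{\ge 0}$, and $\Hom_{\T}(W[-1],Y)\cong D\Hom_{\T}(Y,W[-d-1])$ vanishes because $Y\in\T^{\le d}$ and $W[-d-1]\in\T^{\ge d+1}$.

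For injectivity in (1), a morphism $f:X\to Y$ that becomes zero in $\T_{\sg}$ factors as $f=hg$ with $g:X\to W$, $h:W\to Y$, $W\in\T^{\p}$. Using the restricted co-$t$-structure $\T^{\p}=\T_{>0}\perp(\T_{\le 0}\cap\T^{\p})$ from the proof of Theorem \ref{Thm:singularity}, decompose $W$ as $W_{>0}\to W\to W_{\le 0}$. Since $W_{>0}\in{}^{\perp}\sss[\ge\0]$ and $Y\in\T^{\le 0}=\Filt\sss[\ge\0]$, induction on extensions gives $\Hom_{\T}(W_{>0},Y)=0$, so $h$, and hence $f$, factors through $W_{\le 0}\in\T^{\p}\cap\T^{\le 0}$; then relative Serre duality yields $\Hom_{\T}(X,W_{\le 0})\cong D\Hom_{\T}(W_{\le 0},X[-d])=0$, since $W_{\le 0}\in\T^{\le 0}$ and $X[-d]\in\T^{\ge 1}$ under $X\in\T^{\ge -d+1}$, forcing $f=0$. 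For injectivity in (2) I would instead decompose $W$ via the Serre-shifted restricted co-$t$-structure $\T^{\p}=(\sss[\ge 1]^{\perp}\cap\T^{\p})\perp\sss[\le\0]^{\perp}$, getting $W_{1}\to W\to W_{2}$; since $X\in\Filt\sss[\le\0]$ and $W_{2}\in\sss[\le\0]^{\perp}$, induction gives $\Hom_{\T}(X,W_{2})=0$, so $g$ and thus $f$ factors through $W_{1}\in\sss[\ge 1]^{\perp}\cap\T^{\p}$, and Serre duality computes $\Hom_{\T}(W_{1},\sss[j])\cong D\Hom_{\T}(\sss[j+d],W_{1})=0$ for $j\ge -d+1$ (as $W_{1}\in\sss[\ge 1]^{\perp}$); filtering $Y\in\Filt\sss[\ge -d+1]$ by extensions then gives $\Hom_{\T}(W_{1},Y)=0$, so $f=0$. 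The main technical point will be selecting the right co-$t$-structure on $\T^{\p}$ in each injectivity case---the one from Theorem \ref{Thm:singularity} for (1) and its Serre-shifted variant for (2)---so that the $(-d)$-CY hypothesis cleanly converts each relative Serre-duality identity into a vanishing statement about the $t$-structure.
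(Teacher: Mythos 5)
Your proof is correct and takes essentially the same route as the paper: represent a morphism in $\T_{\sg}$ via Lemma \ref{Lem:cocone}, reduce the obstruction (resp.\ the factoring object) to $\T^{\p}\cap\T^{\le 0}$ (resp.\ its dual) using the co-$t$-structures of (RS2), and kill it by relative Serre duality (RS1) combined with the vanishing $\Hom_{\T}(\T^{\le n},\T^{\ge n+1})=0$. The paper proves only (1) and declares (2) ``similar''; your explicit dual argument for (2), via the shifted second co-$t$-structure, is precisely the intended dual and is correct.
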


\begin{proof}
%[Proof of Proposition \ref{Prop:inducedbij}]
We only prove the first statement and (2) is similar by using Lemma \ref{Lem:cocone} (2).
 We first show $\Hom_{\T}(X, Y) \ra \Hom_{\T_{\sg}}(X,Y)$ is surjective for $X\in \T^{\ge -d}$ and $Y\in \T^{\le 0}$.  By Lemma \ref{Lem:cocone} (1),  any morphism in $\Hom_{\T_{\sg}}(X, Y)$ has a representative  $X\xra{f} Z \xleftarrow{s} Y$ such that the cocone $W$ of $s$ is in $\T^{\p}\cap \T^{\le 0}$,
then we have the following  exact sequence 
\[ \Hom_{\T}(X, Y) \ra \Hom_{\T}(X, Z) \ra \Hom_{\T}(X, W[1]).\]
Since $X\in \T^{\ge -d}$ and $W\in \T^{\p}\cap \T^{\le 0}$, then by relative Serre duality (RS1), we have $\Hom_{\T}(X, W[1])=D\Hom_{\T}(W, X[\le\hspace{-3pt}-d-1])=0$. 
So there exists $g\in \Hom_{\T}(X, Y)$ such that $f=s\circ g$.
Then the morphism $X\xra{f} Z \xleftarrow{s} Y$ is equivalent to $X\xra{g}Y$ in $\T_{\sg}$ and moreover,   $\Hom_{\T}(X, Y) \ra \Hom_{\T_{\sg}}(S,T)$ is surjective.

Next we show $\Hom_{\T}(X, Y) \ra \Hom_{\T_{\sg}}(X,Y)$ is injective if $X\in \T^{\ge -d+1}$. 
Assume $f\in \Hom_{\T}(X,Y)$ is zero in $\T_{\sg}$, then it factors though some $P\in \T^{\p}$.
We may assume $P\in \T^{\p}\cap \T^{\le 0}$ by the proof of Lemma \ref{Lem:cocone} (1). Then by (RS1), $\Hom_{\T}(X, P)=D\Hom_{\T}(P, X[-d])=0$ since $X\in \T^{\ge -d+1}$.
Thus $f$ is zero in $\T$. So the statement follows.
\end{proof}

The following lemma suggests the existence of functor from $\zzz$ to $(\T_{\sg})_{\rrr}$ directly.

\begin{Lem}\label{Lem:Nbij}
Let $X\in \cal Z$, then
\begin{enumerate}[\rm (1)]
\item The map $  \Hom_{\T}(\cal R[i], X) \ra \Hom_{\T_{\sg}}(\cal R[i], X)$
is bijective (resp. surjective) for $i\le d-1$ (resp. $i\le d$). In particular, $\Hom_{\T_{\sg}}(\cal R[i], X)=0$ for $0\le i\le d$;
\item The map $  \Hom_{\T}(X, \cal R[-i]) \ra \Hom_{\T_{\sg}}(X, \cal R[-i])$ is bijective (resp. surjective) for $i\le d-1$ (resp. $i\le d$).  In particular, $\Hom_{\T_{\sg}}(X, \cal R[-i])=0$ for $0\le i\le d$.
\end{enumerate}
\end{Lem}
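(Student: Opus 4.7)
The plan is to reduce the computation to Proposition \ref{Prop:inducedbij} by replacing $X$ with a suitable subobject that lies in $\T^{\le 0}$ (resp.\ $\T^{\ge 0}$). I will describe (1) in detail; (2) follows by the symmetric argument.

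First, I would decompose $X \in \cal Z$ via the co-$t$-structure $\T = \T_{>0} \perp \T_{\le 0}$, obtaining a triangle
\[
X_{>0} \to X \to X_{\le 0} \to X_{>0}[1],
\]
where Lemma \ref{Lem:decomp}(1) gives $X_{>0} \in \cal Z \cap \T^{\p}$ and $X_{\le 0} \in \cal Z$; since $\T_{\le 0} = \T^{\le 0}$, also $X_{\le 0} \in \T^{\le 0}$. The isomorphism $X \cong X_{\le 0}$ in $\T_{\sg}$ already yields $\Hom_{\T_{\sg}}(\rrr[i], X) \cong \Hom_{\T_{\sg}}(\rrr[i], X_{\le 0})$, so only the ordinary $\Hom$ on the $\T$-side needs to be re-identified.

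The crux is the claim that $\Hom_{\T}(\rrr[m], X_{>0}) = 0$ for every $m \in \Z$. The condition $X_{>0} \in \rrr[\ge \0]^{\perp}$ delivers this for $m \ge 0$ directly. For the remaining range, $X_{>0} \in {}^{\perp}\rrr[\le \0]$ gives $\Hom_{\T}(X_{>0}, \rrr[n]) = 0$ for all $n \le 0$; since $X_{>0} \in \T^{\p}$, Serre duality (RS1) with $\SSS = [-d]$ converts this into $\Hom_{\T}(\rrr[n+d], X_{>0}) = 0$ for $n \le 0$, i.e.\ into the same vanishing for all $m \le d$. Because $d \ge 0$, the ranges $\{m \ge 0\}$ and $\{m \le d\}$ together exhaust $\Z$. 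Applying $\Hom_{\T}(\rrr[i], -)$ to the triangle above then produces isomorphisms $\Hom_{\T}(\rrr[i], X) \cong \Hom_{\T}(\rrr[i], X_{\le 0})$ for every $i \in \Z$.

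Finally, $\rrr \subset \sss \subset \hhh$ gives $\rrr[i] \in \hhh[i] \subset \T^{\ge -i}$, so Proposition \ref{Prop:inducedbij}(1), applied with first argument $\rrr[i]$ and second argument $X_{\le 0} \in \T^{\le 0}$, yields that $\Hom_{\T}(\rrr[i], X_{\le 0}) \to \Hom_{\T_{\sg}}(\rrr[i], X_{\le 0})$ is bijective for $i \le d-1$ and surjective for $i \le d$. Composing with the isomorphisms from the previous step gives the bijection/surjection statement of (1), and the concluding vanishing is immediate since for $0 \le i \le d$ the source $\Hom_{\T}(\rrr[i], X) = 0$ already vanishes thanks to $X \in \rrr[\ge \0]^{\perp}$. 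Statement (2) is the exact dual: use Lemma \ref{Lem:decomp}(2) to replace $X$ by $X'_{\ge 0} \in \cal Z \cap \T^{\ge 0}$ modulo $X'_{<0} \in \cal Z \cap \T^{\p}$, run the same Serre-duality argument on $X'_{<0}$ to kill $\Hom_{\T}(X'_{<0}, \rrr[n])$ for every $n$, and conclude with Proposition \ref{Prop:inducedbij}(2) applied to $\rrr[-i] \in \hhh[-i] \subset \T^{\le i}$. The only nontrivial step is the Serre-duality computation establishing the double-sided vanishing on $X_{>0}$ (resp.\ $X'_{<0}$); once this holds, everything else is a long-exact-sequence chase and a citation.
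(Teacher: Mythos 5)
Your proposal is correct and follows essentially the same route as the paper: decompose $X$ by the co-$t$-structure triangle, use Lemma \ref{Lem:decomp} to see the error term $X_{>0}$ (resp.\ $X'_{<0}$) lies in $\T^{\p}\cap\cal Z$ and hence is orthogonal to $\thick(\cal R)$ on both sides, and then invoke Proposition \ref{Prop:inducedbij} for $X_{\le 0}\in\T^{\le 0}$ (resp.\ $X'_{\ge 0}\in\T^{\ge 0}$). The only cosmetic difference is that you re-derive the two-sided vanishing against $X_{>0}$ via (RS1) with $\SSS=[-d]$, whereas the paper simply cites $\T^{\p}\cap\cal Z=\T^{\p}\cap\thick(\cal R)^{\perp}$ from Lemma \ref{Lem:Uthick}, whose proof is exactly your Serre-duality computation.
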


\begin{proof}
We only shown (1), since (2) is similar by using Lemma \ref{Lem:decomp} (2) and Proposition \ref{Prop:inducedbij} (2). The triangle \eqref{eqn1} induces a commutative diagram as follows,
\[\xymatrix{  \Hom_{\T}(\cal R[i], X) \ar[r] \ar[d]& \Hom_{\T}(\cal R[i], X_{\le 0}) \ar[d]\\
\Hom_{\T_{\sg}}(\cal R[i], X)\ar[r] & \Hom_{\T_{\sg}}(\cal R[i], X_{\le 0}) }\]
The upper map is bijective since $X_{>0}\in \T^{\p}\cap \cal Z \subset \thick(\cal R)^{\perp}$ by Lemma \ref{Lem:decomp} (1) and the lower map is bijective since $X\ra X_{\le 0}$ becomes an isomorphism in $\T_{\sg}$. Since $X_{\le 0}\in \T_{\le 0}=\T^{\le 0}$, then the right map is bijective (resp. surjective) for $i\le d-1$ (resp. $i\le d$) by Proposition \ref{Prop:inducedbij} (1), so is the left one.
Since $X\in \cal Z$, then $\Hom_{\T}(\cal R[\ge \0], X)=0$. So $\Hom_{\T_{\sg}}(\cal R[i], X)=0$ for $0\le i \le d$.
\end{proof}

The following proposition shows we have a triangle functor  $\zzz\ra (\T_{\sg})_{\rrr}$.
\begin{Prop}\label{Prop:inducedwell}
The composition of functors $\cal Z \hookrightarrow \T \xra{\pi} \T_{\sg}$ induces a well-defined triangle functor $\rho: \cal Z\ra (\T_{\sg})_{\cal R}$.
\end{Prop}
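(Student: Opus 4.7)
The plan is to establish two things in turn: (i) the composition $\cal Z \hookrightarrow \T \xra{\pi} \T_{\sg}$ sends every object of $\cal Z$ into the subcategory $(\T_{\sg})_{\cal R}$, so that $\rho$ is a well-defined additive functor; and (ii) $\rho$ is compatible with the suspensions and distinguished triangles on both sides, making it a triangle functor.

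For (i) I would simply invoke Lemma \ref{Lem:Nbij}: for any $X \in \cal Z$, parts (1) and (2) of that lemma give $\Hom_{\T_{\sg}}(\cal R[i], \pi(X)) = 0 = \Hom_{\T_{\sg}}(\pi(X), \cal R[-i])$ for every $0 \le i \le d$, which is exactly the membership condition for $(\T_{\sg})_{\cal R}$. Additivity of $\rho$ is automatic, since it is the restriction of a composition of additive functors.

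The substance of the proposition is (ii). Recall from Proposition \ref{Prop:triangles} that the triangulated structure on $\cal Z$ is built from minimal right $\hhh_{\cal R}$-approximations $R_X \to X[1]$, used to define the suspension $X\lan 1 \ran$, together with the diagrams of the form \eqref{triangle} used to describe distinguished triangles. The triangulated structure on $(\T_{\sg})_{\cal R}$ supplied by \cite[Theorems 4.1 and 5.1]{CSP} is defined by an entirely analogous procedure, but carried out inside $\T_{\sg}$ using right $\hhh_{\cal R}$-approximations there. Thus the heart of the argument is to prove that these two approximation processes agree under $\pi$. The key technical input is that for $X \in \cal Z$ and $R \in \hhh_{\cal R}$, the natural map $\Hom_{\T}(R, X[1]) \to \Hom_{\T_{\sg}}(R, X[1])$ is bijective; this follows from Proposition \ref{Prop:inducedbij} together with a d\'evissage along the filtration $\hhh_{\cal R} = \bigcup_n \cal R \ast \cdots \ast \cal R$, which reduces the statement to the instance already handled by Lemma \ref{Lem:Nbij}. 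Applying this, a minimal right $\hhh_{\cal R}$-approximation computed in $\T$ descends to a minimal right $\hhh_{\cal R}$-approximation in $\T_{\sg}$, so the defining triangle for $X\lan 1\ran$ maps under $\pi$ to the defining triangle for the suspension of $\rho(X)$ in $(\T_{\sg})_{\cal R}$. The same comparison applied to diagram \eqref{triangle} shows that $\rho$ takes distinguished triangles of $\cal Z$ to distinguished triangles of $(\T_{\sg})_{\cal R}$.

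The main obstacle is to correctly extract from \cite{CSP} the precise description of the triangulated structure on $(\T_{\sg})_{\cal R}$ and to verify that the approximations used there are compatible, under $\pi$, with those used in the reduction $\cal Z$. Once this compatibility is in place, preservation of the suspension and of distinguished triangles follows by straightforward diagram chasing, using the equivalence $\cal Z \simeq \cal U$ of Theorem \ref{Thm:SMCbij}(1) to transport everything to the Verdier quotient and hence to $\T_{\sg}$.
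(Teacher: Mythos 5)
Your proposal is correct and follows essentially the same route as the paper: well-definedness of $\rho$ via Lemma \ref{Lem:Nbij}, and then the observation that the right $\hhh_{\rrr}$-approximations defining the suspension $\lan 1\ran$ and the triangles of $\cal Z$ (diagram \eqref{triangle}) remain right $\hhh_{\rrr}$-approximations in $\T_{\sg}$, so that CSP's construction of the shift and triangles of $(\T_{\sg})_{\rrr}$ matches the image of those of $\cal Z$. The only difference is cosmetic: you justify the descent of approximations by the surjectivity of $\Hom_{\T}(R, X[1])\ra\Hom_{\T_{\sg}}(R, X[1])$ obtained from Lemma \ref{Lem:Nbij}/Proposition \ref{Prop:inducedbij} plus a d\'evissage over $\Filt(\rrr)$, whereas the paper appeals to the equivalence $\frac{\cal F}{[\cal P]}\simeq \T_{\sg}$; both are fine.
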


\begin{proof}
%[Proof of Proposition \ref{Prop:inducedwell}]
By Lemma \ref{Lem:Nbij}, it is easy to see $\rho(\cal Z)\subset (\T_{\sg})_{\cal R}$. So $\rho: \cal Z\ra (\T_{\sg})_{\cal R}$ is  well-defined. We show it is a triangle functor.

First we claim $\rho$ commutes with shift functors. Let $X\in \zzz$. Then $X\lan 1\ran$ is defined by the following triangle (see Section \ref{Section:SMCreduction}).
\begin{eqnarray}\label{eqnomega}
 R_{X} \xra{f_{X}} X[1] \ra  X\lan1\ran \ra R_{X}[1],
 \end{eqnarray}
where $R_{X} \xra{f_{X}} X[1]$ is the right $\hhh_{\rrr}$-approximation of $X[1]$ in $\T$.
Now we consider the triangle \eqref{eqnomega} in $\T_{\sg}$.
By the equivalence $\frac{\cal F}{[\cal P]}\simeq \T_{\sg}$ (Theorem \ref{Thm:singularity}), it is clear that
 $R_{X}\xra{f_{X}}X[1]$ is  also a right $\hhh_{\rrr}$-approximation of $X[1]$ in $\T_{\sg}$. Then $\rho(X\lan1\ran)=\rho(X)\lan1\ran$ in $(\T_{\sg})_{\rrr}$ (see \cite[Definition 4.2]{CSP} for the shift functor of $(\T_{\sg})_{\rrr}$).

Next we show $\rho$ sends triangles in $\zzz$ to triangles in $(\T_{\sg})_{\rrr}$.
Let $s:X\ra Y$ be a morphism in $\zzz$. 
Consider the commutative diagrams \eqref{triangle}, then $X\xra{s}Y\ra W\ra X\lan1\ran$ is the triangle induced by $s$ in $\zzz$ by Proposition \ref{Prop:triangles}. In fact, every triangle in $\zzz$ can be obtained in this way. Now we consider the diagrams \eqref{triangle} in $\T_{\sg}$. We have shown that $R_{Z}\ra Z$ and $R_{X}\ra X[1]$ are right $\hhh_{\rrr}$-approximations in $\T_{\sg}$ above.
Then by the construction of triangles of $(\T_{\sg})_{\rrr}$, we know $X\xra{s}Y\ra W\ra X\lan1\ran$ is the triangle given by $s$ in $(\T_{\sg})_{\rrr}$ (see \cite[Theorem 4.1 and Definition 4.4]{CSP}). Then $\rho$ sends triangles to triangles.

So $\rho$ is a triangle functor and the assertion is true.
\end{proof}

Now we are ready to prove our main result. 

\begin{proof}[Proof of Theorem \ref{Thm:mainresult}]
The natural functor $\rho:\zzz\ra(\T_{\sg})_{\rrr}$ is a triangle functor by  Proposition \ref{Prop:inducedwell}.
Since $\rho(\T^{\p})=0$, then $\rho$ induces a triangle functor $\tilde{\rho}:\zzz/(\zzz\cap\T^{\p})\ra (\T_{\sg})_{\rrr}$.
Since $\zzz/(\zzz\cap\T^{\p})$ is equivalent to $\frac{\cal F_{\zzz}}{[\cal P_{\zzz}]}$ by 
Theorem \ref{Thm:singularity}, we have a  functor $\frac{\cal F_{\zzz}}{[\cal P_{\zzz}]}\ra (\T_{\sg})_{\rrr}$, which is also denoted by $\tilde{\rho}$.
We claim $\tilde{\rho}$ is fully faithful and dense.

Let $\sss'=\sss\backslash\rrr$ and $\hhh'=\Filt_{\zzz}(\sss')$.
Then $\sss'$ is a SMC in $\zzz$  by Theorem \ref{Thm:SMCbij} and moreover, $\sss'$ is a $d$-SMS in  $\frac{\cal F_{\zzz}}{[\cal P_{\zzz}]}$ by Theorem \ref{Thm:singularity}. So
 $$\frac{\cal F_{\zzz}}{[\cal P_{\zzz}]}= \cal H'\lan d\ran\ast \cal H'\lan d-1\ran\ast\cdots \ast \cal H'.$$
On the other hand,  $\rho (\sss')$ is a $d$-SMS in $(\T_{\sg})_{\rrr}$ by \cite[Theorem 6.6]{CSP} and thus by \cite[Lemma 2.8]{CSP}, we have
$$ (\T_{\sg})_{\rrr}=\rho(\cal H')\lan d\ran\ast \rho(\cal H')\lan d-1\ran\ast\cdots \ast \rho(\cal H').$$
Then it is clear that $\tilde{\rho}$ is dense. 
We are left to show $\tilde{\rho}$ is fully faithful. Let $X, Y\in \sss'$. We may assume $X, Y\not\in \cal P_{\zzz}$.
 It is enough to show 
\begin{eqnarray}\label{eqneqn}
\Hom_{\frac{\cal F_{\zzz}}{[\cal P_{\zzz}]}}(X\lan i\ran, Y\lan j\ran)=\Hom_{(\T_{\sg})_{\rrr}}(\tilde{\rho}(X)\lan i\ran, \tilde{\rho}(Y)\lan j\ran)
\end{eqnarray}
for any $i, j\in\Z$. Let $t=j-i$. Notice that if $t<0$, then the both sides of equation \eqref{eqneqn} are zero.
If $t=0$. Since $$\dim \Hom_{\frac{\cal F_{\zzz}}{[\cal P_{\zzz}]}}(X, Y)=\dim \Hom_{\zzz}(X,Y)=\delta_{X,Y},$$
and $\Hom_{\zzz}(X,Y)=\Hom_{\T_{\sg}}(\rho(X),\rho(Y))$ by Proposition \ref{Prop:inducedbij}, then \eqref{eqneqn} holds.

If $t>0$. Notice that $Y\lan t\ran \in Y[t]\ast\hhh_{\rrr}[t]\ast \cdots \ast \hhh_{\rrr}[1]$ by Lemma \ref{Lem:Omega}.
Then there is a triangle $Y[t]\ra Y\lan t\ran \ra Z\ra Y[t+1]$  in $\T$ such that $Z\in \hhh_{\rrr}[t]\ast \cdots \ast \hhh_{\rrr}[1]\subset\T^{\le -1}$. Then by Proposition  \ref{Prop:inducedbij} and five lemma, one can show 
%Since we have $\Hom_{\T}(X,Y[t])=\Hom_{\T_{\sg}}(X,Y[t])$ and $\Hom_{\T}(X, Z)=\Hom_{\T_{\sg}}(X,Z)$ by Proposition \ref{Prop:inducedbij}.
 $$\Hom_{\T}(X, Y\lan t\ran)=\Hom_{\T_{\sg}}(X, Y\lan t\ran).$$
Because $\Hom_{\zzz}(\cal P_{\zzz}, Y\lan t\ran)=0$ by our constructionof $\cal P_{\zzz}$,  then 
$\Hom_{\frac{\cal F_{\zzz}}{[\cal P_{\zzz}]}}(X, Y\lan t\ran)=\Hom_{\zzz}(X, Y\lan t\ran)$. So the equation \eqref{eqneqn} is true.  Then $\tilde{\rho}$ is fully faithful.
 
 Thus $\tilde{\rho}:\frac{\cal F_{\zzz}}{[\cal P_{\zzz}]}\ra (\T_{\sg})_{\rrr}$ gives a triangle equivalence and the theorem holds.
\end{proof}

We finish this paper by consider some examples. 
\begin{Ex}
Let $A$ be a finite-dimensional symmetric $k$-algebra and let $e$ be an idempotent. Let $S_{e}=\Top (1-e)A$. Then by Proposition \ref{Prop:SMCex} (1), the SMC reduction of $\D^{\bb}(\mod A)$ with respect to $S_{e}$ is triangle equivalent to $\D^{\bb}(\mod eAe)$. Then by Theorem \ref{Thm:mainresult}, we have the following commutative diagram,

 {\small \[\xymatrixcolsep{5.5pc}\xymatrixrowsep{4pc}\xymatrix{   \D^{\bb}(\mod A) \ar[r]^{\text{sing.   category}}  \ar[d]_{\text{SMC reduction}}& \D_{\sg}(A) \ar@{~>}[d]^{\text{SMS reduction}}\\ \cal \D^{\bb}(\mod eAe) \ar[r]^{\text{sing. category}} & \D_{\sg}(eAe) \cong (\D_{\sg}(A))_{S_{e}}
}\] }
We point out that the left map is given by the functor $?\otimes^{\bf L}_{A}Ae: \D^{\bb} ({\rm mod} A)\ra \D^{\bb}({\rm mod} eAe)$ and, the upper and lower maps are given by the Verdier quotient. But the right map is usually not given by  functors.
\end{Ex}

Next we consider a concrete algebra  $A$ and check the equivalence $ \D_{\sg}(eAe) \cong (\D_{\sg}(A))_{S_{e}}$ by comparing the   AR quivers of them.

\begin{Ex}\label{Ex:end}
Let $A$ be the $k$-algebra given by the quiver 
\xymatrix{1\ar@/^0.6pc/[r]^{\alpha_{1}}  & 2 \ar@/^0.6pc/[r]^{\beta_{1}} \ar@/^0.6pc/[l]^{\alpha_{2}}&3 \ar@/^0.6pc/[l]^{\beta_{2}}},
 with relations $\{\alpha_{1}\alpha_{2}\alpha_{1}, \beta_{2}\beta_{1}\beta_{2}, \alpha_{1}\beta_{1}, \beta_{2}\alpha_{2}, \alpha_{2}\alpha_{1}-\beta_{1}\beta_{2} \}$.
  Let $S_{i}$ be  the simple $A$-modules at vertices $i$ ($i=1,2,3$)  and let  $P_{1}=\begin{smallmatrix} 1 \\ 2\\ 1\end{smallmatrix}$ (resp. $P_{2}=\begin{smallmatrix} & 2 & \\ 1 & &3 \\ &2& \end{smallmatrix}$, $P_{3}=\begin{smallmatrix} 3 \\ 2\\ 3\end{smallmatrix}$)
be the indecomposable projective $A$-module at the vertex $1$ (resp. $2$, $3$). Let $e=e_{1}+e_{2}$ be an idempotent.
Consider  the SMC reduction $\D^{\bb}(\mod A)/\thick(S_{3})$ of $\D^{\bb}(\mod A)$ with respect to $S_{3}$. It is equivalent to $\D^{\bb}(\mod B)$ by  Proposition \ref{Prop:SMCex} (1), where $B=eAe$ is given by the   quiver 
\xymatrix{1 \ar@/^0.6pc/[r]^{\alpha_{1}}  & 2 \ar@/^0.6pc/[l]^{\alpha_{2}}} with relations 
$\{ \alpha_{1}\alpha_{2}\alpha_{1}, \alpha_{2}\alpha_{1}\alpha_{2}\}$.

  Since $A$ is symmetric, then it is well-know that  $\D_{\sg}(A)\cong \un{\mod} A$ and the AR quiver of $\D_{\sg}(A)$ is given by $\Z A_{3}/\nu[1]$. In fact, we can describe it specifically as follows,

 {\small
       \begin{center}
         \begin{tikzpicture}[scale=0.6]
         \draw
         node (00) at (0,0) {$\begin{smallmatrix}&2&\\1& &3\end{smallmatrix}$}
        % node (-20) at (-2,0) {$\begin{smallmatrix}1&&3\\&2 &\end{smallmatrix}$}
         node (20) at (2,0) {$\begin{smallmatrix}2\end{smallmatrix}$}
         node (40) at (4,0) {$\begin{smallmatrix}1&&3\\&2 &\end{smallmatrix}$}
         node (60) at (6,0) {$\begin{smallmatrix}&2&\\1&&3\end{smallmatrix}$}
         node (80) at (8,0) {$\begin{smallmatrix}2\end{smallmatrix}$}
         node at (-2,0) {$\dots$}
         node at (9,0){$\dots$}
         
         node (31) at (3,1) {$\begin{smallmatrix}1\\2\end{smallmatrix}$}
         node (51) at (5,1) {$\begin{smallmatrix}3\end{smallmatrix}$}
         node (71) at (7,1) {$\begin{smallmatrix}2\\1\end{smallmatrix}$}
         node (11) at (1,1) {$\begin{smallmatrix}2\\1\end{smallmatrix}$}

         node (-11) at (-1,1) {$\begin{smallmatrix}3\end{smallmatrix}$}
         node (-1-1) at (-1,-1) {$\begin{smallmatrix}1\end{smallmatrix}$}

         node (7-1) at (7,-1) {$\begin{smallmatrix}2\\3\end{smallmatrix}$}
         node (1-1) at (1,-1) {$\begin{smallmatrix}2\\3\end{smallmatrix}$}
         node (3-1) at (3,-1) {$\begin{smallmatrix}3\\2\end{smallmatrix}$}
         node (5-1) at (5,-1) {$\begin{smallmatrix}1\end{smallmatrix}$}
         
           [dotted] (0.6,1.4) rectangle (6.6,-1.4);
          \fill[opacity=0.3, red]  (4.5,1.4)--(1.7,-1.4)--(3.2,-1.4)--(4.8,0.2)--(6.4,-1.4)--(8,-1.4)--(5.2,1.4)--(4.5,1.4) (0.4,-1.4)--(2,-1.4)--(-0.8,1.4)--(-2.4,1.4);          
                \end{tikzpicture} 
     \end{center}}    
\noindent where  the arrows are omitted and a fundamental domain is outlined in dotted line.      
By the definition of SMS reduction, we know that $$(\D^{\bb}_{\sg}(A))_{S_{3}}=\{ X\in \D^{\bb}_{\sg}(A)\mid \Hom_{\D^{\bb}_{\sg}(A)}(X[i], S_{3})=0=\Hom_{\D^{\bb}_{\sg}(A)}(S_{3}[i],X) \text{ with } i=0,1\}.$$
So the indecomposable objects of $(\D^{\bb}_{\sg}(A))_{S_{3}}$ are given by the AR quiver above without the shaded part. The AR quiver of $(\D^{\bb}_{\sg}(A))_{S_{3}}$ is $\Z A_{2}/\nu[1]$, which is the same as the AR quiver of $\D^{\bb}_{\sg}(B)$.
\end{Ex}

%%%%%%%%%%%%%%%%%%%%%%%%
 \appendix
 
 \section{An equivalence induced by derived Schur functor}\label{appendix}
 Let $A$ be a non-positive proper dg algebra.  Let $e$ be an idempotent of $A$. Assume $e\in A^{0}$. Then $eA$ (resp. $Ae$) is a right (resp. left) dg $A$-module. 
We have a natural derived  Schur functor $F=?\otimes_{A}^{\bf L}Ae: \D(A)\ra \D(eAe)$, which restricts to a functor $F^{\bb}=?\otimes_{A}^{\bf L}Ae: \D^{\bb}(A)\ra \D^{\bb}(eAe)$.
It is well-known that $F$ admits a left adjoint $G=?\ot_{eAe}^{\bf L}eA$. We first give an easy observation.

\begin{Lem}\label{Lem:upperbounded}
Let $M\in \D^{\bb}(eAe)$. Then $G(M)\in \D(A)$ is upper bounded and $\h^{i}(G(M))$ is finite-dimensional for any $i\in \Z$.
\end{Lem}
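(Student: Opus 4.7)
The plan is to build $M$ as an infinite iterated extension of shifts of $eAe$ by mimicking the inductive construction from the proof of Lemma~\ref{Lem:apply1}, then transport this tower through $G$ and read off the required bounds from the fact that $G(eAe) = eA$ lies in $\D^{\bb}(A)$ with finite-dimensional cohomology in each degree.

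More concretely, since $eAe$ is itself a non-positive proper dg $k$-algebra, I would apply the inductive construction in the proof of Lemma~\ref{Lem:apply1} to $M_{0} := M \in \D^{\bb}(eAe)$, producing triangles
\[ P_{n}[l_{n}] \ra M_{n} \ra M_{n+1} \ra P_{n}[l_{n}+1], \]
in which $P_{n} \in \add(eAe)$ is a finite direct sum and $l_{n} := -\sup\{l : \h^{l}(M_{n}) \ne 0\}$ is a strictly increasing sequence of integers tending to $+\infty$; in particular $M_{n}$ has cohomology concentrated in degrees $\le -l_{n}$. Iterated octahedral completions yield, for each $n$, a triangle
\[ \tilde X_{n} \ra M \ra M_{n} \ra \tilde X_{n}[1] \]
with $\tilde X_{n} \in P_{0}[l_{0}] \ast \cdots \ast P_{n-1}[l_{n-1}]$. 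Applying $G$ and using $G(eAe) = eA$, this becomes a triangle $G(\tilde X_{n}) \ra G(M) \ra G(M_{n}) \ra G(\tilde X_{n})[1]$ in which $G(\tilde X_{n})$ lies in the iterated extension of finite copies of $eA[l_{0}], \dots, eA[l_{n-1}]$. A key auxiliary claim, which I would state and prove separately, is the right $t$-exactness of $G$: if a dg $eAe$-module $N$ satisfies $\h^{i}(N) = 0$ for all $i > k$, then $\h^{i}(G(N)) = 0$ for all $i > k$. This uses the non-positivity of $eAe$ to take a semi-free resolution $Q \to N$ with $Q^{i} = 0$ for $i > k$, combined with $(eA)^{j} = 0$ for $j > 0$, to conclude that $(Q \otimes_{eAe} eA)^{i} = 0$ for $i > k$.

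Granted this $t$-exactness, $G(M_{n})$ is concentrated in cohomological degrees $\le -l_{n}$; for a fixed $i \in \Z$ I pick $n$ with $l_{n} > 1-i$, which forces $\h^{i}(G(M_{n})) = \h^{i-1}(G(M_{n})) = 0$. The long exact sequence then identifies $\h^{i}(G(M))$ with $\h^{i}(G(\tilde X_{n}))$, and the latter is finite-dimensional because only finitely many of the shifts $eA[l_{j}]$ contribute to any fixed degree and each contributes a finite-dimensional space. For upper-boundedness, after shifting so that $M \in \D^{\le 0}(eAe)$ one has $l_{j} \ge 0$ for all $j$, so both $G(\tilde X_{n})$ and $G(M_{n})$ lie in cohomological degrees $\le 0$, and hence so does $G(M)$. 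I expect the main obstacle to be the right $t$-exactness of $G$, as it is the only step that requires a careful construction of a semi-free resolution respecting the non-positive degree bound rather than being a direct consequence of the bookkeeping already set up earlier in the paper.
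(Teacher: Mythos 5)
Your proposal is correct, but it takes a genuinely different route from the paper's. The paper handles the finite-dimensionality in one line by duality: combining the adjunction $(G,F)$ with the identification $\Hom_{\D(A)}(X,DA)\cong D\h^{0}(X)$, it gets $D\h^{i}(G(M))\cong\Hom_{\D(eAe)}(M,F(DA)[-i])$, which is finite-dimensional because $M$ and $F(DA)$ lie in $\D^{\bb}(eAe)$ and $\D^{\bb}(eAe)$ is Hom-finite (Lemma \ref{Lem:predg}); upper-boundedness is then just the remark that $M\ot^{\bf L}_{eAe}eA$ is a tensor product of upper bounded complexes. You instead re-run the approximation tower from the proof of Lemma \ref{Lem:apply1} over $eAe$ (legitimate, since $eAe$ is again non-positive and proper, and the tower need not terminate — only the strict growth of the $l_{n}$ matters, with the trivial edge case $M_{n}=0$), transport it through the triangle functor $G$, and read off each $\h^{i}(G(M))$ from the finite piece $G(\tilde X_{n})$ built from shifts of $eA$ once $l_{n}>1-i$; finite-dimensionality then comes from properness of $A$ via $\h^{*}(eA)$ rather than from Hom-finiteness of $\D^{\bb}(eAe)$ and the $DA$-duality. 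Your auxiliary right $t$-exactness of $G$ (degree-bounded semifree resolution tensored with the non-positively graded $eA$) is correct and standard, and is in substance the same observation the paper uses for upper-boundedness; note that applied to $M$ directly it already yields the upper bound, so the tower is only needed for finite-dimensionality. In short, the paper's argument is shorter and leans on facts already established (Hom-finiteness, the relative duality), while yours is more elementary and self-contained, avoiding the duality computation entirely at the cost of heavier bookkeeping.
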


\begin{proof}
Since $M\in \D^{\bb}(eAe)$, we may assume $M^{\gg 0}=0$.
We have
\[ D\h^{i}(G(M))=\Hom_{\D(A)}(G(M)[i], DA)=\Hom_{\D(eAe)}(M, F(DA)[i]). \] 
Since $M, F(DA)[i]\in \D^{\bb}(eAe)$ and by Lemma \ref{Lem:predg},  $\D^{\bb}(eAe)$ is Hom-finite, then $\h^{i}(M)$ is finite dimensional for any $i\in\Z$. Notice that $M$ and $eA$  are both upper bounded, so $G(M)=M\ot_{eAe}^{\bf L}eA$ is also upper bounded. 
\end{proof}

The following result should be well-known, but we could not find a reference. So we include a complete proof for the convenience of the reader.
\begin{Prop}\label{Prop:idem}
Let $A$ be a non-positive  proper dg algebra and $e\in A$ be an idempotent. Let $F, G$ be defined as above. Then $F$ induces a triangle equivalence $\overline{F}^{\bb}:\D^{\bb}(A)/\ker F^{\bb}\simeq \D^{\bb}(eAe)$.
\end{Prop}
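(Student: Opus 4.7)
The plan is to verify that $\overline{F}^{\bb}$ is both essentially surjective and fully faithful, and the two key technical inputs are: $(i)$ $Ae$ is a dg direct summand of $A_A$, so $F=?\otimes_{A}^{\bf L}Ae$ acts as ``take the $e$-part'' and is therefore $t$-exact with $\h^i(F(N))=\h^i(N)e$ for every $N\in\D(A)$; and dually $(ii)$ $eA$ is a dg direct summand of ${}_AA$, which yields the crucial isomorphism $FG\cong\mathrm{id}$ via $eA\otimes^{\bf L}_A Ae = eAe$.

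For essential surjectivity, take $M\in\D^{\bb}(eAe)$. By Lemma \ref{Lem:upperbounded}, $G(M)\in\D(A)$ is upper bounded with finite-dimensional cohomology, and combining $(i)$ and $(ii)$ gives $\h^i(G(M))e=\h^i(FG(M))=\h^i(M)$, which vanishes for $i$ below the support of $M$. For $n$ sufficiently large, define $X:=\tau^{\ge -n}G(M)\in\D^{\bb}(A)$; applying $F$ to the truncation triangle $\tau^{<-n}G(M)\to G(M)\to X\to$ and observing that $F(\tau^{<-n}G(M))$ has vanishing cohomology in every degree yields $F^{\bb}(X)\cong FG(M)\cong M$.

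For fully faithfulness, I reduce to the unbounded version. The counit $GF\to\mathrm{id}$ produces, for every $X\in\D(A)$, a triangle $GF(X)\to X\to K_X\to$ with $K_X\in\ker F$, and from this the standard argument gives a triangle equivalence $\overline{F}\colon\D(A)/\ker F\xra{\simeq}\D(eAe)$. Placing $\overline{F}^{\bb}$ in a commutative square with $\overline{F}$ and the full embedding $\D^{\bb}(eAe)\hookrightarrow\D(eAe)$, fully faithfulness of $\overline{F}^{\bb}$ reduces to fully faithfulness of the canonical comparison functor $\iota\colon\D^{\bb}(A)/\ker F^{\bb}\to\D(A)/\ker F$.

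The main obstacle is this fully faithfulness of $\iota$. The key observation is that, by $(i)$, $\ker F$ is closed under the standard truncations $\tau^{\le n}$ and $\tau^{\ge n}$. Given a roof $X\xleftarrow{s}Z\xrightarrow{f}Y$ in $\D(A)/\ker F$ with $X,Y\in\D^{\bb}(A)$ and $\con(s)\in\ker F$, the long exact cohomology sequence of the triangle defining $\con(s)$ forces $\h^i(Z)e=0$ for all $i$ outside the finite range determined by $X$. Upper truncation gives $\tau^{\le N}Z\to Z$ with $\con$ in $\ker F$, producing a bounded-above replacement; applying the dual lower-truncation argument via an octahedron (to reconcile the direction of the truncation map with that of the roof) produces a bounded object $Z'\in\D^{\bb}(A)$ with a comparison map whose mapping cone lies in $\ker F$, hence a bounded representative of the given morphism. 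The same truncation technique handles the equivalence relation on roofs, establishing fully faithfulness of $\iota$ and completing the proof.
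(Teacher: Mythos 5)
Your overall strategy (reduce to the unbounded equivalence $\overline{F}\colon\D(A)/\ker F\simeq\D(eAe)$, prove full faithfulness of the comparison functor $\iota\colon\D^{\bb}(A)/\ker F^{\bb}\to\D(A)/\ker F$, and get density by truncating $G(M)$ using Lemma \ref{Lem:upperbounded}) is the same as the paper's, and your essential surjectivity argument is correct. The gap is in the full faithfulness of $\iota$. In this paper $\D^{\bb}(A)$ means dg modules with \emph{finite-dimensional total cohomology}, and your truncation-only argument does not produce such an object: given a roof $X\xleftarrow{s}Z\xra{f}Y$ with $\con(s)\in\ker F$, the condition $\con(s)\in\ker F$ only controls the $e$-parts $\h^{i}(Z)e\cong\h^{i}(X)e$, so while $\tau^{\ge m}\tau^{\le N}Z$ has cohomology concentrated in finitely many degrees, each $\h^{i}(Z)$ in that range can still be infinite-dimensional. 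For instance $Z=X\oplus K$ with $K$ any dg module satisfying $\h^{*}(K)e=0$ but with huge $\h^{i}(K)$ in degrees inside $[m,N]$ gives a legitimate roof whose two-sided truncation never lands in $\D^{\bb}(A)$. So the claim ``produces a bounded object $Z'\in\D^{\bb}(A)$'' is unjustified, and the same problem recurs when you invoke ``the same truncation technique'' for the equivalence relation on roofs (faithfulness).

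The missing step is exactly the paper's replacement of $Z$ by $GF(Z)$ before truncating: since $\con(s)\in\ker F$ one has $F(Z)\cong F(X)\in\D^{\bb}(eAe)$, the counit gives a triangle $GF(Z)\to Z\to K'\to GF(Z)[1]$ with $K'\in\ker F$ (because $FG\cong\mathrm{id}$, i.e.\ $G$ is fully faithful), and Lemma \ref{Lem:upperbounded} — which you already use for density — shows $GF(Z)$ is upper bounded with finite-dimensional $\h^{i}$ in every degree. Only then does the lower truncation $\tau^{\ge m}GF(Z)$ lie in $\D^{\bb}(A)$; the factorization of the two legs of the roof through this truncation is legitimate because $\Hom_{\D(A)}(\tau^{<m}GF(Z),X)=0=\Hom_{\D(A)}(\tau^{<m}GF(Z),Y)$ for $m\ll0$ and $\tau^{<m}GF(Z)\in\ker F$ (here your $t$-exactness observation $\h^{i}(F(N))=\h^{i}(N)e$ is indeed the right tool). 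With this insertion your argument becomes essentially the paper's proof; without it, truncation alone cannot reach $\D^{\bb}(A)$.
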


\begin{Rem}
We point out that Proposition \ref{Prop:idem} is  known for finite-dimensional $k$-algebra (see \cite[Lemma 2.2]{Ch1}). But the approach in \cite{Ch1}  fails in  dg setting, so here we prove it  in a more direct way.
\end{Rem}

\begin{proof}
Notice that  $G$ is fully faithful (see for example, \cite[Lemma 4.2]{Keller}), then $\D(A)$ has a stable $t$-structure $(\im G, \ker F)$ and moreover, there is a triangle equivalence $\overline{F}: \D(A)/\ker F \simeq \D(eAe)$.
%We denote by $H$ the natural functor $H: $.
Considering the following commutative diagram.
\[ \xymatrix{ \D(A)/\ker F \ar[r]^{\overline{F}}_{\simeq} & \D(eAe) \\
\D^{\bb}(A)/\ker F^{\bb}  \ar[u]^{H}
\ar[r]^<<<<<{\overline{F}^{\bb}} & \D^{\bb}(eAe)\ar@{^{(}->}[u] }\]
where $H: \D^{\bb}(A)/\ker F^{\bb} \ra \D(A)/\ker F$ is the natural functor. To show $\overline{F}^{\bb}$ is fully faithful, it is enough to show $H$ is fully faithful and $\overline{F}^{\bb}$ is dense.

(1) $H$ is full.  Let $X, Y\in \D^{\bb}(A)$. Any morphism $X\ra Y$ in $\D(A)/\ker F$ can be written as $X \xleftarrow{s} Z\xrightarrow{f} Y$, such that there is a triangle 
\[ K \ra Z \xra{s} X \ra K[1]\]
with $K\in \ker F$.
In this case,  $F(Z)\cong F(X)$ in $\D(eAe)$ and thus $F(Z)\in \D^{\bb}(eAe)$.
Let $Z':=GF(Z)$. Then we have a natural triangle $Z'\xra{t} Z\ra K' \ra Z'[1]$ in $\D(A)$ given by the adjoint pair such that $K'\in \ker F$. It is easy to check that the morphism  $X \xleftarrow{st} Z' \xrightarrow{ft} Y$ is equivalent to $X \xleftarrow{s} Z\xrightarrow{f} Y$ in $\D(A)/\ker F$. By Lemma \ref{Lem:upperbounded}, we know that  $Z'$ is upper bounded and $\h^{n}(Z')$ is finite dimensional for any $n\in \Z$.

Now we consider the standard truncation of $Z'$. Since $X, Y\in \D^{\bb}(A)$, we can find small enough $m$ such that $$\Hom_{\D(A)}(\tau^{< m}Z', X)=0=\Hom_{\D(A)}(\tau^{<m}Z', Y).$$
Since $F(Z)=Ze$, which acts on cohomology, we may also assume $\tau^{{<m}}Z'\in \ker F$. 
Then we have the following diagram.
\[\small{ \xymatrix{ & \tau^{<m}Z' \ar[d] & \\
& Z' \ar[ld] \ar[rd] \ar[d]&\\
X & \tau^{\ge m}Z' \ar[l] \ar[r]& Y } }\] 
By our construction, $\tau^{\ge m}Z'\in \D^{\bb}(A)$ and the morphism $X\la \tau^{\ge m}Z'\ra Y$ is equivalent to $X\la Z' \ra Y$ in $\D(A)/\ker F$.
So the functor $H: \D^{\bb}(A)/\ker F^{\bb}\ra \D(A)/\ker F$ is full.

(2) $H$ is faithful.
Let $X\xleftarrow{p}U \xrightarrow{g} Y$ be any morphism in $\D^{\bb}(A)/\ker F^{\bb}$, which sends to zero map in $\D(A)/\ker F$. Then the morphism is equivalent to $X \xleftarrow{\rm Id} X \xrightarrow{0} Y$ in $\D(A)/\ker F$. So we have commutative diagram.
\[\small{\xymatrix{ & X \ar[ld]_{\rm Id} \ar[rd]^{0}& \\
X & W \ar[r]^{0} \ar[l]_{q} \ar[u]^{q} \ar[d]^{r}& Y\\
& U \ar[ur]_{g} \ar[ul]^{p} &}}\]
where $W\in \D(A)$, $\con(q)\in \ker F$,  $gr=0$ and $pr=q$.
By the same strategy in (1), we can take $W\in \D^{\bb}(A)$. Then $X\xleftarrow{p}U \xrightarrow{g} Y$ is also zero map in $\D^{\bb}(A)$. So $H$ is faithful.

(3) $\overline{F}^{\bb}$ is dense.
 Let $M\in \D^{\bb}(eAe)$.
We know  $G(M)\in \D(A)$ is upper bounded and $\h^{i}(G(M))$ is finite dimensional for any $i$ by Lemma \ref{Lem:upperbounded}.  Notice that we have 
\[ F(\tau^{\ge n}G(M))=(\tau^{\ge n}G(M))e=\tau^{\ge n}(G(M)e)=\tau^{\ge n}(FG(M))=\tau^{\ge n}(M).\]
Since $M\in \D^{\bb}(eAe)$, we may take  $n\ll 0$ such that $\tau^{\ge n}(M)\cong M$. Then $F(\tau^{\ge n}G(M))=M$ and $\tau^{\ge n}(G(M))\in \D^{\bb}(A)$. So $\widetilde{F}$ is dense.
\end{proof}

%%%%%%%%%%%%%%%%%%%%%%%%%

\end{document}